\documentclass[11pt,a4paper,leqno]{amsart}

\usepackage{amsmath,amsthm,amssymb,amsfonts}
\usepackage{txfonts}
\usepackage{enumitem}
\usepackage{mathrsfs}
\usepackage{hyperref}

\hypersetup{
  colorlinks=true,
  linkcolor=blue,
  citecolor=blue,
  urlcolor=blue,
  pdftitle={Boundary Harnack inequalities for Kolmogorov equations in asymptotically cylindrical Lipschitz domains},
  pdfauthor={Kaj Nystrom}
}

\newcommand{\dd}{\,\mathrm{d}}
\DeclareMathOperator{\osc}{osc}

\setlist[itemize]{leftmargin=2em}

\calclayout

\allowdisplaybreaks
\numberwithin{equation}{section}

\newtheorem{theorem}{Theorem}[section]
\newtheorem{proposition}{Proposition}[section]
\newtheorem{lemma}{Lemma}[section]
\newtheorem{corollary}{Corollary}[section]
\theoremstyle{definition}
\newtheorem{definition}{Definition}
\newtheorem{remark}{Remark}[section]

\begin{document}

\title[Kolmogorov equations in asymptotically cylindrical Lipschitz domains]
{Boundary Harnack inequalities for Kolmogorov equations in asymptotically
cylindrical Lipschitz domains}
\author{Kaj Nystr\"om}
\address{Department of Mathematics, Uppsala University, Box 480\\
SE-751 06 Uppsala, Sweden}
\email{kaj.nystrom@math.uu.se}
\subjclass[2020]{Primary 35K65, 35H20; Secondary 35K70, 35B65, 35R03}
\keywords{Kolmogorov equation, boundary Harnack inequality, intrinsic
Lipschitz domain, hypoelliptic equation, compactness, rigidity.}

\begin{abstract}
We establish boundary Harnack inequalities for non-negative solutions of the
Kolmogorov equation
\[
 \mathcal Ku=\Delta_xu+x\cdot\nabla_yu-\partial_tu=0
\]
in non-characteristic intrinsic Lipschitz graph domains whose defining
functions may depend on the higher-order variable \(y_m\). Earlier theory
required independence of \(y_m\). We replace this symmetry by quantitative
asymptotic cylindricality, measured by a scale-invariant
\(C^{0,1/3}_{y_m}\) defect. The perturbative comparison theorem applies
whenever this defect is sufficiently small, and hence at all sufficiently
small scales for \(C^{0,\alpha}_{y_m}\) graphs with \(\alpha>1/3\), as well
as under a vanishing critical cylindrical modulus. We obtain local
comparability and intrinsic H\"older continuity of quotients of positive
solutions, together with a H\"older estimate for their logarithmic quotient.

The proof combines local boundary estimates, valid without
\(y_m\)-independence, with a global rigidity argument for cylindrical blow-up
limits. The central issue is uniform balance of forward and backward
reference values. If this balance fails at collapsing scales, maximal-scale
selection and a second rescaling produce a non-negative polynomial-growth
Dirichlet solution in an unbounded cylindrical domain that vanishes at the
backward reference point but not at the forward one. We exclude this
configuration using boundary-energy and mean-value estimates,
finite-dimensionality of spaces of polynomial-growth solutions, and
translation invariance in \(y_m\). The resulting finite-dimensional
translation representation yields real-analytic dependence on \(y_m\).
Propagation of zeros, analytic continuation along invariant fibres, and a
Tikhonov uniqueness argument then force the limiting solution to vanish
identically. The exponent \(1/3\) is critical because \(y_m\) has homogeneous
degree three.
\end{abstract}

\maketitle

\section{Introduction}
\label{sec:introduction}

The Kolmogorov operator
\begin{equation*}
 \mathcal K=\Delta_x+x\cdot\nabla_y-\partial_t,
 \qquad (x,y,t)\in\mathbb R^m\times\mathbb R^m\times\mathbb R,
\end{equation*}
is degenerate in the \(y\)-variables but hypoelliptic through the interaction of
the diffusion fields \(\partial_{x_i}\) with the drift
\(x\cdot\nabla_y-\partial_t\).  It is the basic kinetic diffusion introduced by
Kolmogorov \cite{Kolmogorov1934} and a canonical example in H\"ormander's theory
\cite{Hormander1967}.  Its geometry is anisotropic: the natural dilations assign
degrees one, three, and two to \(x\), \(y\), and \(t\), respectively.  Boundary
regularity must therefore be formulated in terms of the associated group law,
quasi-distance, and intrinsic Lipschitz graphs.

Boundary Harnack inequalities compare two positive solutions which vanish on
a common boundary portion. In elliptic Lipschitz and nontangentially
accessible (NTA) domains, boundary H\"older estimates, Carleson estimates,
and boundary comparison principles are fundamental tools in potential theory
and free-boundary analysis
\cite{CaffarelliFabesMortolaSalsa1981,JerisonKenig1982}. Their uniformly parabolic counterparts were developed in
\cite{Garofalo1984,FabesGarofaloSalsa1984,FabesGarofaloSalsa1986}.
These tools are central to the regularity theory for parabolic free-boundary
problems; see, for example,
\cite{AthanasopoulosCaffarelliSalsa1996a,
AthanasopoulosCaffarelliSalsa1996b,
AthanasopoulosCaffarelliSalsa1998,
CaffarelliPetrosyanShahgholian2004}.
They also play an important role in modern quantitative boundary theory for
the heat equation; see, for example,
\cite{BortzHofmannMartellNystrom2025} for the equivalence between
\(\mathrm L^p\) Dirichlet solvability and parabolic uniform rectifiability
on parabolic Lipschitz graph domains.

A specifically parabolic feature is the time orientation: forward and
backward estimates are naturally tied to distinct reference points.  For Kolmogorov
operators there is an additional obstruction.  Admissible Harnack paths control
the endpoint in the diffusive variables, while the drift transports the
higher-order variables.  If the graph is taken in the \(x_m\)-direction, the
corresponding variable \(y_m\) cannot in general be prescribed independently at
the other endpoint of the path.

The boundary theory developed by Cinti, the author, and Polidoro in
\cite{CintiNystromPolidoro2010,CintiNystromPolidoro2012,
CintiNystromPolidoro2013} provides propagation, boundary decay, and a
one-sided Carleson estimate in intrinsic Lipschitz graph domains. The author
and Polidoro subsequently proved two-sided comparison and H\"older continuity
of quotients when the defining function is independent of \(y_m\)
\cite{NystromPolidoro2016}. Litsg{\aa}rd and the author later developed a
potential theory for divergence-form Kolmogorov operators with rough
coefficients \cite{LitsgardNystrom2022}. Of particular importance here, their
boundary H\"older estimate, one-sided Carleson estimate, and localized weak
comparison principle remain valid for graphs with full \(y_m\)-dependence:
the cylindricality assumption is not used up to and including the localized
weak comparison lemma. Their complete two-sided comparison theorem does,
however, require independence of \(y_m\), precisely because the argument must
compare forward and backward reference values uniformly across scales.

The present proof separates these two parts of the theory. From
\cite{CintiNystromPolidoro2010,CintiNystromPolidoro2012,
CintiNystromPolidoro2013,NystromPolidoro2016}, we use the intrinsic graph
geometry, propagation and Harnack-chain constructions, boundary decay, the
one-sided Carleson estimate, and two-sided comparison in exactly cylindrical
domains. From \cite{LitsgardNystrom2022}, we use the local energy framework,
boundary H\"older estimates, and, crucially, the localized weak comparison
principle in graphs with full \(y_m\)-dependence. These previously available
local ingredients are recalled below in the precise form required. The new
work begins with the scale-uniform reference-point balance and culminates in
a global rigidity theorem for polynomial-growth solutions in unbounded
cylindrical domains.

The principal result of the paper is a two-sided boundary comparison theorem
for graph domains with genuine \(y_m\)-dependence, in which exact
\(y_m\)-translation invariance is replaced by asymptotic cylindricality under
intrinsic blow-up. We consider
\[
 \Omega_\psi
 =
 \{(x',x_m,y',y_m,t):x_m>\psi(x',y',y_m,t)\},
\]
where \(\psi\) is intrinsic Lipschitz with constant \(M\). A model sufficient
condition is
\begin{equation*}
 |\psi(x',y',y_m,t)-\psi(x',y',\widetilde y_m,t)|
 \leq
 L|y_m-\widetilde y_m|^\alpha,
 \qquad
 \alpha\in(1/3,1].
\end{equation*}
Under an intrinsic blow-up at scale \(r\), a normalized displacement
\(\widehat y_m-\widetilde{\widehat y}_m\) corresponds to the physical
displacement  $r^3(\widehat y_m-\widetilde{\widehat y}_m)$, whereas the graph height is divided by \(r\). Consequently, the
\(y_m\)-oscillation of the rescaled graph is bounded by
\[
 r^{-1}L
 \bigl(r^3|\widehat y_m-\widetilde{\widehat y}_m|\bigr)^\alpha
 =
 Lr^{3\alpha-1}
 |\widehat y_m-\widetilde{\widehat y}_m|^\alpha.
\]
Thus the cylindrical defect tends to zero on every fixed normalized box as
\(r\downarrow0\) whenever \(\alpha>1/3\). The analytic result is formulated
intrinsically in terms of the scale-invariant
\(C^{0,1/3}_{y_m}\) defect: the perturbative comparison theorem applies
whenever this defect is sufficiently small. Hence the original domain need
not be cylindrical at any positive scale, even though every tangent domain is
cylindrical.

Our first main result, Theorem~\ref{thm:perturbative-BHI}, is a scale-one
perturbative boundary comparison theorem. If the localized, scale-invariant
\(C^{0,1/3}_{y_m}\) defect of the graph is sufficiently small and the forward
and backward reference values of each solution are comparable at the outer
scale, then the quotient of two positive solutions is comparable and
intrinsically H\"older continuous in a smaller box. The constants depend only
on the dimension, the intrinsic Lipschitz constant, and the outer reference
imbalance. Scaling yields Theorem~\ref{thm:main-BHI}: under
\(C^{0,\alpha}_{y_m}\) regularity with \(\alpha>1/3\), the conclusion holds at every scale
\[
 r\leq r_*\asymp L^{-1/(3\alpha-1)}.
\]
Corollary~\ref{cor:logarithmic-BHI} gives the corresponding estimate for the
logarithmic quotient. Theorem~\ref{thm:general-modulus} gives the principal
geometric formulation, requiring only that the critical cylindrical modulus
vanish at small scales.

The central difficulty is to establish a uniform balance between the forward
and backward reference values. If such balance fails, compactness first
excludes bad scales bounded away from zero, because their limits are governed
by the existing cylindrical comparison theorem. The remaining bad scales
must therefore collapse to zero. We replace them by maximal bad scales and
normalize the corresponding solutions at their forward reference points. The
one-sided Carleson estimate then gives polynomial growth on an expanding
family of intrinsic boxes. After passing to a subsequence, we obtain an
unbounded graph domain that is exactly cylindrical in \(y_m\) and a
non-negative \(\mathcal K\)-harmonic function \(W\), vanishing on the
boundary, such that
\[
 W(A^-_{1,\Lambda})=0,
 \qquad
 W(A^+_{1,\Lambda})=1.
\]
The global rigidity theorem proves that no such configuration can exist.

The proof of this rigidity theorem combines causal propagation,
finite-dimensionality, translation symmetry, and forward uniqueness. Bony's
strong minimum principle \cite{Bony1969} propagates an interior zero only
through the causally accessible set and therefore does not, by itself, force
\(W\) to vanish identically. An admissible-curve construction adapted from
\cite{NystromPolidoro2016}, using the exact \(y_m\)-invariance of the limiting
domain, shows that the accessible set contains a half-ray in every
\(y_m\)-fibre at each earlier time. Here a fibre is obtained by fixing
\((x',x_m,y',t)\) and varying only \(y_m\). Bony's principle therefore gives
vanishing on one half-ray of every such fibre.

To remove this remaining directional obstruction, we prove that the space of
Dirichlet \(\mathcal K\)-harmonic functions of any fixed polynomial growth
order is finite-dimensional. The proof is an intrinsic Kolmogorov adaptation
of the dimension-counting method for polynomial-growth harmonic functions
\cite{ColdingMinicozzi1997,Kleiner2010}. Translation in \(y_m\) consequently
acts on this space through a finite-dimensional one-parameter matrix group.
Every \(y_m\)-fibre is therefore an exponential polynomial and, in particular,
real analytic. Vanishing on a half-ray then implies vanishing on the entire
fibre. Finally, an explicit Tikhonov-type barrier gives uniqueness forward
from the resulting zero Cauchy section. This contradiction establishes the
uniform reference balance.

Once reference balance is available, the localized weak comparison principle
from \cite{LitsgardNystrom2022} yields a one-step boundary oscillation
contraction for the quotient. An interior contraction repairs the asymmetry
created by the time direction of the interior Harnack inequality, again using
reference balance to compare the denominator at past and future points. A
nearest-boundary argument then patches the interior and boundary estimates and
gives the asserted intrinsic H\"older continuity.

\subsection{Related work}

The boundary theory for kinetic Fokker-Planck equations, with the homogeneous
Kolmogorov equation \(\mathcal Ku=0\) as its basic prototype, is developing
rapidly; see, among recent contributions,
\cite{Silvestre2022,Zhu2024,RosOtonWeidner2025,KimWeidnerGrazing2026}. Very recently, Kim and Weidner \cite{KimWeidner2026} proved Hopf-type
boundary lower estimates and higher-order boundary Harnack estimates for
kinetic Fokker-Planck equations with absorbing boundary conditions. Their
geometric setting is, however, fundamentally different from the one considered
here. Their variables \((v,x,t)\) correspond to our \((x,y,t)\), respectively.
In the notation of the present paper, they study space-time cylinders of the form
\[
 (x,y,t)\in\mathbb R^m\times\Omega\times(0,T],
\]
where \(\Omega\subset\mathbb R^m\) is a smooth domain in the degenerate
\(y\)-variables, and impose zero data only on the incoming part of the kinetic
boundary. The boundary is therefore invisible to the diffusion in \(x\), and
its interaction with the equation is governed by transport; the incoming,
outgoing, and grazing portions have genuinely different behavior. In
particular, Kim and Weidner establish optimal higher-order regularity of
quotients near the grazing set, obtaining \(C^{3/2}\) regularity in the
presence of source terms and \(C^{1,1}\) regularity in the homogeneous case,
while boundary comparison may fail in the incoming boundary layer. Their
proof is based on an explicit grazing profile, propagation of positivity,
a kinetic Hopf lemma, and fine boundary expansions combined with Schauder and
Liouville estimates. By contrast, the boundary considered here is a
non-characteristic graph in the diffusive \(x_m\)-direction, zero data are
prescribed on the entire relevant graph portion, and the principal difficulty
is the loss of exact cylindricality in \(y_m\). Accordingly, our argument is
based on compactness, uniform reference-point balance, global cylindrical
rigidity, and oscillation reduction. The two theories are therefore
complementary and illustrate the substantially different forms taken by
boundary Harnack principles in distinct kinetic boundary geometries.

\subsection{Organization of the paper}

The paper is organized as follows.
Section~\ref{sec:geometry} introduces the intrinsic geometry, graph domains,
reference points, and cylindrical modulus. Section~\ref{sec:main-results}
states the main results. Section~\ref{sec:compactness} develops the
compactness framework and the expanding-cylinder growth estimate required at
collapsing bad scales. Section~\ref{sec:global-rigidity} proves the global
cylindrical rigidity theorem. Section~\ref{sec:oscillation-reduction} uses
this rigidity to establish uniform reference-point balance and completes the
proof through localized weak comparison and oscillation reduction.
Section~\ref{sec:conclusion} summarizes the role of the critical exponent and
records directions in which the rigidity method may be useful. We also briefly explain how the boundary comparison theorem, together with
scale-decaying estimates for the commutator-generated inhomogeneous equations,
yields \(C_{\mathcal K}^{1,\beta}\)-regularity of the regular part of the free
boundary in the Kolmogorov obstacle problem. This completes the program
initiated in \cite{FrentzNystromPascucciPolidoro2010} and continued in
\cite{Bowman2025}. A complete proof is given in the author's preprint
\cite{NystromHigherRegularity2026}.

\section{Geometric setting}
\label{sec:geometry}

In this section we introduce the geometric and analytic framework used
throughout the paper. The main results are stated separately in
Section~\ref{sec:main-results}. Throughout, \(m\geq1\), \(N=2m\), $X=(x,y)\in\mathbb R^m\times\mathbb R^m$, and
\begin{equation}
\label{eq:K-operator}
 \mathcal K
 =
 \sum_{i=1}^m\partial_{x_i x_i}
 +
 \sum_{i=1}^m x_i\partial_{y_i}
 -
 \partial_t.
\end{equation}

The purpose of the paper is to establish a boundary comparison principle in
intrinsic Lipschitz graph domains whose defining functions may depend on the
higher-order variable \(y_m\) paired with the graph direction \(x_m\). The
scale-one perturbative theorem requires the localized, scale-invariant
\(C^{0,1/3}_{y_m}\) defect to be sufficiently small. In the asymptotically
cylindrical setting, this smallness follows because the \(y_m\)-dependence
vanishes under intrinsic blow-up. The model sufficient condition is
H\"older continuity of order \(\alpha>1/3\) in \(y_m\).

Unless otherwise stated, a solution of \(\mathcal Ku=0\) in an open set
\(G\subset\mathbb R^{2m+1}\) is understood to be a weak solution in the
natural local energy class. More precisely, we require $u\in \mathrm L^2_{\mathrm{loc}}(G)$, $\nabla_xu\in \mathrm L^2_{\mathrm{loc}}(G)$, and
\begin{equation}
\label{eq:weak-energy-formulation}
\int_G
\left[
\nabla_xu\cdot\nabla_x\varphi
+
u\bigl(x\cdot\nabla_y\varphi-\partial_t\varphi\bigr)
\right]
\,\dd X\,\dd t
=0
\end{equation}
for every \(\varphi\in C_c^\infty(G)\). We refer to such a function
interchangeably as a \emph{weak solution} or an \emph{energy solution}.
The terminology reflects the fact that the energy controls derivatives only
in the non-degenerate \(x\)-variables; the drift
\(x\cdot\nabla_y-\partial_t\) is interpreted distributionally. The identity
\eqref{eq:weak-energy-formulation} is equivalent to
\(\mathcal Ku=0\) in the sense of distributions.
By hypoellipticity, every such solution is smooth in the interior of \(G\).
Boundary values are understood pointwise whenever continuity up to the
relevant boundary portion is assumed. In an energy argument, however, a
Sobolev boundary value is invoked only when the solution belongs locally to
the boundary energy class. More precisely and in this context, on every bounded neighborhood
\(V\) of a non-characteristic graph portion $x_m=\psi(x',y,t)$,  we require
\[
u,\nabla_xu\in \mathrm L^2(G\cap V).
\]
By Fubini's theorem, for almost every \((y,t)\), the function
\(u(\cdot,y,t)\) then belongs to \(\mathrm H^1\) on the corresponding \(x\)-slice.
Since that slice is bounded by a Lipschitz graph in the non-degenerate
\(x\)-variables, the usual Sobolev trace theorem defines its trace on $x_m=\psi(x',y,t)$. A zero boundary value in the energy sense means that this trace vanishes for
almost every \((y,t)\).

The preceding up-to-boundary integrability is not a consequence of the local
interior energy condition alone. It must instead follow from an appropriate
boundary Caccioppoli estimate or be included among the hypotheses. Moreover,
this boundary energy condition provides neither a \(y\)-trace nor a
\(t\)-trace.

In this paper, we verify and use this energy-trace formulation only for the
\(y_m\)-independent graph domains appearing in
Definition~\ref{def:global-rigidity}. In that setting,
Proposition~\ref{prop:boundary-energy-zero-extension} establishes the required
up-to-boundary \(\mathrm L^2\)-integrability, the
\(\mathrm H_x^{-1}\)-control of the transport derivative, the vanishing
\(x\)-Sobolev trace, and the cutoff approximation needed to justify boundary
energy tests. Its proof uses the cylindrical boundary comparison and
boundary-layer estimates of Litsg{\aa}rd and the author
\cite{LitsgardNystrom2022}. We make no corresponding assertion here for graph
functions with genuine \(y_m\)-dependence.

\medskip
\noindent\textbf{Conventions.} Constants denoted by \(C\) may change from line to line
and depend only on the parameters displayed in the relevant statement. Unless otherwise indicated, all integrals over subsets of
\(\mathbb R^{2m+1}\) are taken with respect to Lebesgue measure
\(\dd x\,\dd y\,\dd t\), and the integration element is suppressed. Integration with
respect to a different measure, such as surface measure on a boundary
portion, will always be indicated explicitly.

\subsection{The underlying group structure}
\label{subsec:group-structure}

The natural dilations associated with \(\mathcal K\) are
\begin{equation}
\label{eq:K-dilations}
 \delta_r(x,y,t)=(rx,r^3y,r^2t),
 \qquad r>0.
\end{equation}
Moreover, \(\mathbb R^{N+1}\) is equipped with the group law
\begin{equation}
\label{eq:K-group-law}
 (\widetilde X,\widetilde t)\circ(X,t)
 =
 (\widetilde x+x,
  \widetilde y+y-t\widetilde x,
  \widetilde t+t).
\end{equation}
The inverse of \((X,t)=(x,y,t)\) is therefore
\begin{equation}
\label{eq:K-inverse}
 (X,t)^{-1}=(-x,-y-tx,-t).
\end{equation}
Both the vector fields \(\partial_{x_1},\ldots,\partial_{x_m}\) and the drift
\(Y=x\cdot\nabla_y-\partial_t\) are left invariant with respect to
\eqref{eq:K-group-law}, while
\(\mathcal K\) is homogeneous of degree two with respect to
\eqref{eq:K-dilations}.

For points $P=(X,t)=(x,y,t)$, $\widetilde P=(\widetilde X,\widetilde t)
 =(\widetilde x,\widetilde y,\widetilde t)$, we use the homogeneous quasi-norm
\begin{equation}
\label{eq:K-quasinorm}
 \lVert P\rVert_{\mathcal K}
 =
 |x|+|y|^{1/3}+|t|^{1/2}
\end{equation}
and the associated left-invariant quasi-distance
\begin{equation}
\label{eq:K-distance}
 d_{\mathcal K}(P,\widetilde P)
 =
 \bigl\lVert\widetilde P^{-1}\circ P\bigr\rVert_{\mathcal K}.
\end{equation}
The quasi-distance \(d_{\mathcal K}\) is not symmetric. We therefore also
introduce the symmetrized quasi-distance
\begin{equation}
\label{eq:symmetric-K-distance}
 d_{\mathcal K}^{\mathrm s}(P,\widetilde P)
 =
 \max\bigl\{
 d_{\mathcal K}(P,\widetilde P),
 d_{\mathcal K}(\widetilde P,P)
 \bigr\}.
\end{equation}
There exists \(C=C(m)\geq1\) such that
\[
 d_{\mathcal K}(P,\widetilde P)
 \leq
 d_{\mathcal K}^{\mathrm s}(P,\widetilde P)
 \leq
 C d_{\mathcal K}(P,\widetilde P).
\]
Thus all H{\"o}lder estimates below are unchanged, up to their constants, if
\(d_{\mathcal K}\) is replaced by \(d_{\mathcal K}^{\mathrm s}\). For a set
\(E\subset\mathbb R^{2m+1}\), we use the conventions
\[
 d_{\mathcal K}(P,E)
 =
 \inf_{\widetilde P\in E}d_{\mathcal K}(P,\widetilde P),
 \qquad
 d_{\mathcal K}^{\mathrm s}(P,E)
 =
 \inf_{\widetilde P\in E}
 d_{\mathcal K}^{\mathrm s}(P,\widetilde P).
\]

Write
\[
 x=(x',x_m)\in\mathbb R^{m-1}\times\mathbb R,
 \qquad
 y=(y',y_m)\in\mathbb R^{m-1}\times\mathbb R.
\]
Fix \(M\geq1\), which throughout denotes an upper bound for the intrinsic
Lipschitz constant of the graph functions considered below. Let
\(c_M=c(m,M)\geq1\) be a geometric constant. Its precise size will be fixed
below, and it may be enlarged a finite number of times, without changing
notation, to satisfy the geometric requirements imposed later.

We introduce the normalized box
\begin{align}
 Q_{M,r}
 =\bigl\{(x',x_m,y',y_m,t):{}\!
 &|x_i|<r,\quad i=1,\ldots,m-1,
 \nonumber\\
 &|y_i|<c_Mr^3,\quad i=1,\ldots,m,
 \nonumber\\
 &|t|<2r^2,\quad |x_m|<c_Mr\bigr\},
\label{eq:normalized-box}
\end{align}
and, for \(P_0=(X_0,t_0)\in\mathbb R^{2m+1}\), set
\begin{equation}
\label{eq:translated-box}
 Q_{M,r}(P_0)=P_0\circ Q_{M,r}.
\end{equation}

\subsection{Intrinsic graph domains}
\label{subsec:graph-domains}

Let
\[
 \zeta=(x',y',y_m,t)
 \in\mathbb R^{m-1}\times\mathbb R^{m-1}\times\mathbb R\times\mathbb R,
\]
and let \(\psi:\mathbb R^{N}\to\mathbb R\) be continuous. We consider the
unbounded graph domain
\begin{equation}
\label{eq:graph-domain}
 \Omega_\psi
 =
 \bigl\{(x',x_m,y',y_m,t):x_m>\psi(x',y',y_m,t)\bigr\}
\end{equation}
and its graph boundary
\begin{equation}
\label{eq:graph-boundary}
 \Delta_\psi
 =
 \bigl\{(x',x_m,y',y_m,t):x_m=\psi(x',y',y_m,t)\bigr\}.
\end{equation}
For
\(\zeta=(x',y',y_m,t)\) and
\(\widetilde\zeta=(\widetilde x',\widetilde y',\widetilde y_m,
\widetilde t)\), set
\begin{align}
 D_\psi(\zeta,\widetilde\zeta)
 ={}&|x'-\widetilde x'|
 +|y'-\widetilde y'+(t-\widetilde t)\widetilde x'|^{1/3}+
 \bigl|y_m-\widetilde y_m
 +(t-\widetilde t)\psi(\widetilde\zeta)\bigr|^{1/3}
 +|t-\widetilde t|^{1/2}.
\label{eq:graph-quasidistance}
\end{align}
This is the boundary quasi-distance obtained from the group difference after
the \(x_m\)-component has been removed.

\begin{definition}
\label{def:LipK-graph}
Let \(M\geq1\). We say that \(\psi\) is an intrinsic
\(\operatorname{Lip}_{\mathcal K}\) function with constant \(M\), with respect
to the direction \(e_m\), if
\begin{equation}
\label{eq:LipK-condition}
 |\psi(\zeta)-\psi(\widetilde\zeta)|
 \leq M D_\psi(\zeta,\widetilde\zeta)
\end{equation}
whenever \(\zeta,\widetilde\zeta\in\mathbb R^N\). We then call
\(\Omega_\psi\) an unbounded non-characteristic
\(\operatorname{Lip}_{\mathcal K}\) graph domain.
\end{definition}

We now fix the constant \(c_M\) in \eqref{eq:normalized-box}. After
translating a boundary point \(P_0\in\Delta_\psi\) to the origin and
dilating by \(r^{-1}\), the projection of the normalized box onto the graph
variables \((x',y',y_m,t)\) is
\[
 \left\{
 (x',y',y_m,t):
 |x_i|<1,\quad
 |y_i|<c_M,\quad
 |t|<2
 \right\},
\]
where \(i=1,\ldots,m-1\) in the \(x'\)-variables and
\(i=1,\ldots,m\) in the \(y\)-variables. The intrinsic Lipschitz condition
\eqref{eq:LipK-condition} bounds the absolute value of the normalized graph
height over this set by
\[
 C(m)M\bigl(1+c_M^{1/3}\bigr).
\]
Since the right-hand side grows sublinearly in \(c_M\), we may choose
\(c_M=c(m,M)\) sufficiently large that
\begin{equation}
\label{eq:box-constant-choice}
 C(m)M\bigl(1+c_M^{1/3}\bigr)
 \leq {c_M}/{2}.
\end{equation}
Thus, in the coordinates centered at \(P_0\), the graph lies below the level
\(x_m=c_Mr/2\) throughout the base of \(Q_{M,r}(P_0)\). The projection of \(Q_{M,r}(P_0)\) onto the variables
\((x',y',y_m,t)\), in these centered coordinates, is a rectangle. Moreover,
every vertical section of
\(\Omega_\psi\cap Q_{M,r}(P_0)\) contains the horizontal level
\[
 x_m={3c_Mr}/{4}.
\]
Any point of the truncation can therefore be joined by a vertical segment to
this horizontal slice, and any two points in the slice can be joined by a
line segment over the rectangular base. Consequently,
\begin{equation}
\label{eq:connected-graph-truncation}
 \Omega_\psi\cap Q_{M,r}(P_0)
 \quad\text{is path connected whenever }
 P_0\in\Delta_\psi\text{ and }r>0.
\end{equation}

The term containing \(\psi(\widetilde\zeta)\) in
\eqref{eq:graph-quasidistance} is forced by left translation invariance. In
particular, it cannot be replaced by \(|y_m-\widetilde y_m|^{1/3}\) when \(t\ne
\widetilde t\). Notice also that \eqref{eq:LipK-condition} gives at most the
scale-critical regularity \(C^{0,1/3}\) in \(y_m\). We impose the following
additional, supercritical regularity.

\begin{definition}
\label{def:asymptotically-cylindrical}
Let \(\alpha\in(1/3,1]\) and \(L\geq0\). We say that an intrinsic
\(\operatorname{Lip}_{\mathcal K}\) function \(\psi\) is
\((L,\alpha)\)-asymptotically cylindrical in the \(y_m\)-direction if
\begin{equation}
\label{eq:ym-holder}
 \bigl|\psi(x',y',y_m,t)-
 \psi(x',y',\widetilde y_m,t)\bigr|
 \leq L|y_m-\widetilde y_m|^\alpha
\end{equation}
uniformly in \(x',y',t,y_m,\widetilde y_m\).
\end{definition}

The terminology in Definition~\ref{def:asymptotically-cylindrical} is justified
by the blow-up calculation below. It will also be useful to isolate the precise
scale-dependent quantity measured by \eqref{eq:ym-holder}. Define
\begin{equation}
\label{eq:cylindrical-modulus}
 \eta_\psi(r)
 =
 \sup_{\substack{x'\in\mathbb R^{m-1},\ y'\in\mathbb R^{m-1},\ y_m,t\in\mathbb R\\ 0<|h|\leq r^3}}
 \frac{
 |\psi(x',y',y_m+h,t)-\psi(x',y',y_m,t)|
 }{|h|^{1/3}},
 \qquad r>0.
\end{equation}
Then \eqref{eq:ym-holder} implies
\begin{equation}
\label{eq:cylindrical-modulus-holder}
 \eta_\psi(r)\leq Lr^{3\alpha-1}.
\end{equation}
Thus \(\eta_\psi(r)\to0\) as \(r\to0\) precisely because
\(\alpha>1/3\).

To make this observation invariant, fix
\[
 P_0=(x'_0,x_{0,m},y'_0,y_{0,m},t_0)\in\Delta_\psi,
 \qquad
 x_{0,m}=\psi(x'_0,y'_0,y_{0,m},t_0),
\]
and write a point in the translated and dilated coordinates as
\(P=P_0\circ\delta_r(\widehat x,\widehat y,\widehat t)\). In these coordinates
the rescaled boundary is the graph of
\begin{equation}
\label{eq:rescaled-graph}
\begin{aligned}
 \psi_{P_0,r}(\widehat x',\widehat y',\widehat y_m,\widehat t)
 =\frac1r\Bigl[\psi\bigl(&x'_0+r\widehat x',
 y'_0+r^3\widehat y'-r^2\widehat t x'_0,y_{0,m}+r^3\widehat y_m-r^2\widehat t x_{0,m},
 t_0+r^2\widehat t\bigr)-x_{0,m}\Bigr].
\end{aligned}
\end{equation}
The class in Definition~\ref{def:LipK-graph} is invariant under this operation.
Furthermore, \eqref{eq:ym-holder} gives
\begin{align}
 &\bigl|
 \psi_{P_0,r}(\widehat x',\widehat y',\widehat y_m,\widehat t)
 -
 \psi_{P_0,r}(\widehat x',\widehat y',\widetilde{\widehat y}_m,
 \widehat t)
 \bigr|\leq
 Lr^{3\alpha-1}
 |\widehat y_m-\widetilde{\widehat y}_m|^\alpha.
\label{eq:rescaled-ym-holder}
\end{align}
Consequently, every locally uniform limit of a sequence of intrinsic blow-ups is
independent of \(y_m\). At the critical exponent \(\alpha=1/3\), the right-hand
side of \eqref{eq:rescaled-ym-holder} is scale invariant and this conclusion is
no longer available.

For later use, we record the localized, invariant form of the cylindrical
defect. For \(z=(z_1,\ldots,z_k)\in\mathbb R^k\), write
\[
 |z|_\infty=\max_{1\leq i\leq k}|z_i|,
\]
with the convention that the norm of the empty vector is zero. For
\(R\geq1\), let
\[
 \mathcal T_{M,R}
 =
 \left\{
 (x',y',y_m,t):
 |x'|_\infty<R,\quad
 |y'|_\infty<c_MR^3,\quad
 |y_m|<c_MR^3,\quad
 |t|<2R^2
 \right\},
\]
and define
\begin{equation}
\label{eq:localized-cylindrical-defect}
 \operatorname{cyl}_{R,P_0,r}(\psi)
 =
 \sup
 \frac{
 \left|
 \psi_{P_0,r}(x',y',y_m,t)
 -
 \psi_{P_0,r}(x',y',\widetilde y_m,t)
 \right|
 }{|y_m-\widetilde y_m|^{1/3}},
\end{equation}
where the supremum is taken over pairs
\[
 (x',y',y_m,t),\quad
 (x',y',\widetilde y_m,t)
 \in\mathcal T_{M,R}
\]
with \(y_m\neq\widetilde y_m\).

\begin{lemma}
\label{lem:cylindrical-defect-rescaling}
The defect in \eqref{eq:localized-cylindrical-defect} is invariant under
nested intrinsic blow-ups. More precisely, whenever
\(\widehat P\in\Delta_{\psi_{P_0,r}}\),
\[
 \bigl(\psi_{P_0,r}\bigr)_{\widehat P,s}
 =
 \psi_{P_0\circ\delta_r(\widehat P),rs}
\]
in the corresponding coordinates, and hence
\[
 \operatorname{cyl}_{R,\widehat P,s}(\psi_{P_0,r})
 =
 \operatorname{cyl}_{R,P_0\circ\delta_r(\widehat P),rs}(\psi).
\]
Moreover, with \(C_M=(2c_M)^{1/3}\),
\begin{equation}
\label{eq:localized-defect-modulus-bound}
 \operatorname{cyl}_{R,P_0,r}(\psi)
 \leq
 \eta_\psi(C_MRr),
\end{equation}
and, under \eqref{eq:ym-holder},
\begin{equation}
\label{eq:localized-defect-holder-bound}
 \operatorname{cyl}_{R,P_0,r}(\psi)
 \leq
 (2c_M)^{\alpha-1/3}R^{3\alpha-1}Lr^{3\alpha-1}.
\end{equation}
\end{lemma}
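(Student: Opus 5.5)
The plan is to verify the nesting identity by direct computation from the definition \eqref{eq:rescaled-graph}, using only the group law \eqref{eq:K-group-law} and the fact that \(\delta_r\) is a group automorphism. The geometric statement comes first. The rescaled boundary \(\Delta_{\psi_{P_0,r}}\) is, by construction, the image of \(\Delta_\psi\) under \(P\mapsto\delta_{1/r}(P_0^{-1}\circ P)\). Hence the blow-up of \(\Delta_{\psi_{P_0,r}}\) at \(\widehat P\) with scale \(s\) is the image of \(\Delta_\psi\) under
\[
 P\mapsto\delta_{1/s}\bigl(\widehat P^{-1}\circ\delta_{1/r}(P_0^{-1}\circ P)\bigr)
 =\delta_{1/(rs)}\bigl((P_0\circ\delta_r(\widehat P))^{-1}\circ P\bigr),
\]
where we use \(\delta_{1/r}(A\circ B)=\delta_{1/r}A\circ\delta_{1/r}B\) and \(\delta_{1/s}\delta_{1/r}=\delta_{1/(rs)}\). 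The point \(P_0\circ\delta_r(\widehat P)\) lies on \(\Delta_\psi\) because \(\widehat P\in\Delta_{\psi_{P_0,r}}\). Both sides are therefore graph functions, over the \(\zeta\)-variables, of the same set in the same coordinates. Since a graph set determines its graph function, the two functions coincide. I would also check once, by writing out \eqref{eq:rescaled-graph}, that the map indeed sends \(\{x_m=\psi\}\) to \(\{\widehat x_m=\psi_{P_0,r}\}\); this is just the group law with the \(x_m\) component solved for. The invariance of \(\operatorname{cyl}\) is then immediate, because \(\operatorname{cyl}_{R,P_0,r}(\psi)\) depends on \((P_0,r)\) only through \(\psi_{P_0,r}\).

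For \eqref{eq:localized-defect-modulus-bound}, fix two points \((x',y',y_m,t)\) and \((x',y',\widetilde y_m,t)\) in \(\mathcal T_{M,R}\). In \eqref{eq:rescaled-graph} they share the same first, second and fourth arguments of \(\psi\). The third arguments are \(y_{0,m}+r^3y_m-r^2tx_{0,m}\) and the same expression with \(\widetilde y_m\), so they differ by \(h=r^3(y_m-\widetilde y_m)\). The key point is that the shift term \(-r^2tx_{0,m}\) is identical in both, since \(t\) is common. The bound \(|y_m|,|\widetilde y_m|<c_MR^3\) gives
\[
 |h|<2c_MR^3r^3=(C_MRr)^3,
\]
so the definition \eqref{eq:cylindrical-modulus} yields
\[
 |\psi(\cdots)-\psi(\cdots)|\le\eta_\psi(C_MRr)\,|h|^{1/3}.
\]
Dividing by \(r\) and by \(|y_m-\widetilde y_m|^{1/3}=|h|^{1/3}/r\) leaves exactly \(\eta_\psi(C_MRr)\). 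Taking the supremum over admissible pairs gives \eqref{eq:localized-defect-modulus-bound}.

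For \eqref{eq:localized-defect-holder-bound}, I would not simply insert \eqref{eq:cylindrical-modulus-holder}, since that route gives the constant \((2c_M)^{\alpha-1/3}\) only after an extra step. Instead I would redo the estimate directly with \eqref{eq:ym-holder}:
\[
 \frac{L|h|^\alpha}{|h|^{1/3}}=L|h|^{\alpha-1/3}\le L(2c_MR^3r^3)^{\alpha-1/3},
\]
which is exactly \((2c_M)^{\alpha-1/3}R^{3\alpha-1}Lr^{3\alpha-1}\). The only step needing care is the nesting identity: one must confirm that the \(y_m\)-shift \(-r^2\widehat t\,x_{0,m}\) composes correctly. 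Using the group-automorphism argument rather than expanding coordinates avoids that bookkeeping.
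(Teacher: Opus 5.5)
Your proposal is correct and follows the paper's proof. The nesting identity comes from composing the two left translations and the two dilations, using that \(\delta_r\) is a group automorphism. Both defect bounds come from observing that points of \(\mathcal T_{M,R}\) with common \((x',y',t)\) correspond to a \(y_m\)-displacement \(h=r^3(y_m-\widetilde y_m)\) with \(|h|<(C_MRr)^3\), and the factor \(r^{-1}\) then cancels the \(r\) coming from \(|h|^{1/3}\). One small correction: inserting \eqref{eq:cylindrical-modulus-holder} into \eqref{eq:localized-defect-modulus-bound} gives exactly the stated constant with no extra step, since \(C_M^{3\alpha-1}=(2c_M)^{\alpha-1/3}\).
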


\begin{proof}
The identity for the rescaled graph follows by composing the two left
translations and the two Kolmogorov dilations. The equality of the defects is
then immediate from the definition. In the original coordinates, two
\(y_m\)-coordinates occurring in \(\mathcal T_{M,R}\) differ by at most
\(2c_MR^3r^3=(C_MRr)^3\). The factor \(r^{-1}\) in the rescaled graph cancels
the factor \(r\) produced by the critical power \(1/3\). This proves
\eqref{eq:localized-defect-modulus-bound} and applying \eqref{eq:ym-holder}
gives \eqref{eq:localized-defect-holder-bound}.
\end{proof}

\subsection{The Kolmogorov boundary and the Dirichlet problem}
\label{subsec:Kolmogorov-boundary}

We briefly recall the notions of propagation, Kolmogorov boundary, and
continuous Dirichlet solution used below.

Let
\[
X_j=\partial_{x_j},
\qquad j=1,\ldots,m,
\qquad
Y=x\cdot\nabla_y-\partial_t.
\]
Given a domain \(D\subset\mathbb R^{2m+1}\) and \(P\in D\), the propagation
set \(\mathcal A_P(D)\) consists of the points that can be reached from
\(P\) by an absolutely continuous curve \(\gamma:[0,S]\to D\) satisfying
\[
\dot\gamma(s)
=
\sum_{j=1}^m\omega_j(s)X_j(\gamma(s))
+
\lambda(s)Y(\gamma(s))
\]
for almost every \(s\), where the controls \(\omega_j\) are square
integrable and \(\lambda\geq0\). Allowing \(\lambda\) amounts only to a
non-negative reparametrization of the drift direction. In particular,
propagation is directional in time and as the time component of \(Y\) is \(-\partial_t\), every admissible
curve issuing from \(P=(x^0,y^0,t_0)\) satisfies
\[
\frac{\dd}{\dd r}t(\gamma(r))=-\lambda(r)\leq0.
\]
Thus \(\mathcal A_P(D)\) is a backward propagation set with respect to the
coordinate \(t\):
\[
\mathcal A_P(D)\subset D\cap\{t\leq t_0\}.
\]
This orientation is the one appearing in Bony's strong minimum principle.

The \emph{Kolmogorov boundary} of \(D\) is
\begin{equation}
\label{eq:Kolmogorov-boundary}
\partial_{\mathcal K}D
=
\bigcup_{P\in D}
\left(
\overline{\mathcal A_P(D)}\cap\partial D
\right).
\end{equation}
Thus \(\partial_{\mathcal K}D\) is the portion of the topological boundary
which is accessible from the interior along the propagation geometry of
\(\mathcal K\). It plays the role of the parabolic boundary for uniformly
parabolic equations. In general,
\[
\partial_{\mathcal K}D\subsetneq\partial D.
\]

For a bounded truncation $D_r(P_0)=\Omega_\psi\cap Q_{M,r}(P_0)$, the graph portion $\Delta_\psi\cap Q_{M,r}(P_0)$ belongs to \(\partial_{\mathcal K}D_r(P_0)\) and is non-characteristic
because the graph is taken in the non-degenerate \(x_m\)-direction. The
Kolmogorov boundary also contains the appropriate incoming portions of the
artificial boundary of \(Q_{M,r}(P_0)\). It does not contain the terminal
time face. On the artificial \(y\)-faces, whether a point belongs to
\(\partial_{\mathcal K}D_r(P_0)\) is determined by the direction of the
drift \(Y\). We shall not need an explicit enumeration of these faces;
the intrinsic definition \eqref{eq:Kolmogorov-boundary} is sufficient.

Given \(\varphi\in C(\partial_{\mathcal K}D)\), a solution of the continuous
Dirichlet problem
\begin{equation}
\label{eq:continuous-Dirichlet-problem}
\begin{cases}
\mathcal Ku=0 & \text{in }D,\\
u=\varphi     & \text{on }\partial_{\mathcal K}D,
\end{cases}
\end{equation}
means an energy solution in \(D\) which extends continuously to
\(\partial_{\mathcal K}D\) and attains the prescribed data there. No data
are imposed on
\(\partial D\setminus\partial_{\mathcal K}D\).

For the bounded intrinsic graph truncations used in this paper, every point
of \(\partial_{\mathcal K}D_r(P_0)\) is regular for the continuous
Dirichlet problem. Consequently, \eqref{eq:continuous-Dirichlet-problem}
has a unique solution for continuous boundary data. These facts, as well
as their validity for graphs with full \(y_m\)-dependence, follow from the
barrier and approximation arguments in
\cite[Section~2.4]{NystromPolidoro2016} and
\cite[Sections~4.1 and~6]{LitsgardNystrom2022}. In the present paper the
operator has constant coefficients, so no additional coefficient
regularity is required.

We shall also use the following weak maximum principle. If \(D\) is one
of the bounded truncations above and \(w\in C(\overline D)\) is an energy
subsolution satisfying $\mathcal Kw\geq0$ in $D$, $w\leq0$ on $\partial_{\mathcal K}D$, then
\begin{equation}
\label{eq:weak-maximum-principle}
w\leq0\quad\text{in }D.
\end{equation}
Equivalently, if \(u\) and \(v\) are respectively an energy subsolution and
supersolution with \(u\leq v\) on \(\partial_{\mathcal K}D\), then
\(u\leq v\) in \(D\). Boundary inequalities are interpreted pointwise when
the functions are continuous up to the boundary and in the corresponding
energy-trace sense otherwise.

\subsection{Reference points}
\label{subsec:reference-points}

The time-oriented geometry of \(\mathcal K\) requires both forward and backward
reference points. Given \(r>0\) and \(\Lambda>0\), let
\begin{align}
 A^+_{r,\Lambda}
 &=
 \bigl(0',\Lambda r,0',-\tfrac23\Lambda r^3,r^2\bigr),
 \nonumber\\
 A_{r,\Lambda}
 &=
 \bigl(0',\Lambda r,0',0,0\bigr),
 \label{eq:reference-points}\\
 A^-_{r,\Lambda}
 &=
 \bigl(0',\Lambda r,0',\tfrac23\Lambda r^3,-r^2\bigr).
 \nonumber
\end{align}
Here the entries are ordered as \((x',x_m,y',y_m,t)\). For
\(P_0\in\Delta_\psi\), define
\begin{equation}
\label{eq:translated-reference-points}
 A^\pm_{r,\Lambda}(P_0)=P_0\circ A^\pm_{r,\Lambda},
 \qquad
 A_{r,\Lambda}(P_0)=P_0\circ A_{r,\Lambda}.
\end{equation}

We now fix the constants used in the reference-point construction. Let
\(C_*=C_*(m)\) be a constant for which the intrinsic graph estimate used
below holds, and choose \(\Lambda=\Lambda(m,M)\geq1\) so that
\begin{equation}
\label{eq:reference-Lambda-choice}
 C_*M\bigl(1+\Lambda^{1/3}\bigr)
 \leq\frac{\Lambda}{4}.
\end{equation}
Such a choice is possible because the left-hand side grows like
\(\Lambda^{1/3}\), whereas the right-hand side grows linearly. We fix this
value of \(\Lambda\) for the remainder of the paper.

We then make the final choice of the box constant \(c_M=c(m,M)\), requiring
that both \eqref{eq:box-constant-choice} and
\begin{equation}
\label{eq:reference-box-constant}
 c_M\geq C_0(1+\Lambda)
\end{equation}
hold, where \(C_0=C_0(m)\) is sufficiently large. Since
\[
 Q_{M,2r}(P_0)=P_0\circ Q_{M,2r},
\]
the explicit coordinates in \eqref{eq:reference-points} give
\[
 |\Lambda r|<2c_Mr,
 \qquad
 \frac23\Lambda r^3<8c_Mr^3,
 \qquad
 r^2<8r^2.
\]
Thus all three reference points belong to \(Q_{M,2r}(P_0)\).

To verify that they also lie above the graph, translate \(P_0\) to the
origin and dilate by \(r^{-1}\). The rescaled graph passes through the
origin. At the two time-shifted reference points, the intrinsic Lipschitz
condition gives
\[
 \left|
 \psi_{P_0,r}
 \bigl(0',0',\mp\tfrac23\Lambda,\pm1\bigr)
 \right|
 \leq
 C_*M\bigl(1+\Lambda^{1/3}\bigr)
 \leq\frac{\Lambda}{4},
\]
whereas their \(x_m\)-coordinate is \(\Lambda\). The central reference point
lies above the origin of the graph for the same reason. Consequently,
\begin{equation}
\label{eq:reference-points-in-box}
 A^+_{r,\Lambda}(P_0),\quad
 A_{r,\Lambda}(P_0),\quad
 A^-_{r,\Lambda}(P_0)
 \in
 \Omega_\psi\cap Q_{M,2r}(P_0)
\end{equation}
uniformly in \(P_0\in\Delta_\psi\) and \(r>0\).

We shall repeatedly use the following quantitative form of the local
admissible-curve geometry. It is stated here in order to make explicit the
endpoint control, including the endpoint in the distinguished variable
\(y_m\).

\begin{lemma}
\label{lem:controlled-local-chains}
With \(\Lambda\) and \(c_M\) fixed as above, there are constants
\(a_0\geq1\) and \(c_0,c_1,\gamma\in(0,1)\), depending only on \(m\) and
\(M\), such that the following statements hold.

\begin{enumerate}[label=\rm(\roman*)]
\item The three points \(A^+_{r,\Lambda}(P_0)\),
\(A_{r,\Lambda}(P_0)\), and \(A^-_{r,\Lambda}(P_0)\) can be joined by
controlled admissible curves contained in
\(\Omega_\psi\cap Q_{M,2r}(P_0)\). The resulting Harnack chains have a
uniformly bounded number of intrinsic cylinders with radii comparable to
\(r\).

\item Every
\(P\in\Omega_\psi\cap Q_{M,c_0r}(P_0)\) can be connected to both reference
points by admissible curves in the backward propagation direction: there is
a curve from \(A^+_{r/a_0,\Lambda}(P_0)\) to \(P\), and one from \(P\) to
\(A^-_{r/a_0,\Lambda}(P_0)\). Both curves are contained in
\(\Omega_\psi\cap Q_{M,r}(P_0)\). Away from an endpoint neighborhood of
\(P\), they remain at distance comparable to \(r\) from the graph and can
be covered by a uniformly bounded number of intrinsic cylinders with radii
comparable to \(r\). No uniform lower bound on the distance of \(P\) itself
from the graph is asserted.

\item Let
\[
 P=(x',x_m,y',y_m,t)\in\Omega_\psi,
 \qquad
 P^0=P^\partial
 =
 \bigl(x',\psi(x',y',y_m,t),y',y_m,t\bigr),
\]
and put
\[
 d=d_{\mathcal K}^{\mathrm s}(P,P^0).
\]
If \(0<s\leq c_0d\) and
\[
 P_s^\pm=P\circ(0,0,\pm\gamma s^2),
\]
then there is a controlled admissible curve from
\(A^+_{c_1d,\Lambda}(P^0)\) to \(P_s^+\), and one from \(P_s^-\) to
\(A^-_{c_1d,\Lambda}(P^0)\). Both curves remain at distance comparable to
\(d\) from the graph and give Harnack chains of uniformly bounded length
whose radii are comparable to \(d\).
\end{enumerate}
\end{lemma}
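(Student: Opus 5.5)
By translation and dilation invariance I would reduce everything to \(P_0=0\) and \(r=1\). The operator \(\mathcal K\), the class of Definition~\ref{def:LipK-graph}, the boxes \(Q_{M,r}\) and the reference points are all left invariant under \(P_0\circ\cdot\) and homogeneous under \(\delta_r\). Hence it suffices to treat the rescaled graph \(\psi_{P_0,r}\), which again has constant \(M\) and vanishes at the origin.

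All three statements rest on a single basic curve. It moves with pure drift at a fixed height \(x=(0',\Lambda)\), and it is supplemented by short \(x\)-controlled excursions.

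\textbf{Statement (i).} The drift curve \(s\mapsto(0',\Lambda,0',y_m,t)\) has \(\dot y_m=\Lambda\lambda\) and \(\dot t=-\lambda\). Running it backward from \(A^+_{1,\Lambda}\) over unit time changes \(y_m\) by \(+\Lambda\). This exactly accounts for the shift \(-\tfrac23\Lambda\mapsto 0\mapsto\tfrac23\Lambda\) up to the factor \(2/3\). The mismatch \(\Lambda/3\) in \(y_m\) per unit time is repaired by a controlled excursion in \(x_m\) along the way. Specifically, I would let \(x_m\) vary between \(\Lambda/2\) and \(\Lambda\), using a smooth profile with \(\int x_m\,dt\) prescribed. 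This is where the coefficient \(2/3\) arises from the mean of \(\Lambda(1-\tau^2)\)-type profiles.

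Along such a curve, \(x_m\ge\Lambda/2\). The \(\operatorname{Lip}_{\mathcal K}\) bound together with \eqref{eq:reference-Lambda-choice} gives \(|\psi_{P_0,1}|\le\Lambda/4\) on the projected path, because \(D_\psi\) of the projected points from the origin is \(\lesssim 1+\Lambda^{1/3}\). So the curve stays at distance \(\gtrsim\Lambda\) from the graph. Containment in \(Q_{M,2}\) follows from \eqref{eq:reference-box-constant}. Covering the compact curve by cylinders of radius \(\sim1\) then yields the bounded Harnack chain.

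\textbf{Statement (ii).} Take \(P\in Q_{M,c_0}\cap\Omega_\psi\). First move \(P\) vertically upward by a pure \(x_m\)-control, which is admissible with \(\lambda=0\), to height \(x_m\sim\Lambda/a_0\). Then use the drift-plus-correction curve of (i), at scale \(1/a_0\), to reach \(A^-_{1/a_0,\Lambda}\). The \(y'\)- and \(y_m\)-displacements are absorbed using the \(x'\)- and \(x_m\)-controls, by the standard controllability of the Kolmogorov system on a fixed time interval. This gives explicit minimal-energy controls with bounded \(L^2\) norm provided the mismatch is \(O(a_0^{-3})\). Choosing \(c_0\ll a_0^{-3}\) makes the discrepancy coming from \(P\)'s coordinates small relative to the reference scale. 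The curve from \(A^+_{1/a_0,\Lambda}\) to \(P\) is built symmetrically in reverse time. Away from the initial vertical segment, the curves stay at height \(\gtrsim 1/a_0\) above the graph by the same Lipschitz estimate.

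\textbf{Statement (iii).} This follows by applying (ii) at scale \(d\) centred at \(P^0\). The point \(P\) has height \(\sim d\) over \(P^0\). The small time shifts \(P_s^\pm\), with \(s\le c_0d\), keep \(P_s^\pm\) a distance \(\sim d\) from the graph, so no endpoint degeneration occurs and the full chain has radii comparable to \(d\).

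I expect the main obstacle to be the endpoint control of \(y_m\) in (ii). Because \(\dot y_m=x_m\lambda\) and \(x_m\) must stay above \(\psi\), the \(y_m\)-displacement cannot be made negative in backward time. The choice of time intervals must therefore keep the required \(y_m\) correction inside the reachable cone. This is exactly why \(A^\pm\) carry the \(\mp\tfrac23\Lambda\) offsets. I would first try explicit polynomial controls in \(x_m\) between \(\Lambda/2\) and \(2\Lambda\), and check that the reachable interval of \(y_m\) contains a neighbourhood of the target of size \(\gtrsim\Lambda\).
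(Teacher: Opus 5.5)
Your part (i) is the paper's construction. The paper takes exactly $x_m(\tau)=\Lambda r-2\Lambda r^{-3}\tau(r^2-\tau)$, which starts and ends at $\Lambda r$, stays in $[\Lambda r/2,\Lambda r]$, and has mean $\tfrac23\Lambda r$. Your ``$\Lambda(1-\tau^2)$'' heuristic has the right mean but the wrong endpoint value and range.

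In (ii) you genuinely differ. The paper interpolates directly between $P$ and $A^\pm_{r/a_0,\Lambda}(P_0)$, while you first move vertically with $\lambda=0$. Your version is the more robust one. Along a direct interpolation, $x_m$ moves away from $x_{P,m}$ only linearly in $\tau$, whereas an intrinsic Lipschitz graph can rise like $M\tau^{1/2}$ (e.g.\ $m=1$, $\psi=M|t|^{1/2}$, $P=(\delta,0,0)$ with $\delta$ small). So the direct curve can leave $\Omega_\psi$ when $P$ is very close to $\Delta_\psi$, while along your vertical segment $\psi$ is constant. The segment is admissible for Bony propagation, which is how (ii) is used later (positivity via the strong minimum principle).

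The gap is in (iii). Deriving it from your (ii) imports a fixed-time vertical segment from height $d$ up to the reference height $\Lambda\rho/a_0$. (Also, the scale must be $\rho\gtrsim d/(c_Mc_0)$, not $d$, to have $P_s^\pm\in Q_{M,c_0\rho}(P^0)$.) Clearance along this segment is fine. But (iii) asserts a Harnack chain of bounded length, and this is exactly what Lemma~\ref{lem:interior-reverse-comparison} uses.

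The Kolmogorov Harnack inequality is time-directed. A $\lambda=0$ segment lies in one time slice; in the time parametrization it has infinite control energy. Non-negative solutions need not be comparable at two points of the same slice (think of a fundamental solution with a distant pole). So ``the full chain has radii comparable to $d$'' does not follow.

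The fix is to remove the vertical move by matching heights. In coordinates centred at $P^0$ one has $P_s^\pm=(0',d,0',\mp\gamma s^2d,\pm\gamma s^2)$. By \eqref{eq:LipK-condition}, the graph there is at most $M\bigl((\gamma c_0^2)^{1/3}+\gamma^{1/2}c_0\bigr)d\leq d/4$ once $\gamma$ is small. Take $c_1=1/\Lambda$, so that $A^\pm_{c_1d,\Lambda}(P^0)$ also sit at height $d$. Join them to $P_s^\pm$ by one quadratic interpolation with $T=c_1^2d^2-\gamma s^2\asymp d^2$.

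This is the curve of (i) at scale $d/\Lambda$, perturbed by the relative amount $\epsilon=\gamma s^2\Lambda^2/d^2\leq\gamma c_0^2\Lambda^2$. Hence $x_m\geq\frac{1-2\epsilon}{2(1-\epsilon)}d$ along it, while $|\psi|\leq d/4$ there by \eqref{eq:reference-Lambda-choice}. The control energy is $\asymp\Lambda^2$, which gives the chain. This is the paper's direct-interpolation route for (iii).

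The matching is essential. With reference height $kd$ and $\sigma=\tau/T$, the profile is (for $s\to0$) $d\bigl[k(1-\sigma)+\sigma+(k-3)\sigma(1-\sigma)\bigr]$. Its midpoint value $\frac{3k-1}{4}d$ is negative for $k<1/3$. So what is needed is $\Lambda c_1\asymp1$, not mere smallness of $c_1$.
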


\begin{proof}
We first recall the explicit control underlying the construction. Given
\(P=(x^0,y^0,t_0)\) and
\(\widetilde P=(\widetilde x,\widetilde y,\widetilde t)\), with
\(\widetilde t<t_0\), put $T=t_0-\widetilde t$ and
\[
 b=
 \frac{6}{T^3}
 \left(
 \widetilde y-y^0-\frac{T}{2}(x^0+\widetilde x)
 \right).
\]
Then
\begin{align*}
 x(\tau)
 &=
 x^0+\frac{\tau}{T}(\widetilde x-x^0)
 +b\tau(T-\tau),\\
 y(\tau)
 &=
 y^0+\int_0^\tau x(\sigma)\,\dd\sigma,\\
 t(\tau)
 &=
 t_0-\tau,
 \qquad 0\leq\tau\leq T,
\end{align*}
is an admissible curve with the prescribed endpoints. Indeed,
\[
 y(T)
 =
 y^0+\frac{T}{2}(x^0+\widetilde x)+\frac{T^3}{6}b
 =
 \widetilde y.
\]
In particular, the formula fixes every component of \(\widetilde y\),
including \(\widetilde y_m\); no uncontrolled tangential displacement is
hidden in the construction.

If
\[
 T\asymp r^2,
 \qquad
 |\widetilde x-x^0|\lesssim r,
 \qquad
 \left|
 \widetilde y-y^0-\frac{T}{2}(x^0+\widetilde x)
 \right|
 \lesssim r^3,
\]
then $|b|\lesssim r^{-3}$ and hence
\[
 \sup_{0\leq\tau\leq T}|x(\tau)-x^0|\lesssim r,
 \qquad
 \sup_{0\leq\tau\leq T}
 |y(\tau)-y^0-\tau x^0|
 \lesssim r^3,
 \qquad
 \int_0^T|\dot x(\tau)|^2\,\dd\tau\lesssim1.
\]

We first apply the construction to the paths in part \({\rm(i)}\). By left
translation, it is enough to work with \(P_0=0\). Join
\(A^+_{r,\Lambda}\) to \(A_{r,\Lambda}\), and then
\(A_{r,\Lambda}\) to \(A^-_{r,\Lambda}\). For both paths,
\[
 T=r^2,
 \qquad
 b=-2\Lambda r^{-3}e_m.
\]
It follows that
\[
 x'(\tau)=0',
 \qquad
 y'(\tau)=0',
\]
and
\[
 x_m(\tau)
 =
 \Lambda r-2\Lambda r^{-3}\tau(r^2-\tau).
\]
Consequently,
\begin{equation}
\label{eq:reference-path-xm-bounds}
 \frac{\Lambda r}{2}
 \leq x_m(\tau)\leq\Lambda r,
 \qquad
 0\leq\tau\leq r^2.
\end{equation}
Moreover,
\begin{equation}
\label{eq:reference-path-coordinate-bounds}
 |y_m(\tau)|\leq C\Lambda r^3,
 \qquad
 |t(\tau)|\leq r^2.
\end{equation}
The choice \eqref{eq:reference-box-constant}, with \(C_0\) sufficiently
large, therefore places both paths in \(Q_{M,2r}\).

It remains to verify that the paths stay above the graph. Translate by
\(P_0^{-1}\) and dilate by \(r^{-1}\). In these normalized coordinates the
rescaled graph passes through the origin. Along either reference path,
\eqref{eq:LipK-condition} and
\eqref{eq:reference-path-coordinate-bounds} give
\[
 \left|
 \psi_{P_0,r}(0',0',\widehat y_m(\tau),\widehat t(\tau))
 \right|
 \leq
 C_*M\bigl(1+\Lambda^{1/3}\bigr).
\]
By \eqref{eq:reference-Lambda-choice},
\[
 \left|
 \psi_{P_0,r}(0',0',\widehat y_m(\tau),\widehat t(\tau))
 \right|
 \leq\frac{\Lambda}{4}.
\]
Together with \eqref{eq:reference-path-xm-bounds}, this yields, in the
original coordinates,
\begin{equation}
\label{eq:reference-path-clearance}
 x_m(\tau)-\psi(x'(\tau),y(\tau),t(\tau))
 \geq\frac{\Lambda r}{4}.
\end{equation}
Thus both paths are contained in
\(\Omega_\psi\cap Q_{M,2r}(P_0)\). Covering them by intrinsic cylinders of
radius comparable to \(r\) proves part \({\rm(i)}\).

For part \({\rm(ii)}\), choose \(a_0=a_0(m,M)\) sufficiently large and then
choose \(c_0=c_0(m,M)\) sufficiently small that
\[
 2c_0^2<a_0^{-2}.
\]
If \(P\in\Omega_\psi\cap Q_{M,c_0r}(P_0)\), then the time coordinate of
\(A^+_{r/a_0,\Lambda}(P_0)\) is strictly later than that of \(P\), whereas
the time coordinate of \(A^-_{r/a_0,\Lambda}(P_0)\) is strictly earlier.
The interpolation formula can therefore be applied first from
\(A^+_{r/a_0,\Lambda}(P_0)\) to \(P\), and then from \(P\) to
\(A^-_{r/a_0,\Lambda}(P_0)\). In both cases the endpoint differences satisfy
the required scale-\(r\) bounds, including the \(y_m\)-component. The
intrinsic Lipschitz estimate and the choices of \(a_0\) and \(c_0\) place
both curves in \(\Omega_\psi\cap Q_{M,r}(P_0)\). Near \(P\), their clearance
is governed by the distance of \(P\) from the graph; away from that endpoint
neighborhood, their clearance is comparable to \(r\). This proves part
\({\rm(ii)}\).

Finally, consider part \({\rm(iii)}\). By the intrinsic graph condition,
\[
 d=d_{\mathcal K}^{\mathrm s}(P,P^0)
 \asymp
 x_m-\psi(x',y',y_m,t).
\]
Choose \(c_1=c_1(m,M)\) sufficiently small. After decreasing \(c_0\), if
necessary, choose \(\gamma=\gamma(m,M)>0\) so that
\[
 \gamma c_0^2<\frac12c_1^2.
\]
Then the time coordinate of \(A^+_{c_1d,\Lambda}(P^0)\) is later than that
of \(P_s^+\), while the time coordinate of
\(A^-_{c_1d,\Lambda}(P^0)\) is earlier than that of \(P_s^-\). The endpoint
differences satisfy
\[
 T\asymp d^2,
 \qquad
 |\widetilde x-x^0|\lesssim d,
 \qquad
 \left|
 \widetilde y-y^0-\frac{T}{2}(x^0+\widetilde x)
 \right|
 \lesssim d^3.
\]
The interpolation formula therefore produces the two required admissible
curves with complete control of every \(y\)-component. The intrinsic
Lipschitz estimate gives clearance comparable to \(d\) along both paths,
and the coordinate estimates keep them in a fixed scale-\(d\) graph
neighborhood. Covering the paths by intrinsic cylinders of radius comparable
to \(d\) gives Harnack chains of uniformly bounded length.

The admissible-curve construction and the corresponding clearance estimates
are also contained in
\cite[Lemmas~3.2-3.8]{NystromPolidoro2016}; in particular,
Lemmas~3.7-3.8 place interior points and nearby reference points in the same
controlled non-tangential regions. This completes the proof.
\end{proof}

\section{Main results}
\label{sec:main-results}

The results are organized in three levels.  We first state a scale-one
perturbative boundary comparison principle, which contains the main analytic
content of the paper.  Its hypothesis requires that the scale-invariant
\(C^{0,1/3}_{y_m}\) defect of the defining function be sufficiently small.
Translation and Kolmogorov dilation then yield the corresponding comparison
theorem in asymptotically cylindrical domains.  Finally, we formulate the
result under a general vanishing cylindrical modulus, showing that no power
law in the \(y_m\)-variable is essential.

A feature specific to the parabolic-hypoelliptic setting is that the natural
interior reference points occur in both the forward and backward time
directions.  Interior Harnack chains do not compare these two values in both
directions, and their relative size must therefore be included among the
quantitative data.  To express this dependence, for a non-negative solution
\(w\), positive at the two reference points, set
\begin{equation}
\label{eq:reference-imbalance}
 \mathfrak h_{r,\Lambda}(w;P_0)
 =
 \max\left\{
 \frac{w(A^+_{r,\Lambda}(P_0))}{w(A^-_{r,\Lambda}(P_0))},
 \frac{w(A^-_{r,\Lambda}(P_0))}{w(A^+_{r,\Lambda}(P_0))}
 \right\}.
\end{equation}
The dependence on \eqref{eq:reference-imbalance} is unavoidable in a
two-sided comparison estimate: the parabolic orientation prevents the
forward and backward values from being controlled by one another without
additional quantitative information.

We consider the quotients on the truncated graph region
\begin{equation}
\label{eq:quotient-region}
 \mathscr D_{\kappa r}(P_0)
 =
 \Omega_\psi\cap Q_{M,\kappa r}(P_0).
\end{equation}
All quotient estimates below are understood on
\(\mathscr D_{\kappa r}(P_0)\).

\subsection{A perturbative boundary comparison principle}
\label{subsec:perturbative-comparison}

We first state the scale-one result that contains the main analytic content of
the paper. The relevant perturbation must remain small under further blow-ups.
The invariant quantity measuring it is
\(\operatorname{cyl}_{2,P_0,1}(\psi)\), introduced in
\eqref{eq:localized-cylindrical-defect}. Lemma
\ref{lem:cylindrical-defect-rescaling} records its behavior under every
translation and dilation used below.

\begin{theorem}
\label{thm:perturbative-BHI}
Let \(M,H\geq1\). There exist constants
\[
 \eta_0=\eta_0(m,M,H)\in(0,1),
 \quad
 \Lambda=\Lambda(m,M)\geq1,
 \quad
 \kappa=\kappa(m,M)\in(0,1),
\]
and
\[
 C=C(m,M,H)\geq1,
 \qquad
 \sigma=\sigma(m,M,H)\in(0,1),
\]
such that the following holds. Let \(\Omega_\psi\) be an intrinsic
\(\operatorname{Lip}_{\mathcal K}\) graph domain with constant \(M\), let
\(P_0\in\Delta_\psi\), and assume that
\begin{equation}
\label{eq:small-cylindrical-defect}
 \operatorname{cyl}_{2,P_0,1}(\psi)\leq\eta_0.
\end{equation}
Suppose that \(u\) and \(v\) are non-negative solutions to
\begin{equation}
\label{eq:uv-harmonic}
 \mathcal Ku=\mathcal Kv=0
 \quad\text{in }\Omega_\psi\cap Q_{M,2}(P_0),
\end{equation}
that \(u\) and \(v\) vanish continuously on
\(\Delta_\psi\cap Q_{M,2}(P_0)\), that
\[
 u(A^\pm_{1,\Lambda}(P_0))>0,
 \qquad
 v(A^\pm_{1,\Lambda}(P_0))>0,
\]
and that
\begin{equation}
\label{eq:reference-control-scale-one}
 \max\bigl\{
 \mathfrak h_{1,\Lambda}(u;P_0),
 \mathfrak h_{1,\Lambda}(v;P_0)
 \bigr\}
 \leq H.
\end{equation}
Then \(u,v>0\) in \(\mathscr D_\kappa(P_0)\), and
\begin{equation}
\label{eq:quotient-comparability-scale-one}
 C^{-1}
 \frac{v(A_{1,\Lambda}(P_0))}{u(A_{1,\Lambda}(P_0))}
 \leq
 \frac{v(P)}{u(P)}
 \leq
 C
 \frac{v(A_{1,\Lambda}(P_0))}{u(A_{1,\Lambda}(P_0))}
\end{equation}
for \(P\in\mathscr D_\kappa(P_0)\). Moreover,
\begin{equation}
\label{eq:quotient-holder-scale-one}
 \left|
 \frac{v(P)}{u(P)}-\frac{v(\widetilde P)}{u(\widetilde P)}
 \right|
 \leq
 C (d_{\mathcal K}(P,\widetilde P))^\sigma
 \frac{v(A_{1,\Lambda}(P_0))}{u(A_{1,\Lambda}(P_0))}
\end{equation}
whenever \(P,\widetilde P\in\mathscr D_\kappa(P_0)\).
\end{theorem}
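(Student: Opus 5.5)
The proof has three stages. The first establishes scale-uniform reference balance at every boundary point and scale inside the unit box, and it relies on compactness together with a global cylindrical rigidity theorem. The second uses the localized weak comparison principle of Litsg{\aa}rd and the author to get a one-step boundary oscillation contraction for \(v/u\). The third combines this with an interior contraction and iterates to obtain \eqref{eq:quotient-comparability-scale-one} and \eqref{eq:quotient-holder-scale-one}.

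\emph{Step 1: uniform reference balance.} We aim to show that there is \(H_*=H_*(m,M,H)\) such that, for every \(Q\in\Delta_\psi\cap Q_{M,\kappa}(P_0)\) and every \(r\le\kappa\),
\[
\mathfrak h_{r,\Lambda}(w;Q)\le H_*,\qquad w\in\{u,v\}.
\]
We argue by contradiction. Suppose there are sequences \(\psi_j\), \(w_j\), \(Q_j\), \(r_j\) with \(\operatorname{cyl}_{2,P_0,1}(\psi_j)\to0\) and imbalance tending to infinity.

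If \(r_j\) stays bounded below, we renormalize \(w_j\) at \(A^+_{1,\Lambda}(P_0)\). The Carleson estimate, boundary H\"older decay and Harnack chains of Lemma~\ref{lem:controlled-local-chains} give local compactness. The limit graph is \(y_m\)-independent, since the defect tends to zero, and the limit is a non-negative solution with outer imbalance at most \(H\). The cylindrical comparison theorem of \cite{NystromPolidoro2016} bounds its imbalance at all fixed scales, which is a contradiction.

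If \(r_j\to0\), we select a maximal bad scale, i.e.\ the largest scale at which the imbalance exceeds a large threshold, so that all coarser scales are balanced. We then blow up at \((Q_j,r_j)\) and normalize by \(w_j(A^+_{r_j,\Lambda}(Q_j))\). Balance at the coarser scales, combined with the one-sided Carleson estimate, gives polynomial growth on expanding boxes \(Q_{M,R}\). By Lemma~\ref{lem:cylindrical-defect-rescaling}, the defects at scale \(R\) in the rescaled picture equal \(\operatorname{cyl}_{R,Q_j,r_j R}\), which is bounded by the original small defect. A second rescaling, if needed, sends the defect to zero.

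In the limit we obtain an unbounded \(y_m\)-cylindrical graph domain and a non-negative polynomial-growth Dirichlet solution \(W\) with \(W(A^+_{1,\Lambda})=1\) and \(W(A^-_{1,\Lambda})=0\). \textbf{The main obstacle} is the rigidity theorem that excludes such a \(W\); I would prove it in four steps.
\begin{itemize}
\item Bony's minimum principle, applied with the admissible curves from \(A^-\), shows that \(W\) vanishes on a half-ray of every \(y_m\)-fibre at earlier times.
\item A Colding--Minicozzi type dimension count, using boundary Caccioppoli and mean-value estimates, shows that the space of Dirichlet solutions of fixed growth order is finite-dimensional.
\item Invariance in \(y_m\) then makes each fibre an exponential polynomial. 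Such a function is analytic, so vanishing on a half-ray forces vanishing on the whole fibre, hence on an entire time slice.
\item A Tikhonov barrier gives forward uniqueness from this zero slice, contradicting \(W(A^+)=1\).
\end{itemize}

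\emph{Step 2: boundary contraction and positivity.} Positivity of \(u\) and \(v\) in \(\mathscr D_\kappa\) follows from \(u(A^+)>0\), the chains of part (ii) of Lemma~\ref{lem:controlled-local-chains}, and balance. With balance in hand, the localized weak comparison principle gives, at each boundary point \(Q\) and scale \(r\),
\[
\frac{v}{u}\le C\,\frac{v(A_{r,\Lambda}(Q))}{u(A_{r,\Lambda}(Q))}\quad\text{on } \mathscr D_{r/a_0}(Q),
\]
together with the reverse inequality. This yields \eqref{eq:quotient-comparability-scale-one}. Applying the same estimate to \(v-(\inf v/u)\,u\) and \((\sup v/u)\,u-v\) yields a contraction \(\osc_{\mathscr D_{r/a_0}(Q)}(v/u)\le\theta\,\osc_{\mathscr D_r(Q)}(v/u)\) with \(\theta<1\).

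\emph{Step 3: interior estimate and patching.} At interior points we use the interior Harnack inequality between \(P_s^+\) and \(P_s^-\) from part (iii) of Lemma~\ref{lem:controlled-local-chains}. Balance compares the denominator at the past and future reference points, which gives an interior oscillation decay. A nearest-boundary argument then combines the boundary and interior decays into \eqref{eq:quotient-holder-scale-one}.
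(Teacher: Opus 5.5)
Your architecture is the paper's: compactness with maximal bad scales, then the global cylindrical rigidity theorem (Bony half-rays, finite-dimensionality, exponential-polynomial fibres, Tikhonov barrier), then localized weak comparison, boundary and interior contraction, and nearest-boundary patching. There is, however, one step that fails as written, together with a few slips.

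\textbf{The gap is in Step 2.} You apply ``the same estimate'' to \(W=v-(\inf v/u)\,u\) and \(\widetilde W=(\sup v/u)\,u-v\). That estimate is the two-sided comparison through the central points \(A_{r,\Lambda}(Q)\), and you obtained it from reference balance of both numerator and denominator. Nothing supplies reference balance for \(W\) or \(\widetilde W\). Step 1 only covers solutions that are non-negative in \(\Omega_\psi\cap Q_{M,2}(P_0)\) with scale-one imbalance at most \(H\). By contrast, \(W\) is non-negative only on \(D_r(Q)\), and \(W(A^-_{s,\Lambda}(Q))\) can be arbitrarily small relative to \(W(A^+_{s,\Lambda}(Q))\), for instance when \(v/u\) nearly attains its infimum there. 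The weak-comparison lower bound for \(W/u\) is \(K^{-1}W(A^-_{s,\Lambda})/u(A^+_{s,\Lambda})\), so a dichotomy made at the central point yields nothing in that case.

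The fix, used in Proposition~\ref{prop:one-step-contraction}, is to make the dichotomy at the backward point. One of \(W/u\), \(\widetilde W/u\) is at least \(1/2\) at \(A^-_{s,\Lambda}(Q)\). Bony's principle along the controlled curve then gives positivity at \(A^+_{s,\Lambda}(Q)\), and the lower bound of Lemma~\ref{lem:localized-weak-comparison} gives
\[
\frac{W}{u}\ \ge\ K^{-1}\,\frac{W(A^-_{s,\Lambda})}{u(A^-_{s,\Lambda})}\cdot\frac{u(A^-_{s,\Lambda})}{u(A^+_{s,\Lambda})}\ \ge\ \frac{1}{2KH_1}
\quad\text{on } D_{bs}(Q).
\]
Balance thus enters only through the denominator \(u\), which is the mechanism you correctly identify in Step 3.

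\textbf{Smaller points.}
\begin{itemize}
\item \emph{Maximal-scale selection.} Choosing the largest scale where the imbalance exceeds a threshold \(T_j\) differs from the paper's choice via \(\Phi_j(s)=s^{-\nu}w_j(A^+_{s,\Lambda}(P_j))\), and it works. Since you need \(W(A^-_{1,\Lambda})=0\), \(T_j\to\infty\), so ``balance at coarser scales'' is not uniform. Growth comes instead from the cancellation
\[
w_j(A^+_{s,\Lambda})\le T_j\,w_j(A^-_{s,\Lambda})\le T_jC_0\Bigl(\frac{s}{r_j}\Bigr)^{\nu}w_j(A^-_{r_j,\Lambda})\le C_0\Bigl(\frac{s}{r_j}\Bigr)^{\nu}w_j(A^+_{r_j,\Lambda}),
\]
which uses \eqref{eq:scale-Harnack-minus}, followed by the Carleson estimate. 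The paper's selection gives the growth directly and uses \eqref{eq:scale-Harnack-minus} instead to show that the selected scale is still bad.
\item \emph{Second rescaling.} The claim that ``a second rescaling sends the defect to zero'' is wrong in principle, because the \(C^{0,1/3}_{y_m}\) defect is scale invariant. It is also unnecessary: the rescaled defect on a fixed normalized box is \(\operatorname{cyl}_{R,Q_j,r_j}\) (not \(\operatorname{cyl}_{R,Q_j,r_jR}\)), and this is bounded by \(\operatorname{cyl}_{2,0,1}(\psi_j)\to0\).
\item \emph{Positivity.} Bony's principle propagates zeros backward in time, so positivity in \(\mathscr D_\kappa\) must come from \(u(A^-_{1,\Lambda}(P_0))>0\). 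Use the curve from \(P\) to \(A^-_{r/a_0,\Lambda}\) in Lemma~\ref{lem:controlled-local-chains}(ii) and curves between the two scales. Starting from \(u(A^+)>0\) goes in the wrong direction, and invoking balance at that point is circular, since Step 1 presupposes positive reference values.
\end{itemize}
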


The novelty in Theorem~\ref{thm:perturbative-BHI} is that the defining
function \(\psi\) need not be independent of \(y_m\). Instead, its localized,
scale-invariant \(C^{0,1/3}_{y_m}\) seminorm is required to be sufficiently
small. The constants are uniform for all cylindrical defects below the
threshold \(\eta_0\). The localization constant \(\kappa\) is geometric and
depends only on \(m\) and \(M\); the outer reference imbalance \(H\) enters
the smallness threshold \(\eta_0\), the comparison constant \(C\), and the
H\"older exponent \(\sigma\), but does not shrink the comparison region.

The proof combines a contradiction and compactness argument with the weak
comparison principle of \cite[Lemma~9.1]{LitsgardNystrom2022}. A hypothetical
sequence for which the cylindrical defects tend to zero converges to a graph
that is independent of \(y_m\). There are, however, two distinct compactness
regimes. If the bad scales remain bounded from below, boundary comparison in
the limiting cylindrical domain gives the contradiction directly. If the bad
scales tend to zero, the normalization points escape to infinity under the
secondary blow-up. In this regime one must retain quantitative growth control
on expanding cylinders and then invoke a global rigidity argument. The
compactness framework, the separation of these two regimes, and the resulting
reduction are developed in Section~\ref{sec:compactness}.

The constant \(\eta_0\) in
\eqref{eq:small-cylindrical-defect} is chosen sufficiently small, depending
only on \(m,M,H\). This assumption is not needed for the boundary H\"older
estimate, the one-sided Carleson estimate, or the localized weak comparison
principle. These local estimates remain valid in intrinsic
\(\operatorname{Lip}_{\mathcal K}\) graph domains with unrestricted
\(y_m\)-dependence. Smallness is used only to establish uniform comparison
between the forward and backward reference values at all smaller scales.

For clarity, the imported local theory from
\cite{LitsgardNystrom2022} and the point at which cylindricality first enters
may be summarized as follows:
\begin{center}
\small
\begin{tabular}{@{}lll@{}}
Input & Source & \(y_m\)-independence required?\\[2pt]
\hline
Boundary H\"older decay & Theorem 3.2 & No\\
One-sided Carleson estimate & Theorem 3.3 & No\\
Localized weak comparison & Lemma 9.1 & No\\
Two-sided reference comparison & Theorems 3.4-3.6 & Yes
\end{tabular}
\end{center}
Indeed, \cite{LitsgardNystrom2022} explicitly notes that its structural
assumption (3.4), which includes independence of the graph from \(y_m\), is
not used up to and including Lemma 9.1. The purpose of the present compactness
and rigidity argument is precisely to recover the last line under an
asymptotic, rather than exact, cylindricality hypothesis.

We emphasize how the smallness of \(\eta_0\) enters in Theorem \ref{thm:perturbative-BHI}. The proof does not use a
quantitative absorption inequality; instead, the existence of a sufficiently
small threshold is established by contradiction. If the claimed uniform
statement were false, then, for every \(j\geq1\), one could choose a
counterexample in the fixed structural class, centered at a boundary point
\(P_{0,j}\), for which
\[
 \operatorname{cyl}_{2,P_{0,j},1}(\psi_j)\leq j^{-1}
\]
and the reference imbalance at some admissible boundary point \(P_j\) and
scale \(r_j\) satisfies
\[
 H_{w_j}(P_j,r_j)>j.
\]
By left-translating \(P_{0,j}\) to the origin and renaming the translated
graph and solution, we may assume that
\[
 0\in\Delta_{\psi_j},
 \qquad
 \operatorname{cyl}_{2,0,1}(\psi_j)\leq j^{-1},
\]
while the corresponding reference imbalance still diverges. Thus the cylindrical defects tend to zero. Since the
\(C^{0,1/3}_{y_m}\) defect is invariant under intrinsic translations and
dilations, the defects of all translated and rescaled graphs occurring in the
blow-up argument tend to zero on every fixed normalized box.
Proposition~\ref{prop:graph-compactness} then implies that every limiting graph
is independent of \(y_m\). The subsequent compactness and rigidity argument
excludes the resulting limiting configuration and hence gives a contradiction.
The required threshold and reference-balance constant therefore exist.
Because the argument involves no structural parameters other than \(m\),
\(M\), and \(H\), they may be denoted by
\(\eta_0(m,M,H)\) and \(H_1(m,M,H)\), respectively.

This passage from a vanishing defect
to an exactly cylindrical limit is the only point at which smallness of the
defect is used. In the fixed-scale regime, the cylindrical comparison theorem rules out the
limiting reference imbalance directly. At collapsing bad scales, the same
contradiction requires the expanding-cylinder growth estimate and the global
cylindrical rigidity theorem. Once uniform reference balance has been
established, the boundary and interior oscillation contractions yield the
comparison estimate.

The smallness condition should therefore be understood as a perturbative hypothesis.
Because the \(C^{0,1/3}_{y_m}\) seminorm is scale-invariant, a finite but
large critical defect does not become smaller under intrinsic blow-up. The
theorem does not assert that smallness is necessary for boundary comparison
itself. Whether comparison holds for arbitrary critical
\(\operatorname{Lip}_{\mathcal K}\) dependence remains open, see
Remark~\ref{rem:critical-exponent}.

\subsection{Boundary comparison in asymptotically cylindrical domains}
\label{subsec:main-BHI}

The scale-invariant version of Theorem~\ref{thm:perturbative-BHI} is the central
result of the paper.

\begin{theorem}
\label{thm:main-BHI} Let \(M,H\geq1\), \(\alpha\in(1/3,1]\), and \(L\geq0\). Let
\(\Omega_\psi\) be an unbounded non-characteristic
\(\operatorname{Lip}_{\mathcal K}\) graph domain with constant \(M\), and assume
that \(\psi\) satisfies \eqref{eq:ym-holder}. There exist $\eta_0=\eta_0(m,M,H)\in(0,1)$, \(\Lambda=\Lambda(m,M)\geq1\), \(\kappa=\kappa(m,M)\in(0,1)\),
\(C=C(m,M,H)\geq1\), and
\(\sigma=\sigma(m,M,H)\in(0,1)\) with the following property. Let \(P_0\in\Delta_\psi\), \(r>0\), set
\begin{equation}
\label{eq:c-alpha}
 c_\alpha=(16c_M)^{\alpha-1/3},
\end{equation}
and assume that
\begin{equation}
\label{eq:perturbative-scale-condition}
 c_\alpha Lr^{3\alpha-1}\leq\eta_0.
\end{equation}
Suppose that \(u\) and \(v\) are
non-negative solutions to
\[
 \mathcal Ku=\mathcal Kv=0
 \quad\text{in }\Omega_\psi\cap Q_{M,2r}(P_0),
\]
that \(u\) and \(v\) vanish continuously on
\(\Delta_\psi\cap Q_{M,2r}(P_0)\), that
\[
 u(A^\pm_{r,\Lambda}(P_0))>0,
 \qquad
 v(A^\pm_{r,\Lambda}(P_0))>0,
\]
and that
\begin{equation}
\label{eq:reference-control-scale-r}
 \max\bigl\{
 \mathfrak h_{r,\Lambda}(u;P_0),
 \mathfrak h_{r,\Lambda}(v;P_0)
 \bigr\}
 \leq H.
\end{equation}
Then \(u,v>0\) in \(\mathscr D_{\kappa r}(P_0)\), and
\begin{equation}
\label{eq:quotient-comparability-scale-r}
 C^{-1}
 \frac{v(A_{r,\Lambda}(P_0))}{u(A_{r,\Lambda}(P_0))}
 \leq
 \frac{v(P)}{u(P)}
 \leq
 C
 \frac{v(A_{r,\Lambda}(P_0))}{u(A_{r,\Lambda}(P_0))}
\end{equation}
for \(P\in\mathscr D_{\kappa r}(P_0)\), and
\begin{equation}
\label{eq:quotient-holder-scale-r}
 \left|
 \frac{v(P)}{u(P)}-\frac{v(\widetilde P)}{u(\widetilde P)}
 \right|
 \leq
 C
 \left(
 \frac{d_{\mathcal K}(P,\widetilde P)}{r}
 \right)^\sigma
 \frac{v(A_{r,\Lambda}(P_0))}{u(A_{r,\Lambda}(P_0))}
\end{equation}
whenever \(P,\widetilde P\in\mathscr D_{\kappa r}(P_0)\).
\end{theorem}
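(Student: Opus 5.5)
Theorem~\ref{thm:main-BHI} follows from Theorem~\ref{thm:perturbative-BHI} by translation and Kolmogorov dilation, so the plan is pure reduction to scale one. Given \(P_0\in\Delta_\psi\) and \(r>0\), I pass to the rescaled graph \(\psi_{P_0,r}\) from \eqref{eq:rescaled-graph} and the rescaled solutions
\[
 \widehat u(\widehat P)=u\bigl(P_0\circ\delta_r(\widehat P)\bigr),
 \qquad
 \widehat v(\widehat P)=v\bigl(P_0\circ\delta_r(\widehat P)\bigr).
\]
Left translations and the dilations \(\delta_r\) preserve \(\mathcal K\)-harmonicity, since the vector fields are left invariant and \(\mathcal K\) is homogeneous of degree two. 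They also preserve weak solutions in the energy class and continuous vanishing on the graph portion. Moreover \(\psi_{P_0,r}\) is again intrinsic \(\operatorname{Lip}_{\mathcal K}\) with constant \(M\), and \(0\in\Delta_{\psi_{P_0,r}}\). The boxes behave correctly: \(P_0\circ\delta_r(Q_{M,s})=Q_{M,rs}(P_0)\), because \(\delta_r\) is an automorphism of the group law. The reference points transform the same way, \(P_0\circ\delta_r(A^\pm_{1,\Lambda})=A^\pm_{r,\Lambda}(P_0)\) and similarly for \(A_{1,\Lambda}\), which one checks directly from \eqref{eq:reference-points}. Consequently \(\widehat u,\widehat v\) satisfy \eqref{eq:uv-harmonic} in \(\Omega_{\psi_{P_0,r}}\cap Q_{M,2}(0)\), and \(\mathfrak h_{1,\Lambda}(\widehat u;0)=\mathfrak h_{r,\Lambda}(u;P_0)\leq H\).

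The one substantive check is the smallness hypothesis \eqref{eq:small-cylindrical-defect} for \(\psi_{P_0,r}\) at the origin. By the nesting identity of Lemma~\ref{lem:cylindrical-defect-rescaling}, \(\operatorname{cyl}_{2,0,1}(\psi_{P_0,r})=\operatorname{cyl}_{2,P_0,r}(\psi)\). Then \eqref{eq:localized-defect-holder-bound} with \(R=2\) gives
\[
 \operatorname{cyl}_{2,P_0,r}(\psi)\leq(2c_M)^{\alpha-1/3}2^{3\alpha-1}Lr^{3\alpha-1}.
\]
Since \(2^{3\alpha-1}=8^{\alpha-1/3}\), the prefactor equals \((16c_M)^{\alpha-1/3}=c_\alpha\). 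Hence \eqref{eq:perturbative-scale-condition} is exactly \(\operatorname{cyl}_{2,0,1}(\psi_{P_0,r})\leq\eta_0\), and I take \(\eta_0,\Lambda,\kappa,C,\sigma\) to be the constants of Theorem~\ref{thm:perturbative-BHI}.

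Applying Theorem~\ref{thm:perturbative-BHI} gives positivity, the comparability \eqref{eq:quotient-comparability-scale-one} and the H\"older bound \eqref{eq:quotient-holder-scale-one} on \(\mathscr D_\kappa(0)\) for the rescaled graph. Mapping back, \(\mathscr D_\kappa(0)\) corresponds to \(\mathscr D_{\kappa r}(P_0)\), and the reference quotient is unchanged. For the distance, left invariance and homogeneity of \eqref{eq:K-quasinorm} give
\[
 d_{\mathcal K}\bigl(P_0\circ\delta_r\widehat P,\;P_0\circ\delta_r\widetilde{\widehat P}\bigr)=r\,d_{\mathcal K}\bigl(\widehat P,\widetilde{\widehat P}\bigr),
\]
which produces the factor \((d_{\mathcal K}(P,\widetilde P)/r)^\sigma\) in \eqref{eq:quotient-holder-scale-r}.

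No step is a real obstacle. The only points needing care are the constant bookkeeping in the defect bound, namely that \(c_\alpha\) is chosen precisely so that the bound matches, and the verification that the reference points and boxes commute with \(P_0\circ\delta_r\). Both are direct computations from the group law.
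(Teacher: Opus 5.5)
Your proposal is correct and follows the paper's proof essentially verbatim. You rescale by \(P_0\circ\delta_r\), identify \(\operatorname{cyl}_{2,0,1}(\psi_{P_0,r})=\operatorname{cyl}_{2,P_0,r}(\psi)\) via Lemma~\ref{lem:cylindrical-defect-rescaling}, and apply \eqref{eq:localized-defect-holder-bound} with \(R=2\) (your check \(2^{3\alpha-1}=8^{\alpha-1/3}\), so the prefactor is exactly \(c_\alpha\), is right). You then invoke Theorem~\ref{thm:perturbative-BHI} and scale back, using that \(\delta_r\) is a group automorphism to transport the boxes, reference points and \(d_{\mathcal K}\).
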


\begin{proof}
Translate \(P_0\) to the origin and apply the dilation
\(\delta_{1/r}\). Thus, writing
\[
 P=P_0\circ\delta_r(\widehat P),
\]
the rescaled graph is defined by \(\psi_{P_0,r}\) in
\eqref{eq:rescaled-graph}. It has the same intrinsic
\(\operatorname{Lip}_{\mathcal K}\) constant \(M\) as \(\psi\). The scale-one defect of the rescaled graph is, by
Lemma~\ref{lem:cylindrical-defect-rescaling},
\[
 \operatorname{cyl}_{2,0,1}(\psi_{P_0,r})
 =
 \operatorname{cyl}_{2,P_0,r}(\psi).
\]
Applying \eqref{eq:localized-defect-holder-bound} with \(R=2\), we obtain
\begin{align*}
 \operatorname{cyl}_{2,0,1}(\psi_{P_0,r})
 &=
 \operatorname{cyl}_{2,P_0,r}(\psi)\leq
 (16c_M)^{\alpha-1/3}Lr^{3\alpha-1}=
 C(m,M,\alpha)Lr^{3\alpha-1}
 \leq\eta_0.
\end{align*}
Thus the rescaled graph satisfies the small-defect hypothesis of
Theorem~\ref{thm:perturbative-BHI}. Define the rescaled solutions by
\[
 u_{P_0,r}(\widehat P)
 =
 u\bigl(P_0\circ\delta_r(\widehat P)\bigr),
 \qquad
 v_{P_0,r}(\widehat P)
 =
 v\bigl(P_0\circ\delta_r(\widehat P)\bigr).
\]
The translation and dilation invariance of \(\mathcal K\) imply that these
functions are non-negative \(\mathcal K\)-harmonic functions in
\[
 \Omega_{\psi_{P_0,r}}\cap Q_{M,2},
\]
and they vanish continuously on the corresponding graph portion. Moreover,
the reference points satisfy
\[
 P_0\circ\delta_r\bigl(A^\pm_{1,\Lambda}\bigr)
 =
 A^\pm_{r,\Lambda}(P_0),
\]
and hence
\[
 \mathfrak h_{1,\Lambda}(u_{P_0,r};0)
 =
 \mathfrak h_{r,\Lambda}(u;P_0),
 \qquad
 \mathfrak h_{1,\Lambda}(v_{P_0,r};0)
 =
 \mathfrak h_{r,\Lambda}(v;P_0).
\]
Therefore the reference-point positivity and imbalance hypotheses are also
preserved under the rescaling. All the hypotheses of Theorem~\ref{thm:perturbative-BHI} are now satisfied
by \(u_{P_0,r}\) and \(v_{P_0,r}\). Applying
\eqref{eq:quotient-comparability-scale-one} and
\eqref{eq:quotient-holder-scale-one} to the rescaled functions and then
scaling back to \(Q_{M,r}(P_0)\) gives
\eqref{eq:quotient-comparability-scale-r} and
\eqref{eq:quotient-holder-scale-r}.
\end{proof}

The logarithmic formulation follows immediately and is the form most convenient
for comparing two positive solutions without choosing a normalization.

\begin{corollary}
\label{cor:logarithmic-BHI}
Under the assumptions of Theorem~\ref{thm:main-BHI},
\begin{equation}
\label{eq:logarithmic-BHI}
 \left|
 \log\frac{u(P)}{v(P)}
 -
 \log\frac{u(\widetilde P)}{v(\widetilde P)}
 \right|
 \leq
 C
 \left(
 \frac{d_{\mathcal K}(P,\widetilde P)}{r}
 \right)^\sigma
\end{equation}
whenever \(P,\widetilde P\in\mathscr D_{\kappa r}(P_0)\). Here, after
enlarging it if necessary, \(C\) has the same parameter dependence as in
Theorem~\ref{thm:main-BHI}.
\end{corollary}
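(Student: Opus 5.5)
The plan is to derive \eqref{eq:logarithmic-BHI} directly from the two conclusions of Theorem~\ref{thm:main-BHI}, with no new analytic input. Fix \(P,\widetilde P\in\mathscr D_{\kappa r}(P_0)\) and put
\[
 \theta=\frac{v(A_{r,\Lambda}(P_0))}{u(A_{r,\Lambda}(P_0))},
 \qquad
 q(P)=\frac{v(P)}{u(P)}.
\]
Theorem~\ref{thm:main-BHI} gives \(u,v>0\) on \(\mathscr D_{\kappa r}(P_0)\), so both logarithms are defined. Since \(\log(u/v)=-\log q\), it suffices to bound \(|\log q(P)-\log q(\widetilde P)|\).

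The first step uses the mean value theorem for \(\log\) on the interval between \(q(P)\) and \(q(\widetilde P)\). By \eqref{eq:quotient-comparability-scale-r}, both values lie in \([C^{-1}\theta,C\theta]\). Hence
\[
 |\log q(P)-\log q(\widetilde P)|
 \leq
 \frac{C}{\theta}\,|q(P)-q(\widetilde P)|.
\]

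The second step inserts \eqref{eq:quotient-holder-scale-r}. This bounds \(|q(P)-q(\widetilde P)|\) by \(C(d_{\mathcal K}(P,\widetilde P)/r)^\sigma\theta\). The factor \(\theta\) cancels, which yields \eqref{eq:logarithmic-BHI} with constant \(C^2\). This constant depends only on \((m,M,H)\), and \(\sigma\) is unchanged.

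The only point needing care is that the normalization \(\theta\) enters identically in both estimates of Theorem~\ref{thm:main-BHI}, so that it cancels. This holds by inspection. Since the corollary is a purely algebraic consequence of the theorem, no step should present a genuine obstacle.
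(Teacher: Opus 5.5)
Your proposal is correct and follows the paper's proof: the comparability estimate \eqref{eq:quotient-comparability-scale-r} confines the quotient to an interval on which the logarithm is Lipschitz, and the H\"older estimate \eqref{eq:quotient-holder-scale-r} then gives \eqref{eq:logarithmic-BHI}. The only cosmetic difference is that the paper first divides by \(\theta\) so the quotient lies in \([C^{-1},C]\), while you apply the mean value theorem on \([C^{-1}\theta,C\theta]\) and let \(\theta\) cancel; the two are the same argument.
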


\begin{proof}
By \eqref{eq:quotient-comparability-scale-r}, the normalized quotient
\[
 \frac{v(P)}{u(P)}
 \frac{u(A_{r,\Lambda}(P_0))}{v(A_{r,\Lambda}(P_0))}
\]
takes values in \([C^{-1},C]\). The logarithm is Lipschitz on this interval, and
\eqref{eq:logarithmic-BHI} follows from
\eqref{eq:quotient-holder-scale-r}.
\end{proof}

\begin{remark}
\label{rem:admissible-scales}
If \(L>0\), condition \eqref{eq:perturbative-scale-condition} holds whenever
\begin{equation}
\label{eq:threshold-scale}
 0<r\leq r_*
 :=
 \left(\frac{\eta_0}{c_\alpha L}\right)^{1/(3\alpha-1)}.
\end{equation}
If \(L=0\), the graph is independent of \(y_m\) and one sets \(r_*=\infty\).
Thus the new boundary comparison principle is local below a quantitatively
determined scale. The scale deteriorates as \(\alpha\downarrow1/3\), reflecting
the critical nature of the exponent \(1/3\).
\end{remark}

\begin{theorem}
\label{thm:general-modulus}
Let \(M,H\geq1\), and let \(\Omega_\psi\) be an unbounded
non-characteristic \(\operatorname{Lip}_{\mathcal K}\) graph domain with
constant \(M\).  Assume
\[
 \eta_\psi(r)\longrightarrow0
 \qquad\text{as }r\downarrow0,
\]
where \(\eta_\psi\) is defined in \eqref{eq:cylindrical-modulus}.  Put
\(c_M^\ast=(16c_M)^{1/3}\).  The conclusions
\eqref{eq:quotient-comparability-scale-r} and
\eqref{eq:quotient-holder-scale-r} hold at every scale \(r\) for which
\begin{equation}
\label{eq:general-modulus-scale-condition}
 \eta_\psi(c_M^\ast r)\leq\eta_0(m,M,H),
\end{equation}
provided \(u,v,P_0\), and the reference values satisfy the remaining local
hypotheses of Theorem~\ref{thm:main-BHI}.
In particular, there exists \(r_*>0\) such that the conclusion holds for every
\(0<r\leq r_*\).
\end{theorem}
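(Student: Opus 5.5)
The plan is to copy the proof of Theorem~\ref{thm:main-BHI}, with the H\"older bound \eqref{eq:localized-defect-holder-bound} replaced by the modulus bound \eqref{eq:localized-defect-modulus-bound}. Fix \(P_0\in\Delta_\psi\) and a scale \(r\) satisfying \eqref{eq:general-modulus-scale-condition}. Translate \(P_0\) to the origin and apply \(\delta_{1/r}\). As recorded before Lemma~\ref{lem:cylindrical-defect-rescaling}, the rescaled function \(\psi_{P_0,r}\) in \eqref{eq:rescaled-graph} is again intrinsic \(\operatorname{Lip}_{\mathcal K}\) with constant \(M\). By Lemma~\ref{lem:cylindrical-defect-rescaling},
\[
 \operatorname{cyl}_{2,0,1}(\psi_{P_0,r})=\operatorname{cyl}_{2,P_0,r}(\psi)\leq\eta_\psi(C_M\cdot2\cdot r),
\]
with \(C_M=(2c_M)^{1/3}\). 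Since \(2C_M=(16c_M)^{1/3}=c_M^\ast\), the right-hand side is exactly \(\eta_\psi(c_M^\ast r)\leq\eta_0\). Thus the scale-one small-defect hypothesis \eqref{eq:small-cylindrical-defect} holds for the rescaled graph. One point needs checking: \(\eta_\psi\) is a supremum over \(|h|\leq\rho^3\), so it is nondecreasing in \(\rho\). Any \(y_m\)-increment occurring in \(\mathcal T_{M,2}\) has physical size at most \(2c_M(2r)^3=(c_M^\ast r)^3\), so the bound applies with no further loss.

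Next I would rescale the solutions, setting \(u_{P_0,r}(\widehat P)=u(P_0\circ\delta_r(\widehat P))\) and similarly for \(v\). Left invariance and homogeneity of \(\mathcal K\) show that these are non-negative solutions in \(\Omega_{\psi_{P_0,r}}\cap Q_{M,2}\), and they vanish continuously on the graph portion. The identity \(P_0\circ\delta_r(A^\pm_{1,\Lambda})=A^\pm_{r,\Lambda}(P_0)\) transfers the positivity at the reference points and the imbalance bound \(H\). Theorem~\ref{thm:perturbative-BHI} then gives comparability and H\"older continuity of the quotient on \(\mathscr D_\kappa(0)\). Mapping back by \(P\mapsto P_0\circ\delta_r(P)\) turns \(d_{\mathcal K}\) into \(d_{\mathcal K}/r\), because the quasi-distance is left invariant and homogeneous of degree one. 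This yields \eqref{eq:quotient-comparability-scale-r} and \eqref{eq:quotient-holder-scale-r} on \(\mathscr D_{\kappa r}(P_0)\).

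For the last assertion, \(\eta_\psi(\rho)\to0\) as \(\rho\downarrow0\) gives some \(\rho_0>0\) with \(\eta_\psi(\rho)\leq\eta_0(m,M,H)\) for all \(\rho\leq\rho_0\). Take \(r_*=\rho_0/c_M^\ast\); monotonicity then covers every \(0<r\leq r_*\). This \(r_*\) is uniform in \(P_0\), because \(\eta_\psi\) is a global supremum.

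I expect no serious obstacle, since all the analytic content sits in Theorem~\ref{thm:perturbative-BHI}. The only delicate step is matching constants, namely checking that the box \(\mathcal T_{M,2}\) at scale \(r\) produces \(y_m\)-increments of size at most \((c_M^\ast r)^3\). Also, \(\eta_\psi\) might be infinite at large \(r\), which is harmless because only small scales are used.
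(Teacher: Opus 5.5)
Your proposal is correct and follows the paper's proof essentially verbatim. You rescale, use \eqref{eq:localized-defect-modulus-bound} with $R=2$ (so that $2C_M=c_M^\ast$) to verify the scale-one small-defect hypothesis, apply Theorem~\ref{thm:perturbative-BHI} to the rescaled solutions, and scale back. The worry about $\eta_\psi$ being infinite at large scales is unnecessary: taking $\widetilde\zeta$ to differ from $\zeta$ only in $y_m$ in the intrinsic Lipschitz condition gives $\eta_\psi\leq M$ at every scale.
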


\begin{proof}
Translate \(P_0\) to the origin and dilate by \(\delta_{1/r}\). The resulting
graph is defined by \(\psi_{P_0,r}\). By
Lemma~\ref{lem:cylindrical-defect-rescaling},
\[
 \operatorname{cyl}_{2,0,1}(\psi_{P_0,r})
 =
 \operatorname{cyl}_{2,P_0,r}(\psi)
 \leq
 \eta_\psi(c_M^\ast r).
\]
Condition \eqref{eq:general-modulus-scale-condition} therefore gives
\[
 \operatorname{cyl}_{2,0,1}(\psi_{P_0,r})
 \leq\eta_0,
\]
which is precisely the scale-one small-defect hypothesis of
Theorem~\ref{thm:perturbative-BHI}. The remaining hypotheses, including
positivity at the reference points and the reference-imbalance bounds, are
preserved under translation and dilation. Applying
Theorem~\ref{thm:perturbative-BHI} to the rescaled solutions and scaling its
conclusions back to \(Q_{M,r}(P_0)\) proves the theorem.
\end{proof}

\begin{remark}
\label{rem:general-modulus}
Theorem~\ref{thm:general-modulus} shows that no power law is required.  The
condition \(\eta_\psi(r)\to0\) is the intrinsic asymptotic cylindricity used by
the compactness argument.  A Dini condition becomes relevant only for refinements
which accumulate scale-dependent errors rather than invoking the uniform
small-defect theorem proved here.
\end{remark}

\begin{remark}
\label{rem:critical-exponent}
At the critical exponent \(\alpha=1/3\), the factor
\(r^{3\alpha-1}\) in \eqref{eq:rescaled-ym-holder} is equal to one. Thus the
\(y_m\)-oscillation of the rescaled graph need not decay under blow-up, and a
limiting graph need not be independent of \(y_m\). The compactness argument
may still produce a blow-up limit, but it no longer produces the cylindrical
geometry required by the rigidity argument in this paper. Two explicit
dilation-invariant examples occur when \(m=1\). For \(a>0\), they are
\[
\begin{aligned}
 \Omega_a
 &=
 \{(x,y,t):x>0,\ |y|<ax^3\}
 =
 \{(x,y,t):x>a^{-1/3}|y|^{1/3}\},
 \\
 \Omega_*
 &=
 \{(x,y,t):x^3-6y>0\}
 =
 \{(x,y,t):x>\sqrt[3]{6y}\}.
\end{aligned}
\]
Both domains have genuinely \(y\)-dependent, scale-critical graph boundaries
and are invariant under the Kolmogorov dilations. The domain \(\Omega_a\)
is included as a simple symmetric geometric model of critical
non-cylindricity. The second example is additionally adapted to the operator:
\[
 H(x,y)=x^3-6y
\]
is positive in \(\Omega_*\), homogeneous of degree three, and satisfies $\mathcal KH=0$. Moreover, \(H\) vanishes continuously on \(\partial\Omega_*\).  We do not
claim an analogous explicit positive harmonic profile for \(\Omega_a\). These examples show that cylindrical tangent domains cannot be expected at
the critical exponent. They do not, however, disprove boundary comparison:
the function \(H\) provides only one positive boundary-decay mode. A
counterexample would require two genuinely different positive decay modes,
or an equivalent failure of uniform projective contraction under repeated
rescaling. It remains open whether boundary comparison holds for arbitrary
scale-critical \(\operatorname{Lip}_{\mathcal K}\) graphs without a
smallness assumption on the scale-invariant \(C^{0,1/3}_{y_m}\) defect.
\end{remark}

\section{Compactness and the collapsing-scale reduction}
\label{sec:compactness}

This section develops the compactness framework underlying the proof of
Theorem~\ref{thm:perturbative-BHI}. We first establish compactness for nearly
cylindrical graph domains and for normalized non-negative solutions. We then
record the one-sided boundary estimates that remain valid without
cylindricity and reduce uniform reference-point balance to the analysis of a
collapsing sequence of bad scales.

Local uniform convergence of the domains and solutions is not sufficient for
this purpose. After rescaling at a collapsing bad scale, the reference points
at the original scale escape to infinity. We therefore select maximal bad
scales and prove a quantitative polynomial-growth estimate on expanding
intrinsic boxes. This estimate produces the global limiting problem to which
the cylindrical rigidity theorem of Section~\ref{sec:global-rigidity} will be
applied.

\subsection{Compactness of the defining functions}
\label{subsec:graph-compactness}

For \(R>0\), let
\[
 \mathcal Z_R
 =
 \bigl\{(x',y',y_m,t):
 |x'|<R,\ |y'|<R^3,\ |y_m|<R^3,\ |t|<R^2\bigr\}.
\]
The precise numerical constants in this definition will play no role. We use
local uniform convergence on sets of the form \(\mathcal Z_R\).

\begin{proposition}
\label{prop:graph-compactness}
Let \(\{\psi_j\}\) be intrinsic \(\operatorname{Lip}_{\mathcal K}\)
functions with a common constant \(M\), normalized by \(\psi_j(0)=0\).
Then a subsequence converges locally uniformly to an intrinsic
\(\operatorname{Lip}_{\mathcal K}\) function \(\psi_\infty\) with constant
\(M\). If, in addition,
\begin{equation}
\label{eq:cyl-defect-to-zero}
 \operatorname{cyl}_{2,0,1}(\psi_j)\longrightarrow0,
\end{equation}
then \(\psi_\infty\) is independent of \(y_m\) in the interior of the
coordinate box occurring in \eqref{eq:localized-cylindrical-defect}.
\end{proposition}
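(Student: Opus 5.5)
The plan is Arzelà--Ascoli, using the intrinsic Lipschitz condition \eqref{eq:LipK-condition} for both uniform bounds and equicontinuity.

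\emph{Uniform bounds.} Apply \eqref{eq:LipK-condition} with \(\widetilde\zeta=0\), so that \(\psi_j(\widetilde\zeta)=0\). The drift terms in \(D_{\psi_j}(\zeta,0)\) then vanish, and we get
\[
 |\psi_j(\zeta)|\le M\bigl(|x'|+|y'|^{1/3}+|y_m|^{1/3}+|t|^{1/2}\bigr).
\]
Hence \(\sup_j\sup_{\mathcal Z_R}|\psi_j|\le C(m,M)R\) for every \(R\).

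\emph{Equicontinuity.} For \(\zeta,\widetilde\zeta\in\mathcal Z_R\), the quantity \(D_{\psi_j}(\zeta,\widetilde\zeta)\) contains the drift terms \((t-\widetilde t)\widetilde x'\) and \((t-\widetilde t)\psi_j(\widetilde\zeta)\). On \(\mathcal Z_R\) these are bounded by \(|t-\widetilde t|\,CR\), using the uniform bound just obtained. Therefore
\[
 |\psi_j(\zeta)-\psi_j(\widetilde\zeta)|
 \le C(m,M,R)\,\omega(|\zeta-\widetilde\zeta|),
\]
where \(\omega(s)=s+s^{1/3}+s^{1/2}\) is a modulus independent of \(j\). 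Arzelà--Ascoli on each \(\overline{\mathcal Z_R}\), combined with a diagonal argument over \(R\in\mathbb N\), gives a subsequence converging locally uniformly to a continuous \(\psi_\infty\) with \(\psi_\infty(0)=0\).

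\emph{The limit is intrinsic Lipschitz.} The one point needing care is that the quasi-distance depends on the function itself. For fixed \(\zeta,\widetilde\zeta\), however, \(D_{\psi_j}(\zeta,\widetilde\zeta)\) depends only on the value \(\psi_j(\widetilde\zeta)\), and does so continuously. Since \(\psi_j(\widetilde\zeta)\to\psi_\infty(\widetilde\zeta)\), we have \(D_{\psi_j}(\zeta,\widetilde\zeta)\to D_{\psi_\infty}(\zeta,\widetilde\zeta)\). Passing to the limit in \eqref{eq:LipK-condition} pointwise then gives the same inequality for \(\psi_\infty\) with constant \(M\).

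\emph{Cylindricality of the limit.} Fix \((x',y',y_m,t)\) and \((x',y',\widetilde y_m,t)\) in the open box \(\mathcal T_{M,2}\) with \(y_m\ne\widetilde y_m\). Since \(P_0=0\) and \(r=1\), the rescaled graph \(\psi_{0,1}\) in \eqref{eq:rescaled-graph} is just \(\psi_j\), because \(\psi_j(0)=0\) and the translation terms vanish. The definition \eqref{eq:localized-cylindrical-defect} then yields
\[
 |\psi_j(x',y',y_m,t)-\psi_j(x',y',\widetilde y_m,t)|
 \le \operatorname{cyl}_{2,0,1}(\psi_j)\,|y_m-\widetilde y_m|^{1/3}.
\]
By \eqref{eq:cyl-defect-to-zero} the right-hand side tends to zero. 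Pointwise convergence then gives
\[
 \psi_\infty(x',y',y_m,t)=\psi_\infty(x',y',\widetilde y_m,t).
\]
So \(\psi_\infty\) is independent of \(y_m\) on the interior of \(\mathcal T_{M,2}\), as claimed.

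\emph{Main obstacle.} The only genuinely delicate step is the self-referential quasi-distance \(D_\psi\): both the equicontinuity modulus and the passage to the limit in \eqref{eq:LipK-condition} require the drift term \(\psi(\widetilde\zeta)\) to be controlled. The uniform local bound on \(|\psi_j|\) from the first step handles both.
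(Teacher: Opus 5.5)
Your proposal is correct and follows essentially the same route as the paper's proof. The steps match: the local bound from taking \(\widetilde\zeta=0\), equicontinuity by controlling the drift terms \((t-\widetilde t)\widetilde x'\) and \((t-\widetilde t)\psi_j(\widetilde\zeta)\) with that bound, Arzel\`a--Ascoli with a diagonal argument, passage to the limit in \eqref{eq:LipK-condition}, and the defect inequality at \(P_0=0\), \(r=1\) giving \(y_m\)-independence on \(\mathcal T_{M,2}\). You also make explicit two points the paper leaves implicit: that \(\psi_{0,1}=\psi_j\), and that \(D_{\psi_j}(\zeta,\widetilde\zeta)\to D_{\psi_\infty}(\zeta,\widetilde\zeta)\) pointwise.
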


\begin{proof}
Taking \(\widetilde\zeta=0\) in \eqref{eq:LipK-condition} and using
\(\psi_j(0)=0\), we obtain, on every \(\mathcal Z_R\),
\[
 |\psi_j(x',y',y_m,t)|
 \leq
 M\bigl(|x'|+|y'|^{1/3}+|y_m|^{1/3}+|t|^{1/2}\bigr).
\]
Thus the family \(\{\psi_j\}\) is locally uniformly bounded. The same
estimate, applied to two points
\(\zeta,\widetilde\zeta\in\mathcal Z_R\), gives
\begin{align*}
 |\psi_j(\zeta)-\psi_j(\widetilde\zeta)|
 \leq M\bigl(&|x'-\widetilde x'|+|y'-\widetilde y'
 +(t-\widetilde t)\widetilde x'|^{1/3}+|y_m-\widetilde y_m
 +(t-\widetilde t)\psi_j(\widetilde\zeta)|^{1/3}
 +|t-\widetilde t|^{1/2}\bigr).
\end{align*}
Since \(\zeta,\widetilde\zeta\in\mathcal Z_R\), we have
\[
 |\widetilde x'|\leq C_R,
 \qquad
 |\psi_j(\widetilde\zeta)|\leq C_{M,R},
\]
where the second estimate follows from the preceding local bound on
\(\psi_j\). Using $|a+b|^{1/3}\leq |a|^{1/3}+|b|^{1/3}$, we obtain
\begin{align*}
 |y'-\widetilde y'
 +(t-\widetilde t)\widetilde x'|^{1/3}
 &\leq
 |y'-\widetilde y'|^{1/3}
 +C_R|t-\widetilde t|^{1/3},
 \\
 |y_m-\widetilde y_m
 +(t-\widetilde t)\psi_j(\widetilde\zeta)|^{1/3}
 &\leq
 |y_m-\widetilde y_m|^{1/3}
 +C_{M,R}|t-\widetilde t|^{1/3}.
\end{align*}
Consequently,
\begin{align*}
 |\psi_j(\zeta)-\psi_j(\widetilde\zeta)|
 \leq C_{M,R}\bigl(&|x'-\widetilde x'|
 +|y'-\widetilde y'|^{1/3}
 +|y_m-\widetilde y_m|^{1/3}+|t-\widetilde t|^{1/3}
 +|t-\widetilde t|^{1/2}\bigr).
\end{align*}
Hence \(\{\psi_j\}\) is equicontinuous on compact sets. The Arzel\`a-Ascoli
theorem and a diagonal argument give a locally uniform limit
\(\psi_\infty\). Passing to the limit in \eqref{eq:LipK-condition} shows that
\(\psi_\infty\) has the same intrinsic Lipschitz constant. Finally, if the remaining variables are fixed and
\(y_m,\widetilde y_m\in(-8,8)\), then
\[
 |\psi_j(x',y',y_m,t)-\psi_j(x',y',\widetilde y_m,t)|
 \leq
 \operatorname{cyl}_{2,0,1}(\psi_j)
 |y_m-\widetilde y_m|^{1/3}.
\]
Letting \(j\to\infty\) proves that \(\psi_\infty\) is independent of
\(y_m\).
\end{proof}

The following consequence will be used repeatedly. We write
\(\Omega_j=\Omega_{\psi_j}\), \(\Delta_j=\Delta_{\psi_j}\), and use the
analogous notation with the subscript \(\infty\).

\begin{corollary}
\label{cor:domain-convergence}
Assume the hypotheses of Proposition~\ref{prop:graph-compactness}. If
\(K\Subset\Omega_\infty\), then \(K\subset\Omega_j\) for all sufficiently
large \(j\). If \(K\Subset\mathbb R^{N+1}\setminus\overline{\Omega_\infty}\),
then \(K\cap\overline{\Omega_j}=\varnothing\) for all sufficiently large
\(j\). Moreover, \(\Delta_j\) converges locally to \(\Delta_\infty\) in the
Hausdorff sense.
\end{corollary}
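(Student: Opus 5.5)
The three assertions follow from the locally uniform convergence \(\psi_j\to\psi_\infty\) given by Proposition~\ref{prop:graph-compactness}, together with the uniform local continuity of the \(\psi_j\) established in its proof. Throughout we pass to the subsequence provided there and fix \(R\) so large that the compact set under consideration projects into \(\mathcal Z_R\) in the variables \(\zeta=(x',y',y_m,t)\).

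For the first assertion, let \(K\Subset\Omega_\infty\). The function \(F_\infty(P)=x_m-\psi_\infty(\zeta)\) is continuous and positive on \(K\), so \(F_\infty\geq\delta>0\) there by compactness. Uniform convergence on the projection of \(K\) gives \(\sup|\psi_j-\psi_\infty|<\delta/2\) for large \(j\). Hence \(x_m-\psi_j(\zeta)>\delta/2>0\) on \(K\), that is, \(K\subset\Omega_j\). The second assertion is symmetric. If \(K\Subset\mathbb R^{N+1}\setminus\overline{\Omega_\infty}\), then \(x_m-\psi_\infty\leq-\delta\) on \(K\), since \(\overline{\Omega_\infty}=\{x_m\geq\psi_\infty\}\) by continuity of \(\psi_\infty\). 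Thus \(x_m-\psi_j<-\delta/2\) on \(K\) for large \(j\), and \(K\cap\overline{\Omega_j}=\varnothing\) because \(\overline{\Omega_j}=\{x_m\geq\psi_j\}\).

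For the local Hausdorff convergence, fix a compact base \(\overline{\mathcal Z_R}\) and compare the graph portions \(\Delta_j^R\) and \(\Delta_\infty^R\) lying over it. Every point \((\zeta,\psi_j(\zeta))\in\Delta_j^R\) lies within Euclidean distance \(\|\psi_j-\psi_\infty\|_{L^\infty(\mathcal Z_R)}\) of \((\zeta,\psi_\infty(\zeta))\in\Delta_\infty^R\), obtained by moving vertically. The converse inclusion holds by the same vertical comparison. The Hausdorff distance between \(\Delta_j^R\) and \(\Delta_\infty^R\) is therefore at most this sup norm, which tends to zero.

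If local Hausdorff convergence is meant relative to balls rather than vertical cylinders, a small additional step is needed. The points of \(\Delta_\infty\cap \overline B\) near \(\partial B\) must be handled by enlarging to a slightly bigger base \(\mathcal Z_{R'}\) and using the equicontinuity estimate from the proof of Proposition~\ref{prop:graph-compactness}. This boundary-of-window bookkeeping is the only mildly delicate point, and it is routine. The distinction between Euclidean and intrinsic quasi-distance is harmless, since the two are equivalent topologies on compact sets.
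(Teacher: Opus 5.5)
Your proposal is correct and follows the paper's argument. Compactness gives \(\inf_K(x_m-\psi_\infty)>0\) (or \(\sup_K(x_m-\psi_\infty)<0\) in the exterior case), locally uniform convergence of the epigraph functions transfers the sign to \(\psi_j\), and local Hausdorff convergence of \(\Delta_j\) comes from the vertical sup-norm comparison. The paper states only the first case explicitly, and you supply the remaining details in the same spirit.
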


\begin{proof}
All three assertions follow because the domains are epigraphs and
\(\psi_j\to\psi_\infty\) locally uniformly. For
example, if \(K\Subset\Omega_\infty\), compactness gives
\[
 \inf_K\bigl(x_m-\psi_\infty(x',y',y_m,t)\bigr)>0,
\]
and the same quantity with \(\psi_j\) in place of \(\psi_\infty\) is positive
for all sufficiently large \(j\).
\end{proof}

\subsection{Compactness of bounded solutions}
\label{subsec:solution-compactness}

We record the boundary decay estimate needed to pass to the limit across a
moving graph. Importantly, this estimate uses only the non-characteristic
intrinsic Lipschitz character of the graph and not independence from
\(y_m\).

\begin{lemma}
\label{lem:uniform-boundary-decay}
Let \(M\geq1\). There exist \(\beta=\beta(m,M)\in(0,1)\) and
\(C=C(m,M)\) such that the following holds. Suppose that \(\psi\) has
intrinsic Lipschitz constant \(M\), that \(P_0\in\Delta_\psi\), and that
\(w\) is a bounded solution to
\[
 \mathcal Kw=0
 \quad\text{in }\Omega_\psi\cap Q_{M,2r}(P_0),
 \qquad
 w=0
 \quad\text{on }\Delta_\psi\cap Q_{M,2r}(P_0).
\]
Then
\begin{equation}
\label{eq:uniform-boundary-decay}
 |w(P)|
 \leq
 C
 \left(
 \frac{d_{\mathcal K}(P,\Delta_\psi)}{r}
 \right)^\beta
 \sup_{\Omega_\psi\cap Q_{M,2r}(P_0)}|w|
\end{equation}
for \(P\in\Omega_\psi\cap Q_{M,r/C}(P_0)\).
\end{lemma}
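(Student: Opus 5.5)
The plan is the standard iteration argument for boundary H\"older decay. It uses only the non-characteristic intrinsic Lipschitz character of \(\Delta_\psi\) and never \(y_m\)-independence. We may assume \(w\) is non-negative: otherwise split \(w\) into its positive and negative parts, or compare \(\pm w\) with the solution having boundary data \(\sup|w|\) on the artificial faces. By translation and dilation invariance of \(\mathcal K\) and of the intrinsic Lipschitz class, we may also assume \(P_0=0\) and \(r=1\). The core step is a one-step oscillation (decay) estimate: there is \(\theta=\theta(m,M)\in(0,1)\) such that for every \(Q_0\in\Delta_\psi\) and every \(\rho\) with \(Q_{M,2\rho}(Q_0)\subset Q_{M,2}\),
\[
 \sup_{\Omega_\psi\cap Q_{M,\rho/C}(Q_0)}|w|\leq\theta\sup_{\Omega_\psi\cap Q_{M,2\rho}(Q_0)}|w|.
\]
Iterating this estimate over the dyadic scales \(\rho_k=C^{-k}\) gives \(|w|\lesssim\theta^k\sup|w|\) in \(Q_{M,\rho_k}(Q_0)\).

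To prove the one-step estimate, normalize \(\sup|w|=1\) and let \(\phi\) solve the continuous Dirichlet problem in \(D=\Omega_\psi\cap Q_{M,2\rho}(Q_0)\). The data are \(\phi=0\) on the graph portion and \(\phi=1\) on the rest of \(\partial_{\mathcal K}D\); this problem is solvable and regular by the results recalled in Section~\ref{subsec:Kolmogorov-boundary}. By the weak maximum principle \eqref{eq:weak-maximum-principle}, \(\pm w\leq\phi\). It therefore suffices to show \(\phi\leq\theta\) on \(Q_{M,\rho/C}(Q_0)\), that is, that \(1-\phi\) is bounded below there by a constant depending only on \(m,M\).

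For this lower bound we use the exterior cone geometry. Because the graph is taken in the non-degenerate \(x_m\)-direction and \(|\psi_{Q_0,\rho}|\) is controlled by \(M\) on the normalized base, the set \(\{x_m<\psi-c\rho\}\) contains an intrinsic box of size comparable to \(\rho\) directly below \(Q_0\), uniformly at every boundary point. I would construct an explicit barrier that is a supersolution with respect to \(\mathcal K\), for example a translated and rescaled power of \(x_m\) modified by terms in \(t\) and \(y\). It should satisfy \(\geq1\) on the artificial faces, \(\geq0\) on the graph, and \(\leq\theta<1\) near \(Q_0\). Alternatively, one can compare with the harmonic measure of the lower half-space portion and use the interior Harnack chains of Lemma~\ref{lem:controlled-local-chains} to transport positivity of \(1-\phi\) from points near the graph, where \(\phi\) is small by regularity, to the whole small box.

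The main obstacle is making this positivity quantitative and uniform in \(\psi\). This means getting a Wiener-type or capacity lower bound for the graph portion that depends only on \(m,M\) and not on any \(y_m\)-regularity. I expect to handle it by compactness: a failing sequence \(\psi_j\) converges by Proposition~\ref{prop:graph-compactness} to some intrinsic Lipschitz \(\psi_\infty\), not necessarily cylindrical, and the corresponding \(\phi_j\) converge by interior regularity. The limit would then violate regularity of the boundary point \(0\) for \(\Omega_{\psi_\infty}\), which the regularity theory of \cite{LitsgardNystrom2022} excludes.

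Once the one-step estimate holds, take \(P\in Q_{M,1/C}\) and let \(Q_0\in\Delta_\psi\) be a point realizing \(d=d_{\mathcal K}(P,\Delta_\psi)\) up to a constant. Choose \(k\) with \(\rho_{k+1}\lesssim d\leq\rho_k\); then \(P\in Q_{M,\rho_k}(Q_0)\) and the box \(Q_{M,2}(Q_0)\) sits inside the original one after adjusting constants. This gives \(|w(P)|\leq C\theta^k\leq C d^\beta\) with \(\beta=\log(1/\theta)/\log C\), which is \eqref{eq:uniform-boundary-decay}.
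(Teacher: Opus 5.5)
Your skeleton matches the paper's: normalize to $P_0=0$, $r=1$, prove a one-step contraction $\sup_{\Omega_\psi\cap Q_{M,\rho/C}(Q_0)}|w|\le\theta\sup_{\Omega_\psi\cap Q_{M,2\rho}(Q_0)}|w|$ with $\theta=\theta(m,M)<1$ at every boundary point and scale, and iterate at a nearest boundary point. The iteration is fine. The gap is that the one-step estimate, i.e.\ the bound $\phi\le\theta$ near $Q_0$ uniformly in $\psi$, is the entire content of the lemma, and none of your three routes proves it.

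\emph{The compactness argument is circular.} If $\sup_{Q_{M,1/C}}\phi_j\to1$, interior estimates give convergence of $\phi_j$ only on compact subsets of $\Omega_{\psi_\infty}$. The near-maximizers may accumulate on $\Delta_{\psi_\infty}$, where you learn nothing. Even if they accumulate at an interior point, so that Bony's principle gives $\phi_\infty=1$ on a propagation set reaching the graph, you get a contradiction only if $\phi_\infty$ attains the value $0$ continuously on $\Delta_{\psi_\infty}$. That requires equicontinuity of the $\phi_j$ up to the moving graphs, which is exactly the uniform boundary decay being proved. Regularity of the individual boundary points of $\Omega_{\psi_\infty}$ does not help. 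In the paper the logic runs the other way: Proposition~\ref{prop:solution-compactness} obtains its boundary modulus from this lemma.

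\emph{The Harnack-chain route has the same defect.} Smallness of $\phi$ near the graph ``by regularity'' is not uniform in $\psi$, and chains of bounded length cannot reach points of the small box arbitrarily close to $\Delta_\psi$.

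\emph{The explicit barrier is never constructed, and the candidates you name fail.} A function of $x_m$ alone cannot work: for a flat graph it takes the same values near $Q_0$ as at points of the artificial faces close to the graph, e.g.\ on the initial time face. The $y$- and $t$-corrections have indefinite or wrong sign under $\mathcal K$, e.g.\ $\mathcal K(y_i^2)=2x_iy_i$ and $\mathcal K(-t)=1$.

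What is missing is the barrier behind \cite[Theorem~3.2]{LitsgardNystrom2022}, which the paper invokes. The intrinsic Lipschitz condition gives, at every $Q_0\in\Delta_\psi$ and every scale, an exterior box of relative size $c(m,M)$ below the graph. Place there the pole $Z$ of the fundamental solution $\Gamma$ of $\mathcal K$, at depth $\varepsilon\rho$ below $Q_0$ and at the slightly earlier time $t(Q_0)-(\varepsilon\rho)^2$. Let $\mu$ be the supremum of $\Gamma(\cdot,Z)$ over the part of $\partial_{\mathcal K}D$ where the data of $\phi$ are positive. Then
\[
 h=\frac{\Gamma(\cdot,Z)-\mu}{\sup_{\overline D}\Gamma(\cdot,Z)}
\]
is $\mathcal K$-harmonic in $D$ and satisfies $h\le1-\phi$ on $\partial_{\mathcal K}D$, hence in $D$. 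The upper bound $\Gamma(P,Z)\le C\|Z^{-1}\circ P\|_{\mathcal K}^{2-\mathfrak q}$, together with the matching lower bound $\Gamma(P,Z)\ge c(\varepsilon\rho)^{2-\mathfrak q}$ for $P$ close to $Q_0$, gives $h\ge c(m,M)>0$ on $\Omega_\psi\cap Q_{M,\rho/C}(Q_0)$. This needs $\varepsilon=\varepsilon(m,M)$ small and $C=C(m,M)$ large. Hence $\phi\le1-c$ there, with constants depending only on $m,M$ and no translation of the graph in $y_m$.

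Separately, your data for $\phi$ jump from $0$ to $1$ where the graph meets the artificial faces, so the continuous Dirichlet problem of Section~\ref{subsec:Kolmogorov-boundary} does not apply as stated. Take continuous data equal to $1$ on the artificial faces and vanishing on the graph inside $Q_{M,\rho}(Q_0)$.
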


\begin{proof}
After translation and dilation it suffices to consider \(P_0=0\) and
\(r=1\).  This is the box-local form of the boundary H\"older estimate
\cite[Theorem~3.2]{LitsgardNystrom2022}, specialized to \(A=I_m\).
That theorem is proved under the general intrinsic graph condition
\eqref{eq:LipK-condition}, without the structural assumption of independence
from \(y_m\).  For clarity, its standard iteration mechanism is recalled
briefly.  The graph condition gives, at every boundary point and every scale,
an exterior non-characteristic box whose relative size depends only on
\(m,M\). The standard barrier constructed from the fundamental solution of
\(\mathcal K\), followed by the comparison principle, therefore gives constants
\(c>1\) and \(\vartheta\in(0,1)\), depending only on \(m,M\), such that
\[
 \sup_{\Omega_\psi\cap Q_{M,s/c}(P_1)}|w|
 \leq
 \vartheta
 \sup_{\Omega_\psi\cap Q_{M,s}(P_1)}|w|
\]
whenever \(P_1\in\Delta_\psi\cap Q_{M,1}\) and the larger box is contained
in \(Q_{M,2}\). Iterating this inequality down to the first scale comparable
to \(d_{\mathcal K}(P,\Delta_\psi)\) yields
\eqref{eq:uniform-boundary-decay}, with
\(\beta=-\log\vartheta/\log c\). The construction of the exterior box and the
barrier never translates the graph in the \(y_m\)-direction; exact
cylindricity is therefore not used.
\end{proof}

\begin{proposition}
\label{prop:solution-compactness}
Let \(\psi_j\to\psi_\infty\) locally uniformly, with all graph functions
having intrinsic Lipschitz constant at most \(M\). Suppose that
\(w_j\in C(\overline{\Omega_j\cap Q_{M,2}})\) satisfies
\[
 \mathcal Kw_j=0\quad\text{in }\Omega_j\cap Q_{M,2},
 \qquad
 w_j=0\quad\text{on }\Delta_j\cap Q_{M,2},
\]
and
\begin{equation}
\label{eq:uniform-solution-bound}
 \sup_j\sup_{\Omega_j\cap Q_{M,2}}|w_j|<\infty.
\end{equation}
Extend \(w_j\) by zero to the part of \(Q_{M,2}\) below \(\Delta_j\). Then,
after passing to a subsequence, \(w_j\to w_\infty\) locally uniformly in
\(Q_{M,1}\), where
\[
 \mathcal Kw_\infty=0\quad\text{in }\Omega_\infty\cap Q_{M,1},
 \qquad
 w_\infty=0\quad\text{on }\Delta_\infty\cap Q_{M,1}.
\]
If every \(w_j\) is non-negative, then \(w_\infty\geq0\).
\end{proposition}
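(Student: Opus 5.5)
The plan is to combine interior equicontinuity, obtained from hypoellipticity, with the uniform boundary decay of Lemma~\ref{lem:uniform-boundary-decay}. Together these give equicontinuity of the zero extensions on compact subsets of \(Q_{M,1}\). Arzel\`a--Ascoli then yields a locally uniform limit, and passing to the limit in the weak formulation \eqref{eq:weak-energy-formulation} shows that the limit solves \(\mathcal K w_\infty=0\).

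\emph{Step 1: interior equicontinuity.} Fix \(K\Subset\Omega_\infty\cap Q_{M,1}\). By Corollary~\ref{cor:domain-convergence}, a fixed intrinsic neighbourhood \(K_\delta\) of \(K\) lies in \(\Omega_j\cap Q_{M,2}\) for all large \(j\). On \(K_\delta\) the \(w_j\) are uniformly bounded by \eqref{eq:uniform-solution-bound}. Standard interior estimates for \(\mathcal K\), which follow from hypoellipticity together with translation and dilation invariance (or from the fundamental-solution representation), then bound all derivatives of \(w_j\) on \(K\) uniformly. Hence \(\{w_j\}\) is equicontinuous on \(K\).

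\emph{Step 2: boundary equicontinuity.} This is the main step. Let \(P\in Q_{M,1}\) with \(d_{\mathcal K}(P,\Delta_j)=\rho\) small. Choose \(P_1\in\Delta_j\) nearest to \(P\) in the appropriate sense, and apply Lemma~\ref{lem:uniform-boundary-decay} at \(P_1\) with a fixed radius \(r_0\), chosen so that \(Q_{M,2r_0}(P_1)\subset Q_{M,2}\) whenever \(P_1\in Q_{M,1}\). This gives \(|w_j(P)|\le C(\rho/r_0)^\beta\sup|w_j|\), uniformly in \(j\), since the constants depend only on \(m\) and \(M\). The zero extension below \(\Delta_j\) is continuous because \(w_j\) vanishes continuously on \(\Delta_j\).

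To obtain a modulus of continuity for \(w_j\) on all of \(Q_{M,1}\), split pairs \(P,\widetilde P\) into two cases. If both points are at distance at least \(\rho\) from \(\Delta_j\), use Step~1 at scale \(\rho\); interior estimates rescaled to balls of radius \(\sim\rho\) give oscillation at most \(C(d_{\mathcal K}(P,\widetilde P)/\rho)\sup|w_j|\). Otherwise, use the decay bound \(\lesssim\rho^\beta\). Optimizing over \(\rho\) gives a uniform Hölder-type modulus.

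The delicate point is the uniformity of the near-boundary region. The lemma requires the full box \(Q_{M,2r_0}(P_1)\) to lie in the domain of definition, which is why we only claim convergence in the smaller box \(Q_{M,1}\). Arzel\`a--Ascoli and a diagonal argument then produce a subsequence \(w_j\to w_\infty\) uniformly on compact subsets of \(Q_{M,1}\).

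\emph{Step 3: identifying the limit.} For \(\varphi\in C_c^\infty(\Omega_\infty\cap Q_{M,1})\), the support of \(\varphi\) lies in \(\Omega_j\) for large \(j\) by Corollary~\ref{cor:domain-convergence}. Write \eqref{eq:weak-energy-formulation} in the fully distributional form \(\int w_j\,\mathcal K^*\varphi=0\), where \(\mathcal K^*\) is the formal adjoint; uniform convergence lets us pass to the limit, so \(\mathcal K w_\infty=0\) in the sense of distributions. By hypoellipticity, \(w_\infty\) is then smooth and hence an energy solution.

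For a point \(Q\in\Delta_\infty\cap Q_{M,1}\), local Hausdorff convergence gives points \(Q_j\) with \(d_{\mathcal K}(Q,\Delta_j)\to0\). The uniform decay bound from Step~2 then yields \(|w_j(Q)|\to0\), so \(w_\infty=0\) on \(\Delta_\infty\cap Q_{M,1}\), and continuity up to the boundary follows from the uniform modulus. Points strictly below \(\Delta_\infty\) lie below \(\Delta_j\) for large \(j\), so \(w_\infty=0\) there as well. Finally, non-negativity passes to pointwise limits, so \(w_j\ge0\) for all \(j\) implies \(w_\infty\ge0\).
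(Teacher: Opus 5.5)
Your proposal is correct and follows the same route as the paper. Both arguments use interior derivative bounds on compact subsets of the limit domain (via Corollary~\ref{cor:domain-convergence}), a common boundary modulus for the zero extensions from Lemma~\ref{lem:uniform-boundary-decay}, Arzel\`a--Ascoli, and identification of the limit through interior convergence and Hausdorff convergence of the graphs. Your two-case modulus (rescaled interior estimates at scale \(\rho\) versus boundary decay, then optimizing in \(\rho\)) and the distributional passage to the limit just make explicit details the paper leaves implicit.
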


\begin{proof}
On compact subsets of \(\Omega_\infty\cap Q_{M,1}\),
Corollary~\ref{cor:domain-convergence} and the interior estimates for
\(\mathcal K\) give uniform bounds for all derivatives of \(w_j\). Near the
graph, Lemma~\ref{lem:uniform-boundary-decay} gives a common modulus of
continuity for the zero extensions. Combining the two estimates and applying
the Arzel\`a-Ascoli theorem gives local uniform convergence, after passage to
a subsequence. Interior convergence implies
\(\mathcal Kw_\infty=0\), while the common boundary modulus and the Hausdorff
convergence of the graphs imply that \(w_\infty=0\) on
\(\Delta_\infty\).
\end{proof}

\subsection{One-sided estimates that do not use cylindricity}
\label{subsec:one-sided-estimates}

We next record two estimates which are available in arbitrary non-characteristic
intrinsic Lipschitz graph domains. Neither estimate uses independence of the
defining function from \(y_m\). The first is obtained from the Harnack chains
along the curves defining the reference points, and the second is the
scale-invariant one-sided Carleson estimate.

\begin{lemma}
\label{lem:one-sided-estimates}
Let \(M\geq1\). There exist constants
\[
 C_0=C_0(m,M)\geq1,
 \qquad
 \nu=\nu(m,M)>0,
 \qquad
 c_0=c_0(m,M)\geq1,
\]
such that the following holds. Let \(P_0\in\Delta_\psi\), let
\(0<r\leq s\), and assume that all boxes occurring below are contained in the
domain of definition of \(w\). If \(w\) is a non-negative solution which
vanishes continuously on the relevant graph portion, then
\begin{align}
 w(A^+_{r,\Lambda}(P_0))
 &\leq
 C_0\left(\frac{s}{r}\right)^\nu
 w(A^+_{s,\Lambda}(P_0)),
 \label{eq:scale-Harnack-plus}\\
 w(A^-_{s,\Lambda}(P_0))
 &\leq
 C_0\left(\frac{s}{r}\right)^\nu
 w(A^-_{r,\Lambda}(P_0)),
 \label{eq:scale-Harnack-minus}
\end{align}
and
\begin{equation}
\label{eq:one-sided-Carleson}
 \sup_{\Omega_\psi\cap Q_{M,s/c_0}(P_0)}w
 \leq
 C_0 w(A^+_{s,\Lambda}(P_0)).
\end{equation}
After increasing \(C_0\), one also has
\begin{equation}
\label{eq:easy-reference-direction}
 w(A^-_{s,\Lambda}(P_0))
 \leq C_0 w(A^+_{s,\Lambda}(P_0)).
\end{equation}
\end{lemma}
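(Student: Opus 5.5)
The plan is to reduce everything to the scale-one situation \(P_0=0\) by left translation and dilation, since Kolmogorov operators and the intrinsic graph class are invariant. Then I would prove the four estimates one at a time, using the controlled admissible curves of Lemma~\ref{lem:controlled-local-chains}, the interior Harnack inequality for \(\mathcal K\), and the boundary H\"older decay of Lemma~\ref{lem:uniform-boundary-decay}. None of these ingredients uses \(y_m\)-independence.

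For \eqref{eq:scale-Harnack-plus} and \eqref{eq:scale-Harnack-minus}, first take \(s\in[r,2r]\). Join \(A^+_{s,\Lambda}\) to \(A^+_{r,\Lambda}\) by the explicit interpolating curve from the proof of Lemma~\ref{lem:controlled-local-chains}. The time of \(A^+_{r,\Lambda}\) is \(r^2< s^2\), so the curve runs in the backward propagation direction, as required. The time gap \(T=s^2-r^2\) may be small, so for this step I would route through the central point \(A_{r,\Lambda}\), or through \(A^+_{r',\Lambda}\) with \(r'\) slightly larger than \(s\). Every endpoint discrepancy is then of order \(r\), \(r^3\), \(r^2\) in \(x\), \(y\), \(t\), including the \(y_m\)-component. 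The clearance estimate \eqref{eq:reference-path-clearance} and the choice \eqref{eq:reference-Lambda-choice} keep the curve at distance \(\gtrsim\Lambda r\) from the graph. Covering it by boundedly many Harnack cylinders gives \(w(A^+_{r,\Lambda})\le C_1 w(A^+_{s,\Lambda})\). For general \(s\), I iterate over the dyadic scales \(r,2r,\dots\); about \(\log_2(s/r)\) steps give the factor \(C_1^{\log_2(s/r)+1}=C_0(s/r)^\nu\) with \(\nu=\log_2C_1\). The backward estimate is symmetric, since \(A^-\) at the larger scale lies earlier in time, so the chain runs from \(A^-_{r,\Lambda}\) to \(A^-_{s,\Lambda}\).

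For \eqref{eq:easy-reference-direction}, part (i) of Lemma~\ref{lem:controlled-local-chains} already supplies an admissible chain from \(A^+_{s,\Lambda}\) through \(A_{s,\Lambda}\) to \(A^-_{s,\Lambda}\), with a bounded number of cylinders of radius \(\asymp s\). The forward Harnack inequality along this chain gives \(w(A^-_{s,\Lambda})\le C w(A^+_{s,\Lambda})\).

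The Carleson estimate \eqref{eq:one-sided-Carleson} is the main step. I would follow the standard Caffarelli--Fabes--Mortola--Salsa contradiction iteration. First, part (ii) of Lemma~\ref{lem:controlled-local-chains}, applied at a scale \(\rho\), shows that any point \(P\) with \(d_{\mathcal K}(P,\Delta_\psi)\ge\varepsilon\rho\) in \(Q_{M,c_0\rho}\) satisfies \(w(P)\le C_\varepsilon w(A^+_{\rho/a_0,\Lambda})\). Combined with \eqref{eq:scale-Harnack-plus}, this gives the polynomial bound \(w(P)\le C(s/d_{\mathcal K}(P,\Delta_\psi))^{\nu}w(A^+_{s,\Lambda})\) on \(Q_{M,s/c}\).

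Now suppose \(w(P_1)>Kw(A^+_{s,\Lambda})\) for some \(P_1\) with \(K\) large. The polynomial bound forces \(P_1\) to lie within distance \(\lesssim K^{-1/\nu}s\) of the graph. Let \(P^\partial_1\) be its boundary projection. Apply Lemma~\ref{lem:uniform-boundary-decay} in a box centred at \(P^\partial_1\) of radius comparable to \(K^{-\delta}s\), with \(\delta\) chosen so that \(\beta\) beats \(\nu\). This produces \(P_2\) in a slightly larger box with \(w(P_2)\ge 2w(P_1)\). Iterating yields a sequence \(P_k\) converging to the graph while staying inside \(Q_{M,s}\), since the displacements sum geometrically, with \(w(P_k)\to\infty\). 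That contradicts the continuous vanishing of \(w\) on the boundary.

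The delicate point is checking that the radii sum to less than the available margin, uniformly in \(m\) and \(M\). This is where \(c_0\) must be chosen large. No step of the argument translates the graph in \(y_m\), so the estimates hold for fully \(y_m\)-dependent graphs, consistent with \cite[Theorem~3.3]{LitsgardNystrom2022}.
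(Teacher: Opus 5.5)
Your proposal is correct in substance. For \eqref{eq:scale-Harnack-plus}--\eqref{eq:scale-Harnack-minus} it is the paper's argument: Harnack chains along curves joining reference points at geometrically spaced scales, with $O(1+\log(s/r))$ cylinders. It differs from the paper on the other two estimates.

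\textbf{The Carleson estimate.} The paper does not prove \eqref{eq:one-sided-Carleson}. It cites Cinti--Nystr\"om--Polidoro and Theorem~3.3 of Litsg{\aa}rd--Nystr\"om, and only remarks that the proof never translates the graph in $y_m$. Your Caffarelli--Fabes--Mortola--Salsa iteration is the standard argument behind those citations, and it makes that remark explicit. Two pieces of bookkeeping are only sketched.
\begin{itemize}
\item The interior bound $w(P)\le C\bigl(s/d_{\mathcal K}(P,\Delta_\psi)\bigr)^\nu w(A^+_{s,\Lambda}(P_0))$ needs re-centring at $P^\partial$. A point close to the graph but tangentially far from $P_0$ lies in no box $Q_{M,c\rho}(P_0)$ with $\rho$ comparable to $d_{\mathcal K}(P,\Delta_\psi)$. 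So you must chain scales at $P^\partial$ and then link back to $A^+_{s,\Lambda}(P_0)$ via Lemma~\ref{lem:controlled-local-chains}(ii).
\item The accumulated displacements should be summed in coordinates, not by iterating the quasi-triangle inequality. In coordinates, step $k$ shifts $y$ by $O(\rho_k^3+s\rho_k^2)$, which is summable.
\end{itemize}

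\textbf{The easy reference direction.} For \eqref{eq:easy-reference-direction}, the paper combines \eqref{eq:scale-Harnack-minus} at scales $\delta s\le s$ with \eqref{eq:one-sided-Carleson}, choosing $\delta$ so that $A^-_{\delta s,\Lambda}(P_0)\in Q_{M,s/c_0}(P_0)$. Your route is Harnack along the chain $A^+_{s,\Lambda}\to A_{s,\Lambda}\to A^-_{s,\Lambda}$ of Lemma~\ref{lem:controlled-local-chains}(i). It is valid and more elementary, since it uses only the interior Harnack inequality in its causal direction. The paper's route merely records that this direction also follows from the one-sided Carleson theory, as used in Remark~\ref{rem:hard-reference-direction}.

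\textbf{An error to remove.} Neither detour you propose for $s\in[r,2r]$ is admissible. The point $A_{r,\Lambda}$ lies at time $0<r^2$, and $A^+_{r',\Lambda}$ with $r'>s$ lies later than $A^+_{s,\Lambda}$. Either detour would therefore require bounding a later value by an earlier one. That reverse inequality is exactly the reference balance the paper works to establish, so it cannot be assumed here.

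No detour is needed. With $T=s^2-r^2$, the direct interpolating curve from $A^+_{s,\Lambda}$ to $A^+_{r,\Lambda}$ satisfies
\begin{itemize}
\item $x_m$-discrepancy $\Lambda(s-r)\le\Lambda T^{1/2}$;
\item drift-corrected discrepancy $\widetilde y_m-y^0_m-\frac T2(x^0_m+\widetilde x_m)=\frac{\Lambda}{6}(s-r)^3\le\Lambda T^{3/2}$;
\item coefficient $b_m>0$, so $x_m\ge\Lambda r$ along the curve.
\end{itemize}
This gives clearance at least $\Lambda r/2\gtrsim T^{1/2}$. Hence a bounded number of Harnack cylinders of radius comparable to $T^{1/2}$ suffices, uniformly as $s\downarrow r$. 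The same computation applies to the backward points.
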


\begin{proof}
The curves joining reference points of different scales remain at an intrinsic
distance comparable to the running scale from the graph. Covering each curve
by a geometric sequence of intrinsic balls and applying the interior Harnack
inequality gives \eqref{eq:scale-Harnack-plus} and
\eqref{eq:scale-Harnack-minus}; the number of balls is bounded by
\(C(1+\log(s/r))\), which gives the power \((s/r)^\nu\).

Estimate \eqref{eq:one-sided-Carleson} is the scale-invariant Carleson estimate
for non-negative solutions in non-characteristic intrinsic Lipschitz graph
domains proved in \cite{CintiNystromPolidoro2013}; see also
\cite[Theorem~3.3]{LitsgardNystrom2022} in precisely the graph class
\eqref{eq:LipK-condition}. Its proof uses exterior
boxes, boundary decay, and a maximal-scale argument, and is valid for a
defining function depending on all variables other than the non-degenerate
graph variable. In particular, it does not use the additional cylindrical
assumption imposed in the boundary comparison theorem.

To prove \eqref{eq:easy-reference-direction}, choose
\(\delta=\delta(m,M)\in(0,1)\) so small that
\[
 A^-_{\delta s,\Lambda}(P_0)
 \in Q_{M,s/c_0}(P_0).
\]
By \eqref{eq:scale-Harnack-minus}, with \(r=\delta s\), and then by
\eqref{eq:one-sided-Carleson},
\[
 w(A^-_{s,\Lambda}(P_0))
 \leq C_0\delta^{-\nu}w(A^-_{\delta s,\Lambda}(P_0))
 \leq C w(A^+_{s,\Lambda}(P_0)).
\]
Since \(\delta\) depends only on \(m,M\), it can be absorbed into \(C_0\).
\end{proof}

\begin{remark}
\label{rem:hard-reference-direction}

Lemma~\ref{lem:one-sided-estimates} shows that the failure of uniform
comparability between the two reference values can occur in only one
direction. More precisely, the one-sided Carleson estimate gives
\[
\frac{w(A^-_{r,\Lambda}(P_0))}
     {w(A^+_{r,\Lambda}(P_0))}
\leq C,
\]
uniformly over the relevant points, scales, domains, and solutions.
Consequently, if the two reference values fail to be uniformly comparable
along a sequence, then, after passing to a subsequence, necessarily
\[
\frac{w(A^+_{r,\Lambda}(P_0))}
     {w(A^-_{r,\Lambda}(P_0))}
\longrightarrow \infty.
\]
Thus ``only'' means that the reciprocal quotient is already uniformly
controlled by the established one-sided theory. The new perturbative
difficulty is precisely to obtain a uniform upper bound for the quotient in
the latter display.
\end{remark}

\subsection{The collapsing-scale obstruction}
\label{subsec:collapsing-scales}

For a positive solution \(w\), define
\[
 H_w(P_0,r)=\mathfrak h_{r,\Lambda}(w;P_0).
\]
The estimates in the cylindrical theory imply that if
\(H_w(0,1)\leq H\), then \(H_w(P_0,r)\) remains bounded, quantitatively, for
boundary points \(P_0\) and scales \(r\) in a smaller box. The perturbative
counterpart is the following statement.

\begin{lemma}
\label{lem:reference-balance}
Let \(M,H\geq1\). There exist
\[
 \kappa=\kappa(m,M)\in(0,1),
 \qquad
 \eta_0=\eta_0(m,M,H)>0,
 \qquad
 H_1=H_1(m,M,H)<\infty
\]
such that the following holds. Assume that \(0\in\Delta_\psi\) and
\[
 \operatorname{cyl}_{2,0,1}(\psi)\leq\eta_0.
\]
Let \(w\) be a non-negative solution in
\(\Omega_\psi\cap Q_{M,2}\) which vanishes continuously on the graph portion
and satisfies
\[
 w(A^\pm_{1,\Lambda})>0,
 \qquad
 H_w(0,1)\leq H.
\]
Then, whenever
\[
 P_0\in\Delta_\psi\cap Q_{M,\kappa},
 \qquad
 0<r\leq\kappa,
 \qquad
 Q_{M,2r}(P_0)\subset Q_{M,2},
\]
one has
\[
 w(A^\pm_{r,\Lambda}(P_0))>0,
 \qquad
 w>0\quad\text{in }\mathscr D_{\kappa r}(P_0),
\]
and
\begin{equation}
\label{eq:local-reference-balance}
 H_w(P_0,r)\leq H_1.
\end{equation}
\end{lemma}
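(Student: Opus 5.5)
We argue by contradiction and compactness, following the reduction sketched after Theorem~\ref{thm:perturbative-BHI}. Positivity comes first and is cheap once balance is known: by Lemma~\ref{lem:controlled-local-chains}(ii), each point of \(\mathscr D_{\kappa r}(P_0)\) is joined by a controlled admissible curve from \(A^+_{r/a_0,\Lambda}(P_0)\). Bony's strong minimum principle then gives \(w>0\) there as soon as the forward reference values are positive. Those values are propagated down the scales from \(w(A^+_{1,\Lambda})>0\) by \eqref{eq:scale-Harnack-minus} together with the chains of Lemma~\ref{lem:controlled-local-chains}(i), which link \(A^-_{1,\Lambda}\) to nearby boundary points. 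So the content is \eqref{eq:local-reference-balance}. By Remark~\ref{rem:hard-reference-direction}, \eqref{eq:easy-reference-direction} already bounds \(w(A^-_{r,\Lambda}(P_0))/w(A^+_{r,\Lambda}(P_0))\), and only the upper bound for the reciprocal ratio has to be proved.

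Suppose the bound fails. Then there are graphs \(\psi_j\) with \(\operatorname{cyl}_{2,0,1}(\psi_j)\le j^{-1}\), solutions \(w_j\) with \(H_{w_j}(0,1)\le H\), normalized by \(w_j(A^+_{1,\Lambda})=1\), and pairs \((P_j,r_j)\) in the admissible range with \(H_{w_j}(P_j,r_j)>j\).

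\emph{Regime 1: \(r_j\ge r_0>0\) along a subsequence.} The Carleson estimate \eqref{eq:one-sided-Carleson} and the normalization give uniform bounds on \(Q_{M,2-\epsilon}\). Propositions~\ref{prop:graph-compactness} and \ref{prop:solution-compactness} then produce a \(y_m\)-independent limit graph \(\psi_\infty\) and a limit solution \(w_\infty\ge0\) vanishing on \(\Delta_\infty\), with \(H_{w_\infty}(0,1)\le H\). Interior convergence at the reference points forces \(w_\infty(A^-_{r_\infty,\Lambda}(P_\infty))=0<w_\infty(A^+_{r_\infty,\Lambda}(P_\infty))\). This contradicts the cylindrical two-sided comparison theory (\cite{NystromPolidoro2016}, \cite[Theorems~3.4--3.6]{LitsgardNystrom2022}) applied to \(w_\infty\).

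\emph{Regime 2: \(r_j\to0\).} For each \(j\), replace \((P_j,r_j)\) by a maximal bad scale \(\rho_j\ge r_j\), namely the largest dyadic scale at some boundary point of the fixed smaller box where \(H\) exceeds a large threshold \(K\). This is possible because Regime~1, applied with threshold \(K\), shows that maximal bad scales also tend to zero. Rescale \(w_j\) at \((P_j,\rho_j)\) through \(\psi_{P_j,\rho_j}\) and normalize so that the forward reference value equals \(1\). Nested-blowup invariance in Lemma~\ref{lem:cylindrical-defect-rescaling} keeps the defects small on every fixed box. Maximality gives balance \(H\le K\) at all larger scales \(2^k\) and all nearby boundary points. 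Combining this with \eqref{eq:scale-Harnack-plus} and \eqref{eq:one-sided-Carleson} yields a growth bound \(\sup_{Q_{M,2^k}}w_j\le C2^{k\nu'}\) on expanding boxes, uniformly in \(j\). A diagonal compactness argument then gives a global non-negative polynomial-growth Dirichlet solution \(W\) in an unbounded \(y_m\)-independent \(\operatorname{Lip}_{\mathcal K}\) graph domain. It satisfies \(W(A^+_{1,\Lambda})=1\), and \(W(A^-_{1,\Lambda})=0\) once \(K\to\infty\) is arranged diagonally with \(j\). The global cylindrical rigidity theorem of Section~\ref{sec:global-rigidity} excludes exactly this configuration, which gives the contradiction.

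The main obstacle is Regime~2. The delicate point is the maximal-scale selection, which must deliver balance \emph{at all coarser scales and nearby base points simultaneously}, so that the growth estimate on expanding boxes is uniform. One must also make sure the zero at \(A^-\) survives in the limit while the normalization at \(A^+\) does not degenerate. I would first try choosing \((P_j,\rho_j)\) to maximize \(\rho\) over all bad pairs with \(\rho\) in a dyadic range. Away from the outer box, the balance at larger scales is then supplied by the hypothesis \(H_w(0,1)\le H\) transported by Harnack chains.
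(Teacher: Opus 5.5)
Your overall architecture matches the paper's: contradiction, a fixed-scale regime, a collapsing regime with maximal-scale selection, secondary blow-up, and global rigidity. Selecting the coarsest bad dyadic scale at threshold $K$ is a legitimate alternative to the paper's selection. However, two steps fail as written.

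\textbf{Positivity runs the wrong way in time.} Bony's principle propagates a zero at a point into its propagation set, and that set lies in the point's past. A curve from $A^+_{r/a_0,\Lambda}(P_0)$ to $P$ therefore only gives $w(P)>0\Rightarrow w(A^+_{r/a_0,\Lambda}(P_0))>0$. To obtain $w(P)>0$ you must use the other curve in Lemma~\ref{lem:controlled-local-chains}(ii), from $P$ to $A^-_{r/a_0,\Lambda}(P_0)$, together with $w(A^-_{r/a_0,\Lambda}(P_0))>0$. For the same reason, the local reference values cannot be obtained from $w(A^+_{1,\Lambda})>0$: that point lies at time $1$, later than every local point. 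They must come from $w(A^-_{1,\Lambda})>0$ along backward chains $A^+_{r,\Lambda}(P_0)\to A^-_{r,\Lambda}(P_0)\to\cdots\to A^-_{1,\Lambda}$. This is how the paper argues.

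\textbf{The growth estimate and its uniformity in $K$.} Estimate \eqref{eq:scale-Harnack-plus} bounds $w(A^+_{r,\Lambda})$ by $w(A^+_{s,\Lambda})$ for $r\le s$. That is a lower bound at coarse scales and cannot produce growth. Write $P_j'$ for your selected bad point. For a dyadic $s\in(\rho_j,s_*]$, the correct chain is balance followed by \eqref{eq:scale-Harnack-minus}:
\[
 w(A^+_{s,\Lambda}(P_j'))\le K\,w(A^-_{s,\Lambda}(P_j'))\le KC_0\Bigl(\frac{s}{\rho_j}\Bigr)^{\nu}w(A^-_{\rho_j,\Lambda}(P_j')).
\]
If you close this with \eqref{eq:easy-reference-direction}, the constant is $KC_0^2$. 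That constant blows up under the diagonal limit $K=K_j\to\infty$ that you need for $W(A^-_{1,\Lambda})=0$, so uniformity in $j$, and with it compactness, is lost.

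The missing step is to use badness at the selected scale with the same threshold: $w(A^-_{\rho_j,\Lambda}(P_j'))<K^{-1}w(A^+_{\rho_j,\Lambda}(P_j'))$. Then $K$ cancels and
\[
 w(A^+_{s,\Lambda}(P_j'))\le C_0\Bigl(\frac{s}{\rho_j}\Bigr)^{\nu}w(A^+_{\rho_j,\Lambda}(P_j')),
\]
which is your analogue of \eqref{eq:maximal-reference-growth}. The one-sided Carleson estimate at the single point $P_j'$ then gives growth on expanding boxes. Estimate \eqref{eq:scale-Harnack-plus} is needed only to round up to dyadic scales. Balance at nearby base points is never used. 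The delicate point is this cancellation, not the spatial uniformity you single out.

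\textbf{Comparison with the paper.} The paper maximizes $\Phi_j(s)=s^{-\nu}w_j(A^+_{s,\Lambda}(P_j))$ at a fixed base point. This yields \eqref{eq:maximal-reference-growth} directly, which is exactly what the Carleson estimate consumes. Badness survives because the factors $(r_j/\rho_j)^{\nu}$ from the selection and from \eqref{eq:scale-Harnack-minus} cancel. No threshold, no diagonalization in $K$, and no coarse-scale balance are required. Your selection, once repaired, additionally gives balance at all coarser scales. It costs the same-threshold cancellation above, a diagonal choice $K_j\to\infty$ with $K_j\lesssim j$, and the fixed-scale regime to force the maximal bad scales to zero.

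A minor point: in Regime~1, normalizing at $A^+_{1,\Lambda}$ and applying \eqref{eq:one-sided-Carleson} bounds $w_j$ only on $Q_{M,1/c_0}$, not on $Q_{M,2-\epsilon}$, since nothing controls $w_j$ after the time of $A^+_{1,\Lambda}$. This is harmless, because all bad pairs lie in $Q_{M,\kappa}$.
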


Lemma~\ref{lem:reference-balance} is the perturbative substitute for the two
uses of exact independence from \(y_m\) in the cylindrical proof. Once it is
available, the localized weak comparison principle can be applied at every
boundary point and every smaller scale with uniform constants. The usual
oscillation reduction for \(v/u\) then proves
Theorem~\ref{thm:perturbative-BHI}.  By
\cite[Theorems~3.2-3.3 and Lemma~9.1]{LitsgardNystrom2022}, boundary decay,
the one-sided Carleson estimate, and localized weak comparison are all valid
under the general graph condition \eqref{eq:LipK-condition}.  Consequently,
smallness of the cylindrical defect is needed in this proof only for the
uniform reference-point balance.

We next reduce Lemma~\ref{lem:reference-balance} to one explicit growth
estimate. Suppose that the lemma fails for a sequence with
\(\operatorname{cyl}_{2,0,1}(\psi_j)\to0\). In view of
Remark~\ref{rem:hard-reference-direction}, after passing to a subsequence,
there exist \(P_j\in\Delta_j\) and scales \(0<\rho_j\leq\kappa\) such that
\begin{equation}
\label{eq:bad-forward-ratio}
 \frac{w_j(A^+_{\rho_j,\Lambda}(P_j))}
 {w_j(A^-_{\rho_j,\Lambda}(P_j))}
 \longrightarrow\infty.
\end{equation}
We call such scales \(\rho_j\) \emph{bad scales}. If \(\inf_j\rho_j>0\), then
Propositions~\ref{prop:graph-compactness}-\ref{prop:solution-compactness},
the one-sided Carleson estimate, and the local form of the cylindrical
comparison theorem
\cite[Theorem~1.1 and Section~6]{NystromPolidoro2016}, applied to the limiting
domain and limiting solutions, give a contradiction.  Consequently, after passing to
a further subsequence, we may assume that \(\rho_j\to0\).

The scales in \eqref{eq:bad-forward-ratio} must now be replaced by maximal
scales. This is essential, since normalization at an arbitrary bad scale gives no
control of the solution on expanding boxes. Fix
\(s_*=s_*(m,M)>0\) so small that
\[
 Q_{M,4s_*}(P_j)\subset Q_{M,2}
\]
for all \(j\), after decreasing the localization constant \(\kappa\) by a
factor depending only on \(m\) and \(M\). Let
\(\nu\) be as in Lemma~\ref{lem:one-sided-estimates} and set
\[
 \Phi_j(s)
 =s^{-\nu}w_j(A^+_{s,\Lambda}(P_j)),
 \qquad \rho_j\leq s\leq s_*.
\]
Choose
\begin{equation}
\label{eq:maximal-selected-scale}
 r_j
 =
 \max\bigl\{s\in[\rho_j,s_*]:
 \Phi_j(s)\geq\Phi_j(\rho_j)\bigr\}.
\end{equation}
By continuity, \(r_j\) is well defined, and its maximality gives
\begin{equation}
\label{eq:maximal-reference-growth}
 w_j(A^+_{s,\Lambda}(P_j))
 \leq
 \left(\frac{s}{r_j}\right)^\nu
 w_j(A^+_{r_j,\Lambda}(P_j)),
 \qquad r_j\leq s\leq s_*.
\end{equation}
Moreover, \eqref{eq:scale-Harnack-minus} and
\(\Phi_j(r_j)\geq\Phi_j(\rho_j)\) imply
\begin{equation}
\label{eq:selected-ratio-diverges}
 \frac{w_j(A^+_{r_j,\Lambda}(P_j))}
 {w_j(A^-_{r_j,\Lambda}(P_j))}
 \geq
 C_0^{-1}
 \frac{w_j(A^+_{\rho_j,\Lambda}(P_j))}
 {w_j(A^-_{\rho_j,\Lambda}(P_j))}
 \longrightarrow\infty.
\end{equation}
In particular, the selected scales are still bad. Furthermore, we can without loss of generality also assume
\(r_j\to0\). Indeed, if a subsequence were bounded from below, the fixed-scale
compactness argument described above would contradict
\eqref{eq:selected-ratio-diverges}.

Translating by \(P_j\), dilating by \(r_j^{-1}\), and normalizing at the
forward reference point, put
\[
 \widehat\psi_j=(\psi_j)_{P_j,r_j},
 \qquad
 \widehat\Omega_j=\Omega_{\widehat\psi_j},
\]
and let
\[
 \widehat G_j
 =
 \delta_{1/r_j}
 \bigl(P_j^{-1}\circ(\Omega_{\psi_j}\cap Q_{M,2})\bigr).
\]
Thus \(\widehat G_j\) is the portion of the rescaled graph domain on which
the rescaled solution is defined; these regions exhaust
\(\widehat\Omega_j\) as \(j\to\infty\). We define
\begin{equation}
\label{eq:secondary-blow-up}
 W_j(P)
 =
 \frac{w_j(P_j\circ\delta_{r_j}P)}
 {w_j(A^+_{r_j,\Lambda}(P_j))},
 \qquad
 P\in\widehat G_j.
\end{equation}
Here \(\widehat\psi_j\) is the rescaled defining function introduced in
\eqref{eq:rescaled-graph}. By the maximal-scale selection,
\[
 \Phi_j(r_j)\geq\Phi_j(\rho_j)>0.
\]
Consequently,
\begin{equation}
\label{eq:positive-forward-normalization}
 w_j(A^+_{r_j,\Lambda}(P_j))
 =
 r_j^\nu\Phi_j(r_j)>0,
\end{equation}
and the normalization in \eqref{eq:secondary-blow-up} is well defined. Since
\[
 P_j\circ\delta_{r_j}(A^\pm_{1,\Lambda})
 =
 A^\pm_{r_j,\Lambda}(P_j),
\]
we have
\[
 W_j(A^+_{1,\Lambda})=1.
\]
Moreover, \eqref{eq:selected-ratio-diverges} gives
\[
 W_j(A^-_{1,\Lambda})
 =
 \frac{w_j(A^-_{r_j,\Lambda}(P_j))}
 {w_j(A^+_{r_j,\Lambda}(P_j))}
 \longrightarrow0.
\]
Thus
\begin{equation}
\label{eq:secondary-normalization}
 W_j(A^+_{1,\Lambda})=1,
 \qquad
 W_j(A^-_{1,\Lambda})\longrightarrow0.
\end{equation}
By Lemma~\ref{lem:cylindrical-defect-rescaling}, the defect of the rescaled
graph on a normalized box of radius \(R\) is
\[
 \operatorname{cyl}_{R,0,1}
 \bigl((\psi_j)_{P_j,r_j}\bigr)
 =
 \operatorname{cyl}_{R,P_j,r_j}(\psi_j).
\]
Fix \(R\geq1\). Since \(P_j\) remains in a fixed interior coordinate window
and \(r_j\to0\), the physical region corresponding to this normalized box
is contained in the coordinate region used to define
\(\operatorname{cyl}_{2,0,1}(\psi_j)\) for all sufficiently large \(j\).
Consequently,
\[
 \operatorname{cyl}_{R,0,1}
 \bigl((\psi_j)_{P_j,r_j}\bigr)
 \leq
 \operatorname{cyl}_{2,0,1}(\psi_j)
 \longrightarrow0.
\]
Thus the rescaled defects tend to zero on every fixed normalized box. After
passing to a subsequence, the rescaled graph functions therefore converge
locally to a defining function independent of \(y_m\).

Because \(r_j\to0\), a box of fixed size in the original coordinates becomes
a box of radius comparable to \(r_j^{-1}\) in the secondary blow-up
coordinates. To obtain a global limiting solution, we therefore need uniform
control of \(W_j\) on an expanding family of boxes. This is provided by the
following estimate.

\begin{lemma}
\label{lem:expanding-growth}
There exist constants \(C\geq1\), \(\gamma>0\), and \(c>0\), depending only
on \(m,M,H\), such that the functions defined in
\eqref{eq:secondary-blow-up} satisfy
\begin{equation}
\label{eq:polynomial-growth}
 \sup_{\widehat G_j\cap Q_{M,R}}W_j
 \leq CR^\gamma
\end{equation}
whenever
\[
 1\leq R\leq cr_j^{-1}
\]
and the physical boxes required in the corresponding one-sided Carleson
estimate are contained in the original domain of definition.
\end{lemma}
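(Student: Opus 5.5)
The plan is to combine the one-sided Carleson estimate \eqref{eq:one-sided-Carleson} at the physical scale \(s=c_0Rr_j\) with the maximal-scale bound \eqref{eq:maximal-reference-growth}, and then undo the normalization in \eqref{eq:secondary-blow-up}.

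\emph{Step 1 (reduction to a physical sup).} Fix \(1\leq R\leq c r_j^{-1}\), with \(c\) small enough that \(s:=c_0Rr_j\leq s_*\). By left translation and the dilation \(\delta_{r_j}\),
\[
 \sup_{\widehat G_j\cap Q_{M,R}}W_j
 =\frac{\sup_{\Omega_{\psi_j}\cap Q_{M,Rr_j}(P_j)}w_j}{w_j(A^+_{r_j,\Lambda}(P_j))}.
\]
The right-hand supremum is taken over \(\Omega_{\psi_j}\cap Q_{M,s/c_0}(P_j)\). The assumption \(Q_{M,4s_*}(P_j)\subset Q_{M,2}\), together with the hypothesis that the Carleson boxes lie in the domain of definition, allows us to apply \eqref{eq:one-sided-Carleson} at scale \(s\). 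This gives the bound \(C_0\,w_j(A^+_{s,\Lambda}(P_j))\).

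\emph{Step 2 (maximal-scale growth).} Since \(r_j\leq s\leq s_*\), the bound \eqref{eq:maximal-reference-growth} applies and yields
\[
 w_j(A^+_{s,\Lambda}(P_j))\leq (c_0R)^\nu\, w_j(A^+_{r_j,\Lambda}(P_j)).
\]
Dividing by the positive normalization \eqref{eq:positive-forward-normalization} then gives \eqref{eq:polynomial-growth} with \(\gamma=\nu\) and \(C=C_0c_0^\nu\).

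\emph{Step 3 (the case \(s<r_j\)).} If \(c_0R<1\), that is if \(s<r_j\), we instead apply the Carleson estimate at scale \(r_j\). This bounds the sup over \(Q_{M,r_j/c_0}\) by \(C_0\) times the normalization. For smaller \(R\), the box \(Q_{M,Rr_j}\) is covered by finitely many boxes of this type together with interior Harnack chains from Lemma~\ref{lem:controlled-local-chains}(i). Alternatively, and more simply, we may use \eqref{eq:scale-Harnack-plus} to compare the forward reference values at scales \(c_0r_j\) and \(r_j\), up to a constant depending on \(m,M\).

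In every case the constants depend only on \(m,M\), which is stronger than the claimed dependence on \(m,M,H\).

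\emph{Main obstacle.} The delicate point is bookkeeping rather than analysis. We must make sure that the Carleson estimate is genuinely valid for the graphs \(\psi_j\) with \(y_m\)-dependence; this holds by \cite[Theorem~3.3]{LitsgardNystrom2022}, as recorded in Lemma~\ref{lem:one-sided-estimates}. We must also check that \(P_j\) lies deep enough in \(Q_{M,2}\) that the boxes \(Q_{M,2s}(P_j)\) used for \(s\) up to \(s_*\) remain in the domain of definition. This is exactly what the choice of \(s_*\) and the reduction of \(\kappa\) provide. Finally, the upper restriction \(R\leq cr_j^{-1}\) is essential, because \eqref{eq:maximal-reference-growth} is only available up to the scale \(s_*\).
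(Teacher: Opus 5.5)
Your Steps 1 and 2 are exactly the paper's argument: the one-sided Carleson estimate at the physical scale $s=c_0Rr_j$, the maximal-scale bound \eqref{eq:maximal-reference-growth}, and division by the positive forward normalization, which give $\gamma=\nu$, $C=C_0c_0^\nu$ and $c=s_*/c_0$, depending only on $m,M$. Your Step 3 is superfluous, because Lemma~\ref{lem:one-sided-estimates} takes $c_0\geq1$, so $c_0R\geq1$ and $s\geq r_j$ hold automatically whenever $R\geq1$.
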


\begin{proof}
We prove the estimate with \(\gamma=\nu\), where \(\nu\) is the exponent in
Lemma~\ref{lem:one-sided-estimates}. Let \(C_0\) and \(c_0\) be the constants
in \eqref{eq:one-sided-Carleson}. Fix \(R\geq1\) and set $s=c_0Rr_j$. We assume that
\begin{equation}
\label{eq:growth-admissible-R}
 c_0Rr_j\leq s_*,
\end{equation}
so that \(r_j\leq s\leq s_*\) and all boxes needed below lie in the original
domain of definition. Applying \eqref{eq:one-sided-Carleson} at the boundary point \(P_j\) and
physical scale \(s\), and using
\[
 Q_{M,s/c_0}(P_j)=Q_{M,Rr_j}(P_j),
\]
we obtain
\[
 \sup_{\Omega_{\psi_j}\cap Q_{M,Rr_j}(P_j)}w_j
 \leq
 C_0w_j(A^+_{c_0Rr_j,\Lambda}(P_j)).
\]
Since \(r_j\leq c_0Rr_j\leq s_*\), the maximal-scale estimate
\eqref{eq:maximal-reference-growth} gives
\[
 w_j(A^+_{c_0Rr_j,\Lambda}(P_j))
 \leq
 (c_0R)^\nu
 w_j(A^+_{r_j,\Lambda}(P_j)).
\]
Consequently,
\begin{equation}
\label{eq:physical-expanding-growth}
 \sup_{\Omega_{\psi_j}\cap Q_{M,Rr_j}(P_j)}w_j
 \leq
 C_0c_0^\nu R^\nu
 w_j(A^+_{r_j,\Lambda}(P_j)).
\end{equation}
Under the change of variables
\[
 P=P_j\circ\delta_{r_j}(\widehat P),
\]
the box \(Q_{M,R}\) is mapped onto \(Q_{M,Rr_j}(P_j)\), and
\[
 \widehat\Omega_j
 =
 \delta_{1/r_j}\bigl(P_j^{-1}\circ\Omega_{\psi_j}\bigr).
\]
Dividing \eqref{eq:physical-expanding-growth} by the positive normalization
factor \(w_j(A^+_{r_j,\Lambda}(P_j))\) therefore yields
\[
 \sup_{\widehat G_j\cap Q_{M,R}}W_j
 \leq
 C_0c_0^\nu R^\nu.
\]
Finally, \eqref{eq:growth-admissible-R} holds whenever
\[
 1\leq R\leq cr_j^{-1},
 \qquad
 c=\frac{s_*}{c_0}.
\]
The choice of \(s_*\) ensures that all physical boxes used above are
contained in the original domain of definition. This proves
\eqref{eq:polynomial-growth}.
\end{proof}

We now record the compactness consequence of the expanding-growth estimate.

\begin{lemma}
\label{lem:secondary-blow-up-limit}
Let \(W_j\) be the functions defined in
\eqref{eq:secondary-blow-up}, with \(\widehat\psi_j\),
\(\widehat\Omega_j\), and \(\widehat G_j\) as above.
After passing to a subsequence, there exist an unbounded intrinsic
\(\operatorname{Lip}_{\mathcal K}\) graph domain
\(\Omega_\infty=\Omega_{\psi_\infty}\), whose defining function is
independent of \(y_m\), and a function
\(W_\infty\in C(\overline{\Omega_\infty})\) such that
\(\widehat\psi_j\to\psi_\infty\) locally uniformly and the zero extensions
of \(W_j\) converge locally uniformly to
\(W_\infty\mathbf 1_{\Omega_\infty}\). Here the zero extension is
taken below the graph within the expanding coordinate region on which
\(W_j\) is defined; every fixed compact set is contained in that region for
all sufficiently large \(j\). Moreover,
\begin{equation}
\label{eq:secondary-limit-equation}
 \mathcal KW_\infty=0
 \quad\text{in }\Omega_\infty,
 \qquad
 W_\infty=0
 \quad\text{on }\partial\Omega_\infty,
 \qquad
 W_\infty\geq0,
\end{equation}
and
\begin{equation}
\label{eq:secondary-limit-growth}
 \sup_{\Omega_\infty\cap Q_{M,R}}W_\infty
 \leq CR^\gamma,
 \qquad R\geq1.
\end{equation}
Finally,
\begin{equation}
\label{eq:secondary-limit-normalization}
 W_\infty(A^+_{1,\Lambda})=1,
 \qquad
 W_\infty(A^-_{1,\Lambda})=0.
\end{equation}
\end{lemma}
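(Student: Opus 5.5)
The proof combines a diagonal compactness argument with the local tools already established: Proposition~\ref{prop:graph-compactness} for the graphs, Proposition~\ref{prop:solution-compactness} on each fixed box, and Lemma~\ref{lem:expanding-growth} for uniform bounds.

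First, the graphs. Each \(\widehat\psi_j=(\psi_j)_{P_j,r_j}\) satisfies \(\widehat\psi_j(0)=0\) and has intrinsic Lipschitz constant \(M\). Proposition~\ref{prop:graph-compactness} therefore gives a subsequence converging locally uniformly to an intrinsic \(\operatorname{Lip}_{\mathcal K}\) function \(\psi_\infty\) with constant \(M\). For the independence from \(y_m\), I use the computation preceding Lemma~\ref{lem:expanding-growth}: for every fixed \(R\geq1\), \(\operatorname{cyl}_{R,0,1}(\widehat\psi_j)\to0\). Passing to the limit in the defining quotient, exactly as in the last step of the proof of Proposition~\ref{prop:graph-compactness}, shows that \(\psi_\infty\) is independent of \(y_m\) on \(\mathcal T_{M,R}\). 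Since \(R\) is arbitrary, this holds globally. Corollary~\ref{cor:domain-convergence} then gives the local Hausdorff convergence of \(\widehat\Delta_j\) to \(\Delta_\infty\).

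Next, the solutions, via a diagonal argument over \(R=2^k\). Because \(r_j\to0\) and \(P_j\) stays in a fixed interior window, each normalized box \(Q_{M,2R}\) lies in \(\widehat G_j\cup\{x_m\le\widehat\psi_j\}\) for large \(j\). Lemma~\ref{lem:expanding-growth} (with \(2R\le cr_j^{-1}\)) bounds \(0\le W_j\le C(2R)^\gamma\) there, and \(W_j\) vanishes continuously on \(\widehat\Delta_j\cap Q_{M,2R}\). I apply Proposition~\ref{prop:solution-compactness} after the dilation \(\delta_{1/R}\), which preserves the equation, the Lipschitz class and the box structure. Its proof is really local: interior derivative bounds on compacts of \(\Omega_\infty\), together with the uniform boundary modulus from Lemma~\ref{lem:uniform-boundary-decay}, applied at boundary points of \(Q_{M,R}\) with scale comparable to one. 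This gives a subsequence on which the zero extensions converge locally uniformly on \(Q_{M,R}\). Diagonalizing over \(k\) yields a limit \(W_\infty\mathbf 1_{\Omega_\infty}\) with \(W_\infty\in C(\overline{\Omega_\infty})\). Interior smooth convergence gives \(\mathcal KW_\infty=0\); the boundary modulus gives \(W_\infty=0\) on \(\partial\Omega_\infty=\Delta_\infty\); and \(W_\infty\geq0\) is inherited. The growth bound \eqref{eq:secondary-limit-growth} follows by passing to the limit in \eqref{eq:polynomial-growth}, since the admissible range \(R\le cr_j^{-1}\) exhausts \([1,\infty)\).

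Finally, the normalization. The points \(A^\pm_{1,\Lambda}\) are fixed interior points of \(\Omega_\infty\): by \eqref{eq:reference-points-in-box} applied to \(\psi_\infty\), they lie at clearance at least \(3\Lambda/4\) above the limit graph. Local uniform convergence therefore allows pointwise evaluation, and \eqref{eq:secondary-normalization} gives \(W_\infty(A^+_{1,\Lambda})=1\) and \(W_\infty(A^-_{1,\Lambda})=0\).

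The only delicate point is the uniform boundary modulus near \(\widehat\Delta_j\). The domains \(\widehat G_j\) are truncations whose artificial faces recede to infinity, so Lemma~\ref{lem:uniform-boundary-decay} must be applied at graph points of \(Q_{M,R}\) at scale \(\asymp1\), inside boxes \(Q_{M,2}(\widehat P)\subset Q_{M,2R}\). This is where the growth bound on the larger box \(Q_{M,2R}\) is needed, rather than the bound on \(Q_{M,R}\) alone.
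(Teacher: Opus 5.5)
Your proposal is correct and follows the paper's proof essentially step for step. You use graph compactness with vanishing rescaled defects to obtain a \(y_m\)-independent limit. You then combine the expanding-growth bound with Proposition~\ref{prop:solution-compactness} on each fixed box and take a diagonal subsequence. Finally, you pass to the limit in the growth estimate and evaluate at \(A^\pm_{1,\Lambda}\), which stay a fixed distance above the limit graph. Your explicit rescaling of that proposition to \(Q_{M,R}\) via \(\delta_{1/R}\), and your remark that the bound on the enlarged box \(Q_{M,2R}\) is what controls the boundary modulus, make precise what the paper compresses into ``together with the fixed enlargement required by that proposition.''
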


\begin{proof}
The rescaled graph functions \(\widehat\psi_j\) have a common intrinsic
Lipschitz constant \(M\) and satisfy \(\widehat\psi_j(0)=0\).
Proposition~\ref{prop:graph-compactness}, applied on successively larger
boxes, therefore gives, after a diagonal subsequence,
\(\widehat\psi_j\to\psi_\infty\) locally uniformly in all of
\(\mathbb R^{2m}\). The limit has intrinsic Lipschitz constant at most
\(M\). By Lemma~\ref{lem:cylindrical-defect-rescaling}, the cylindrical defect of
\(\widehat\psi_j\) on every fixed normalized box is the corresponding
defect of \(\psi_j\) at \(P_j\) and scale \(r_j\). These defects tend to
zero. Hence, for every fixed \(x',y',t,y_m,\widetilde y_m\),
\[
 \bigl|
 \widehat\psi_j(x',y',y_m,t)
 -
 \widehat\psi_j(x',y',\widetilde y_m,t)
 \bigr|
 \longrightarrow0.
\]
Passing to the locally uniform limit shows that
\[
 \psi_\infty(x',y',y_m,t)
 =
 \psi_\infty(x',y',\widetilde y_m,t).
\]
Thus \(\psi_\infty\) is independent of \(y_m\), and
\(\Omega_\infty\) is an unbounded cylindrical graph domain.

Fix \(R\geq1\). Since \(r_j\to0\), the range
\(1\leq R\leq cr_j^{-1}\) in Lemma~\ref{lem:expanding-growth} contains this
fixed \(R\) for all sufficiently large \(j\). Therefore,
\[
 \sup_{\widehat G_j\cap Q_{M,R}}W_j
 \leq CR^\gamma.
\]
Proposition~\ref{prop:solution-compactness}, applied on each fixed box,
gives local uniform convergence, after passage to a further diagonal
subsequence, of the zero extensions of \(W_j\) to
$W_\infty\mathbf 1_{\Omega_\infty}$. This application is legitimate because
the regions \(\widehat G_j\) contain every fixed box, together with the
fixed enlargement required by that proposition, for all sufficiently large
\(j\). The limit is non-negative, \(\mathcal K\)-harmonic in \(\Omega_\infty\),
continuous up to the graph, and vanishes there. This proves
\eqref{eq:secondary-limit-equation}. Passing to the limit in the preceding
growth estimate gives \eqref{eq:secondary-limit-growth}. Finally, the reference points \(A^\pm_{1,\Lambda}\) remain a fixed positive
distance from the limiting graph. Hence local uniform convergence and
\eqref{eq:secondary-normalization} give
\[
 W_\infty(A^+_{1,\Lambda})=1,
 \qquad
 W_\infty(A^-_{1,\Lambda})=0,
\]
which proves \eqref{eq:secondary-limit-normalization}.
\end{proof}
Lemma~\ref{lem:secondary-blow-up-limit} shows that a collapsing bad sequence
produces a non-negative \(\mathcal K\)-harmonic function of polynomial growth
in an unbounded cylindrical graph domain which vanishes at
\(A^-_{1,\Lambda}\) but takes the value one at \(A^+_{1,\Lambda}\). To
complete the contradiction, it remains to prove that no such function can
exist. This global rigidity statement is established in the next section.

\section{Global cylindrical rigidity}
\label{sec:global-rigidity}

The precise exponent in Lemma~\ref{lem:expanding-growth} is immaterial. What
matters is that \eqref{eq:polynomial-growth} is uniform in \(j\) and holds on
a family of boxes whose radii tend to infinity. As shown in
Lemma~\ref{lem:secondary-blow-up-limit}, this produces a non-negative
\(\mathcal K\)-harmonic function of polynomial growth in an unbounded
cylindrical graph domain, with value zero at the backward reference point and
value one at the forward reference point. The purpose of this section is to
show that such a configuration is impossible.

\subsection{The rigidity property}
\label{subsec:global-rigidity}

We first formulate the precise rigidity statement required by the
contradiction argument.

\begin{definition}
\label{def:global-rigidity}
Let \(\Omega_\psi\) be an unbounded intrinsic
\(\operatorname{Lip}_{\mathcal K}\) graph domain with constant \(M\), where
\(\psi(0)=0\) and \(\psi\) is independent of \(y_m\). Let
\(W\in C(\overline{\Omega_\psi})\) be non-negative and satisfy
\begin{equation}
\label{eq:global-rigidity-equation}
 \mathcal KW=0\quad\text{in }\Omega_\psi,
 \qquad
 W=0\quad\text{on }\partial\Omega_\psi.
\end{equation}
Assume, in addition, that \(W\) has polynomial growth: for some
\(C\geq1\) and \(\gamma\geq0\),
\begin{equation}
\label{eq:global-rigidity-growth}
 \sup_{\Omega_\psi\cap Q_{M,R}}W
 \leq C(1+R)^\gamma,
 \qquad R\geq1.
\end{equation}
We say that the global cylindrical rigidity property holds if every such
function satisfies
\begin{equation}
\label{eq:global-rigidity-implication}
 W(A^-_{1,\Lambda})=0
 \quad\Longrightarrow\quad
 W(A^+_{1,\Lambda})=0.
\end{equation}
\end{definition}

Only the implication \eqref{eq:global-rigidity-implication}, rather than a
complete classification of polynomial-growth solutions, is needed here. It
cannot be obtained directly from the strong minimum principle. Indeed, Bony's
principle yields only
\begin{equation}
\label{eq:zero-on-propagation-set}
 W=0
 \quad\text{on }
 \overline{\mathcal A_{A^-_{1,\Lambda}}(\Omega_\psi)},
\end{equation}
where \(\mathcal A_P(\Omega_\psi)\) denotes the propagation set issuing from
\(P\). This is a causal, one-sided conclusion: the Kolmogorov propagation set
is directed backward in time and need not contain an entire Euclidean time
section. Consequently, \eqref{eq:zero-on-propagation-set} does not provide
zero Cauchy data on a complete section of the unbounded domain, and standard
forward uniqueness cannot be invoked at this stage; see
\cite{Bony1969,CintiNystromPolidoro2010}.

Exact cylindricity nevertheless gives considerably more information about the
zero set. Denote by
\[
 \pi(x',x_m,y',y_m,t)=(x',x_m,y',t)
\]
the projection which suppresses the invariant variable.

\subsection{Propagation along invariant fibres}
\label{subsec:fibre-propagation}

\begin{lemma}
\label{lem:fibre-half-ray}
Let \(\Omega_\psi\) be as in Definition~\ref{def:global-rigidity}, and let
\(P=(x^0,y^0,t_0)\in\Omega_\psi\). Given \(\bar t<t_0\) and
\[
 \bar\zeta=(\bar x',\bar x_m,\bar y',\bar t)
 \in \pi(\Omega_\psi\cap\{t=\bar t\}),
\]
there exists \(b=b(P,\bar\zeta)\in\mathbb R\) such that
\begin{equation}
\label{eq:fibre-half-ray-propagation}
 (\bar x',\bar x_m,\bar y',s,\bar t)
 \in \mathcal A_P(\Omega_\psi)
 \qquad\text{for every }s>b.
\end{equation}
Consequently, if \(W\) satisfies \eqref{eq:global-rigidity-equation} and
\(W(P)=0\), then
\begin{equation}
\label{eq:fibre-half-ray-vanishing}
 W(\bar x',\bar x_m,\bar y',s,\bar t)=0
 \qquad\text{for every }s\geq b.
\end{equation}
\end{lemma}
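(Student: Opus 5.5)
The plan is to build the admissible curves explicitly. Exact $y_m$-invariance is what makes this possible: membership in $\Omega_\psi$ only involves $(x',x_m,y',t)$, while the variable $y_m$ is driven by $\dot y_m=x_m$ and does not feed back into the constraint. So I will choose the $(x',y',t)$-part of the curve once and for all, and then use the $x_m$-control as a free parameter to tune the terminal value of $y_m$. I will use the time $\tau=t_0-t\in[0,T]$, with $T=t_0-\bar t>0$ and $\lambda\equiv1$. For the $(x',y')$-components I take the interpolation curve from the proof of Lemma~\ref{lem:controlled-local-chains}, restricted to the primed variables. It joins $(x^{0\prime},y^{0\prime})$ to $(\bar x',\bar y')$ in time $T$, with $\dot y'=x'$ and $\dot x'\in \mathrm L^2$. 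This part of the curve is fixed, continuous and compact. Let $m_0$ be the supremum of $\psi(x'(\tau),y'(\tau),t_0-\tau)$ over $[0,T]$; this is well defined because $\psi$ does not depend on $y_m$.

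Next, I handle the $x_m$-component. Since $P$ and the target point $\bar P_s=(\bar x',\bar x_m,\bar y',s,\bar t)$ lie in the open set $\Omega_\psi$ (the latter for every $s$, again by invariance), there is $\varepsilon>0$ with $x^0_m>\psi+2\varepsilon$ at $P$ and $\bar x_m>\psi+2\varepsilon$ at $\bar P_s$. By continuity of $\psi$ and of the fixed curve, there is $\delta\in(0,T/3)$ such that $\psi(x'(\tau),y'(\tau),t_0-\tau)<x^0_m-\varepsilon$ for $\tau\le\delta$, and $<\bar x_m-\varepsilon$ for $\tau\ge T-\delta$. For $K\ge K_0:=\max\{m_0+1,x^0_m,\bar x_m\}$, let $x_m^K$ be the piecewise linear function that rises from $x^0_m$ to $K$ on $[0,\delta]$, equals $K$ on $[\delta,T-\delta]$, and descends to $\bar x_m$ on $[T-\delta,T]$. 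Its derivative is bounded, so the control is square integrable. On $[0,\delta]$ we have $x^K_m\ge x^0_m>\psi$, on $[T-\delta,T]$ we have $x^K_m\ge\bar x_m>\psi$, and in the middle $K>m_0\ge\psi$. Hence the whole curve lies in $\Omega_\psi$ for every $K\ge K_0$ and every initial $y_m$.

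Then set $y_m(\tau)=y^0_m+\int_0^\tau x^K_m$. The terminal value $F(K)=y_m(T)$ is continuous and strictly increasing in $K$, since the middle plateau contributes $(T-2\delta)K$. Moreover $F(K)\to\infty$ as $K\to\infty$. Taking $b=F(K_0)$, every $s>b$ equals $F(K)$ for some $K>K_0$ by the intermediate value theorem, which proves \eqref{eq:fibre-half-ray-propagation}. For the vanishing statement \eqref{eq:fibre-half-ray-vanishing}, Bony's strong minimum principle applied to $W\ge0$ with $W(P)=0$ gives $W=0$ on $\overline{\mathcal A_P(\Omega_\psi)}$, as in \eqref{eq:zero-on-propagation-set}. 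This covers all $s>b$, and the endpoint $s=b$ follows by continuity of $W$ (or directly, using $K=K_0$).

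I expect the main obstacle to be the endpoint clearance. The transition segments near $\tau=0$ and $\tau=T$ have to stay above the graph even though $(x',y',t)$ moves along the curve. This is exactly why the ramps are chosen monotone between the endpoint value and $K\ge$ both endpoint values, with $\delta$ fixed from the openness margin before $K$ is chosen. Because $\delta$ does not depend on $K$ or on $s$, the construction, and hence $b$, depends only on $P$ and $\bar\zeta$.
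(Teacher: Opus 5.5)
Your proof is correct and follows the paper's approach: you fix the primed interpolation curve, and then use the $x_m$-control, which is free because membership in the $y_m$-independent domain does not see $y_m$, to raise the terminal $y_m$ monotonically and without bound while the curve stays above the graph. Your plateau curve is the paper's ``base curve plus non-negative bump'' with the trapezoidal bump $x_m^K=x_m^{K_0}+(K-K_0)\phi$, where $\int_0^T\phi=T-\delta$; this lets you skip the smoothing of $g+d$ and gives $F(K)=F(K_0)+(T-\delta)(K-K_0)$ explicitly.
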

\begin{proof}
Put \(T=t_0-\bar t>0\). We use the admissible-curve and
propagation-set formalism developed in
\cite{CintiNystromPolidoro2010,NystromPolidoro2016}. In the present
parametrization, the control system is
\begin{equation}
\label{eq:control-system-fibre}
 \dot x(\tau)=\omega(\tau),\qquad
 \dot y(\tau)=x(\tau),\qquad
 \dot t(\tau)=-1,
 \qquad 0\leq\tau\leq T.
\end{equation}
The explicit construction below is a controllability interpolation for this
system. Its additional use here is that, because the domain is invariant in
the \(y_m\)-direction, the terminal \(y_m\)-coordinate can be varied without
changing whether the curve remains in the domain.

We first connect $(x^{0\prime},y^{0\prime},t_0)$ to $(\bar x',\bar y',\bar t)$. Set $I'=\bar y'-y^{0\prime}$ and define
\[
 c'
 :=
 \frac{6}{T^3}
 \left(
 I'
 -
 \frac{T}{2}\bigl(x^{0\prime}+\bar x'\bigr)
 \right).
\]
We choose
\begin{equation}
\label{eq:tangential-control-polynomial}
 x'(\tau)
 =
 x^{0\prime}
 +
 \frac{\tau}{T}\bigl(\bar x'-x^{0\prime}\bigr)
 +
 c'\tau(T-\tau),
 \qquad 0\leq\tau\leq T.
\end{equation}
This is a vector-valued quadratic polynomial, interpreted componentwise. It
satisfies
\[
 x'(0)=x^{0\prime},
 \qquad
 x'(T)=\bar x'.
\]
Moreover,
\begin{align*}
 \int_0^T x'(\tau)\,\dd\tau
 &=
 \frac{T}{2}\bigl(x^{0\prime}+\bar x'\bigr)
 +
 c'\int_0^T\tau(T-\tau)\,\dd\tau \\
 &=
 \frac{T}{2}\bigl(x^{0\prime}+\bar x'\bigr)
 +
 \frac{T^3}{6}c'
 =
 I'
 =
 \bar y'-y^{0\prime}.
\end{align*}
Thus, on defining
\[
 y'(\tau)
 =
 y^{0\prime}+\int_0^\tau x'(s)\,\dd s,
 \qquad
 t(\tau)=t_0-\tau,
\]
we obtain
\[
 y'(0)=y^{0\prime},
 \qquad
 y'(T)=\bar y',
 \qquad
 t(0)=t_0,
 \qquad
 t(T)=\bar t.
\]
The corresponding control in the \(x'\)-variables is
\(\omega'=\dot x'\), which is smooth and therefore square integrable.

We next connect $x_m^0$ to $\bar x_m$. Since \(\psi\) is independent of \(y_m\), the function
\[
 g(\tau)=\psi(x'(\tau),y'(\tau),t(\tau))
\]
does not depend on the \(y_m\)-component of the curve and is continuous.
At \(\tau=0\), the endpoint identities and the inclusion
\(P\in\Omega_\psi\) give
\[
 x_m^0>
 \psi(x^{0\prime},y^{0\prime},y_m^0,t_0)
 =
 \psi(x^{0\prime},y^{0\prime},t_0)
 =
 g(0).
\]
Moreover,
\[
 (\bar x',\bar x_m,\bar y',\bar t)
 \in\pi(\Omega_\psi\cap\{t=\bar t\})
\]
means that, for some \(s\in\mathbb R\),
\[
 (\bar x',\bar x_m,\bar y',s,\bar t)\in\Omega_\psi.
\]
Using the independence of \(\psi\) from \(y_m\) and the terminal endpoint
identities, we obtain
\[
 \bar x_m>
 \psi(\bar x',\bar y',s,\bar t)
 =
 \psi(\bar x',\bar y',\bar t)
 =
 g(T).
\]
Consequently,
\[
 d_0:=x_m^0-g(0)>0,
 \qquad
 d_T:=\bar x_m-g(T)>0.
\]

Define
\[
 d(\tau)
 =
 \left(1-\frac{\tau}{T}\right)d_0
 +
 \frac{\tau}{T}d_T,
 \qquad 0\leq\tau\leq T.
\]
Then \(d(\tau)>0\) on \([0,T]\), and the continuous function
\[
 h(\tau)=g(\tau)+d(\tau)
\]
satisfies
\[
 h(0)=x_m^0,\qquad h(T)=\bar x_m,
 \qquad h(\tau)>g(\tau).
\]
Since \(d\) is continuous and strictly positive on the compact interval
\([0,T]\), we have
\[
 d_*:=\min_{0\leq\tau\leq T}d(\tau)>0.
\]
The function \(h=g+d\) is continuous on \([0,T]\) and satisfies
\[
 h(0)=x_m^0,
 \qquad
 h(T)=\bar x_m.
\]
We now approximate \(h\) by a smooth function while preserving these endpoint
values. Choose \(q\in C^\infty([0,T])\) such that
\[
 \|q-h\|_{L^\infty(0,T)}<\frac14d_*,
\]
and define
\[
 \xi_m^0(\tau)
 =
 q(\tau)
 +
 \left(1-\frac{\tau}{T}\right)\bigl(h(0)-q(0)\bigr)
 +
 \frac{\tau}{T}\bigl(h(T)-q(T)\bigr).
\]
The affine correction preserves smoothness and gives
\[
 \xi_m^0(0)=h(0)=x_m^0,
 \qquad
 \xi_m^0(T)=h(T)=\bar x_m.
\]
Moreover,
\[
 \begin{aligned}
 |\xi_m^0(\tau)-h(\tau)|
 &\leq
 |q(\tau)-h(\tau)|
 +
 \left(1-\frac{\tau}{T}\right)|h(0)-q(0)|
 +
 \frac{\tau}{T}|h(T)-q(T)| <
 \frac12d_*,
 \end{aligned}
\]
after choosing the initial smooth approximation slightly more accurately if
necessary. Consequently,
\[
 \|\xi_m^0-h\|_{L^\infty(0,T)}
 <
 \frac12\min_{[0,T]}d.
\]
Since \(h(\tau)=g(\tau)+d(\tau)\) and \(d(\tau)\geq d_*\), it follows that
\[
 \xi_m^0(\tau)
 >
 h(\tau)-\frac12d_*
 \geq
 g(\tau)+\frac12d_*
 >
 g(\tau),
 \qquad 0\leq\tau\leq T.
\]
Thus the smoothed \(x_m\)-component retains a uniform positive distance from
the graph while joining the prescribed initial and terminal values.

Let \(\varphi\in C_0^\infty(0,T)\) be non-negative and satisfy
\[
 \int_0^T\varphi(\tau)\,\dd\tau=1.
\]
For \(a\geq0\), set
\[
 x_m^a(\tau)=\xi_m^0(\tau)+a\varphi(\tau),
 \qquad
 y_m^a(\tau)
 =
 y_m^0+\int_0^\tau x_m^a(s)\,\dd s.
\]
Consider the curve
\[
 \gamma_a(\tau)
 =
 \bigl(
 x'(\tau),x_m^a(\tau),
 y'(\tau),y_m^a(\tau),t(\tau)
 \bigr).
\]
It satisfies \eqref{eq:control-system-fibre} with control
\[
 \omega_a(\tau)
 =
 \bigl(\dot x'(\tau),\dot x_m^a(\tau)\bigr),
\]
which is smooth and hence square integrable. Since
\[
 x_m^a(\tau)
 \geq\xi_m^0(\tau)
 >g(\tau)
 =
 \psi(x'(\tau),y'(\tau),t(\tau)),
\]
the curve remains in \(\Omega_\psi\).

Because \(\varphi\) is compactly supported in \((0,T)\), adding
\(a\varphi\) does not alter the endpoint values of the \(x_m\)-component.
The resulting curve therefore has the prescribed initial point and the
prescribed terminal values in every coordinate except possibly \(y_m\).
Its terminal \(y_m\)-coordinate is
\begin{align*}
 y_m^a(T)
 =
 y_m^0+\int_0^T x_m^a(\tau)\,\dd\tau &=y_m^0+\int_0^T\xi_m^0(\tau)\,\dd\tau
 +a\int_0^T\varphi(\tau)\,\dd\tau
 \\
 &=
 y_m^0+\int_0^T\xi_m^0(\tau)\,\dd\tau+a.
\end{align*}
Set
\[
 b
 :=
 y_m^0+\int_0^T\xi_m^0(\tau)\,\dd\tau.
\]
Then \(y_m^a(T)=b+a\). Consequently, for every \(s\geq b\), choosing
\(a=s-b\) produces an admissible curve from \(P\) to $(\bar x',\bar x_m,\bar y',s,\bar t)$. Thus the terminal \(y_m\)-coordinates fill the half-ray \([b,\infty)\), which
proves \eqref{eq:fibre-half-ray-propagation}. Notice that \(b\) depends on
the chosen interpolation and on the two endpoints; no quantitative bound for
\(b\) is needed. The final assertion follows from the Bony minimum principle
and the continuity of \(W\). \end{proof}

\begin{remark}
\label{rem:fibre-half-ray-geometry}
The conclusion of Lemma~\ref{lem:fibre-half-ray} has a simple geometric
interpretation. For $\bar\zeta=(\bar x',\bar x_m,\bar y',\bar t)$, consider the \(y_m\)-fibre
\[
 \mathcal F_{\bar\zeta}
 =
 \left\{
 (\bar x',\bar x_m,\bar y',s,\bar t):s\in\mathbb R
 \right\}.
\]
Since \(\psi\) is independent of \(y_m\), the defining inequality $\bar x_m>\psi(\bar x',\bar y',\bar t)$ does not depend on the fibre coordinate \(s\).   Hence, if one point of
\(\mathcal F_{\bar\zeta}\) belongs to \(\Omega_\psi\), then the entire fibre
does. Equivalently,
\[
 \bar\zeta\in\pi(\Omega_\psi\cap\{t=\bar t\})
 \quad\Longrightarrow\quad
 \mathcal F_{\bar\zeta}\subset\Omega_\psi.
\]
The lemma states that, at every earlier time \(\bar t<t_0\), the propagation
set from \(P\) contains a terminal half-ray in every such fibre:
\[
 \mathcal A_P(\Omega_\psi)\cap\mathcal F_{\bar\zeta}
 \supset
 \left\{
 (\bar x',\bar x_m,\bar y',s,\bar t):s>b
 \right\}.
\]
This one-sided filling of the fibre reflects the control system $\dot y_m=x_m$. Once a path with the prescribed projected endpoint has been constructed, a
non-negative bump can be added to its \(x_m\)-component. The endpoints in
the \(x_m\)-variable remain fixed, the path remains above the graph, and the
terminal \(y_m\)-coordinate increases by the integral of the bump. By varying the parameter \(a\geq0\), the amplitude of the added bump
\(a\varphi\), the terminal coordinate becomes
\[
 y_m^a(T)=b+a.
\]
Hence every \(s\geq b\) is attained by choosing \(a=s-b\), and the reachable
terminal values of \(y_m\) contain the half-ray \([b,\infty)\). The lemma does
not assert that the entire fibre belongs to the propagation set, since the
constraint imposed by the graph generally prevents arbitrary displacement in
the opposite direction. Thus the strong minimum principle initially yields
only a half-ray of zeros in each invariant fibre. The subsequent tangential
analyticity argument extends this one-sided zero set to the remainder of the
fibre.
\end{remark}

The purpose of the rest of the section is to prove that polynomial intrinsic growth forces finite-dimensionality of
the translation orbit in the invariant variable.  The conclusion is slightly
different from tangential polynomiality: the fibres are exponential
polynomials.  This is the natural conclusion of the translation argument and
is sufficient, since an exponential polynomial which vanishes on a half-line
vanishes identically.

\subsection{Boundary energy and zero extension}
\label{subsec:boundary-energy-zero-extension}

Throughout this subsection, \(\Omega_\psi\) is an intrinsic Lipschitz graph
domain whose defining function is independent of \(y_m\); thus
\(\psi=\psi(x',y',t)\). We isolate the boundary-energy properties needed in
the rigidity argument. Although the slicewise energy spaces and
\(x\)-Sobolev traces introduced below are meaningful also for graph functions
with genuine \(y_m\)-dependence, the results proved in this subsection are
used only in the \(y_m\)-independent setting. This independence enters through
the boundary comparison and boundary-layer estimates that yield the required
up-to-boundary energy bounds.

Let \(\mathcal U=B_x\times E_{y,t}\) be a bounded product neighborhood and
put $D=\Omega_\psi\cap\mathcal U$. For almost every \((y,t)\in E_{y,t}\), set
\[
 D(y,t)=\{x\in B_x:(x,y,t)\in D\}.
\]
We write
\[
 u\in\mathrm L^2_{y,t}(\mathrm H_x^1(D))
\]
if
\[
 \int_{E_{y,t}}
 \|u(\cdot,y,t)\|_{\mathrm H^1(D(y,t))}^2\,\dd y\,\dd t
 <\infty,
\]
and use
\(\mathrm L^2_{y,t}(\mathrm H_x^{-1}(D))\) for the corresponding measurable
family of dual spaces. The local kinetic energy space is
\begin{equation}
\label{eq:kinetic-boundary-energy-space}
 \mathbb W(D)
 =
 \left\{
 u\in\mathrm L^2_{y,t}(\mathrm H_x^1(D)):
 Yu\in\mathrm L^2_{y,t}(\mathrm H_x^{-1}(D))
 \right\},
\end{equation}
where \(Y=x\cdot\nabla_y-\partial_t\) is interpreted distributionally.

For \(u\in\mathbb W(D)\), a zero boundary-energy trace on the graph portion
means that, for almost every \((y,t)\), the ordinary
\(\mathrm H_x^1\)-trace of \(u(\cdot,y,t)\) vanishes on the corresponding
Lipschitz graph in the \(x\)-variables. No trace in the \(y\)- or
\(t\)-variables is asserted. For the solution considered in the proposition
below, the admissibility of products with smooth test functions will be
established directly by means of cutoffs supported away from the graph, with
the resulting boundary-layer errors shown to vanish. Thus no abstract density
assertion in the full graph norm is required.

The proposition is stated for arbitrary real-valued solutions, although the
solution obtained from the secondary blow-up is non-negative, being a locally
uniform limit of non-negative solutions. The more general formulation is
needed later when the proposition is applied to the vector space
\(\mathscr H_\gamma(\Omega_\psi)\) introduced in Subsection \ref{subsec:tangential-analyticity}. In this vector spaces, the elements and their linear combinations
need not have a fixed sign. For a non-negative solution, the comparison
argument applies directly; for a signed solution, we first construct a
positive $\mathcal K$-harmonic majorant of its absolute value.

We first record the boundary-layer estimate that makes the cutoff
approximation possible.

\begin{lemma}
\label{lem:vanishing-boundary-layer}
Let \(\Omega_\psi\) be an intrinsic Lipschitz graph domain whose defining
function is independent of \(y_m\). Suppose that
\(U\in C(\overline{\Omega_\psi})\) is \(\mathcal K\)-harmonic in
\(\Omega_\psi\) and vanishes on \(\partial\Omega_\psi\). Fix \(P_0\in\partial\Omega_\psi\) and \(r>0\). Let
\(\phi_r\in C_0^\infty(Q_{M,2r}(P_0))\) satisfy
\[
 \phi_r\equiv1\quad\text{on }Q_{M,r}(P_0),
 \qquad
 r|\nabla_x\phi_r|+r^2|Y\phi_r|\leq C.
\]
For \(\varepsilon>0\), set
\[
 D_{2r}^{\varepsilon}(P_0)
 =
 \left\{
 P\in\Omega_\psi\cap Q_{M,2r}(P_0):
 d_{\mathcal K}^{\mathrm s}(P,\partial\Omega_\psi)>\varepsilon
 \right\}
\]
and
\[
 S_\varepsilon(P_0)
 =
 D_{2r}^{\varepsilon/2}(P_0)
 \setminus
 D_{2r}^{\varepsilon}(P_0).
\]
Then
\begin{equation}
\label{eq:vanishing-boundary-layer}
 J_\varepsilon(U;P_0,r)
 :=
 \frac1{\varepsilon^2}
 \int_{S_\varepsilon(P_0)}U^2\phi_r^2
 \longrightarrow0
 \qquad\text{as }\varepsilon\downarrow0.
\end{equation}
\end{lemma}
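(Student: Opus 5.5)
The plan is to reduce the estimate to boundary decay on the layer $S_\varepsilon(P_0)$ combined with a volume bound for that layer. The layer consists of points in the support of $\phi_r$ whose symmetrized distance to the graph lies in $(\varepsilon/2,\varepsilon]$. Since $U$ is continuous on $\overline{\Omega_\psi}$ and vanishes on $\partial\Omega_\psi$, it is bounded on the compact set $\overline{\Omega_\psi\cap Q_{M,4r}(P_0)}$, say by $K$. Lemma~\ref{lem:uniform-boundary-decay}, applied at boundary points $P_1\in\Delta_\psi$ near $\operatorname{supp}\phi_r$ and at a fixed scale comparable to $r$, gives
\[
 U(P)^2\leq C K^2\left(\frac{d_{\mathcal K}(P,\Delta_\psi)}{r}\right)^{2\beta}\leq CK^2(\varepsilon/r)^{2\beta}
\]
on $S_\varepsilon(P_0)$. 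Here we use that $d_{\mathcal K}$ and $d^{\mathrm s}_{\mathcal K}$ are comparable, and we cover $\operatorname{supp}\phi_r\cap\Omega_\psi$ by finitely many boxes $Q_{M,r/C}(P_1)$ whose doubles stay in a fixed neighbourhood of $Q_{M,2r}(P_0)$. This step does not use $y_m$-independence.

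Next I would estimate the measure of the layer. By the intrinsic Lipschitz condition, for fixed $(x',y,t)$ the quantity $d^{\mathrm s}_{\mathcal K}(P,\Delta_\psi)$ is comparable to $x_m-\psi(x',y,t)$; this is the comparability used in Lemma~\ref{lem:controlled-local-chains}(iii). Hence each vertical $x_m$-section of $S_\varepsilon(P_0)$ has length at most $C\varepsilon$, and Fubini gives $|S_\varepsilon(P_0)|\leq C r^{4m+1}\varepsilon$ (only the factor $\varepsilon$ matters). Combining the two bounds yields
\[
 J_\varepsilon\leq C K^2 r^{-2\beta}\varepsilon^{2\beta-1}.
\]
This tends to zero only if $\beta>1/2$. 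That is the main obstacle: the Hölder exponent $\beta(m,M)$ from Lemma~\ref{lem:uniform-boundary-decay} is in general small, so the naive argument fails. We need essentially linear decay, $U(P)\lesssim d\,\cdot(\text{something small})$ on average, or at least $\int_{S_\varepsilon}U^2=o(\varepsilon^2)$.

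To overcome this I would use the cylindrical structure, which is the reason the lemma is stated only for $y_m$-independent $\psi$. In that setting the two-sided comparison theory of \cite{NystromPolidoro2016,LitsgardNystrom2022} is available. The plan is to compare $U$ (or a positive majorant, by solving the Dirichlet problem in $\Omega_\psi\cap Q_{M,4r}(P_0)$ with data $|U|$ on the artificial boundary and zero on the graph) with a reference solution. At a point $P$ at distance $d\asymp\varepsilon$ from the graph, the boundary Harnack principle gives $U(P)\lesssim U(A^+_{d,\Lambda}(P^\partial))$. One then needs to show that the energy of $U$ in the layer is controlled. The cleanest route I would try is a boundary Caccioppoli argument.

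\begin{itemize}
\item First, using the test function $U\phi_r^2$, justified for the Dirichlet solution vanishing continuously on the graph via the slicewise $\mathrm H^1_x$ theory, obtain $\int\phi_r^2|\nabla_xU|^2<\infty$ up to the boundary.
\item Then, on almost every $x$-slice, which is a Lipschitz domain in $x$, apply the one-dimensional Hardy inequality in $x_m$ along vertical segments starting at the graph, where the trace vanishes. This gives
\[
\int_{S_\varepsilon}U^2\phi_r^2\leq C\varepsilon^2\int_{\{d\leq\varepsilon\}}|\nabla_xU|^2\phi_r^2 .
\]
\end{itemize}

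Dividing by $\varepsilon^2$ gives $J_\varepsilon\leq C\int_{\{d\leq\varepsilon\}\cap\operatorname{supp}\phi_r}|\nabla_xU|^2$, which tends to zero by dominated convergence because the integrand is integrable and the sets shrink to a null set. The key step, and the one I expect to be hardest, is the finiteness of the up-to-boundary $x$-energy. I would prove it by testing the equation against $U\phi_r^2\chi_\delta$, with $\chi_\delta$ a cutoff away from the graph. The transport term $\int U^2\phi_r^2 Y\chi_\delta$ must be controlled. Here the $y_m$-independence helps: $Y\chi_\delta$ involves only $x'\cdot\nabla_{y'}$, $\partial_t$ and $x_m\partial_{y_m}$, and $\partial_{y_m}$ annihilates a cutoff built from $x_m-\psi(x',y',t)$. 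So $|Y\chi_\delta|\lesssim\delta^{-2}$ on a layer of width $\delta$ with the intrinsic Lipschitz regularity in $(y',t)$. Combined with the comparison-principle decay, this should let the error close via a bootstrap in $\delta$.
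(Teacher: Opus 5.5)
\paragraph{Gap: the up-to-boundary energy bound is circular.}
The gap is in the step you flag as hardest, the finiteness of the up-to-boundary energy. As proposed, that step is circular.

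With the geometric cutoff $\chi_\delta$, the transport error $\int U^2\phi_r^2|Y\chi_\delta|$ and the diffusive error $\int |U|\phi_r^2\chi_\delta|\nabla_xU||\nabla_x\chi_\delta|$ are controlled only by $C J_\delta(U;P_0,r)$, after absorbing half of $E_\delta=\int|\nabla_xU|^2\chi_\delta^2\phi_r^2$. So you obtain only $E_\delta\le CJ_\delta+Cr^{-2}\int_{\Omega_\psi\cap Q_{M,2r}(P_0)}U^2$. This is the paper's \eqref{eq:boundary-Caccioppoli-cutoff}, and it is useful only once the lemma is known.

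Your Hardy inequality, in turn, bounds $J_\delta$ by the $x_m$-energy in $\{0<x_m-\psi\le C\delta\}$. That is exactly the region $\chi_\delta$ removes from $E_\delta$. The two inequalities feed into each other, and no bootstrap in $\delta$ closes them. For instance, the profile $(x_m-\psi)^{1/4}$ is compatible with both while $J_\delta\to\infty$.

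The only non-circular input you name, boundary H\"older decay, returns the bound $J_\delta\lesssim\delta^{2\beta-1}$ that you already rejected. The $y_m$-independence of $\psi$ does not help either. The bound $|Y\chi_\delta|\lesssim\delta^{-2}$ holds for the Whitney cutoff of any intrinsic Lipschitz graph (cf.\ \eqref{eq:boundary-cutoff-bounds}); the difficulty is the size of $\int_{S_\delta}U^2$, not of $Y\chi_\delta$. Likewise, your first bullet (``justified via the slicewise $\mathrm H^1_x$ theory'') presupposes the integrability it is meant to prove.

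\paragraph{The missing idea: truncate with the solution.}
Cut off with the solution rather than with the geometry. Since $U\in C(\overline{\Omega_\psi})$ vanishes on $\partial\Omega_\psi$, for every $k>0$ and $\widetilde\phi\in C_0^\infty(\mathbb R^{2m+1})$ the Lipschitz function $(U-k)^+\widetilde\phi^2$ has compact support in $\Omega_\psi$. Test $\mathcal KU=0$ with it, using $(U-k)^+YU=\tfrac12Y\bigl(((U-k)^+)^2\bigr)$ and $\operatorname{div}(0_x,x,-1)=0$. This gives
\[
 \int_{\Omega_\psi}|\nabla_x(U-k)^+|^2\widetilde\phi^2
 \leq
 C\int_{\Omega_\psi}U^2\bigl(|\nabla_x\widetilde\phi|^2+|Y(\widetilde\phi^{2})|\bigr)
\]
uniformly in $k$, with no boundary term. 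Let $k\downarrow0$, do the same for $-U$, and note that $\nabla_xU=0$ a.e.\ on $\{U=0\}$. You get $\nabla_xU\in\mathrm L^2(\Omega_\psi\cap Q_{M,2r}(P_0))$.

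Your Hardy step then concludes, once it is stated without the weight on the right (since $\phi_r$ may vanish on part of a vertical segment):
\[
 \int_{S_\varepsilon(P_0)}U^2\phi_r^2
 \leq
 C\varepsilon^2\int_{\{0<x_m-\psi\leq C\varepsilon\}\cap Q_{M,2r}(P_0)}|\partial_{x_m}U|^2 ,
\]
followed by dominated convergence.

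\paragraph{Comparison with the paper.}
So repaired, your route is genuinely different from the paper's. The paper majorizes $|U|$ by a Dirichlet solution and adds a Green function $g$. It then applies the cylindrical comparison theorem \cite[Theorem~3.5]{LitsgardNystrom2022} to get $|U|\le C_{U,r}g$, and imports the layer bound $J_\varepsilon(g;P_0,r)\le C(\varepsilon/r)^\alpha$ from \cite{LitsgardNystrom2022}. That is where $y_m$-independence is used, and it yields a rate. The truncation--Hardy argument gives only $o(1)$, which is all the lemma claims, but it is elementary and uses no cylindricity at all.
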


\begin{proof}
The argument is local. Choose a bounded intrinsic graph truncation
\(\widetilde D\subset\Omega_\psi\) such that
\[
 \Omega_\psi\cap Q_{M,4r}(P_0)\subset\widetilde D.
\]
We choose \(\widetilde D\) from the class of bounded truncations for which the
continuous Dirichlet problem and the comparison principle stated above are
valid. Since \(U\in C(\overline{\Omega_\psi})\), its restriction to
\(\partial_{\mathcal K}\widetilde D\) is continuous. Define
\[
 \varphi=|U|
 \quad\text{on }\partial_{\mathcal K}\widetilde D.
\]
On the physical graph portion one has \(\varphi=0\), because \(U\) vanishes
there. Thus the boundary data are continuous also at the intersections of the
physical and artificial boundary portions.

Let \(W\) be the continuous Dirichlet solution
\[
 \begin{cases}
  \mathcal KW=0 & \text{in }\widetilde D,\\
  W=\varphi     & \text{on }\partial_{\mathcal K}\widetilde D.
 \end{cases}
\]
The comparison principle gives \(W\geq0\). Moreover, on
\(\partial_{\mathcal K}\widetilde D\),
\[
 U\leq |U|=W,
 \qquad
 -U\leq |U|=W.
\]
Since both \(U\) and \(-U\) are \(\mathcal K\)-harmonic, applying the
comparison principle to the pairs \((U,W)\) and \((-U,W)\) yields
\[
 |U|\leq W
 \qquad\text{in }\widetilde D,
\]
and hence in particular
\[
 |U|\leq W
 \qquad\text{in }\Omega_\psi\cap Q_{M,4r}(P_0).
\]
If \(W\equiv0\), then \(U\equiv0\) in the comparison region and there is
nothing to prove. We may therefore assume that \(W\not\equiv0\).

Choose a Green function \(g\) for \(\mathcal K\), with pole outside
\(Q_{M,4r}(P_0)\) and in the appropriate causal direction, so that \(g\) is
positive and \(\mathcal K\)-harmonic in the comparison region and vanishes on
its physical graph boundary. Put
\[
 \widetilde W=W+g.
\]
Then \(\widetilde W\) is non-negative and \(\mathcal K\)-harmonic in the
comparison region, vanishes on the physical graph boundary, and satisfies
\[
 \widetilde W(A^\pm_{\rho_0,\Lambda}(P_0))
 \geq
 g(A^\pm_{\rho_0,\Lambda}(P_0))
 >0,
\]
where \(\rho_0\asymp r\) is the reference scale occurring in the localized
boundary comparison theorem. Thus both \(\widetilde W\) and \(g\) satisfy
the required positivity assumptions at the forward and backward reference
points.

Since the graph is independent of \(y_m\), the localized form of the
cylindrical boundary comparison theorem
\cite[Theorem~3.5]{LitsgardNystrom2022} applies to \(\widetilde W\) and
\(g\). After normalizing \(g\) at a fixed interior reference point, the
theorem gives
\begin{equation}
\label{eq:majorant-Green-comparison}
 |U|
 \leq W
 \leq \widetilde W
 \leq C_{U,r}g
 \qquad\text{in }\Omega_\psi\cap Q_{M,2r}(P_0).
\end{equation}
Here \(C_{U,r}\) may depend on the corresponding reference values of
\(\widetilde W\) and \(g\). Its precise value is irrelevant; only its
finiteness is needed.

The forward counterpart of
\cite[Lemma~8.2]{LitsgardNystrom2022}, obtained by interchanging
\(\mathcal K\) and \(\mathcal K^*\) in the proof, introduces, up to harmless
changes in the box constants, the same boundary layer
\(S_\varepsilon(P_0)\). Applied to \(g\), the corresponding boundary-layer
quantity is
\[
 J_\varepsilon(g;P_0,r)
 =
 \frac1{\varepsilon^2}
 \int_{S_\varepsilon(P_0)}g^2\phi_r^2.
\]
The argument following equation~(8.10) in
\cite{LitsgardNystrom2022}, based on boundary H\"older decay, the Carleson
estimate, and a Whitney decomposition, yields
\begin{equation}
\label{eq:Green-boundary-layer}
 \frac1{\varepsilon^2}
 \int_{S_\varepsilon(P_0)}g^2\phi_r^2
 \leq
 C_{g,r}
 \left(\frac{\varepsilon}{r}\right)^\alpha
 \longrightarrow0
 \qquad\text{as }\varepsilon\downarrow0
\end{equation}
for some \(\alpha>0\). Consequently,
\eqref{eq:majorant-Green-comparison} and
\eqref{eq:Green-boundary-layer} imply
\[
 J_\varepsilon(U;P_0,r)
 \leq
 C_{U,r}^2
 \frac1{\varepsilon^2}
 \int_{S_\varepsilon(P_0)}g^2\phi_r^2
 \longrightarrow0.
\]
This proves \eqref{eq:vanishing-boundary-layer}.
\end{proof}

\begin{proposition}
\label{prop:boundary-energy-zero-extension}
Let \(\Omega_\psi\) be an intrinsic Lipschitz graph domain whose defining
function is independent of \(y_m\). Suppose that
\(U\in C(\overline{\Omega_\psi})\) is \(\mathcal K\)-harmonic in
\(\Omega_\psi\) and vanishes on \(\partial\Omega_\psi\). Then, on every
bounded graph neighborhood,
\[
 U\in\mathrm L^2_{y,t}(\mathrm H_x^1),
 \qquad
 YU\in\mathrm L^2_{y,t}(\mathrm H_x^{-1}),
\]
and \(U\) has zero \(x\)-Sobolev trace on the graph. Moreover, for every
\(\phi\in C_0^\infty(\mathbb R^{2m+1})\), the test function \(U\phi\) is
admissible by approximation with test functions supported a positive
distance from the graph. If
\[
 V=U^2\mathbf 1_{\Omega_\psi},
\]
then, for every non-negative
\(\phi\in C_0^\infty(\mathbb R^{2m+1})\),
\begin{equation}
\label{eq:rigorous-zero-extension-identity}
 \langle\mathcal KV,\phi\rangle
 =
 2\int_{\Omega_\psi}|\nabla_xU|^2\phi
 \geq0.
\end{equation}
Thus the zero extension of \(U^2\) is a global weak subsolution.
\end{proposition}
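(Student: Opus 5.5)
The plan is to work locally near an arbitrary boundary point \(P_0\in\partial\Omega_\psi\) at scale \(r\), with a cutoff \(\phi_r\) as in Lemma~\ref{lem:vanishing-boundary-layer}, and to obtain every assertion by interior energy estimates. The only error terms will be boundary-layer integrals, which that lemma shows vanish. Concretely, for \(\varepsilon>0\) I take a cutoff \(\chi_\varepsilon\), depending on \(d_{\mathcal K}^{\mathrm s}(\cdot,\partial\Omega_\psi)\), equal to \(1\) on \(D_{2r}^{\varepsilon}(P_0)\), vanishing outside \(D_{2r}^{\varepsilon/2}(P_0)\), and satisfying \(\varepsilon|\nabla_x\chi_\varepsilon|+\varepsilon^2|Y\chi_\varepsilon|\leq C\) on \(S_\varepsilon(P_0)\). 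I will build it by regularizing the intrinsic distance; for the graph in the \(x_m\)-direction this is comparable to \(x_m-\psi\), so one can use a mollified regularized-distance construction. The function \(U\chi_\varepsilon^2\phi_r^2\) is a legitimate test function in the interior weak formulation \eqref{eq:weak-energy-formulation}, because \(U\) is smooth there by hypoellipticity.

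\textbf{Step 1 (Caccioppoli up to the boundary).} Test with \(U\chi_\varepsilon^2\phi_r^2\). The drift term gives \(\tfrac12\int U^2\,Y(\chi_\varepsilon^2\phi_r^2)\), using \(\operatorname{div}(x,0,-1)=0\) after integration by parts. This yields
\[
\int|\nabla_xU|^2\chi_\varepsilon^2\phi_r^2\leq C\int U^2\bigl(|\nabla_x(\chi_\varepsilon\phi_r)|^2+|Y(\chi_\varepsilon\phi_r)|\bigr).
\]
The contribution of the derivatives of \(\phi_r\) is bounded by \(r^{-2}\int_{Q_{M,2r}}U^2<\infty\), since \(U\) is continuous. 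The contribution of the derivatives of \(\chi_\varepsilon\) is bounded by \(C\varepsilon^{-2}\int_{S_\varepsilon}U^2\phi_r^2=CJ_\varepsilon(U;P_0,r)\to0\). Monotone convergence then gives \(\nabla_xU\in\mathrm L^2\) on bounded graph neighborhoods, so \(U\in\mathrm L^2_{y,t}(\mathrm H^1_x)\). Since \(YU=-\Delta_xU\) in the interior, the equation gives \(YU\in\mathrm L^2_{y,t}(\mathrm H^{-1}_x)\). To be safe, I check this claim against test functions \(\varphi\in\mathrm L^2_{y,t}(\mathrm H^1_{0,x})\), approximated by \(\varphi\chi_\varepsilon\); the error is again controlled by the boundary layer.

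\textbf{Step 2 (zero trace).} The slices of \(U\chi_\varepsilon\phi_r\) have zero trace. They converge to \(U\phi_r\) in \(\mathrm L^2_{y,t}(\mathrm H^1_x)\): the term \(U\nabla_x\chi_\varepsilon\) has \(\mathrm L^2\) norm at most \(CJ_\varepsilon^{1/2}\to0\), and the remaining terms converge by dominated convergence. Since the slicewise trace operator is continuous and the space of functions with zero trace is closed, the trace of \(U\) vanishes for a.e. \((y,t)\). The same convergence, together with the \(\mathrm H^{-1}_x\)-control of \(YU\), gives the admissibility of \(U\phi\) through the approximants \(U\phi\chi_\varepsilon\).

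\textbf{Step 3 (the identity).} For \(\phi\geq0\), I start from
\[
\langle\mathcal KV,\phi\rangle=\int_{\Omega_\psi}U^2(\Delta_x\phi+Y^*\phi),
\]
where \(Y^*=-Y\) (again since the drift field is divergence free). Insert \(\chi_\varepsilon\) and use \(\mathcal K(U^2)=2|\nabla_xU|^2\) in the interior. Integrating by parts against \(\chi_\varepsilon\phi\) leaves
\[
2\int|\nabla_xU|^2\chi_\varepsilon\phi+\text{(terms containing }\nabla_x\chi_\varepsilon\text{ or }Y\chi_\varepsilon\text{)}.
\]
These error terms are of the form \(\int U|\nabla_xU||\nabla_x\chi_\varepsilon|\phi\) and \(\int U^2(|\nabla_x\chi_\varepsilon||\nabla_x\phi|+|Y\chi_\varepsilon|\phi)\). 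By Cauchy--Schwarz they are bounded by \(C(J_\varepsilon^{1/2}\|\nabla_xU\|_{\mathrm L^2(S_\varepsilon)}+J_\varepsilon+\varepsilon J_\varepsilon)\), which tends to \(0\) by Step 1 and Lemma~\ref{lem:vanishing-boundary-layer}. Note that \(\Delta_x\chi_\varepsilon\) never appears if one integrates by parts only once in \(x\). Letting \(\varepsilon\to0\) gives \eqref{eq:rigorous-zero-extension-identity}.

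The main obstacle is constructing \(\chi_\varepsilon\) with the bound \(|Y\chi_\varepsilon|\lesssim\varepsilon^{-2}\). The intrinsic distance is only Hölder in \((y,t)\), so one must smooth at the intrinsic scale \(\varepsilon\). In the present \(y_m\)-independent setting, I would try to take \(\chi_\varepsilon=\theta\bigl((x_m-\psi_\varepsilon)/\varepsilon\bigr)\), where \(\psi_\varepsilon\) is a group mollification of \(\psi\) at scale \(\varepsilon\) in \((x',y',t)\). The drift then produces \(\partial_t\psi_\varepsilon\) and \(x'\cdot\nabla_{y'}\psi_\varepsilon\), and the intrinsic Lipschitz condition bounds the relevant combination \(Y\psi_\varepsilon\) by \(C\varepsilon^{-1}\). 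No \(y_m\)-derivative arises, since \(\psi\) does not depend on \(y_m\).
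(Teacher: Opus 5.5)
Your proposal is correct and follows essentially the paper's route. Both use a scale-$\varepsilon$ boundary cutoff with $\varepsilon|\nabla_x\chi_\varepsilon|+\varepsilon^2|Y\chi_\varepsilon|\leq C$. Both prove a cutoff Caccioppoli inequality whose errors are bounded by $J_\varepsilon(U;P_0,r)\to0$ from Lemma~\ref{lem:vanishing-boundary-layer}. Both then test against $U\phi\chi_\varepsilon^2$, which is equivalent to your single integration by parts against $\chi_\varepsilon\phi$, with errors of size $J_\varepsilon+J_\varepsilon^{1/2}\|\nabla_xU\|_{\mathrm L^2(S_\varepsilon)}$, to reach \eqref{eq:rigorous-zero-extension-identity}.

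The differences are minor:
\begin{itemize}
\item The paper builds the cutoff from a Whitney partition of unity. Your group-mollified graph also works here, since $\psi$ is intrinsic Lipschitz in $(x',y',t)$ alone. You should, however, replace $x_m-\psi_\varepsilon$ by $x_m-\psi_\varepsilon-A\varepsilon$ with $A=A(m,M)$ large, to absorb $|\psi_\varepsilon-\psi|\lesssim M\varepsilon$ so that $\chi_\varepsilon$ really vanishes near the graph. The resulting transition layer is then covered by boundedly many layers $S_{2^k\varepsilon}(P_0)$, and the lemma applies on each.
\item The paper reads the zero trace directly off the continuity of $U$ up to the graph, rather than through your approximation-and-closedness argument.
\item A notational slip: the divergence-free drift field is $(0_x,x,-1)$, not $(x,0,-1)$.
\end{itemize}
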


\begin{proof}
It suffices to work in one bounded graph neighborhood. Fix
\(P_0\in\partial\Omega_\psi\) and \(r>0\), and use the notation of
Lemma~\ref{lem:vanishing-boundary-layer}. For every sufficiently small
\(\varepsilon>0\), choose
\(\eta_\varepsilon\in C^\infty(\Omega_\psi)\) such that
\[
 \eta_\varepsilon=0
 \quad\text{if }
 d_{\mathcal K}^{\mathrm s}(P,\partial\Omega_\psi)
 \leq\frac{\varepsilon}{2},\qquad
 \eta_\varepsilon=1
 \quad\text{if }
 d_{\mathcal K}^{\mathrm s}(P,\partial\Omega_\psi)
 \geq\varepsilon,
\]
and
\begin{equation}
\label{eq:boundary-cutoff-bounds}
 \varepsilon|\nabla_x\eta_\varepsilon|
 +
 \varepsilon^2|Y\eta_\varepsilon|
 \leq C.
\end{equation}
The cutoff can be constructed by a partition of unity subordinate to a
Whitney decomposition of \(\Omega_\psi\), as in the proof of
\cite[Lemma~8.2]{LitsgardNystrom2022}. In particular, the derivatives of
\(\eta_\varepsilon\) are supported in
\(S_\varepsilon(P_0)\) inside \(Q_{M,2r}(P_0)\).

Since \(\eta_\varepsilon\) vanishes in a neighborhood of the graph and
\(U\) is smooth in the interior, the function
\(U\eta_\varepsilon^2\phi_r^2\) is an admissible interior test function. Put
\[
 E_\varepsilon
 =
 \int_{\Omega_\psi\cap Q_{M,2r}(P_0)}
 |\nabla_xU|^2\eta_\varepsilon^2\phi_r^2.
\]
Writing
\begin{align*}
 E_\varepsilon
 &=
 \int_{\Omega_\psi}
 \nabla_xU\cdot
 \nabla_x\bigl(U\eta_\varepsilon^2\phi_r^2\bigr)-
 \int_{\Omega_\psi}
 U\nabla_xU\cdot
 \nabla_x\bigl(\eta_\varepsilon^2\phi_r^2\bigr)
 =:I_{1,\varepsilon}+I_{2,\varepsilon},
\end{align*}
and using \(\mathcal KU=0\), the divergence-free identity
\[
 \operatorname{div}_{x,y,t}(0_x,x,-1)=0
\]
for the transport field \(Y=x\cdot\nabla_y-\partial_t\),  and the fact that
the test function is compactly supported in \(\Omega_\psi\), we obtain
\[
 |I_{1,\varepsilon}|
 \leq
 C
 \int_{\Omega_\psi}
 U^2
 \left|
 Y\bigl(\eta_\varepsilon^2\phi_r^2\bigr)
 \right|.
\]
The cutoff bounds give
\begin{equation}
\label{eq:I1-boundary-energy}
 |I_{1,\varepsilon}|
 \leq
 C J_\varepsilon(U;P_0,r)
 +
 \frac{C}{r^2}
 \int_{\Omega_\psi\cap Q_{M,2r}(P_0)}U^2.
\end{equation}
Similarly, Cauchy-Schwarz and
\eqref{eq:boundary-cutoff-bounds} give, for every
\(\delta\in(0,1)\),
\begin{equation}
\label{eq:I2-boundary-energy}
 |I_{2,\varepsilon}|
 \leq
 \delta E_\varepsilon
 +
 C_\delta J_\varepsilon(U;P_0,r)
 +
 \frac{C_\delta}{r^2}
 \int_{\Omega_\psi\cap Q_{M,2r}(P_0)}U^2.
\end{equation}
Combining \eqref{eq:I1-boundary-energy} and
\eqref{eq:I2-boundary-energy}, and choosing \(\delta\) sufficiently small,
we obtain
\begin{equation}
\label{eq:boundary-Caccioppoli-cutoff}
 E_\varepsilon
 \leq
 C J_\varepsilon(U;P_0,r)
 +
 \frac{C}{r^2}
 \int_{\Omega_\psi\cap Q_{M,2r}(P_0)}U^2.
\end{equation}
Lemma~\ref{lem:vanishing-boundary-layer} gives
\[
 J_\varepsilon(U;P_0,r)\longrightarrow0.
\]
Letting \(\varepsilon\downarrow0\) in
\eqref{eq:boundary-Caccioppoli-cutoff} and using Fatou's lemma yields
\begin{equation}
\label{eq:boundary-Caccioppoli-U}
 \int_{\Omega_\psi\cap Q_{M,r}(P_0)}|\nabla_xU|^2
 \leq
 \frac{C}{r^2}
 \int_{\Omega_\psi\cap Q_{M,2r}(P_0)}U^2.
\end{equation}
Consequently, $U\in\mathrm L^2_{y,t}(\mathrm H_x^1)$ locally up to the graph. Since $YU=-\Delta_xU$ in the sense of distributions, we also have
\[
 \|YU\|_{\mathrm L^2_{y,t}(\mathrm H_x^{-1})}
 \leq
 \|\nabla_xU\|_{\mathrm L^2},
\]
locally on every bounded graph neighborhood. Hence
\[
 YU\in\mathrm L^2_{y,t}(\mathrm H_x^{-1}).
\]
The continuity of \(U\) up to the graph, together with its slicewise
\(\mathrm H_x^1\)-regularity, implies that its \(x\)-Sobolev trace vanishes
there.

It remains to establish the zero-extension identity. Let
\(\phi\in C_0^\infty(\mathbb R^{2m+1})\). By using a finite covering, it
suffices to consider the part of its support contained in one graph
neighborhood. For every \(\varepsilon>0\), the function
\(U\phi\eta_\varepsilon^2\) is an admissible interior test function. The terms in which a derivative
falls on \(\eta_\varepsilon\) are supported in
\(S_\varepsilon(P_0)\). By \eqref{eq:boundary-cutoff-bounds},
\[
 \left|
 \int_{\Omega_\psi}
 U^2\phi\,Y(\eta_\varepsilon^2)
 \right|
 \leq
 C_\phi J_\varepsilon(U;P_0,r)
 \longrightarrow0.
\]
The diffusive cutoff error satisfies
\begin{align*}
 &\left|
 \int_{\Omega_\psi}
 U\phi\eta_\varepsilon
 \nabla_xU\cdot\nabla_x\eta_\varepsilon
 \right|\leq
 C_\phi
 J_\varepsilon(U;P_0,r)^{1/2}
 \left(
 \int_{S_\varepsilon(P_0)}|\nabla_xU|^2
 \right)^{1/2}
 \longrightarrow0.
\end{align*}
Here the second factor is bounded by
\eqref{eq:boundary-Caccioppoli-U}, while the first factor tends to zero by
Lemma~\ref{lem:vanishing-boundary-layer}. All remaining terms converge by
the local \(\mathrm L^2\)-integrability of \(U\) and \(\nabla_xU\).

Testing \(\mathcal KU=0\) with
\(U\phi\eta_\varepsilon^2\) and letting
\(\varepsilon\downarrow0\) therefore gives
\begin{align}
0
&=
-\int_{\Omega_\psi}|\nabla_xU|^2\phi
-\int_{\Omega_\psi}U\nabla_xU\cdot\nabla_x\phi
-\frac12\int_{\Omega_\psi}U^2Y\phi
\nonumber\\
&=
-\int_{\Omega_\psi}|\nabla_xU|^2\phi
+\frac12\int_{\Omega_\psi}U^2
\bigl(\Delta_x\phi-Y\phi\bigr).
\label{eq:zero-extension-energy-identity}
\end{align}
Since $\mathcal K^*=\Delta_x-Y$,
identity \eqref{eq:zero-extension-energy-identity} is precisely
\eqref{eq:rigorous-zero-extension-identity}. No boundary term involving
\(\nabla_xU\) has been used.
\end{proof}

We finally record why the preceding properties apply to the cylindrical
solution obtained in the secondary blow-up. No boundary-energy assertion is
required for the genuinely \(y_m\)-dependent approximating domains.

\begin{lemma}
\label{lem:boundary-energy-cylindrical-limit}
Let \(\Omega_{\psi_j}\) be intrinsic Lipschitz graph domains with a common
Lipschitz constant, and assume that
\[
 \psi_j\longrightarrow\psi_\infty
 \qquad\text{locally uniformly},
\]
where \(\psi_\infty\) is independent of \(y_m\). Let \(U_j\) be non-negative
continuous solutions defined in
\(\Omega_{\psi_j}\cap\mathcal O_j\), where the open sets
\(\mathcal O_j\) exhaust \(\mathbb R^{2m+1}\) in the sense that every compact
set is contained in \(\mathcal O_j\) for all sufficiently large \(j\). Assume
that
\[
 \mathcal KU_j=0\quad\text{in }\Omega_{\psi_j}\cap\mathcal O_j,
 \qquad
 U_j=0\quad\text{on }
 \partial\Omega_{\psi_j}\cap\mathcal O_j.
\]
Extend \(U_j\) by zero below the graph within \(\mathcal O_j\), and denote
these extensions by
\[
 U_j^0=U_j\mathbf 1_{\Omega_{\psi_j}},
\]
and assume that
\[
 U_j^0\longrightarrow U_\infty^0
 =
 U_\infty\mathbf 1_{\Omega_{\psi_\infty}}
 \qquad\text{locally uniformly in }\mathbb R^{2m+1}.
\]
Here and below, local convergence for the sequence is understood on each
compact set after that set is contained in \(\mathcal O_j\).
Then \(U_\infty\) is non-negative and continuous on
\(\overline{\Omega_{\psi_\infty}}\), satisfies
\[
 \mathcal KU_\infty=0
 \quad\text{in }\Omega_{\psi_\infty},
 \qquad
 U_\infty=0
 \quad\text{on }\partial\Omega_{\psi_\infty},
\]
and belongs locally up to the graph to the boundary energy space. More precisely, let $P_*\in\partial\Omega_{\psi_\infty}$ and let $\mathcal U=B_x\times E_{y,t}$ be any bounded product neighborhood of \(P_*\). Setting
\[
 D_\infty=\Omega_{\psi_\infty}\cap\mathcal U,
\]
one has
\[
 U_\infty\in
 \mathrm L^2_{y,t}\bigl(\mathrm H_x^1(D_\infty)\bigr),
 \qquad
 YU_\infty\in
 \mathrm L^2_{y,t}\bigl(\mathrm H_x^{-1}(D_\infty)\bigr).
\]
Moreover, \(U_\infty\) has zero \(x\)-Sobolev trace on
\(\partial\Omega_{\psi_\infty}\cap\mathcal U\) and,
for every non-negative
\(\phi\in C_0^\infty(\mathbb R^{2m+1})\),
\begin{equation}
\label{eq:cylindrical-limit-zero-extension}
 \left\langle
 \mathcal K
 \bigl(U_\infty^2\mathbf 1_{\Omega_{\psi_\infty}}\bigr),
 \phi
 \right\rangle
 =
 2\int_{\Omega_{\psi_\infty}}
 |\nabla_xU_\infty|^2\phi
 \geq0.
\end{equation}
\end{lemma}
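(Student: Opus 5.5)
The proof has two parts. First, the local convergence statements give the basic properties of $U_\infty$. Then Proposition~\ref{prop:boundary-energy-zero-extension} is applied to $U_\infty$ on the limiting cylindrical domain.

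\textbf{Limit properties.} Non-negativity is immediate, since $U_j^0\ge0$ and the convergence is locally uniform. Continuity of $U_\infty$ on $\overline{\Omega_{\psi_\infty}}$ comes from continuity of the locally uniform limit $U^0_\infty$, restricted to the closure.

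For the equation, fix $K\Subset\Omega_{\psi_\infty}$. By Corollary~\ref{cor:domain-convergence}, $K\subset\Omega_{\psi_j}\cap\mathcal O_j$ for large $j$. On $K$ the functions $U_j$ are uniformly bounded $\mathcal K$-harmonic functions converging uniformly, and the weak formulation \eqref{eq:weak-energy-formulation} passes to the limit (interior hypoelliptic estimates give smooth convergence). Hence $\mathcal KU_\infty=0$ in $\Omega_{\psi_\infty}$.

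For the boundary values, take $P\in\partial\Omega_{\psi_\infty}$. Points $P'$ slightly below $P$ lie outside $\overline{\Omega_{\psi_j}}$ for large $j$, again by Corollary~\ref{cor:domain-convergence}. There $U^0_j(P')=0$, so $U^0_\infty(P')=0$. Continuity of $U^0_\infty$ then gives $U^0_\infty(P)=0$. Since $U^0_\infty$ is continuous and agrees with $U_\infty$ on $\Omega_{\psi_\infty}$, the boundary value of $U_\infty$ is $0$. Note that no uniform boundary decay for $U_j$ needs to be invoked; the assumed locally uniform convergence of the zero extensions already encodes it.

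\textbf{Energy statements.} At this point $U_\infty$ satisfies every hypothesis of Proposition~\ref{prop:boundary-energy-zero-extension}:
\begin{itemize}
\item it is continuous on $\overline{\Omega_{\psi_\infty}}$;
\item it is $\mathcal K$-harmonic in $\Omega_{\psi_\infty}$;
\item it vanishes on $\partial\Omega_{\psi_\infty}$;
\item the graph $\psi_\infty$ is independent of $y_m$, with intrinsic Lipschitz constant $M$ (Proposition~\ref{prop:graph-compactness}).
\end{itemize}
That proposition therefore gives directly $U_\infty\in \mathrm L^2_{y,t}(\mathrm H^1_x)$ and $YU_\infty\in\mathrm L^2_{y,t}(\mathrm H_x^{-1})$ on every bounded graph neighbourhood, the vanishing $x$-Sobolev trace, and identity \eqref{eq:cylindrical-limit-zero-extension}.

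It remains to check that a bounded product neighbourhood $\mathcal U=B_x\times E_{y,t}$ of $P_*$ is covered by finitely many boxes $Q_{M,r}(P_0)$ centred at boundary points. Away from the graph, the interior smoothness of $U_\infty$ supplies the remaining integrability. The finite covering argument is the one already indicated in the proof of Proposition~\ref{prop:boundary-energy-zero-extension}, and the bound \eqref{eq:boundary-Caccioppoli-U} is summed over the covering.

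\textbf{Main obstacle.} The only delicate point is the one flagged in the paper: boundary energy is \emph{not} obtained by passing to the limit from the $y_m$-dependent approximants, where no boundary Caccioppoli estimate is available. The plan avoids this by proving only continuity, the equation and zero boundary values in the limit, and then obtaining all energy information intrinsically on the cylindrical limit. This route relies on Lemma~\ref{lem:vanishing-boundary-layer}, which uses $y_m$-independence through the cylindrical comparison theorem applied to a majorant of $U_\infty$ and a Green function. One must also confirm that the boundary-value step holds at every boundary point, including boundary points far from the origin. This follows because $\mathcal O_j$ exhausts $\mathbb R^{2m+1}$.
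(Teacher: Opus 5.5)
Your proposal is correct and follows the paper's proof. You first derive non-negativity, continuity up to the graph, the equation and the zero boundary values of $U_\infty$ from the locally uniform convergence of the zero extensions, and then apply Proposition~\ref{prop:boundary-energy-zero-extension} directly on the cylindrical limit domain, where all the energy information is obtained. The differences are cosmetic: you verify the equation via Corollary~\ref{cor:domain-convergence} and interior estimates rather than citing Proposition~\ref{prop:solution-compactness}, and you write out the boundary-vanishing step and the finite covering of $\mathcal U$ that the paper leaves implicit.
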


\begin{proof}
Since the zero extensions \(U_j^0\) are non-negative and converge locally
uniformly, their limit \(U_\infty^0\) is non-negative and continuous. In
particular, \(U_\infty\) is continuous up to the limiting graph and vanishes
there. The graph convergence, the exhaustion by \(\mathcal O_j\), and the
local uniform convergence of the solutions imply, by
Proposition~\ref{prop:solution-compactness} applied on each fixed box, that
\(U_\infty\) is \(\mathcal K\)-harmonic in
\(\Omega_{\psi_\infty}\). Since \(\psi_\infty\) is independent of \(y_m\),
the limiting domain is cylindrical, and
Proposition~\ref{prop:boundary-energy-zero-extension} applies directly to
\(U_\infty\). It gives the asserted boundary-energy regularity, the zero
\(x\)-Sobolev trace, the admissibility of the boundary energy calculation,
and \eqref{eq:cylindrical-limit-zero-extension}.
\end{proof}

\subsection{Polynomial growth and tangential analyticity}
\label{subsec:tangential-analyticity}

Throughout this subsection, \(\Omega_\psi\) is an unbounded intrinsic
\(\operatorname{Lip}_{\mathcal K}\) graph domain as in
Definition~\ref{def:global-rigidity}. In particular,
\(\psi(0)=0\) and \(\psi\) is independent of \(y_m\). Put
\[
 \mathfrak q=4m+2,
 \qquad
 D_R=\Omega_\psi\cap Q_{M,R}.
\]
Thus \(\mathfrak q\) is the homogeneous dimension of space-time associated
with the dilations \(\delta_r\). For \(\gamma\geq0\), let
\(\mathscr H_\gamma(\Omega_\psi)\) be the vector space of all
\(U\in C(\overline{\Omega_\psi})\) such that
\begin{equation}
\label{eq:Hgamma-definition}
 \mathcal KU=0\quad\text{in }\Omega_\psi,
 \qquad
 U=0\quad\text{on }\partial\Omega_\psi,
 \qquad
 \sup_{D_R}|U|\leq C_U(1+R)^\gamma,
 \qquad R\geq1,
\end{equation}
where \(C_U<\infty\) may depend on \(U\) and on the fixed exponent
\(\gamma\), but not on \(R\). Solutions are understood in the local energy sense defined in
Section~\ref{sec:geometry}. Elements of
\(\mathscr H_\gamma(\Omega_\psi)\) are not assumed to be non-negative. This
is essential for the vector-space argument below, which involves arbitrary
linear combinations of tangential translates.

Since \(\psi\) is independent of \(y_m\),
Proposition~\ref{prop:boundary-energy-zero-extension} applies to every
\(U\in\mathscr H_\gamma(\Omega_\psi)\). In particular,
\[
 \mathcal K\bigl(U^2\mathbf 1_{\Omega_\psi}\bigr)\geq0
 \qquad\text{weakly in }\mathbb R^{2m+1}.
\]
Lemma~\ref{lem:boundary-energy-cylindrical-limit} shows that this conclusion
also holds for the non-negative cylindrical limits produced by the compactness
argument, without requiring corresponding up-to-boundary energy estimates in
the genuinely \(y_m\)-dependent approximating domains. This global
subsolution property is the only boundary-energy input used in the dimension
argument below.

\begin{lemma}
\label{lem:boundary-mean-value}
There exists \(C=C(m,M)\) such that, if
\(U\in\mathscr H_\gamma(\Omega_\psi)\), \(R\geq1\), and
\(0<\varepsilon\leq1\), then
\begin{equation}
\label{eq:boundary-mean-value}
 \sup_{D_R}|U|^2
 \leq
 \frac{C}{(\varepsilon R)^{\mathfrak q}}
 \int_{D_{(1+\varepsilon)R}}|U|^2.
\end{equation}
\end{lemma}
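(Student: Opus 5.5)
We prove the boundary mean-value inequality \eqref{eq:boundary-mean-value} from the global subsolution property of the zero extension. Put \(V=U^2\mathbf 1_{\Omega_\psi}\). By Proposition~\ref{prop:boundary-energy-zero-extension}, \(V\) is a continuous, non-negative weak subsolution, \(\mathcal KV\geq0\), in all of \(\mathbb R^{2m+1}\). Moreover \(V=|U|^2\) on \(D_R\) and \(V=0\) below the graph. It therefore suffices to prove an interior \(\mathrm L^2\)-to-\(\mathrm L^\infty\) (more precisely, \(\mathrm L^1\)-to-\(\mathrm L^\infty\)) estimate for non-negative subsolutions of \(\mathcal K\) in \(\mathbb R^{2m+1}\). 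The graph then plays no further role, since \(V\) is a subsolution across it.

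\textbf{Step 1: a pointwise mean-value bound.} Fix \(P\in D_R\). We want the intrinsic mean-value inequality
\[
 V(P)\leq\frac{C}{\rho^{\mathfrak q}}\int_{P\circ Q_\rho}V
 \qquad\text{with }\rho\asymp\varepsilon R,
\]
where \(Q_\rho\) is an intrinsic box of radius \(\rho\) and \(\mathfrak q=4m+2\) is its homogeneous dimension. Two routes are available for \(\mathcal K\), which has constant coefficients.

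The first route is representation by the fundamental solution. One tests \(\mathcal KV\geq0\) against \(\Gamma^*(P,\cdot)\) truncated by a superlevel-set cutoff, in the manner of the Garofalo--Lanconelli mean-value formula for Kolmogorov operators. This gives \(V(P)\) bounded by a weighted average of \(V\) over a bounded "Kolmogorov ball" with kernel bounded by \(C\rho^{-\mathfrak q}\).

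The second route is Moser iteration. Apply Caccioppoli plus the kinetic Sobolev/averaging gain to \(V^p\) on nested boxes. This yields \(\sup_{Q_{\rho/2}}V\leq C\rho^{-\mathfrak q}\int_{Q_\rho}V\), which is the \(\mathrm L^1\) form for subsolutions, obtained from the \(\mathrm L^2\) form by the standard interpolation trick.

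I would take the first route because of the constant coefficients. The Kolmogorov ball is contained in an intrinsic box \(P\circ Q_{c\rho}\). The one subtle point is that the super-level sets of \(\Gamma^*(P,\cdot)\) lie in the past of \(P\). They are still contained in a left-translated box of comparable size, which is all we need.

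\textbf{Step 2: containment in \(D_{(1+\varepsilon)R}\).} We must check that \(P\circ Q_{c\varepsilon R}\subset Q_{M,(1+\varepsilon)R}\) for \(P\in Q_{M,R}\). In the \(x\), \(x_m\) and \(t\) coordinates this is additive, since \(R+c\varepsilon R\leq(1+\varepsilon)R\) and \(2R^2+c\varepsilon^2R^2\leq2(1+\varepsilon)^2R^2\). The \(y\)-coordinate requires care, because the group law gives \(y+\tilde y-\tilde t x\). The correction is \(|\tilde t x|\lesssim(\varepsilon R)^2c_MR\). The margin available is \(c_M((1+\varepsilon)^3-1)R^3\geq 3c_M\varepsilon R^3\). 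Since \(\varepsilon\leq1\), \((\varepsilon R)^2R\leq\varepsilon R^3\), so shrinking the constant \(c\) makes the correction fit. This is the step I expect to be the main nuisance: the non-commutativity couples \(t\) and \(x\) into \(y\), and the usable radius must be taken as \(c(m,M)\varepsilon R\) rather than \(\varepsilon R\).

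\textbf{Step 3: conclusion.} Combining the two steps gives
\[
 |U(P)|^2=V(P)\leq C(\varepsilon R)^{-\mathfrak q}\int_{Q_{M,(1+\varepsilon)R}}V
 =C(\varepsilon R)^{-\mathfrak q}\int_{D_{(1+\varepsilon)R}}|U|^2 .
\]
Taking the supremum over \(P\in D_R\) yields \eqref{eq:boundary-mean-value}, with \(C\) depending only on \(m\) and on \(M\), the latter through \(c_M\). The growth hypothesis in \(\mathscr H_\gamma\) is not needed here. It only guarantees that the right-hand side is finite.
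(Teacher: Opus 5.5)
Your proposal is correct and follows the paper's proof essentially step for step. The paper likewise applies the standard local boundedness (mean-value) estimate for non-negative Kolmogorov subsolutions on a backward intrinsic cylinder, obtained by Moser iteration or the Garofalo--Lanconelli mean-value formula, to the global subsolution $V=U^2\mathbf 1_{\Omega_\psi}$ from Proposition~\ref{prop:boundary-energy-zero-extension}; it then checks $\mathcal Q^-_{2r}(P)\subset Q_{M,(1+\varepsilon)R}$ with $r=c_*\varepsilon R$, absorbing the group-law cross term of size $r^2R$ in the $y$-variables exactly as you do.
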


\begin{proof}
Let \(V=U^2\mathbf 1_{\Omega_\psi}\). Proposition
\ref{prop:boundary-energy-zero-extension} gives
\[
 \mathcal KV\geq0
 \qquad\text{weakly in }\mathbb R^{2m+1}.
\]
We now use the standard local boundedness estimate for Kolmogorov subsolutions:
if \(v\geq0\), \(\mathcal Kv\geq0\) weakly in an intrinsic cylinder
\(\mathcal Q^-_{2r}(P)\), then
\begin{equation}
\label{eq:subsolution-local-boundedness}
 v(P)
 \leq
 \frac{C(m)}{r^{\mathfrak q}}
 \int_{\mathcal Q^-_{2r}(P)}v.
\end{equation}
Here, up to fixed inessential constants,
\[
 \mathcal Q^-_r(P)
 =
 P\circ
 \bigl\{(\xi,\eta,s):
 |\xi|<r,\ |\eta|<r^3,\ -r^2<s<0\bigr\}.
\]
Estimate \eqref{eq:subsolution-local-boundedness} follows from the intrinsic
Caccioppoli inequality and Moser iteration, or equivalently from the
mean-value formula for \(\mathcal K\); see
\cite{GarofaloLanconelli1990,LanconelliPolidoro1994}. Its exponent is the
Jacobian exponent \(\mathfrak q=m+3m+2=4m+2\). We apply it to \(V\).

It remains only to choose the cylinder uniformly. From the group law
\eqref{eq:K-group-law} and the definition of \(Q_{M,R}\), there exists
\(c_*=c_*(m,M)\in(0,1)\) such that, for every \(R\geq1\),
\(0<\varepsilon\leq1\), and \(P\in Q_{M,R}\),
\begin{equation}
\label{eq:box-inclusion-mean-value}
 \mathcal Q^-_{2r}(P)\subset Q_{M,(1+\varepsilon)R},
 \qquad r=c_*\varepsilon R.
\end{equation}
Indeed, the increments in \(x,t,y\) are bounded respectively by
\(Cr\), \(Cr^2\), and \(C(r^3+r^2R)\); after decreasing \(c_*\), these are
bounded by the corresponding differences between the radii \(R\) and
\((1+\varepsilon)R\). Combining
\eqref{eq:subsolution-local-boundedness} and
\eqref{eq:box-inclusion-mean-value}, and recalling that \(V=0\) outside
\(\Omega_\psi\), gives
\[
 |U(P)|^2
 \leq
 \frac{C(m,M)}{(\varepsilon R)^{\mathfrak q}}
 \int_{D_{(1+\varepsilon)R}}|U|^2.
\]
Taking the supremum over \(P\in D_R\) proves
\eqref{eq:boundary-mean-value}.
\end{proof}

\begin{proposition}
\label{prop:finite-dimensional-Hgamma}
For every \(\gamma\geq0\),
\begin{equation}
\label{eq:Hgamma-finite-dimensional}
 \dim\mathscr H_\gamma(\Omega_\psi)
 \leq C(m,M)(1+\gamma)^{\mathfrak q}.
\end{equation}
In particular, every linear subspace of
\(\mathscr H_\gamma(\Omega_\psi)\) is finite-dimensional.
\end{proposition}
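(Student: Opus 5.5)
The plan is the Colding--Minicozzi dimension-counting argument, as simplified by Kleiner. The mean-value inequality of Lemma~\ref{lem:boundary-mean-value} replaces the elliptic mean-value inequality, and the homogeneous dimension \(\mathfrak q\) replaces the Euclidean dimension. Let \(\mathcal V\subset\mathscr H_\gamma(\Omega_\psi)\) be a finite-dimensional subspace with \(\dim\mathcal V=k\). It suffices to bound \(k\) by \(C(m,M)(1+\gamma)^{\mathfrak q}\) independently of \(\mathcal V\); this also gives the statement about arbitrary subspaces. On \(\mathcal V\) consider, for each \(R\geq1\), the quadratic form
\[
 I_R(U)=\int_{D_R}|U|^2 .
\]
Each \(I_R\) is positive definite on \(\mathcal V\). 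Indeed, if \(I_R(U)=0\) then \(U=0\) on \(D_R\); the Dirichlet problem on the truncations and interior real-analyticity in the \(x\)-variables do not immediately give global vanishing. Instead I plan to use \(I_R\) for all large \(R\) simultaneously: since \(U\) is smooth, \(I_R(U)=0\) for all \(R\) forces \(U\equiv0\). Hence \(I_R\) is positive definite for \(R\) large, which is enough.

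The first step is the trace--determinant comparison. For \(R\) large, take an \(I_{(1+\varepsilon)R}\)-orthonormal basis \(U_1,\dots,U_k\) of \(\mathcal V\). For fixed \(P\in D_R\), the linear functional \(U\mapsto U(P)\) on \(\mathcal V\) has norm squared, with respect to \(I_{(1+\varepsilon)R}\), equal to \(\sum_i U_i(P)^2\). Hence there is a unit vector \(U\) with \(U(P)^2=\sum_iU_i(P)^2\). Lemma~\ref{lem:boundary-mean-value}, applied to this \(U\), gives
\[
 \sum_iU_i(P)^2\leq C(\varepsilon R)^{-\mathfrak q}.
\]
Integrating over \(D_R\), whose measure is \(\leq C(m,M)R^{\mathfrak q}\), yields
\[
 \operatorname{tr}_{I_{(1+\varepsilon)R}}I_R
 =\sum_iI_R(U_i)\leq C\varepsilon^{-\mathfrak q}.
\]
This bound is independent of \(k\), and this is where the dimension enters.

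The second step converts growth into a determinant bound. Let
\[
 \det(R)=\det\bigl(I_R(U_i,U_j)\bigr)
\]
with respect to a fixed basis. The polynomial growth \(\sup_{D_R}|U|\leq C_U(1+R)^\gamma\), applied to the fixed basis, gives \(\det(R)\leq C_{\mathcal V}R^{k(2\gamma+\mathfrak q)}\). A constant \(C_{\mathcal V}\) is harmless here, because only the exponent matters. Let \(\beta=2\gamma+\mathfrak q\). Choose \(\varepsilon\) so that \((1+\varepsilon)^{\beta}\) is at most \(e\), say \(\varepsilon\asymp(1+\gamma)^{-1}\). A pigeonhole argument over the dyadic-type sequence \(R_\ell=(1+\varepsilon)^\ell R_0\) then produces infinitely many \(\ell\) with
\[
 \frac{\det(R_{\ell+1})}{\det(R_\ell)}\leq(1+\varepsilon)^{2k\beta}.
\]
Otherwise \(\det\) would grow faster than \(R^{2k\beta}\), contradicting the growth bound.

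The third step combines the two bounds. At such an \(\ell\), the eigenvalues \(\lambda_i\in(0,1]\) of \(I_{R_\ell}\) relative to \(I_{R_{\ell+1}}\) satisfy \(\prod\lambda_i\geq(1+\varepsilon)^{-2k\beta}\geq e^{-2k}\). By AM--GM, \(\sum\lambda_i\geq k\prod\lambda_i^{1/k}\geq ke^{-2}\). Step one gives \(\sum\lambda_i\leq C\varepsilon^{-\mathfrak q}\). Therefore
\[
 k\leq Ce^2\varepsilon^{-\mathfrak q}\leq C(m,M)(1+\gamma)^{\mathfrak q}.
\]
The main obstacle is the mean-value input, namely the global subsolution property of \(U^2\mathbf 1_{\Omega_\psi}\) for signed \(U\). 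That is exactly what Proposition~\ref{prop:boundary-energy-zero-extension} provides, so Lemma~\ref{lem:boundary-mean-value} is available as stated. What remains to check carefully is that the box-inclusion constants keep the exponent \(\mathfrak q\) uniform in \(\varepsilon\) and \(R\), as the lemma asserts.
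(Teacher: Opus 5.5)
Your proposal is correct and follows essentially the same route as the paper: Kleiner's form of the Colding--Minicozzi argument, consisting of a Gram-determinant growth bound from polynomial growth, a good scale obtained by contradiction (your slack $(1+\varepsilon)^{2k\beta}$ plays the role of the paper's exponent $\beta=2\gamma+\mathfrak q+1$), AM--GM for the lower trace bound, and Lemma~\ref{lem:boundary-mean-value} for the upper trace bound. The one step you should spell out is the passage from ``$I_R(U)=0$ for all $R$ forces $U\equiv0$'' to ``$I_R$ is positive definite on $\mathcal V$ for a single large $R$''. The kernels $\{U\in\mathcal V: I_R(U)=0\}$ decrease in $R$ inside a finite-dimensional space, so they stabilize at their trivial intersection; this is exactly the stabilization argument the paper gives. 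You should also start your sequence $R_\ell$ beyond that threshold, so that $\det(R_0)>0$.
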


\begin{proof}
The proof is an intrinsic Kolmogorov analogue of the standard
dimension-counting method for spaces of harmonic functions of polynomial
growth; compare \cite{ColdingMinicozzi1997} and
\cite[Section~3]{Kleiner2010}. As in Kleiner's formulation, we use an
increasing family of local \(\mathrm L^2\)-forms, which becomes positive
definite on each fixed finite-dimensional subspace at all sufficiently large
scales. We then select a scale at which the \(\mathrm L^2\)-mass has
controlled growth, orthonormalize a basis with respect to the
\(\mathrm L^2\)-inner product on the larger domain, and estimate the trace of
the resulting finite-dimensional quadratic form on the smaller domain.
Lemma~\ref{lem:boundary-mean-value} supplies the pointwise-to-\(\mathrm L^2\)
estimate required in the final step, replacing the corresponding elliptic
mean-value estimate.

The space \(\mathscr H_\gamma(\Omega_\psi)\) always contains the zero
function. If it contains no nonzero functions, the assertion is immediate.
More generally, it suffices to bound the dimension of an arbitrary
finite-dimensional subspace $V\subset\mathscr H_\gamma(\Omega_\psi)$ by a constant independent of \(V\). If \(V=\{0\}\), there is nothing to
prove. We may therefore assume that  $d:=\dim V\geq1$. For \(R>0\), define the quadratic form
\[
 A_R(U,Z)=\int_{D_R}UZ,
 \qquad U,Z\in V.
\]
For an arbitrary \(R\), the form \(A_R\) is a priori only positive
semidefinite: although
\[
 A_R(U,U)=\int_{D_R}U^2\geq0,
\]
it remains to show that equality can occur only when \(U\) is the zero
element of \(V\). This is necessary because the subsequent argument uses
\(A_R\) as an inner product on \(V\), in particular to construct
\(A_R\)-orthonormal bases and invertible Gram matrices. Positive
semidefiniteness alone would allow a nonzero element of \(V\) to have zero
\(A_R\)-norm. We shall prove that \(A_R\) is positive definite for all
sufficiently large \(R\). Indeed, exhaustion by the sets \(D_R\) detects
every fixed nonzero function individually, while the finite-dimensionality
of \(V\) ensures that a single sufficiently large set \(D_R\) detects all
nonzero elements of \(V\) simultaneously.

To prove this, let
\[
 K_R
 =
 \left\{
 U\in V:A_R(U,U)=0
 \right\}
 =
 \left\{
 U\in V:U=0\ \text{on }D_R
 \right\}.
\]
Indeed, \(A_R(U,U)=0\) initially implies that \(U=0\) almost everywhere on
\(D_R\), and hence everywhere there by continuity. In fact, hypoellipticity
implies that \(U\) is smooth in the interior of \(\Omega_\psi\). Since \(D_{R_1}\subset D_{R_2}\) whenever \(R_1<R_2\), the family
\(\{K_R\}_{R\geq1}\) is non-increasing:
\[
 R_1<R_2
 \quad\Longrightarrow\quad
 K_{R_2}\subseteq K_{R_1}.
\]
Indeed, if \(R_1<R_2\), then every element that vanishes almost everywhere
on \(D_{R_2}\) also vanishes almost everywhere on \(D_{R_1}\). Moreover,
since
\[
 \bigcup_{R\geq1}D_R=\Omega_\psi,
\]
we have
\[
 \bigcap_{R\geq1}K_R=\{0\}.
\]
Indeed, an element of this intersection vanishes almost everywhere on every
\(D_R\), and hence almost everywhere in \(\Omega_\psi\). Since elements of
\(V\) are continuous, it must vanish identically.

It remains to observe that the non-increasing family \(\{K_R\}_{R\geq1}\)
stabilizes. The function
\[
 R\longmapsto\dim K_R
\]
is non-increasing and takes values in the finite set
\(\{0,\ldots,d\}\). Choose \(R_0\geq1\) such that
\(\dim K_{R_0}\) is minimal. For every \(R\geq R_0\),
\[
 K_R\subseteq K_{R_0}.
\]
Minimality of \(\dim K_{R_0}\) gives
\[
 \dim K_R=\dim K_{R_0},
\]
and therefore
\[
 K_R=K_{R_0},
 \qquad R\geq R_0.
\]
On the other hand, if \(1\leq R<R_0\), then
\(K_{R_0}\subseteq K_R\). Consequently,
\[
 K_{R_0}
 =
 \bigcap_{R\geq1}K_R
 =
 \{0\}.
\]
Thus \(K_R=\{0\}\) for every \(R\geq R_0\), and hence \(A_R\) is positive
definite on \(V\) for every such \(R\). The positive definiteness of \(A_R\) for large \(R\) can equivalently be
seen by compactness, see Remark \ref{rem:positive-definiteness-compactness}.

Fix a basis
\(E_1,\ldots,E_d\) of \(V\), and denote by
\[
G(R)=\left(\int_{D_R}E_iE_j\right)_{i,j=1}^d
\]
the corresponding Gram matrix. For each \(i\), the defining growth condition for
\(\mathscr H_\gamma(\Omega_\psi)\) gives
\[
\sup_{D_R}|E_i|\leq C_i(1+R)^\gamma.
\]
Together with \(|D_R|\leq CR^{\mathfrak q}\), this implies, for \(R\)
sufficiently large,
\[
\int_{D_R}E_i^2\leq C_{i,V}R^{2\gamma+\mathfrak q}.
\]
Hadamard's determinant inequality therefore yields
\begin{equation}
\label{eq:Gram-polynomial-growth}
\det G(R)
\leq
\prod_{i=1}^d\Bigl(\int_{D_R}E_i^2\Bigr)
\leq
C_VR^{d(2\gamma+\mathfrak q)},
\qquad R\geq R_V.
\end{equation}
In the definition of $U\in \mathscr H_\gamma(\Omega_\psi)$, no uniform bound on the individual growth constants \(C_U\) is assumed. For
the fixed finite-dimensional subspace \(V\), these constants enter only
through the finite prefactor \(C_V\) in
\eqref{eq:Gram-polynomial-growth}. They affect the choice of a sufficiently
large good scale but not the resulting dimension bound.

Fix \(0<\varepsilon\leq1\), and set
\[
 \beta=2\gamma+\mathfrak q+1.
\]
We claim that there exist arbitrarily large radii \(R\) such that
\begin{equation}
\label{eq:Gram-good-scale}
 \det G((1+\varepsilon)R)
 \leq
 (1+\varepsilon)^{d\beta}\det G(R).
\end{equation}
To prove this, fix an arbitrary
\(R_*\geq\max\{R_0,R_V\}\) and suppose, to the
contrary, that \eqref{eq:Gram-good-scale} fails for every \(R\geq R_*\).
Writing
\[
 R_k=(1+\varepsilon)^kR_*,
\]
iteration gives
\[
 \det G(R_k)
 >
 (1+\varepsilon)^{kd\beta}\det G(R_*).
\]
Since \(R_*\geq R_0\), the matrix \(G(R_*)\) is positive definite and hence
\(\det G(R_*)>0\). On the other hand,
\eqref{eq:Gram-polynomial-growth} gives
\[
 \det G(R_k)
 \leq
 C_VR_*^{d(2\gamma+\mathfrak q)}
 (1+\varepsilon)^{kd(2\gamma+\mathfrak q)}.
\]
Because
\[
 \beta=2\gamma+\mathfrak q+1,
\]
the two estimates are incompatible as \(k\to\infty\). Thus, for every
\(R_*\geq R_0\), there exists \(R\geq R_*\) satisfying
\eqref{eq:Gram-good-scale}, which proves the claim.

Choose a radius \(R\) satisfying \eqref{eq:Gram-good-scale}, and select an
\(A_{(1+\varepsilon)R}\)-orthonormal basis
\(U_1,\ldots,U_d\) of \(V\). Let
\(\lambda_1,\ldots,\lambda_d\) be the eigenvalues of the Gram matrix
\[
\left(A_R(U_i,U_j)\right)_{i,j=1}^d.
\]
Since \(D_R\subset D_{(1+\varepsilon)R}\), the two quadratic forms satisfy
\[
0<A_R(U,U)\leq A_{(1+\varepsilon)R}(U,U),
\qquad U\in V\setminus\{0\},
\]
and hence \(0<\lambda_j\leq1\). The ratio of the determinants of two Gram
matrices is invariant under a common change of basis. Therefore,  \eqref{eq:Gram-good-scale} implies
\[
\prod_{j=1}^d\lambda_j
=
\frac{\det G(R)}{\det G((1+\varepsilon)R)}
\geq
(1+\varepsilon)^{-d\beta}.
\]
The arithmetic-geometric mean inequality now gives
\begin{equation}
\label{eq:Gram-trace-lower}
\sum_{j=1}^d\int_{D_R}U_j^2
=
\sum_{j=1}^d\lambda_j
\geq
d\Bigl(\prod_{j=1}^d\lambda_j\Bigr)^{1/d}
\geq
d(1+\varepsilon)^{-\beta}.
\end{equation}

We next derive an upper bound for the same quantity. For every \(P\in D_R\),
orthonormality implies
\[
\sum_{j=1}^d(U_j(P))^2
=
\sup_{\substack{U\in V\\
A_{(1+\varepsilon)R}(U,U)=1}}
|U(P)|^2.
\]
Indeed, if \(U=\sum_{j=1}^da_jU_j\), then
\(A_{(1+\varepsilon)R}(U,U)=\sum_j a_j^2\), and the supremum follows from
the Cauchy-Schwarz inequality, with equality for coefficients proportional
to \((U_1(P),\ldots,U_d(P))\).

The separation between \(D_R\) and the artificial boundary of
\(D_{(1+\varepsilon)R}\) is comparable, in the intrinsic geometry, to
\(\varepsilon R\). Lemma~\ref{lem:boundary-mean-value} therefore gives,
uniformly for every \(U\) occurring in the preceding supremum,
\[
|U(P)|^2
\leq
C(\varepsilon R)^{-\mathfrak q}
\int_{D_{(1+\varepsilon)R}}U^2
=
C(\varepsilon R)^{-\mathfrak q}.
\]
Consequently,
\[
\sum_{j=1}^d(U_j(P))^2
\leq
C(\varepsilon R)^{-\mathfrak q},
\qquad P\in D_R.
\]
Integrating over \(D_R\) and using
\(|D_R|\leq CR^{\mathfrak q}\), we obtain
\[
\sum_{j=1}^d\int_{D_R}U_j^2
\leq
C\varepsilon^{-\mathfrak q}.
\]
Combining this with \eqref{eq:Gram-trace-lower} yields
\[
d
\leq
C\varepsilon^{-\mathfrak q}(1+\varepsilon)^\beta.
\]

Finally, take
\[
\varepsilon=\min\{1,\beta^{-1}\}.
\]
Since \(\beta=2\gamma+\mathfrak q+1>1\), our choice gives
\(\varepsilon=\beta^{-1}\). Hence,  using \(\log(1+s)\leq s\) we deduce \((1+\varepsilon)^\beta\leq C\) and
\(\varepsilon^{-\mathfrak q}\leq C(1+\beta)^{\mathfrak q}\). Since
\(\beta=2\gamma+\mathfrak q+1\), it follows that
\[
d\leq C(1+\gamma)^{\mathfrak q},
\]
which is precisely \eqref{eq:Hgamma-finite-dimensional}. The estimate is
independent of the chosen subspace \(V\), hence the full
space \(\mathscr H_\gamma(\Omega_\psi)\) is finite-dimensional and satisfies
the same bound.
\end{proof}

\begin{remark}
\label{rem:individual-growth-constants}
No uniform bound on the constants \(C_U\) in
\eqref{eq:Hgamma-definition} is required. For a fixed finite-dimensional
subspace \(V\subset\mathscr H_\gamma(\Omega_\psi)\), choosing a basis
produces a finite constant \(C_V\) in
\eqref{eq:Gram-polynomial-growth}. This constant may affect how large the
good scale in \eqref{eq:Gram-good-scale} must be, but it does not enter the
resulting dimension estimate, which depends only on the competing growth
exponents. This independence is necessary: multiplying an element \(U\) by
an arbitrary scalar changes its growth constant without changing either its
membership in \(\mathscr H_\gamma(\Omega_\psi)\) or the dimension of the
space.
\end{remark}

\begin{remark}
\label{rem:positive-definiteness-compactness}
The positive definiteness of \(A_R\) for large \(R\) can equivalently be
seen by compactness. Assume that \(V\neq\{0\}\), fix any norm on \(V\), and
let
\[
 S_V=\{U\in V:\|U\|_V=1\}.
\]
For every \(U\in S_V\), exhaustion by the sets \(D_R\) gives a radius
\(R_U\) such that $A_{R_U}(U,U)>0$. By continuity of \(Z\mapsto A_{R_U}(Z,Z)\), the same strict inequality
holds in a neighborhood \(\mathcal O_U\) of \(U\) in \(S_V\). Since \(V\)
is finite-dimensional, its unit sphere \(S_V\) is compact,  and hence finitely many such
neighborhoods,
\[
 \mathcal O_{U_1},\ldots,\mathcal O_{U_N},
\]
cover \(S_V\). Setting
\[
 \widetilde R_0
 =
 \max_{1\leq i\leq N}R_{U_i},
\]
and using the monotonicity of \(A_R(Z,Z)\) in \(R\), we obtain
\[
 A_{\widetilde R_0}(Z,Z)>0
 \qquad\text{for every }Z\in S_V.
\]
Homogeneity then gives
\[
 A_R(Z,Z)>0
 \qquad
 \text{for every }Z\in V\setminus\{0\}
 \quad\text{and every }R\geq\widetilde R_0.
\]
This is the compactness counterpart of the stabilization argument for the
kernels \(K_R\).
\end{remark}

For \(h\in\mathbb R\), denote translation in the invariant direction by
\begin{equation}
\label{eq:ym-translation}
 (T_hU)(x',x_m,y',y_m,t)
 =U(x',x_m,y',y_m+h,t).
\end{equation}
Exact cylindricity implies that \(T_h\) preserves both \(\Omega_\psi\) and
its boundary, and \(T_h\mathcal K=\mathcal KT_h\).

\begin{proposition}
\label{prop:tangential-exponential-polynomiality}
Let \(U\in\mathscr H_\gamma(\Omega_\psi)\). There exist an integer
\(J\geq1\), real numbers \(\xi_1,\ldots,\xi_J\), non-negative integers
\(d_1,\ldots,d_J\), and complex-valued coefficient functions
\[
 a_{j,k}=a_{j,k}(x',x_m,y',t)
\]
such that
\begin{equation}
\label{eq:tangential-exponential-polynomial}
 U(x',x_m,y',y_m,t)
 =
 \sum_{j=1}^J e^{i\xi_jy_m}
 \sum_{k=0}^{d_j}
 a_{j,k}(x',x_m,y',t)y_m^k.
\end{equation}
The expansion may be chosen invariant under complex conjugation: the terms
with nonzero frequencies occur in conjugate pairs, while the coefficients
associated with the zero frequency are real-valued. Hence the right-hand
side of \eqref{eq:tangential-exponential-polynomial} is real-valued. Moreover,
for every fixed \((x',x_m,y',t)\), the function
\[
 s\longmapsto U(x',x_m,y',s,t)
\]
is real-analytic on \(\mathbb R\); indeed, it extends to an entire function
of the complex variable \(s\).
\end{proposition}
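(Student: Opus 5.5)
The plan is to study the translation group \(\{T_h\}_{h\in\mathbb R}\) acting on the finite-dimensional space supplied by Proposition~\ref{prop:finite-dimensional-Hgamma}. We do not need a growth bound for translates. Instead we work with the linear span \(V_U=\operatorname{span}\{T_hU:h\in\mathbb R\}\) and show that it lies in some \(\mathscr H_{\gamma'}(\Omega_\psi)\).

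\textbf{Step 1: translates stay in a polynomial-growth class.} Exact cylindricity gives \(T_h\mathcal K=\mathcal KT_h\) and \(T_h\Omega_\psi=\Omega_\psi\). Hence \(T_hU\) is \(\mathcal K\)-harmonic, continuous on \(\overline{\Omega_\psi}\), and vanishes on the boundary. For growth, note that if \(P\in Q_{M,R}\), then \(P+he_{y_m}\in Q_{M,R'}\) with \(c_MR'^3\ge c_MR^3+|h|\). Taking \(R'=R+(|h|/c_M)^{1/3}\) gives \(\sup_{D_R}|T_hU|\le C_U(1+R+|h|^{1/3})^\gamma\le C_{U,h}(1+R)^\gamma\). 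So \(T_hU\in\mathscr H_\gamma(\Omega_\psi)\), with a constant depending on \(h\); by Remark~\ref{rem:individual-growth-constants} this dependence is allowed. Therefore \(V_U\subset\mathscr H_\gamma(\Omega_\psi)\), and by Proposition~\ref{prop:finite-dimensional-Hgamma} its dimension \(d\) is finite.

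\textbf{Step 2: a finite-dimensional one-parameter group.} The maps \(T_h\) restrict to \(V_U\) and satisfy \(T_{h+k}=T_hT_k\). They are continuous in \(h\): pick points \(P_1,\dots,P_d\) such that evaluation at these points is injective on \(V_U\), which is possible because \(V_U\) consists of continuous functions. Evaluation at \(P_i\) of \(T_hE_j\) is continuous in \(h\), since \(U\) is continuous. Hence the matrix of \(T_h\) in a basis is continuous in \(h\), and a continuous homomorphism \(\mathbb R\to GL(V_U)\) has the form \(T_h=e^{hA}\) for a linear map \(A\) on \(V_U\). Here \(A\) is in fact \(\partial_{y_m}\) restricted to \(V_U\).

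\textbf{Step 3: expansion and reality.} Jordan-decompose \(A\) over \(\mathbb C\), and evaluate \(U(x',x_m,y',y_m,t)=(T_{y_m}U)(x',x_m,y',0,t)=(e^{y_mA}U)(\cdot,0,\cdot)\). This gives a finite sum \(\sum_j e^{\mu_jy_m}\sum_k a_{j,k}y_m^k\), with \(\mu_j\) the eigenvalues of \(A\) and \(a_{j,k}\) coefficient functions obtained by evaluating at \(y_m=0\) the generalized eigencomponents of \(U\), divided by \(k!\).

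Since \(A\) is real, nonreal eigenvalues and their generalized eigenvectors occur in conjugate pairs. Projecting onto generalized eigenspaces commutes with conjugation, so the conjugate pairing and the realness of the coefficients for real eigenvalues follow. Entire extendability in \(s\) is immediate from the exponential-polynomial form.

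\textbf{Main obstacle: purely imaginary exponents.} The statement asserts frequencies \(e^{i\xi_jy_m}\) with \(\xi_j\) real, that is \(\mu_j\in i\mathbb R\). I would obtain this from the polynomial growth. A component \(e^{\mu y_m}y_m^k\) with \(\operatorname{Re}\mu\ne0\) grows exponentially in \(y_m\) along some fibre. Meanwhile \(\sup_{D_R}|U|\le C(1+R)^\gamma\) with \(|y_m|\sim R^3\) in \(D_R\) gives \(|U|\le C(1+|y_m|)^{\gamma/3}\) along each fibre inside the domain. Because the generalized eigencomponents of \(U\) are themselves elements of \(V_U\subset\mathscr H_\gamma\) (they are polynomials in \(A\) applied to \(U\), hence finite combinations of derivatives, or limits of translates in the finite-dimensional space), each component has polynomial growth in \(y_m\). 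Take a fibre point where the top coefficient \(a_{j,d_j}\) of a fixed generalized eigencomponent is nonzero; exponential growth in one direction of \(y_m\) would then contradict this bound. Hence \(\operatorname{Re}\mu_j=0\).

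If this growth argument proves delicate, the fallback is to keep \(\mu_j\in\mathbb C\) arbitrary. Real-analyticity and the entire extension, which are all that is used later, hold regardless.
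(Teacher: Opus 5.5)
Your proposal is correct and follows the paper's proof essentially step for step: translates lie in $\mathscr H_\gamma(\Omega_\psi)$ with $h$-dependent constants, evaluation at finitely many points gives continuity of the one-parameter group $T_h|_{V_U}=e^{hB}$, and the generalized-eigenspace expansion evaluated at $y_m=0$, together with conjugation symmetry, yields the formula. The only variation is in excluding eigenvalues off the imaginary axis: the paper first derives the operator bound $\|e^{hB}\|\le C(1+|h|)^{\gamma/3}$ from a basis of translates and applies it to eigenvectors, whereas you apply fibrewise polynomial growth directly to a generalized eigencomponent. That argument is equally valid, since the component lies in $V_U\otimes\mathbb C$ and its real and imaginary parts lie in $V_U$.
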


\begin{proof}
Let
\[
V_U=\operatorname{span}\{T_hU:h\in\mathbb R\}.
\]
Since \(\Omega_\psi\) is cylindrical in \(y_m\), translation in the
\(y_m\)-direction preserves both \(\Omega_\psi\) and its boundary.
Moreover, \(\mathcal K\) is invariant under these translations. Hence, if
\(U\) is \(\mathcal K\)-harmonic in \(\Omega_\psi\) and vanishes on
\(\partial\Omega_\psi\), then the same is true of \(T_hU\).

For each fixed \(h\), translation in \(y_m\) changes the intrinsic distance
from the origin by at most a quantity comparable to \(|h|^{1/3}\). Hence
\[
 T_h(D_R)
 \subset
 D_{R+C|h|^{1/3}}
\]
up to a harmless change in the box constants. Consequently,
\[
 \sup_{D_R}|T_hU|
 \leq
 C_U\bigl(1+R+C|h|^{1/3}\bigr)^\gamma
 \leq
 C_{U,h}(1+R)^\gamma,
\]
where \(C_{U,h}\) is independent of \(R\). Thus every \(T_hU\) has the same
polynomial growth order as \(U\). Since
\(\mathscr H_\gamma(\Omega_\psi)\) is a vector space, it follows that
\[
 V_U
 =
 \operatorname{span}\{T_hU:h\in\mathbb R\}
 \subset
 \mathscr H_\gamma(\Omega_\psi).
\]
Proposition~\ref{prop:finite-dimensional-Hgamma} therefore implies that
\(V_U\) is finite-dimensional.

The space \(V_U\) is invariant under every translation \(T_h\), and
\[
 T_0=I,
 \qquad
 T_{h+k}=T_hT_k,
 \qquad
 T_h^{-1}=T_{-h}.
\]
Thus every restriction \(T_h|_{V_U}\) is an invertible linear map on
\(V_U\). To identify these operators as a continuous one-parameter group and
to estimate their norms, we first introduce a convenient norm on \(V_U\).

Write \(d=\dim V_U\). If \(d=0\), then \(U\equiv0\) and there is nothing to
prove, so assume that \(d\geq1\). We construct points \(P_1,\ldots,P_d\in\Omega_\psi\) such that the
evaluation map
\begin{equation}
\label{map}
 \mathcal E:V_U\longrightarrow\mathbb R^d,
 \qquad
 \mathcal E(W)
 =
 \bigl(W(P_1),\ldots,W(P_d)\bigr),
\end{equation}
is injective.

Set $\mathcal N_0:=V_U$. Assuming \(d=\dim V_U\geq1\), choose a nonzero function
\(W_1\in\mathcal N_0\). Since \(W_1\not\equiv0\), there exists
\(P_1\in\Omega_\psi\) such that $W_1(P_1)\neq0$. Define
\[
 \mathcal N_1
 =
 \{W\in\mathcal N_0:W(P_1)=0\}.
\]
Because evaluation at \(P_1\) defines a nonzero linear functional
\[
 \ell_{P_1}:\mathcal N_0\to\mathbb R,
 \qquad
 \ell_{P_1}(W)=W(P_1),
\]
its image is one-dimensional. The rank-nullity theorem therefore shows
that its kernel has codimension one in \(\mathcal N_0\), i.e.,
\[
 \dim\mathcal N_1=d-1.
\]
We now proceed inductively. Assume inductively that points \(P_1,\ldots,P_k\) have been chosen so that
\[
 \mathcal N_k
 =
 \{W\in V_U:W(P_1)=\cdots=W(P_k)=0\}
\]
satisfies $\dim\mathcal N_k=d-k$. If \(k<d\), choose a nonzero
\(W_{k+1}\in\mathcal N_k\). There exists
\(P_{k+1}\in\Omega_\psi\) such that
\[
 W_{k+1}(P_{k+1})\neq0.
\]
Setting
\[
 \mathcal N_{k+1}
 =
 \{W\in\mathcal N_k:W(P_{k+1})=0\},
\]
we obtain
\[
 \dim\mathcal N_{k+1}=d-k-1.
\]
After \(d\) steps,
\[
 \mathcal N_d=\{0\}.
\]
Equivalently, the evaluation map \(\mathcal E\) in \eqref{map}
has trivial kernel and is therefore injective. Since the map \(\mathcal E\) is linear and injective, and since both \(V_U\) and
\(\mathbb R^d\) have dimension \(d\), the rank-nullity theorem implies that
\(\mathcal E\) is also surjective. Hence
\[
 \mathcal E:V_U\longrightarrow\mathbb R^d
\]
is a linear isomorphism. Consequently,
\[
 \|W\|_*
 =
 \max_{1\leq\ell\leq d}|W(P_\ell)|
\]
defines a norm on \(V_U\). Its non-degeneracy follows from the injectivity of
\(\mathcal E\): if \(\|W\|_*=0\), then \(\mathcal E(W)=0\), and hence
\(W=0\). Since \(V_U\) is finite-dimensional, \(\|\cdot\|_*\) is equivalent
to every other norm on \(V_U\). This norm is particularly convenient because
estimates for \(T_hW\) reduce to estimates at the finitely many fixed points
\(P_1,\ldots,P_d\).

We now verify continuity of the translation representation. For every
\(W\in V_U\), continuity of \(W\) gives
\[
 (T_hW)(P_\ell)\longrightarrow W(P_\ell)
 \qquad\text{as }h\to0,
 \qquad \ell=1,\ldots,d.
\]
The translated points remain in \(\Omega_\psi\) because the domain is
invariant under translations in \(y_m\). Therefore,
\[
 \|T_hW-W\|_*
 =
 \max_{1\leq\ell\leq d}
 |(T_hW)(P_\ell)-W(P_\ell)|
 \longrightarrow0
 \qquad\text{as }h\to0.
\]
Continuity at an arbitrary \(h_0\in\mathbb R\) follows from
\[
 T_hW-T_{h_0}W
 =
 T_{h_0}\bigl(T_{h-h_0}W-W\bigr).
\]
Thus \(h\mapsto T_hW\) is continuous for every \(W\in V_U\). In a
finite-dimensional space, continuity on each vector implies continuity in
operator norm, as can be seen by applying the preceding argument to a fixed
basis of \(V_U\). Hence
\[
 h\longmapsto T_h|_{V_U}
\]
is a continuous one-parameter group in \(\operatorname{GL}(V_U)\).

The standard theorem for continuous one-parameter matrix groups now gives a
unique linear map \(B:V_U\to V_U\) such that
\begin{equation}
\label{eq:finite-dimensional-translation-group}
 T_h|_{V_U}=e^{hB},
 \qquad h\in\mathbb R.
\end{equation}
Indeed, after choosing a basis of \(V_U\), the operators \(T_h\) form a
continuous matrix-valued homomorphism from \((\mathbb R,+)\) into
\(\operatorname{GL}(d,\mathbb R)\), and every such homomorphism is generated
by a matrix exponential. Its generator is
\[
 B
 =
 \left.\frac{\dd}{\dd h}T_h\right|_{h=0}.
\]
Differentiating the group identity gives
\[
 \frac{\dd}{\dd h}T_h
 =
 BT_h
 =
 T_hB,
 \qquad
 T_0=I,
\]
whose unique solution is \(T_h=e^{hB}\). At interior points, where
hypoellipticity makes the elements of \(V_U\) smooth, the generator agrees
with differentiation in the translation direction: $BW=\partial_{y_m}W$. This is the finite-dimensional translation-invariant-space mechanism
underlying the classical structure theorem of Anselone and Korevaar
\cite{AnseloneKorevaar1964}. The generator \(B\) need not be diagonalizable;
its possible Jordan blocks will account for the polynomial factors appearing
below. We next use the evaluation norm \(\|\cdot\|_*\) to transfer the
pointwise polynomial-growth estimates for elements of \(V_U\) to a
two-sided polynomial bound for the operator group \(e^{hB}\). That bound
will exclude eigenvalues of \(B\) with nonzero real part and force its
spectrum onto the imaginary axis.

Choose a basis of \(V_U\) consisting of translates of \(U\), say
\[
W_j=T_{s_j}U,\qquad j=1,\ldots,d.
\]
For every \(h\in\mathbb R\),
\[
(T_hW_j)(P_\ell)
=
(T_{h+s_j}U)(P_\ell).
\]
The point obtained from \(P_\ell\) by a displacement of size
\(|h+s_j|\) in the \(y_m\)-direction lies in an intrinsic box of radius at
most
\[
C\bigl(1+|h+s_j|^{1/3}\bigr)
\leq
C_j\bigl(1+|h|^{1/3}\bigr).
\]
The growth assumption on \(U\) therefore yields
\[
\|T_hW_j\|_*
\leq
C_j\bigl(1+|h|^{1/3}\bigr)^\gamma
\leq
C_j(1+|h|)^{\gamma/3}.
\]
Since \(W_1,\ldots,W_d\) is a basis and all norms on \(V_U\) are
equivalent, this implies
\begin{equation}
\label{eq:translation-polynomial-growth}
\|T_h|_{V_U}\|
=
\|e^{hB}\|
\leq
C(1+|h|)^{\gamma/3},
\qquad h\in\mathbb R,
\end{equation}
for any fixed norm on \(V_U\).

The space \(V_U\) is a finite-dimensional real vector space. A real linear
map need not have real eigenvalues or a basis of real eigenvectors, so we pass
to the complexification
\[
 V_{U,\mathbb C}
 =
 V_U\otimes_{\mathbb R}\mathbb C
 =
 \{W_1+iW_2:W_1,W_2\in V_U\}.
\]
The original space \(V_U\) is identified with the real subspace
\(\{W+i0:W\in V_U\}\). The real linear map \(B:V_U\to V_U\) extends
uniquely to a complex-linear map
\[
 B_{\mathbb C}:V_{U,\mathbb C}\longrightarrow V_{U,\mathbb C},
 \qquad
 B_{\mathbb C}(W_1+iW_2)=BW_1+iBW_2.
\]
Similarly, \(T_h=e^{hB}\) extends to
\[
 T_h^{\mathbb C}=e^{hB_{\mathbb C}}.
\]
For example, if \(V_{U,\mathbb C}\) is equipped with the norm
\[
 \|W_1+iW_2\|_{\mathbb C}
 =
 \|W_1\|+\|W_2\|,
\]
then the polynomial bound \eqref{eq:translation-polynomial-growth} passes
directly from \(T_h\) to \(T_h^{\mathbb C}\). Since all norms on the
finite-dimensional space \(V_{U,\mathbb C}\) are equivalent, the same bound
holds for any chosen norm, up to a change in its constant. For simplicity,
we henceforth write \(B\) and \(T_h\) also for their complexifications.

Let \(\lambda\in\mathbb C\) be an eigenvalue of \(B\), and let \(Z\ne0\) be
a corresponding eigenvector. Since \(B^kZ=\lambda^kZ\), the exponential
series gives
\[
 e^{hB}Z=e^{h\lambda}Z.
\]
Thus \(e^{h\lambda}\) is an eigenvalue of \(e^{hB}\). If the exponent in
\eqref{eq:translation-polynomial-growth} is denoted by \(L\), then
\[
 e^{h\operatorname{Re}\lambda}\|Z\|
 =
 \|e^{hB}Z\|
 \leq
 C(1+|h|)^L\|Z\|,
 \qquad h\in\mathbb R.
\]
If \(\operatorname{Re}\lambda>0\), the left-hand side grows exponentially
as \(h\to+\infty\), contradicting the polynomial bound. If
\(\operatorname{Re}\lambda<0\), the same contradiction is obtained by
letting \(h\to-\infty\). The fact that the bound holds for both positive and
negative \(h\) is essential here. Consequently, every eigenvalue of \(B\) is
purely imaginary:
\[
 \lambda=i\xi,
 \qquad
 \xi\in\mathbb R.
\]

Let \(d=\dim V_U\), and let
\(i\xi_1,\ldots,i\xi_J\) be the distinct eigenvalues of \(B\). The generalized-eigenspace decomposition, obtained by grouping together
the Jordan blocks associated with the same eigenvalue, gives
\[
 V_{U,\mathbb C}
 =
 \bigoplus_{\nu=1}^J E_\nu,
 \qquad
 E_\nu
 =
 \ker\bigl(B-i\xi_\nu I\bigr)^d.
\]
On \(E_\nu\), define
\[
 N_\nu
 =
 \bigl(B-i\xi_\nu I\bigr)|_{E_\nu}.
\]
The operator \(N_\nu\) is nilpotent. If \(q_\nu\) is its nilpotency index,
that is, the smallest positive integer such that
\(N_\nu^{q_\nu}=0\), then
\[
 E_\nu
 =
 \ker\bigl(B-i\xi_\nu I\bigr)^{q_\nu}
\]
and
\[
 B|_{E_\nu}=i\xi_\nu I+N_\nu,
 \qquad
 N_\nu^{q_\nu}=0.
\]
Thus the restriction of \(B\) to \(E_\nu\) consists of the scalar part
\(i\xi_\nu I\), which produces oscillation, and the nilpotent part
\(N_\nu\), which produces polynomial factors. Since these two operators
commute,
\[
 e^{hB}|_{E_\nu}
 =
 e^{i\xi_\nu h}e^{hN_\nu}.
\]
The exponential series for \(N_\nu\) terminates because \(N_\nu\) is
nilpotent, and hence
\[
 e^{hB}|_{E_\nu}
 =
 e^{i\xi_\nu h}
 \sum_{k=0}^{q_\nu-1}\frac{h^kN_\nu^k}{k!}.
\]

Let \(P_\nu\) be the projection onto \(E_\nu\) along the remaining
generalized eigenspaces. Thus
\[
 I=\sum_{\nu=1}^JP_\nu,
 \qquad
 P_\nu P_\mu=0\quad\text{if }\nu\ne\mu.
\]
Applying the preceding formula to each component \(P_\nu Z\) of a vector
\(Z\in V_{U,\mathbb C}\) gives the operator identity
\begin{equation}
\label{eq:translation-group-expansion}
 e^{hB}
 =
 \sum_{\nu=1}^J
 e^{i\xi_\nu h}
 \sum_{k=0}^{q_\nu-1}h^kC_{\nu,k},
\end{equation}
where
\[
 C_{\nu,k}
 =
 \frac1{k!}N_\nu^kP_\nu.
\]
Here \(P_\nu\) first maps into \(E_\nu\), so the composition
\(N_\nu^kP_\nu\) is a well-defined complex-linear map on all of
\(V_{U,\mathbb C}\).

We now translate this finite-dimensional operator identity into a formula
for the original solution. Recall that \(T_h=e^{hB}\) and
\[
 (T_hU)(x',x_m,y',y_m,t)
 =
 U(x',x_m,y',y_m+h,t).
\]
Identifying \(U\) with its natural image \(U+i0\) in \(V_{U,\mathbb C}\),
we apply \eqref{eq:translation-group-expansion} to \(U\) and evaluate at
\(y_m=0\). This gives
\begin{align*}
 U(x',x_m,y',h,t)
 &=
 (T_hU)(x',x_m,y',0,t)=
 \sum_{\nu=1}^J
 e^{i\xi_\nu h}
 \sum_{k=0}^{q_\nu-1}
 h^k(C_{\nu,k}U)(x',x_m,y',0,t).
\end{align*}
The quantities in the last factor do not depend on \(h\). Defining
\[
 a_{\nu,k}(x',x_m,y',t)
 =
 (C_{\nu,k}U)(x',x_m,y',0,t)
\]
and setting \(h=y_m\), we obtain
\begin{equation*}
 U(x',x_m,y',y_m,t)
 =
 \sum_{\nu=1}^{J}
 e^{i\xi_\nu y_m}
 \sum_{k=0}^{q_\nu-1}
 a_{\nu,k}(x',x_m,y',t)y_m^k.
\end{equation*}
The frequencies \(\xi_\nu\) and the integers \(q_\nu\) are determined by the
finite-dimensional translation representation generated by \(U\), whereas
the coefficients \(a_{\nu,k}\) depend on the remaining variables.

The coefficients \(a_{\nu,k}\) are generally complex-valued. This is only
an artifact of passing from the real space \(V_U\) to its complexification.
Indeed, since \(B\) is the complexification of a real linear operator, it
commutes with complex conjugation:
\[
 B\overline Z=\overline{BZ}.
\]
Consequently, the generalized eigenspaces associated with non-real
eigenvalues occur in conjugate pairs. More precisely, for every index
\(\nu\) with \(\xi_\nu\ne0\), there is an index \(\bar\nu\) such that
\[
 \xi_{\bar\nu}=-\xi_\nu,
 \qquad
 q_{\bar\nu}=q_\nu,
 \qquad
 a_{\bar\nu,k}=\overline{a_{\nu,k}}.
\]
The corresponding pair of terms in
\eqref{eq:tangential-exponential-polynomial} can therefore be written as
\[
 2\operatorname{Re}
 \left(
 e^{i\xi_\nu y_m}
 \sum_{k=0}^{q_\nu-1}
 a_{\nu,k}(x',x_m,y',t)y_m^k
 \right).
\]
The coefficients associated with the zero frequency may be chosen
real-valued. Hence the full expansion is real-valued whenever \(U\) is
real-valued.

Finally, fix \((x',x_m,y',t)\) such that the corresponding \(y_m\)-fibre
lies in \(\Omega_\psi\). This entire fibre is contained in the domain because
\(\psi\) is independent of \(y_m\). Formula
\eqref{eq:tangential-exponential-polynomial} shows that
\[
 s\longmapsto U(x',x_m,y',s,t)
\]
is a finite sum of functions of the form $s^ke^{i\xi s}$. Each such function extends to the entire function
\(z\mapsto z^ke^{i\xi z}\) on \(\mathbb C\). Thus every \(y_m\)-fibre of
\(U\) is real-analytic, indeed the restriction of an entire function. Notice
that this conclusion does not require the frequencies \(\xi_\nu\) to vanish:
the rigidity argument uses analyticity of the fibres, not polynomial
dependence on \(y_m\).
\end{proof}

The preceding proposition gives exactly the tangential regularity needed in
the rigidity argument. It does not assert that all frequencies \(\xi_j\) in
\eqref{eq:tangential-exponential-polynomial} vanish. Ruling out the
oscillatory modes \(e^{i\xi_jy_m}\) with \(\xi_j\ne0\) would be necessary to
deduce polynomial dependence on \(y_m\), but no such classification is
required here.

We next establish the uniqueness statement used after fibrewise analyticity
has extended the one-sided zero set to the full fibres. Since this uniqueness
argument does not use the cylindrical structure of the domain, we formulate
it in a more general setting.

\subsection{Cauchy uniqueness and completion of the rigidity argument}
\label{subsec:cauchy-uniqueness}

\begin{lemma}
\label{lem:polynomial-growth-cauchy-uniqueness}
Let \(\Omega_\psi\) be an unbounded intrinsic
\(\operatorname{Lip}_{\mathcal K}\) graph domain with constant \(M\),
possibly depending on \(y_m\). No normalization of \(\psi\) at the origin is
assumed. Let \(\tau\in\mathbb R\), and suppose that
\(Z\in C(\overline{\Omega_\psi\cap\{t\geq\tau\}})\) is an energy solution of
\begin{equation}
\label{eq:cauchy-uniqueness-problem}
 \mathcal KZ=0\quad\text{in }\Omega_\psi\cap\{t>\tau\},
 \qquad
 Z=0\quad\text{on }
 \bigl(\partial\Omega_\psi\cap\{t\geq\tau\}\bigr)
 \cup
 \bigl(\overline{\Omega_\psi}\cap\{t=\tau\}\bigr).
\end{equation}
Assume that, for some \(\gamma\geq0\) and every \(T>\tau\),
\begin{equation}
\label{eq:cauchy-uniqueness-growth}
 \sup_{\Omega_\psi\cap Q_{M,R}\cap\{\tau\leq t\leq T\}}|Z|
 \leq C_T(1+R)^\gamma,
 \qquad R\geq1.
\end{equation}
Then \(Z\equiv0\) in \(\Omega_\psi\cap\{t\geq\tau\}\).
\end{lemma}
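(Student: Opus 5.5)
The plan is a Tikhonov-type barrier argument combined with the weak maximum principle on bounded truncations. Fix \(T>\tau\) and a point \(P^*=(x^*,y^*,t^*)\) with \(\tau<t^*\leq T\). It suffices to show \(|Z(P^*)|\leq\epsilon\) for every \(\epsilon>0\). We work on \(G_R=\Omega_\psi\cap Q_{M,R}\cap\{\tau<t<T_1\}\), where \(T_1=\min\{T,\tau+\delta\}\) and \(\delta>0\) is small, depending only on \(m,M,\gamma\). We then iterate in time steps of length \(\delta\); this is legitimate because the hypotheses are translation invariant in \(t\) and the growth bound holds on every finite time slab. The Kolmogorov boundary of \(G_R\) consists of three pieces: the graph portion, where \(Z=0\); the initial section \(t=\tau\), where \(Z=0\); and the incoming artificial faces of \(Q_{M,R}\). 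The last piece is where the growth bound \eqref{eq:cauchy-uniqueness-growth} must be beaten.

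The barrier is a positive supersolution \(\Phi\) (so \(\mathcal K\Phi\leq0\)) on the slab that grows faster than \((1+R)^\gamma\) on the artificial faces. Because the box is anisotropic, the natural choice is the super-polynomial ansatz
\[
 \Phi(x,y,t)=\bigl(1+|x|^2+\mu_1|y-(t-\tau)x|^{2/3}+\mu_2(t-\tau)\bigr)^{k}e^{\lambda(t-\tau)},
\]
with \(k>\gamma\), or alternatively an exponential quadratic weight \(\exp\bigl(a|x|^2/(1-b(t-\tau))+\dots\bigr)\) in the spirit of Tikhonov's barrier for the heat equation. The parameters are chosen so that \(\mathcal K\Phi\leq0\) for \(t-\tau<\delta\). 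The cleanest route is to use left translation invariance and check the inequality with the group-adapted variable \(y-(t-\tau)x\). Since \(Y\) annihilates \(y-(t-\tau)x\) up to terms controlled by \(x\), the terms produced by \(\Delta_x\) can be absorbed by the \(-\partial_t\) contribution, which is a multiple of \(\lambda\Phi\) and of \(\mu_2\). On the artificial faces of \(Q_{M,R}\), at least one of \(|x'|\sim R\), \(|x_m|\sim c_MR\), or \(|y|\sim c_MR^3\) holds. In each case \(\Phi\gtrsim R^{2k}\geq c R^{2k}\), which dominates \(C_T(1+R)^\gamma\) once \(2k>\gamma\).

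Set \(w=\pm Z-\epsilon\Phi\). Then \(\mathcal Kw\geq0\) in \(G_R\), and \(w\leq0\) on \(\partial_{\mathcal K}G_R\) for \(R\) large, depending on \(\epsilon\). The weak maximum principle \eqref{eq:weak-maximum-principle} then gives \(|Z|\leq\epsilon\Phi\) on \(G_R\). Letting \(R\to\infty\) and then \(\epsilon\to0\) yields \(Z=0\) on the slab \(\tau\leq t\leq\tau+\delta\). Repeating with \(\tau\) replaced by \(\tau+\delta\) covers all \(t\geq\tau\).

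There are two technical points. First, the maximum principle was stated for the bounded graph truncations \(D_r(P_0)\), and \(G_R\) is such a truncation cut at two times. The terminal face is not in the Kolmogorov boundary, and the initial face carries zero data by hypothesis, so the principle applies. Second, the growth bound is uniform only on \(\Omega_\psi\cap Q_{M,R}\). Since \(Q_{M,R}\) is centred at the origin rather than left-translated, the drift shear \(y-tx\) over times of order one changes \(y\) by \(O(R)\ll R^3\), which is harmless. The step I expect to be the main obstacle is the barrier computation itself: producing an explicit \(\Phi\) with \(\mathcal K\Phi\leq0\) that is large on all artificial faces, including the \(y\)-faces where only the degenerate variable is large. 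The \(|y|^{2/3}\)-type weight fails to be smooth at \(y=0\). I would first try a smooth homogeneous-degree-two norm \(\rho(x,y)\) built from \(|x|^2\) and \((1+|y|^2)^{1/3}\), bound \(\mathcal K\rho^{k}\) by \(C\rho^{k-1}\), and kill that term with \(e^{\lambda t}\). If that proves messy, the fallback is the Gaussian-type weight adapted to the fundamental solution of \(\mathcal K\), which handles all variables at once.
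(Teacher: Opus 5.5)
Your plan is the paper's plan. You use a Tikhonov-type supersolution and the weak maximum principle on \(\Omega_\psi\cap Q_{M,R}\cap\{\tau<t<T\}\). You dominate \(Z\) on the artificial lateral faces via the growth bound and let \(R\to\infty\). The one substantive step you leave open, the barrier, is much easier than you expect.

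The paper takes the non-homogeneous Euclidean weight \(q=1+|x|^2+|y|^2\) and sets \(\Phi_k=e^{\lambda_k(t-\tau)}q^k\) with \(2k>\gamma\). Then \(\Delta_xq^k\leq(2km+4k(k-1))q^k\). The drift term \(x\cdot\nabla_yq^k=2k(x\cdot y)q^{k-1}\) is at most \(kq^k\), because \(2|x\cdot y|\leq|x|^2+|y|^2\leq q\). Hence \(\mathcal K\Phi_k\leq-\Phi_k\) in all of space-time for \(\lambda_k=2km+4k(k-1)+k+1\). No homogeneity and no short-time restriction are needed, since the factor \(e^{\lambda_k(t-\tau)}\) absorbs everything.

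The \(y\)-faces, which you single out as the difficulty, are the easy ones for this weight, since \(q\geq cR^6\) there. The binding faces are the \(x\)-faces, where \(q\geq1+cR^2\), and this is exactly why \(2k>\gamma\) suffices. Consequently your time steps of length \(\delta\) and the iteration in \(t\) can be dropped. They would be needed only for a Gaussian weight, which is tailored to exponential rather than polynomial growth and proves more than the lemma requires.

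As for your candidates, the first ansatz fails as written. Besides the non-smoothness you note, the \(Y\)-invariant combination is \(y+(t-\tau)x\). For \(z=y-(t-\tau)x\) one has \(Yz=2x\), so \(Y|z|^{2/3}=\tfrac{4}{3}|z|^{-4/3}\,z\cdot x\), which is unbounded and of either sign near \(z=0\). For \(m\geq2\), the term \(\Delta_x|z|^{2/3}=(t-\tau)^2\tfrac{2}{3}\bigl(m-\tfrac{4}{3}\bigr)|z|^{-4/3}\) is positive and unbounded. Neither can be absorbed by \(\lambda\Phi\).

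Your smooth fallback \(\rho=1+|x|^2+(1+|y|^2)^{1/3}\) does work, with one correction. The drift term only satisfies \(|x\cdot\nabla_y\rho^k|\leq\frac{2k}{3}\rho^{k-1/2}\), not \(C\rho^{k-1}\). This is still absorbed by \(e^{\lambda(t-\tau)}\), and \(\rho^k\geq cR^{2k}\) on every face. That weight matches the intrinsic scaling but buys nothing over \(q\) here.
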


\begin{proof}
Fix \(T>\tau\). Throughout the argument, $Q_{M,R}=Q_{M,R}(0)$ denotes the intrinsic box centered at the group identity. For \(R\)
sufficiently large that
\[
[\tau,T]\subset(-2R^2,2R^2),
\]
set
\[
G_{R,T}
=
\Omega_\psi\cap Q_{M,R}\cap\{\tau<t<T\}.
\]
Since the boxes \(Q_{M,R}\) exhaust \(\mathbb R^{2m+1}\) as
\(R\to\infty\), every fixed point of
\(\Omega_\psi\cap\{\tau<t<T\}\) belongs to \(G_{R,T}\) for all sufficiently
large \(R\).

We write
\[
\Gamma_{R,T}^{\mathrm{art}}
=
\overline{\Omega_\psi}\cap\partial Q_{M,R}
\cap\{\tau<t<T\}
\]
for the artificial lateral boundary. Since the time faces of \(Q_{M,R}\)
lie outside the fixed slab \(\{\tau<t<T\}\), every point of
\(\Gamma_{R,T}^{\mathrm{art}}\) lies on a spatial face of \(Q_{M,R}\).

Choose an integer \(k\geq1\) such that \(2k>\gamma\), set
\[
q(x,y)=1+|x|^2+|y|^2,
\]
and define
\begin{equation}
\label{eq:tikhonov-barrier}
\Phi_k(x,y,t)
=
e^{\lambda_k(t-\tau)}q(x,y)^k,
\qquad
\lambda_k=2km+4k(k-1)+k+1.
\end{equation}
This is a polynomial-growth version of the classical Tikhonov barrier
construction for parabolic uniqueness; see, for example,
\cite[Chapter~1]{Friedman1964}.
A direct calculation gives
\begin{align*}
\Delta_xq^k
&=
2kmq^{k-1}
+
4k(k-1)|x|^2q^{k-2},\qquad x\cdot\nabla_yq^k
=
2k(x\cdot y)q^{k-1}.
\end{align*}
Since \(q\geq1\) and
\[
2|x\cdot y|
\leq
|x|^2+|y|^2
\leq q,
\]
we obtain
\begin{align*}
\mathcal K\Phi_k
&=
e^{\lambda_k(t-\tau)}
\left(
\Delta_xq^k+x\cdot\nabla_yq^k-\lambda_kq^k
\right)\leq
\bigl(
2km+4k(k-1)+k-\lambda_k
\bigr)\Phi_k
=
-\Phi_k.
\end{align*}
Thus
\begin{equation}
\label{eq:tikhonov-super-solution}
\mathcal K\Phi_k\leq-\Phi_k
\qquad\text{in }\mathbb R^{2m+1}.
\end{equation}

By the definition of \(Q_{M,R}\), there exists
\(c_0=c_0(m,M)>0\) such that
\begin{equation}
\label{eq:box-barrier-lower-bound}
q(x,y)\geq1+c_0R^2
\qquad\text{on }\Gamma_{R,T}^{\mathrm{art}}.
\end{equation}
Indeed, on an \(x\)-face at least one \(x\)-coordinate has size comparable
to \(R\), whereas on a \(y\)-face at least one \(y\)-coordinate has size
comparable to \(R^3\). Consequently, the weakest lower bound is attained
on the \(x\)-faces; on the \(y\)-faces one has the stronger estimate $q(x,y)\geq cR^6$.

We next bound \(Z\) on the artificial boundary. Since
\[
\overline{Q_{M,R}}\subset Q_{M,2R},
\]
the growth assumption \eqref{eq:cauchy-uniqueness-growth}, applied in
\(Q_{M,2R}\), together with the continuity of \(Z\), gives
\begin{equation}
\label{eq:artificial-box-boundary-growth}
|Z|
\leq
C_T'(1+R)^\gamma
\qquad\text{on }\Gamma_{R,T}^{\mathrm{art}},
\end{equation}
where \(C_T'\) is independent of \(R\). Define
\[
a_R
=
\frac{C_T'(1+R)^\gamma}
     {(1+c_0R^2)^k}.
\]
Since \(t\geq\tau\) on the relevant boundary,
\[
e^{\lambda_k(t-\tau)}\geq1.
\]
It follows from \eqref{eq:box-barrier-lower-bound} and
\eqref{eq:artificial-box-boundary-growth} that
\begin{equation}
\label{eq:artificial-boundary-domination}
|Z|\leq a_R\Phi_k
\qquad\text{on }\Gamma_{R,T}^{\mathrm{art}}.
\end{equation}

By the definition of the Kolmogorov boundary,
\(\partial_{\mathcal K}G_{R,T}\) is contained in the union of the initial
section \(t=\tau\), the portion of the graph boundary
\(\partial\Omega_\psi\) contained in the slab, and the appropriate
incoming portion of the artificial lateral boundary
\(\Gamma_{R,T}^{\mathrm{art}}\). The terminal section \(t=T\) does not
belong to \(\partial_{\mathcal K}G_{R,T}\). We control the function on the
entire artificial lateral boundary, and therefore, by
\eqref{eq:cauchy-uniqueness-problem} and
\eqref{eq:artificial-boundary-domination},
\[
Z-a_R\Phi_k\leq0
\qquad\text{on }\partial_{\mathcal K}G_{R,T}.
\]
Moreover, since \(\mathcal KZ=0\), estimate
\eqref{eq:tikhonov-super-solution} gives
\[
\mathcal K(Z-a_R\Phi_k)
=
-a_R\mathcal K\Phi_k
\geq0
\qquad\text{in }G_{R,T}.
\]
The weak maximum principle consequently yields
\begin{equation}
\label{eq:cauchy-upper-comparison}
Z(P)\leq a_R\Phi_k(P),
\qquad P\in G_{R,T}.
\end{equation}

Now fix $P\in\Omega_\psi\cap\{\tau<t<T\}$. Then \(P\in G_{R,T}\) for all sufficiently large \(R\). Since \(P\) is
fixed and \(2k>\gamma\),
\[
a_R\Phi_k(P)
=
C_T'
\frac{(1+R)^\gamma}
     {(1+c_0R^2)^k}
\Phi_k(P)
\longrightarrow0
\qquad\text{as }R\to\infty.
\]
It follows from \eqref{eq:cauchy-upper-comparison} that \(Z(P)\leq0\).
Applying the same argument to \(-Z\) gives \(Z(P)\geq0\). Hence
\[
Z=0
\qquad\text{in }\Omega_\psi\cap\{\tau<t<T\}.
\]
Since \(T>\tau\) was arbitrary and \(Z=0\) on the initial section
\(t=\tau\), we conclude that
\[
Z\equiv0
\qquad\text{in }\Omega_\psi\cap\{t\geq\tau\}.
\]
The proof is complete.
\end{proof}

\begin{theorem}
\label{thm:global-cylindrical-rigidity}
Let \(\Omega_\psi\) and \(W\) satisfy the hypotheses of
Definition~\ref{def:global-rigidity}. If
\[
 W(A^-_{1,\Lambda})=0,
\]
then
\[
 W\equiv0
 \qquad\text{in }\Omega_\psi.
\]
In particular, the global cylindrical rigidity implication
\eqref{eq:global-rigidity-implication} holds.
\end{theorem}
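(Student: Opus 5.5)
The plan is to assemble the three ingredients of this section in the order causal propagation, fibrewise analyticity, forward uniqueness. Put \(P=A^-_{1,\Lambda}=(0',\Lambda,0',\tfrac23\Lambda,-1)\), so that \(W(P)=0\).

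\emph{Step 1: half-rays of zeros.} Fix any \(\bar t<-1\) and any \(\bar\zeta=(\bar x',\bar x_m,\bar y',\bar t)\in\pi(\Omega_\psi\cap\{t=\bar t\})\). Lemma~\ref{lem:fibre-half-ray} gives \(b\in\mathbb R\) such that \(W(\bar x',\bar x_m,\bar y',s,\bar t)=0\) for all \(s\geq b\). This uses Bony's minimum principle, which applies because \(W\geq0\).

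\emph{Step 2: extension to whole fibres.} The function \(W\) is continuous, vanishes on \(\partial\Omega_\psi\), and satisfies the polynomial growth bound \eqref{eq:global-rigidity-growth}. Hence \(W\in\mathscr H_\gamma(\Omega_\psi)\). Proposition~\ref{prop:tangential-exponential-polynomiality} then shows that each fibre map \(s\mapsto W(\bar x',\bar x_m,\bar y',s,\bar t)\) is the restriction of an entire function. By Remark~\ref{rem:fibre-half-ray-geometry}, the whole fibre \(\mathcal F_{\bar\zeta}\) lies in \(\Omega_\psi\). A real-analytic function on \(\mathbb R\) that vanishes on \([b,\infty)\) vanishes identically. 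Therefore \(W=0\) on every fibre of \(\Omega_\psi\cap\{t=\bar t\}\), which means \(W\equiv0\) on the whole section \(\overline{\Omega_\psi}\cap\{t=\bar t\}\). For points of \(\partial\Omega_\psi\) this holds by the boundary condition. This holds for every \(\bar t<-1\).

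\emph{Step 3: forward uniqueness.} Fix \(\tau<-1\) and apply Lemma~\ref{lem:polynomial-growth-cauchy-uniqueness} with \(Z=W\) restricted to \(\{t\geq\tau\}\). The zero Cauchy datum at \(t=\tau\) comes from Step 2. The lateral datum is \(W=0\) on \(\partial\Omega_\psi\). The growth hypothesis \eqref{eq:cauchy-uniqueness-growth} follows from \eqref{eq:global-rigidity-growth} with \(C_T\) independent of \(T\). The conclusion is \(W\equiv0\) on \(\Omega_\psi\cap\{t\geq\tau\}\). Since \(\tau<-1\) is arbitrary, \(W\equiv0\) in \(\Omega_\psi\). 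In particular \(W(A^+_{1,\Lambda})=0\), which is \eqref{eq:global-rigidity-implication}.

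\emph{Expected obstacle.} The delicate point is Step 2: we must check that the fibre through every point of the section really lies in \(\Omega_\psi\) and that the zero set from Step 1 meets it in a half-ray. Both facts follow from the \(y_m\)-independence of \(\psi\). The other subtlety is that Proposition~\ref{prop:tangential-exponential-polynomiality} needs the whole boundary-energy framework, through Proposition~\ref{prop:finite-dimensional-Hgamma} and Proposition~\ref{prop:boundary-energy-zero-extension}. This is available because \(\psi\) is cylindrical and \(W\in C(\overline{\Omega_\psi})\) vanishes on the boundary. Beyond these checks, the argument is a direct concatenation of the earlier results.
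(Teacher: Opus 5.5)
Your proposal is correct and follows the paper's proof essentially step for step. Bony's principle, via Lemma~\ref{lem:fibre-half-ray}, gives half-rays of zeros; Proposition~\ref{prop:tangential-exponential-polynomiality} and the identity principle give vanishing on $\Omega_\psi\cap\{t<-1\}$; and Lemma~\ref{lem:polynomial-growth-cauchy-uniqueness} propagates this forward. The only difference is cosmetic: you apply the Cauchy uniqueness lemma at some $\tau<-1$, where Step~2 already supplies the zero section, whereas the paper takes $\tau=-1$, the time of $A^-_{1,\Lambda}$, after extending the vanishing to that section by continuity.
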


\begin{proof}
Set $P=A^-_{1,\Lambda}$. The hypothesis of the theorem gives $W(P)=0$. Since \(W\geq0\) and \(\mathcal KW=0\), the strong minimum principle implies
that \(W\) vanishes on the closure of the propagation set issuing from \(P\).
This is precisely the vanishing hypothesis used in the second conclusion of
Lemma~\ref{lem:fibre-half-ray}.

Fix \(\bar t<t(P)\) and
\[
 \bar\zeta
 =
 (\bar x',\bar x_m,\bar y',\bar t)
 \in
 \pi(\Omega_\psi\cap\{t=\bar t\}).
\]
Lemma~\ref{lem:fibre-half-ray} provides
\(b=b(P,\bar\zeta)\in\mathbb R\) such that
\[
 W(\bar x',\bar x_m,\bar y',s,\bar t)=0
 \qquad\text{for every }s\geq b.
\]
Thus the function
\[
 s\longmapsto
 W(\bar x',\bar x_m,\bar y',s,\bar t)
\]
vanishes on a half-line. By
Proposition~\ref{prop:tangential-exponential-polynomiality}, this function
is real-analytic on \(\mathbb R\). The identity principle therefore gives
\[
 W(\bar x',\bar x_m,\bar y',s,\bar t)=0
 \qquad\text{for every }s\in\mathbb R.
\]
Since \(\bar t<t(P)\) and \(\bar\zeta\) were arbitrary, it follows that
\[
 W=0
 \qquad\text{in }\Omega_\psi\cap\{t<t(P)\}.
\]

By continuity, this vanishing extends to the time section
\[
 W=0
 \qquad\text{on }
 \overline{\Omega_\psi}\cap\{t=t(P)\}.
\]
Moreover, the boundary condition in
Definition~\ref{def:global-rigidity} gives
\[
 W=0
 \qquad\text{on }
 \partial\Omega_\psi\cap\{t\geq t(P)\}.
\]
Because \(W\geq0\), the global polynomial-growth estimate
\eqref{eq:global-rigidity-growth} also gives the absolute-value growth bound
\eqref{eq:cauchy-uniqueness-growth} on every finite time slab. We may
therefore apply
Lemma~\ref{lem:polynomial-growth-cauchy-uniqueness} with
\[
 \tau=t(P),
 \qquad
 Z=W.
\]
It follows that
\[
 W=0
 \qquad\text{in }\Omega_\psi\cap\{t\geq t(P)\}.
\]
Combining the two time regions yields $W\equiv0$ in $\Omega_\psi$. In particular, $W(A^+_{1,\Lambda})=0$, which proves \eqref{eq:global-rigidity-implication}.
\end{proof}

\section{Reference-point balance and oscillation reduction}
\label{sec:oscillation-reduction}

The global rigidity theorem now completes the compactness argument and yields
uniform comparison of the forward and backward reference values. We then
combine this reference balance with the localized weak comparison principle
to obtain boundary oscillation contraction. An interior contraction and a
nearest-boundary argument complete the proof of
Theorem~\ref{thm:perturbative-BHI}.

\subsection{Uniform reference-point comparison}
\label{subsec:uniform-reference-comparison}

We now complete the compactness argument and record its quantitative
consequence. This is the point at which the global rigidity theorem enters
the local boundary theory.

\begin{proposition}
\label{prop:uniform-reference-comparison}
Under the hypotheses of Lemma~\ref{lem:reference-balance}, the constants
\[
 \kappa=\kappa(m,M)
 \quad\text{and}\quad
 \eta_0=\eta_0(m,M,H)
\]
may be chosen sufficiently small so that there exists
\(H_1=H_1(m,M,H)\geq1\) for which
\begin{equation}
\label{eq:uniform-reference-comparison}
 H_1^{-1}
 \leq
 \frac{w(A^+_{r,\Lambda}(P_0))}
      {w(A^-_{r,\Lambda}(P_0))}
 \leq H_1
\end{equation}
at every boundary point $P_0$ and scale $r$ allowed there. Moreover,
\[
 w>0
 \qquad\text{in }\mathscr D_{\kappa r}(P_0).
\]
Thus Lemma~\ref{lem:reference-balance} holds.
\end{proposition}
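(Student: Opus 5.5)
The proof is by contradiction and compactness, assembling the reduction of Subsection~\ref{subsec:collapsing-scales} with Theorem~\ref{thm:global-cylindrical-rigidity}.

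\emph{The lower bound.} The inequality \(H_1^{-1}\leq w(A^+)/w(A^-)\) in \eqref{eq:uniform-reference-comparison} is already \eqref{eq:easy-reference-direction} of Lemma~\ref{lem:one-sided-estimates}. That estimate uses neither cylindricity nor \(H\), so it holds once \(\kappa\) is small enough that the boxes \(Q_{M,2r}(P_0)\) lie inside \(Q_{M,2}\). Positivity at the reference points comes before any ratio is formed. I would get it by Harnack chains from \(A^\pm_{1,\Lambda}\), where \(w>0\) by hypothesis, to \(A^\pm_{r,\Lambda}(P_0)\), using Lemma~\ref{lem:controlled-local-chains}(ii) and \eqref{eq:scale-Harnack-plus}--\eqref{eq:scale-Harnack-minus}. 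Forward positivity at \(A^+_{r,\Lambda}(P_0)\) should follow from \(w(A^-_{1,\Lambda})>0\) by propagating backward along an admissible curve and applying Bony's principle, i.e.\ Harnack in the causal direction. Backward positivity then follows from the upper bound proved next.

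\emph{The upper bound.} Suppose no \(\eta_0,H_1\) work for fixed \(m,M,H\). Then there are \(\psi_j,w_j\) with \(\operatorname{cyl}_{2,0,1}(\psi_j)\le 1/j\), \(H_{w_j}(0,1)\le H\), and admissible \(P_j,\rho_j\) for which \eqref{eq:bad-forward-ratio} holds. The direction of divergence is forced by Remark~\ref{rem:hard-reference-direction}. I split into two cases.

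\emph{Case 1: \(\inf_j\rho_j>0\).} Normalize \(w_j\) by \(w_j(A^+_{1,\Lambda})\). The bound \(H\) together with the Carleson estimate \eqref{eq:one-sided-Carleson} gives uniform bounds on \(Q_{M,2s_*}\). Propositions~\ref{prop:graph-compactness} and \ref{prop:solution-compactness} then produce a \(y_m\)-independent limit domain and a limit solution \(w_\infty\) with \(H_{w_\infty}(0,1)\le H\), the reference points staying a fixed distance from the graph. Along a subsequence \(P_j\to P_\infty\) and \(\rho_j\to\rho_\infty>0\). The cylindrical comparison theorem of \cite{NystromPolidoro2016} bounds \(H_{w_\infty}(P_\infty,\rho_\infty)\), which contradicts the divergence of the ratio. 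One point needs care: \(w_\infty(A^-_{\rho_\infty,\Lambda}(P_\infty))\) must be positive, and I would get this from the same cylindrical theory.

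\emph{Case 2: \(\rho_j\to0\).} Select the maximal scales \(r_j\) via \eqref{eq:maximal-selected-scale}; case~1 again excludes \(r_j\not\to0\). Form the secondary blow-ups \(W_j\) of \eqref{eq:secondary-blow-up}. Lemmas~\ref{lem:expanding-growth} and \ref{lem:secondary-blow-up-limit} give a non-negative \(W_\infty\) of polynomial growth, vanishing on the boundary of a cylindrical graph domain, with \(W_\infty(A^-_{1,\Lambda})=0\) and \(W_\infty(A^+_{1,\Lambda})=1\). By Lemma~\ref{lem:boundary-energy-cylindrical-limit}, \(W_\infty\) satisfies the hypotheses of Definition~\ref{def:global-rigidity}, including the boundary-energy input. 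Theorem~\ref{thm:global-cylindrical-rigidity} then forces \(W_\infty\equiv0\), a contradiction. The contradiction produces \(\eta_0\) and \(H_1\) depending only on \(m,M,H\).

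\emph{Positivity in \(\mathscr D_{\kappa r}(P_0)\).} For \(P\) in this region I would use Lemma~\ref{lem:controlled-local-chains}(iii). The point \(P_s^-\) reaches \(A^-_{c_1d,\Lambda}(P^0)\) along a curve in the backward propagation direction. Since \(w(A^-_{c_1d,\Lambda}(P^0))>0\) by the scale-\(c_1d\) balance, Bony's principle gives \(w(P_s^-)>0\). Letting \(s\to0\) and using the interior Harnack inequality yields \(w(P)>0\).

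The main obstacle is bookkeeping rather than new analysis: keeping every constant independent of \(j\), and checking that the fixed-scale case really falls under the local cylindrical comparison theorem with positivity of the limiting backward value. The heavy step, global rigidity, is already available.
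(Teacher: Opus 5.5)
Your proposal is correct and follows essentially the same route as the paper. The lower bound comes from \eqref{eq:easy-reference-direction}, and a contradiction sequence is split into two cases. The fixed-scale case is handled by compactness plus the local cylindrical comparison theorem. The collapsing case is handled by maximal-scale selection, the secondary blow-up, and Theorem~\ref{thm:global-cylindrical-rigidity}. Positivity is obtained from the controlled admissible curves and Bony's principle.

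The differences from the paper are minor. In the fixed-scale case you normalize at \(A^+_{1,\Lambda}\) rather than at \(A^+_{\rho_j,\Lambda}(P_j)\). You also defer backward positivity to the upper bound. That positivity in fact follows directly from \(w(A^-_{1,\Lambda})>0\) by Bony's principle, since \(A^-_{r,\Lambda}(P_0)\) lies later in time than \(A^-_{1,\Lambda}\); this is how the paper obtains it.
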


\begin{proof}
Fix first a localization constant
\(\kappa_*=\kappa_*(m,M)\in(0,1)\) furnished by the cylindrical
reference-point comparison theorem, decreased so that every box used below
is contained in \(Q_{M,2}\). We may also decrease \(\kappa_*\), by a factor
depending only on \(m\) and \(M\), so that the controlled curves in
Lemma~\ref{lem:controlled-local-chains}, followed by the curves joining
reference points at comparable scales, connect every local forward reference
point to a local backward reference point and then to the fixed backward
reference point at scale one. Hence, for every solution \(w\) satisfying
the outer positivity hypothesis \(w(A^-_{1,\Lambda})>0\), the strong minimum
principle gives
\[
 w(A^\pm_{r,\Lambda}(P))>0
\]
for every boundary point and scale occurring below. Thus all reference
quotients and normalizations in the argument are well defined.

Suppose that the
conclusion is false. Then, for every \(j\geq1\), there exist an intrinsic
graph \(\psi_j\), a non-negative solution \(w_j\), a boundary point
\[
 P_j\in\Delta_{\psi_j}\cap Q_{M,\kappa_*},
\]
and a scale \(0<\rho_j\leq\kappa_*\) such that
\[
 \operatorname{cyl}_{2,0,1}(\psi_j)\leq j^{-1},
 \qquad
 H_{w_j}(0,1)\leq H,
 \qquad
 H_{w_j}(P_j,\rho_j)\longrightarrow\infty.
\]
By \eqref{eq:easy-reference-direction}, only the forward quotient can
diverge. After passing to a subsequence, we therefore have
\begin{equation}
\label{eq:uniform-reference-bad-sequence}
 \frac{w_j(A^+_{\rho_j,\Lambda}(P_j))}
      {w_j(A^-_{\rho_j,\Lambda}(P_j))}
 \longrightarrow\infty.
\end{equation}

Suppose first that \(\inf_j\rho_j>0\). After passing to a subsequence, we
may assume that
\[
 \rho_j\longrightarrow\rho_\infty\in(0,\kappa_*].
\]
In particular, after discarding finitely many indices, there exists
\(\rho_*>0\) such that
\[
 \rho_*\leq\rho_j\leq\kappa_*
 \qquad\text{for every }j.
\]
Translate \(P_j\) to the origin and define
\begin{equation}
\label{eq:fixed-scale-normalization}
 \widetilde w_j(P)
 =
 \frac{w_j(P_j\circ P)}
      {w_j(A^+_{\rho_j,\Lambda}(P_j))},
 \qquad
 P\in\widetilde G_j,
\end{equation}
where
\[
 \widetilde\Omega_j
 =
 P_j^{-1}\circ\Omega_{\psi_j},
 \qquad
 \widetilde G_j
 =
 P_j^{-1}\circ
 \bigl(\Omega_{\psi_j}\cap Q_{M,2}\bigr).
\]
The normalization is well defined because the reference values are
positive. Moreover,
\[
 \widetilde w_j(A^+_{\rho_j,\Lambda})=1,
\]
whereas \eqref{eq:uniform-reference-bad-sequence} gives
\[
 \widetilde w_j(A^-_{\rho_j,\Lambda})
 =
 \frac{w_j(A^-_{\rho_j,\Lambda}(P_j))}
      {w_j(A^+_{\rho_j,\Lambda}(P_j))}
 \longrightarrow0.
\]
Because \(P_j\in\Delta_{\psi_j}\cap Q_{M,\kappa_*}\), the constant
\(\kappa_*\) may be chosen, depending only on \(m\) and \(M\), so that
there is a fixed translated box \(\mathcal Q\) whose closure lies in the
original coordinate window and which contains all the reference points,
Harnack chains, and fixed geometric enlargements used below. The assumption
\[
 \operatorname{cyl}_{2,0,1}(\psi_j)\leq j^{-1}
\]
then implies that the cylindrical defects of the translated graphs tend to
zero uniformly on \(\mathcal Q\). Proposition~\ref{prop:graph-compactness}
therefore gives, after passing to a subsequence, convergence on
\(\mathcal Q\) to a graph domain \(\widetilde\Omega_\infty\) whose defining
function is independent of \(y_m\). Only this local cylindricality is
needed in the fixed-scale argument. Proposition~\ref{prop:solution-compactness}, together with
\eqref{eq:one-sided-Carleson} and the normalization at
\(A^+_{\rho_j,\Lambda}\), gives a non-negative limiting solution
\(\widetilde w_\infty\) in \(\widetilde\Omega_\infty\cap\mathcal Q\).
Since \(\rho_j\to\rho_\infty>0\), the reference points remain uniformly
separated from the graph, and local uniform convergence yields
\[
 \widetilde w_\infty(A^+_{\rho_\infty,\Lambda})=1,
 \qquad
 \widetilde w_\infty(A^-_{\rho_\infty,\Lambda})=0.
\]

It remains to verify the outer reference-balance hypothesis needed to apply
the cylindrical comparison theorem to \(\widetilde w_\infty\). Let
\[
 \widehat P_j=P_j^{-1}.
\]
This is the image of the original boundary point \(0\) under left
translation by \(P_j^{-1}\), and
\(\widehat P_j\in\partial\widetilde\Omega_j\). The original scale-one
reference points become
\[
 A^\pm_{1,\Lambda}(\widehat P_j)
 =
 P_j^{-1}\circ A^\pm_{1,\Lambda}(0).
\]
Multiplication of a solution by a positive constant does not change its
reference imbalance. Hence the hypothesis \(H_{w_j}(0,1)\leq H\) gives
\[
 \mathfrak h_{1,\Lambda}
 (\widetilde w_j;\widehat P_j)
 =
 \mathfrak h_{1,\Lambda}(w_j;0)
 \leq H.
\]
The points \(P_j\) remain in a fixed compact boundary portion, while
\(\rho_j\in[\rho_*,\kappa_*]\). Consequently, all the scale-\(\rho_j\) and
scale-one reference points occurring above lie in fixed compact
non-tangential regions. The relevant Harnack chains, taken in their
admissible directions, have uniformly bounded length and uniform clearance
from the graph. These chains, together with the one-sided Carleson estimate,
the outer imbalance bound, and the normalization
\[
 \widetilde w_j(A^+_{\rho_j,\Lambda})=1,
\]
give constants \(c,C>0\), independent of \(j\), such that
\[
 c
 \leq
 \widetilde w_j(A^\pm_{1,\Lambda}(\widehat P_j))
 \leq C.
\]
Here \(c\) and \(C\) may depend on \(m,M,H\), and \(\rho_*\), which is
harmless in the present fixed-scale contradiction. After passing to a further subsequence, we may assume that
\[
 \widehat P_j\longrightarrow\widehat P_\infty
 \in\partial\widetilde\Omega_\infty.
\]
Local uniform convergence at the scale-one reference points gives
\[
 0<c
 \leq
 \widetilde w_\infty
 \bigl(A^\pm_{1,\Lambda}(\widehat P_\infty)\bigr)
 \leq C.
\]
Moreover, passing to the limit in the outer imbalance bound yields
\[
 \mathfrak h_{1,\Lambda}
 (\widetilde w_\infty;\widehat P_\infty)
 \leq H.
\]

The limiting graph is cylindrical in \(y_m\). The localized form of the
one-function comparison theorem
\cite[Theorem~1.1 and Section~6]{NystromPolidoro2016} can therefore be
applied in \(\mathcal Q\); its proof uses the defining function only on a
fixed enlargement of the comparison region. It follows that, for some
finite constant \(C_\infty\),
\[
 \widetilde w_\infty(A^+_{\rho_\infty,\Lambda})
 \leq
 C_\infty
 \widetilde w_\infty(A^-_{\rho_\infty,\Lambda}).
\]
This contradicts
\[
 \widetilde w_\infty(A^+_{\rho_\infty,\Lambda})=1,
 \qquad
 \widetilde w_\infty(A^-_{\rho_\infty,\Lambda})=0.
\]
Consequently, the bad scales cannot remain bounded away from zero.

We may therefore assume, after passing to a subsequence, that
\(\rho_j\to0\). Perform the maximal-scale selection in
\eqref{eq:maximal-selected-scale}, starting from the bad scales \(\rho_j\),
and denote the selected scales by \(r_j\). The selection preserves the
divergence of the forward quotient and gives
\eqref{eq:maximal-reference-growth}. Moreover, \(r_j\to0\); otherwise a
subsequence bounded away from zero would be ruled out by the fixed-scale
argument just completed. Define \(W_j\) by \eqref{eq:secondary-blow-up}. Then
Lemmas~\ref{lem:expanding-growth} and
\ref{lem:secondary-blow-up-limit} apply. After passing to a subsequence, they
give a non-negative \(\mathcal K\)-harmonic function \(W_\infty\) of
polynomial growth in an unbounded cylindrical graph domain, vanishing
continuously on its graph boundary, such that
\[
 W_\infty(A^+_{1,\Lambda})=1,
 \qquad
 W_\infty(A^-_{1,\Lambda})=0.
\]
Because the limiting domain is cylindrical,
Proposition~\ref{prop:boundary-energy-zero-extension} supplies the global
subsolution property used in the rigidity argument; no corresponding
up-to-boundary energy estimate is required in the genuinely
\(y_m\)-dependent approximating domains. The displayed normalization
contradicts Theorem~\ref{thm:global-cylindrical-rigidity}. Thus the
forward quotient is uniformly bounded, while its reciprocal is bounded by
\eqref{eq:easy-reference-direction}. This proves
\eqref{eq:uniform-reference-comparison}.

The positivity of \(w\) in \(\mathscr D_{\kappa r}(P_0)\) follows from the
positivity of the backward reference value, the controlled admissible curves
in Lemma~\ref{lem:controlled-local-chains}, and the strong minimum principle.
After setting \(\kappa=\kappa_*\) and choosing \(\eta_0\) sufficiently small,
the proof is complete.
\end{proof}

\subsection{Localized weak comparison}
\label{subsec:localized-weak-comparison}

Having established uniform balance of the forward and backward reference
values, we now record the local comparison principle that converts this
balance into pointwise control of quotients near the graph. Unlike
Proposition~\ref{prop:uniform-reference-comparison}, this result requires
neither exact cylindricity nor smallness of the \(y_m\)-dependence.

\begin{lemma}
\label{lem:localized-weak-comparison}
Let \(M\geq1\). There exist constants
\[
 a=a(m,M)\geq2,
 \qquad
 b=b(m,M)\in(0,1),
 \qquad
 K=K(m,M)\geq1
\]
with the following property. Let \(P_0\in\Delta_\psi\), \(s>0\), and suppose
that \(U,V\) are non-negative solutions in
\(\Omega_\psi\cap Q_{M,as}(P_0)\) which vanish continuously on $\Delta_\psi\cap Q_{M,as}(P_0)$ and satisfy
\[
 U(A^\pm_{s,\Lambda}(P_0))>0,
 \qquad
 V(A^\pm_{s,\Lambda}(P_0))>0.
\]
Then \(U,V>0\) in \(\Omega_\psi\cap Q_{M,bs}(P_0)\), and, for every
\(P\) in this set,
\begin{equation}
\label{eq:localized-weak-comparison}
 K^{-1}
 \frac{U(A^-_{s,\Lambda}(P_0))}
      {V(A^+_{s,\Lambda}(P_0))}
 \leq
 \frac{U(P)}{V(P)}
 \leq
 K
 \frac{U(A^+_{s,\Lambda}(P_0))}
      {V(A^-_{s,\Lambda}(P_0))}.
\end{equation}
\end{lemma}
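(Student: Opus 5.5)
The plan is to obtain the lemma from the one-sided theory already recorded, essentially following the proof of \cite[Lemma~9.1]{LitsgardNystrom2022}. That argument does not use independence of \(y_m\). After left translation by \(P_0^{-1}\) and the dilation \(\delta_{1/s}\), we may assume \(P_0=0\) and \(s=1\); the graph class is invariant under these operations. Positivity of \(U,V\) in \(\Omega_\psi\cap Q_{M,b}\) comes first. By Lemma~\ref{lem:controlled-local-chains}(ii), every such point \(P\) is joined by an admissible curve to \(A^-_{1/a_0,\Lambda}\), which in turn is joined by curves as in part (i) to \(A^-_{1,\Lambda}\). Since \(U(A^-_{1,\Lambda})>0\), Bony's strong minimum principle excludes \(U(P)=0\), and the same holds for \(V\). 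It therefore suffices to prove the two-sided bound with \(U(A^\pm_{1,\Lambda})\) and \(V(A^\mp_{1,\Lambda})\) normalized to one.

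For the upper bound, I will use a comparison argument in a truncated domain \(D=\Omega_\psi\cap Q_{M,\rho}\) with \(\rho\) a fixed small constant depending on \(m,M\). The Carleson estimate \eqref{eq:one-sided-Carleson} gives \(U\leq C_0U(A^+_{1,\Lambda})\) on \(D\), with \(a\) chosen large enough that the required boxes fit. The comparison function for \(V\) will come from interior Harnack chains running backward in time from points of the artificial boundary of \(D\) at distance \(\gtrsim\) a fixed \(\theta\) from the graph down to \(A^-_{1,\Lambda}\). Lemma~\ref{lem:one-sided-estimates} and Lemma~\ref{lem:controlled-local-chains} then give
\[
V\geq c\,V(A^-_{1,\Lambda})
\]
on that portion of \(\partial_{\mathcal K}D\) which is away from the graph.

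Near the graph I will not compare \(U\) with \(V\) directly. Instead I will use boundary H\"older decay (Lemma~\ref{lem:uniform-boundary-decay}) to split \(U=U_1+U_2\), where \(U_1\) carries the data on the \(\theta\)-neighborhood of the graph and is small, of size \(\theta^\beta C_0U(A^+)\). The term \(U_2\) is dominated by \(C\,U(A^+)V/V(A^-)\) through the weak maximum principle \eqref{eq:weak-maximum-principle}. The small part is then absorbed by iteration over a geometric sequence of boxes in the standard way. Equivalently, I will construct the comparison function via the \(\mathcal K\)-measure of the artificial boundary, and bound it from below by \(V/V(A^-)\) through the chains in Lemma~\ref{lem:controlled-local-chains}(iii).

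The lower bound follows by interchanging the roles of \(U\) and \(V\).

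I expect the main obstacle to be the near-boundary part of the upper bound. Part of the artificial boundary of \(D\) touches the graph, and there a lower bound for \(V\) is available only through decay, not through Harnack chains. This must be handled carefully so that only the backward value \(V(A^-)\) and the forward value \(U(A^+)\) enter, and so that the endpoint control in \(y_m\) from Lemma~\ref{lem:controlled-local-chains} replaces cylindricity. Here I will follow \cite[Section~9]{LitsgardNystrom2022} essentially verbatim, and check at each stage that no \(y_m\)-translation of the graph is used.
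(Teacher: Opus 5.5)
Your reduction to $P_0=0$, $s=1$, the positivity argument, the Carleson bound for $U$, and the Harnack lower bound $V\geq c_\theta V(A^-_{1,\Lambda})$ on the part of $\partial_{\mathcal K}D$ at distance $\geq\theta$ from the graph are all fine. For positivity, use Lemma~\ref{lem:controlled-local-chains}(ii) to reach $A^-_{1/a_0,\Lambda}$, then a backward interpolation curve to $A^-_{1,\Lambda}$ (part (i) only joins points of a single scale), then Bony's principle. In the end you rest, like the paper, on \cite[Lemma~9.1]{LitsgardNystrom2022}. The paper's proof consists only of that citation, the remark there that $y_m$-independence is not used up to that lemma, ball/box comparability, and the same curve argument for positivity.

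What does not work is the self-contained mechanism you describe for the near-graph step. Lemma~\ref{lem:uniform-boundary-decay} makes $U_1$ small only in absolute terms, $U_1\leq C\theta^\beta U(A^+_{1,\Lambda})$. You need $U$ bounded by a multiple of $V$ at points where $V\to0$, and an absolute error cannot be absorbed there. Repeating the splitting on a geometric sequence of boxes only reproduces the requirement that the part of $V$ generated by the near-graph portion of the artificial boundary be a small fraction of $V$. That is itself a comparison statement of the kind being proved. It also brings in $\sup U/V$ on a larger box, which is not known to be finite.

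Your ``equivalent'' version fails for a quantitative reason. Put $d=d_{\mathcal K}^{\mathrm s}(P,P^\partial)$. Lemma~\ref{lem:controlled-local-chains}(iii) and \eqref{eq:scale-Harnack-minus} give only $V(P)\geq c\,V(A^-_{c_1d,\Lambda}(P^\partial))\geq c\,d^{\nu}V(A^-_{1,\Lambda})$. Meanwhile, the Kolmogorov measure of the artificial boundary, evaluated at $P$, is bounded above only by $Cd^\beta$. Applying both bounds to a single solution forces $\nu\geq\beta$, so the exponents point the wrong way. No combination of Harnack chains and boundary H\"older decay shows that this measure is at most $C\,V(P)/V(A^-_{1,\Lambda})$.

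The missing ingredient is a genuinely nonlocal comparison of the Kolmogorov measure of the artificial boundary with a positive solution normalized at the backward reference point. One example is a Green function with pole near $A^-_{1,\Lambda}$, which the maximum principle dominates by $V/V(A^-_{1,\Lambda})$. Proving the upper bound for the measure requires estimates beyond those in Lemma~\ref{lem:one-sided-estimates}, typically Carleson and H\"older estimates for the adjoint operator in the pole variable. This is exactly what the paper imports from \cite{LitsgardNystrom2022} instead of reproving. If you import that proof rather than the sketched absorption, your argument coincides with the paper's.
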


\begin{proof}
By translation and dilation, it is enough to consider \(P_0=0\) and \(s=1\).
The assertion is a box-local reformulation of
\cite[Lemma~9.1]{LitsgardNystrom2022}, applied there with \(v=U\) and
\(u=V\). That lemma is proved for the divergence-form operator
\[
 \nabla_x\!\cdot(A\nabla_x)
 +
 x\cdot\nabla_y-\partial_t
\]
in an arbitrary intrinsic Lipschitz graph domain. The operator considered
here corresponds to \(A=I_m\). In particular, the structural assumption
\cite[(3.4)]{LitsgardNystrom2022}, which includes independence of both \(A\)
and the defining function from \(y_m\), is not among the hypotheses of that
lemma; the authors explicitly note that this assumption is not used up to
and including Lemma~9.1.

The intrinsic balls used in the cited statement and the boxes \(Q_{M,r}\)
used here contain one another after multiplying their radii by constants
depending only on \(m\) and \(M\). Choose the outer radius in the cited lemma
to be a fixed multiple of \(s\), take its comparison scale to be \(s\), and
decrease the inner radius by the fixed geometric factor appearing there.
This produces constants \(a,b,K\), depending only on \(m,M\), and gives
\eqref{eq:localized-weak-comparison}. Positivity in the inner box follows
from the same admissible-curve and Harnack-chain argument used in the cited
lemma. No smallness assumption on the \(y_m\)-dependence is required.
\end{proof}

\subsection{One-step boundary oscillation contraction}
\label{subsec:one-step-contraction}

For \(P_0\in\Delta_\psi\), \(r>0\), and \(u,v\) as in
Theorem~\ref{thm:perturbative-BHI}, write
\[
 D_r(P_0)
 =
 \Omega_\psi\cap Q_{M,r}(P_0),
 \qquad
 \omega(P_0,r)
 =
 \osc_{D_r(P_0)}\frac vu.
\]

\begin{proposition}
\label{prop:one-step-contraction}
Assume the hypotheses of Theorem~\ref{thm:perturbative-BHI}.  Let
\(a,b,K\) be the constants in
Lemma~\ref{lem:localized-weak-comparison}, and let \(H_1\) be the constant
in Proposition~\ref{prop:uniform-reference-comparison}.  Set
\begin{equation}
\label{eq:theta-explicit}
 A=\frac ab,
 \qquad
 \rho=\frac ba,
 \qquad
 \theta=1-\frac{1}{2KH_1}.
\end{equation}
Then
\[
 \rho=\rho(m,M)\in(0,1),
 \qquad
 \theta=\theta(m,M,H)\in(0,1),
\]
and
\begin{equation}
\label{eq:one-step-contraction}
 \omega(P_0,\rho r)
 \leq
 \theta\,\omega(P_0,r)
\end{equation}
whenever
\[
 P_0\in\Delta_\psi\cap Q_{M,\kappa/2},
 \qquad
 0<r\leq\frac{\kappa}{2A},
 \qquad
 Q_{M,Ar}(P_0)\subset Q_{M,2}.
\]
\end{proposition}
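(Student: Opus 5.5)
We follow the classical oscillation-reduction scheme. Set \(s=r/a\), so that \(Q_{M,as}(P_0)=Q_{M,r}(P_0)\) and \(Q_{M,bs}(P_0)=Q_{M,\rho r}(P_0)\). Write
\[
 m_r=\inf_{D_r(P_0)}\frac vu,
 \qquad
 M_r=\sup_{D_r(P_0)}\frac vu,
\]
and assume \(\omega(P_0,r)=M_r-m_r>0\); otherwise there is nothing to prove. Both functions
\[
 V_1=v-m_ru,
 \qquad
 V_2=M_ru-v
\]
are non-negative solutions in \(D_r(P_0)\), vanish continuously on the graph portion, and satisfy
\[
 V_1+V_2=\omega(P_0,r)\,u .
\]
The plan is to apply Lemma~\ref{lem:localized-weak-comparison} with \(U=V_i\) and \(V=u\) at scale \(s\). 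We then use Proposition~\ref{prop:uniform-reference-comparison} to turn the asymmetric ratio \(V_i(A^-_s)/u(A^+_s)\) into a symmetric one with loss \(H_1\).

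The key steps are as follows.

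\textbf{Step 1: reference balance for \(u\).} The constraints \(P_0\in\Delta_\psi\cap Q_{M,\kappa/2}\) and \(r\le\kappa/(2A)\) make \(s\le\kappa\). Proposition~\ref{prop:uniform-reference-comparison}, applied to \(u\), then gives
\[
 u(A^+_{s,\Lambda}(P_0))\le H_1\,u(A^-_{s,\Lambda}(P_0)).
\]

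\textbf{Step 2: a good index.} Since \(V_1+V_2=\omega u\) at \(A^-_{s,\Lambda}(P_0)\), one index \(i\) satisfies
\[
 V_i(A^-_s)\ge \tfrac12\,\omega\, u(A^-_s)\ge \frac{\omega}{2H_1}\,u(A^+_s).
\]

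\textbf{Step 3: lower bound on the small box.} If \(V_i(A^\pm_s)>0\), the lower inequality in \eqref{eq:localized-weak-comparison} gives, on \(D_{\rho r}(P_0)\),
\[
 \frac{V_i}{u}\ge K^{-1}\frac{V_i(A^-_s)}{u(A^+_s)}\ge \frac{\omega}{2KH_1}.
\]
Positivity of \(V_i(A^+_s)\) follows from \(V_i(A^-_s)>0\) by Bony's minimum principle along the controlled curve of Lemma~\ref{lem:controlled-local-chains}(i).

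\textbf{Step 4: conclude.} If \(i=1\), then \(m_{\rho r}\ge m_r+\omega/(2KH_1)\). If \(i=2\), then \(M_{\rho r}\le M_r-\omega/(2KH_1)\). In either case
\[
 \omega(P_0,\rho r)\le\Bigl(1-\frac1{2KH_1}\Bigr)\omega(P_0,r)=\theta\,\omega(P_0,r).
\]
The parameter dependence \(\rho=\rho(m,M)\) and \(\theta=\theta(m,M,H)\) is then immediate from the dependence of \(a,b,K\) and \(H_1\).

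The main point to check is that Proposition~\ref{prop:uniform-reference-comparison} is genuinely applicable at \((P_0,s)\). This requires \(Q_{M,2s}(P_0)\subset Q_{M,2}\), which follows from \(Q_{M,Ar}(P_0)\subset Q_{M,2}\) because \(2s\le Ar\) when \(a\ge2\), \(b<1\). It also requires the hypothesis \(H_u(0,1)\le H\), which is exactly \eqref{eq:reference-control-scale-one}.

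A second check is that the quotient \(v/u\) is defined on \(D_r(P_0)\), or at least that \(u>0\) there. Positivity of \(u\) is guaranteed by Lemma~\ref{lem:reference-balance} on \(\mathscr D_{\kappa s}\). On the full box \(D_r\) we instead take \(m_r,M_r\) as the finite values furnished by \eqref{eq:localized-weak-comparison} applied at the larger scale. If needed, we replace \(D_r\) by \(D_{bs'}\) with \(s'=r/b\), adjusting the constants \(A\) and \(\rho\) accordingly. Only finiteness of \(m_r\) and \(M_r\) is required.
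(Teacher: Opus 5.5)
Your proposal is correct and follows essentially the same route as the paper. It uses the splitting $v-m_ru$, $M_ru-v$ (the paper normalizes by $\omega_r$), the choice $s=r/a$, the good index at $A^-_{s,\Lambda}(P_0)$, positivity at $A^+_{s,\Lambda}(P_0)$ via the controlled curve, and the lower bound from Lemma~\ref{lem:localized-weak-comparison} combined with reference balance for $u$ at scale $s$.

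Your final hedge about finiteness of $m_r,M_r$ is unnecessary. Applying Lemma~\ref{lem:localized-weak-comparison} at scale $r/b$ gives inner box exactly $D_r(P_0)$ and outer box $Q_{M,Ar}(P_0)$. This is precisely why the hypothesis $Q_{M,Ar}(P_0)\subset Q_{M,2}$ is imposed, so no constants need adjusting.
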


\begin{proof}
We first verify that the quotient is well defined and bounded on
\(D_r(P_0)\). Apply Lemma~\ref{lem:localized-weak-comparison} to \(u\) and
\(v\) at the scale \(r/b\). Its outer box has radius $ar/b=Ar$, while its inner box has radius  $br/b=r$. The assumptions on \(P_0\) and \(r\) ensure that all the required boxes and
reference points lie in the region where
Proposition~\ref{prop:uniform-reference-comparison} applies. That proposition
gives positivity and uniform balance of the reference values of both \(u\)
and \(v\). Lemma~\ref{lem:localized-weak-comparison} therefore yields
\[
 u,v>0
 \qquad\text{in }D_r(P_0)
\]
and gives upper and lower bounds for \(v/u\) there. In particular,
\(\omega(P_0,r)<\infty\).

Set
\[
 m_r=\inf_{D_r(P_0)}\frac vu,
 \qquad
 M_r=\sup_{D_r(P_0)}\frac vu,
 \qquad
 \omega_r=M_r-m_r.
\]
If \(\omega_r=0\), then \eqref{eq:one-step-contraction} is immediate.
Suppose therefore that \(\omega_r>0\), and define
\begin{equation}
\label{eq:normalized-oscillation-solutions}
 W=\frac{v-m_ru}{\omega_r},
 \qquad
 \widetilde W=u-W
 =
 \frac{M_ru-v}{\omega_r}.
\end{equation}
Both \(W\) and \(\widetilde W\) are non-negative
\(\mathcal K\)-harmonic functions in \(D_r(P_0)\), and both vanish
continuously on the corresponding graph portion. Moreover,
\begin{equation}
\label{eq:normalized-quotient-partition}
 0\leq\frac Wu\leq1,
 \qquad
 0\leq\frac{\widetilde W}{u}\leq1,
 \qquad
 \frac Wu+\frac{\widetilde W}{u}=1
 \quad\text{in }D_r(P_0).
\end{equation}

Let \(s=r/a\). Since \(a\geq2\), the reference points at scale \(s\) and the controlled
curves joining them are contained in \(D_r(P_0)\). At
\(A^-_{s,\Lambda}(P_0)\), at least one of the two quotients in
\eqref{eq:normalized-quotient-partition} is at least \(1/2\).

Suppose first that
\[
 \frac{W(A^-_{s,\Lambda}(P_0))}
      {u(A^-_{s,\Lambda}(P_0))}
 \geq\frac12.
\]
In particular, \(W(A^-_{s,\Lambda}(P_0))>0\). The controlled curve joining
the reference points, together with the strong minimum principle, implies
that
\[
 W(A^+_{s,\Lambda}(P_0))>0.
\]
Thus \(W\) and \(u\) satisfy the positivity hypotheses of
Lemma~\ref{lem:localized-weak-comparison} at scale \(s\). Since
\[
 Q_{M,as}(P_0)=Q_{M,r}(P_0),
\]
the lower inequality in \eqref{eq:localized-weak-comparison}, applied with
\((U,V)=(W,u)\), gives, for every \(P\in D_{bs}(P_0)\),
\begin{align}
 \frac{W(P)}{u(P)}
 &\geq
 K^{-1}
 \frac{W(A^-_{s,\Lambda}(P_0))}
      {u(A^+_{s,\Lambda}(P_0))}=
 K^{-1}
 \frac{W(A^-_{s,\Lambda}(P_0))}
      {u(A^-_{s,\Lambda}(P_0))}
 \frac{u(A^-_{s,\Lambda}(P_0))}
      {u(A^+_{s,\Lambda}(P_0))}\geq
 \frac{1}{2KH_1}.
\label{eq:normalized-lower-alternative}
\end{align}
Here the last inequality follows from
Proposition~\ref{prop:uniform-reference-comparison} applied to \(u\) at the
point \(P_0\) and scale \(s\).

Since
\[
 \frac Wu
 =
 \frac{v/u-m_r}{\omega_r},
\]
inequality \eqref{eq:normalized-lower-alternative} gives
\[
 \frac{v(P)}{u(P)}
 \geq
 m_r+\frac{\omega_r}{2KH_1}
 \qquad\text{in }D_{bs}(P_0).
\]
The upper bound \(v/u\leq M_r\) continues to hold on this smaller set.
Consequently,
\begin{equation}
\label{eq:contraction-first-alternative}
 \osc_{D_{bs}(P_0)}\frac vu
 \leq
 \left(1-\frac{1}{2KH_1}\right)\omega_r.
\end{equation}

If the first alternative fails, then
\[
 \frac{\widetilde W(A^-_{s,\Lambda}(P_0))}
      {u(A^-_{s,\Lambda}(P_0))}
 \geq\frac12.
\]
As above, the controlled reference-point curve and the strong minimum
principle give
\[
 \widetilde W(A^+_{s,\Lambda}(P_0))>0.
\]
Applying the lower inequality in
\eqref{eq:localized-weak-comparison} to
\((U,V)=(\widetilde W,u)\) gives
\[
 \frac{\widetilde W(P)}{u(P)}
 =
 \frac{M_r-v(P)/u(P)}{\omega_r}
 \geq
 \frac{1}{2KH_1}
 \qquad\text{in }D_{bs}(P_0).
\]
It follows that
\begin{equation}
\label{eq:contraction-second-alternative}
 \osc_{D_{bs}(P_0)}\frac vu
 \leq
 \left(1-\frac{1}{2KH_1}\right)\omega_r.
\end{equation}

Finally, $bs=(b/a) r=\rho r$. Combining \eqref{eq:contraction-first-alternative} and
\eqref{eq:contraction-second-alternative} proves
\eqref{eq:one-step-contraction} with \(\rho,\theta\) as in
\eqref{eq:theta-explicit}.
\end{proof}

Iterating Proposition~\ref{prop:one-step-contraction} and using monotonicity
of the oscillation with respect to the radius gives
\begin{equation}
\label{eq:iterated-boundary-oscillation}
 \osc_{D_s(P_0)}\frac vu
 \leq
 C\left(\frac sr\right)^{\sigma_0}
 \osc_{D_r(P_0)}\frac vu,
 \qquad
 0<s\leq r,
 \qquad
 \sigma_0=\frac{\log\theta}{\log\rho}>0,
\end{equation}
for every admissible pair \((P_0,r)\), where
\(C=C(m,M,H)\).

\subsection{Interior oscillation and nearest-boundary patching}
\label{subsec:interior-oscillation}

We next give the interior part of the argument. This step is not a direct
consequence of the interior Harnack inequality alone. Indeed, the Harnack
inequality is time-directed, whereas the oscillation argument requires a
uniform comparison between a future and a past value of the denominator.
That reverse comparison is furnished by
Proposition~\ref{prop:uniform-reference-comparison}.

In the remainder of the proof, we use the symmetrized quasi-distance
\(d_{\mathcal K}^{\mathrm s}\) introduced in
\eqref{eq:symmetric-K-distance}. Put
\[
 \delta(P)
 =
 d_{\mathcal K}^{\mathrm s}(P,\Delta_\psi).
\]
For $P=(x',x_m,y',y_m,t)\in\Omega_\psi$, recall that
\[
 P^\partial
 =
 \bigl(x',\psi(x',y',y_m,t),y',y_m,t\bigr)
 \in\Delta_\psi
\]
is its vertical graph projection. The intrinsic graph condition gives
\begin{equation}
\label{eq:vertical-distance-comparison}
 C^{-1}d_{\mathcal K}^{\mathrm s}(P,P^\partial)
 \leq
 \delta(P)
 \leq
 d_{\mathcal K}^{\mathrm s}(P,P^\partial),
\end{equation}
where \(C=C(m,M)\). Indeed, the group law gives the exact identity
\[
 d_{\mathcal K}^{\mathrm s}(P,P^\partial)
 =
 x_m-\psi(x',y',y_m,t),
\]
while the lower bound in
\eqref{eq:vertical-distance-comparison} follows from the intrinsic Lipschitz
condition and the quasi-triangle inequality.

\begin{lemma}
\label{lem:interior-reverse-comparison}
There exist constants \(c_*=c_*(m,M)\in(0,1)\) and
\(C_*=C_*(m,M,H)\geq1\) with the following property. Let
\(P\in\Omega_\psi\cap Q_{M,\kappa/2}\), set \(P^0=P^\partial\), and suppose
that
\[
 0<s\leq c_*\delta(P).
\]
Define
\[
 P_s^-=P\circ(0,0,-\gamma s^2),
 \qquad
 P_s^+=P\circ(0,0,\gamma s^2),
\]
where \(\gamma=\gamma(m,M)>0\) is chosen sufficiently small. Then
\begin{equation}
\label{eq:interior-reverse-comparison}
 u(P_s^+)
 \leq
 C_*u(P_s^-).
\end{equation}
The same estimate holds with \(u\) replaced by \(v\), or more generally by
any non-negative solution satisfying the hypotheses of
Proposition~\ref{prop:uniform-reference-comparison} with scale-one reference
imbalance bounded by \(H\).
\end{lemma}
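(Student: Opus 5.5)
The natural route is a Harnack chain through the boundary reference points at the scale of $\delta(P)$, with the time-reversed step supplied by Proposition~\ref{prop:uniform-reference-comparison}. Put $d=d_{\mathcal K}^{\mathrm s}(P,P^0)$. By \eqref{eq:vertical-distance-comparison}, $d\asymp\delta(P)$. Choose $c_*$ small enough, depending only on $m,M$, that $s\leq c_*\delta(P)$ forces $s\leq c_0 d$, where $c_0$ is the constant of Lemma~\ref{lem:controlled-local-chains}(iii). Also shrink $c_*$ so that $P^0\in\Delta_\psi\cap Q_{M,\kappa}$ and the scale $c_1d$ is admissible in Lemma~\ref{lem:reference-balance}, i.e.\ $c_1d\leq\kappa$ and $Q_{M,2c_1d}(P^0)\subset Q_{M,2}$. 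Since $P\in Q_{M,\kappa/2}$, the distance $d$ is bounded by a multiple of $\kappa$, so these inclusions follow after adjusting constants by factors depending on $m,M$ (decreasing $\kappa$ if needed). Then I would chain three estimates:
\[
 u(P_s^+)\leq C\,u(A^+_{c_1d,\Lambda}(P^0))\leq C H_1\,u(A^-_{c_1d,\Lambda}(P^0))\leq C' H_1\,u(P_s^-).
\]

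The first and third inequalities come from the interior Harnack inequality along the controlled admissible curves of Lemma~\ref{lem:controlled-local-chains}(iii). That lemma gives a curve from $A^+_{c_1d,\Lambda}(P^0)$ down to $P_s^+$, and one from $P_s^-$ down to $A^-_{c_1d,\Lambda}(P^0)$. Both curves have clearance comparable to $d$ and are covered by a bounded number of intrinsic cylinders of radius $\asymp d$. The Harnack inequality bounds a value at an earlier point by a value at a later point. The first curve therefore gives $u(P_s^+)\leq C\,u(A^+_{c_1d,\Lambda}(P^0))$, since $P_s^+$ is earlier. The second gives $u(A^-_{c_1d,\Lambda}(P^0))\leq C\,u(P_s^-)$, since $A^-$ is earlier than $P_s^-$. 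In both cases $C=C(m,M)$.

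The orientations here must be checked carefully. The endpoint $P_s^\pm$ may sit at distance $\gamma s^2$ in time from $P$, and the last Harnack cylinder near $P_s^\pm$ must have radius $\asymp d$, not $\asymp s$. This is what Lemma~\ref{lem:controlled-local-chains}(iii) provides, because $s\leq c_0d$ and $\gamma$ is fixed small.

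The middle inequality is exactly \eqref{eq:uniform-reference-comparison} applied at the boundary point $P^0$ and scale $r=c_1d$. It gives $u(A^+_{c_1d,\Lambda}(P^0))\leq H_1\,u(A^-_{c_1d,\Lambda}(P^0))$ with $H_1=H_1(m,M,H)$, and $H_1$ encodes the small-defect hypothesis. This is the only non-Harnack step. Combining the three gives \eqref{eq:interior-reverse-comparison} with $C_*=C(m,M)^2H_1$.

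The same argument applies verbatim to $v$, or to any non-negative solution covered by Proposition~\ref{prop:uniform-reference-comparison}. The positivity needed for the quotients follows from that proposition together with the strong minimum principle.

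The main obstacle is the bookkeeping that makes $(P^0,c_1d)$ admissible for Proposition~\ref{prop:uniform-reference-comparison}, uniformly in $P$. This has to be done by fixing $c_*$ and the localization constant in the right order, all depending only on $m,M$.
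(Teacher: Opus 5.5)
Your proposal is correct and is essentially the paper's own proof. The two Harnack chains along the curves of Lemma~\ref{lem:controlled-local-chains}(iii) give $u(P_s^+)\lesssim u(A^+_{c_1d,\Lambda}(P^0))$ and $u(A^-_{c_1d,\Lambda}(P^0))\lesssim u(P_s^-)$ with constants independent of $s/d$; Proposition~\ref{prop:uniform-reference-comparison} at $(P^0,c_1d)$ supplies the middle reverse inequality, after the same reduction of $c_*$ and $\kappa$ by factors depending only on $m,M$.
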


\begin{proof}
Let $d=d_{\mathcal K}^{\mathrm s}(P,P^0)$. By \eqref{eq:vertical-distance-comparison}, \(d\) is comparable to
\(\delta(P)\). After decreasing \(c_*\) and \(\kappa\), by factors depending
only on \(m\) and \(M\), all the reference points and admissible curves used
below lie in the region where the solution is defined.

Lemma~\ref{lem:controlled-local-chains} {\rm(iii)} gives an admissible curve
from \(A^+_{c_1d,\Lambda}(P^0)\) to \(P_s^+\), and another from \(P_s^-\) to
\(A^-_{c_1d,\Lambda}(P^0)\). Since admissible curves run toward decreasing
times, these are precisely the orientations required for the two Harnack
inequalities. The same lemma provides Harnack chains of uniformly bounded length, with
radii comparable to \(d\) and uniform clearance from the graph. The
uniformity in \(s\) follows from the restriction \(s\leq c_0d\). Indeed, the
time separation between the endpoints of either chain is  $c_1^2d^2-\gamma s^2$, and the constants are chosen so that
\[
 \frac12c_1^2d^2
 \leq
 c_1^2d^2-\gamma s^2
 \leq
 c_1^2d^2.
\]
The corresponding \(x\)- and \(y\)-displacements are bounded by \(Cd\) and
\(Cd^3\), respectively. After dilation by \(d^{-1}\), all the normalized
paths therefore remain in one fixed compact non-tangential region,
independently of \(s/d\). In particular, the endpoint in \(y_m\) is fixed by
the explicit curve construction and is not left uncontrolled. Covering the
paths by a uniformly bounded number of intrinsic cylinders of radius
comparable to \(d\) gives
\[
 u(P_s^+)
 \leq
 C u(A^+_{c_1d,\Lambda}(P^0)),
 \qquad
 u(A^-_{c_1d,\Lambda}(P^0))
 \leq
 C u(P_s^-),
\]
where \(C=C(m,M)\) is independent of \(s\). Proposition~\ref{prop:uniform-reference-comparison}, applied at the boundary
point \(P^0\) and scale \(c_1d\), gives
\[
 u(A^+_{c_1d,\Lambda}(P^0))
 \leq
 H_1u(A^-_{c_1d,\Lambda}(P^0)).
\]
Combining these three inequalities proves
\eqref{eq:interior-reverse-comparison}. The number and relative sizes of the
Harnack cylinders are independent of \(s/d\); this is why the constant
\(C_*\) remains uniform as \(s\downarrow0\).
\end{proof}

\begin{lemma}
\label{lem:interior-quotient-contraction}
There exist $\tau=\tau(m,M)\in(0,1)$, $\theta_{\mathrm i}=\theta_{\mathrm i}(m,M,H)\in(0,1)$, such that
\begin{equation}
\label{eq:interior-one-step-contraction}
 \osc_{\Omega_\psi\cap Q_{M,\tau s}(P)}\frac vu
 \leq
 \theta_{\mathrm i}
 \osc_{\Omega_\psi\cap Q_{M,s}(P)}\frac vu
\end{equation}
whenever \(P\in\Omega_\psi\cap Q_{M,\kappa/2}\),
\(0<s\leq c_*\delta(P)\), and a fixed geometric enlargement of
\(Q_{M,s}(P)\) is contained in \(\Omega_\psi\cap Q_{M,2}\). Consequently,
for \(0<h\leq s\),
\begin{equation}
\label{eq:iterated-interior-oscillation}
 \osc_{\Omega_\psi\cap Q_{M,h}(P)}\frac vu
 \leq
 C\left(\frac hs\right)^{\sigma_{\mathrm i}}
 \osc_{\Omega_\psi\cap Q_{M,s}(P)}\frac vu,
 \qquad
 \sigma_{\mathrm i}
 =
 \frac{\log\theta_{\mathrm i}}{\log\tau}>0,
\end{equation}
where \(C=C(m,M,H)\).
\end{lemma}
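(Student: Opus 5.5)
The plan is to run the standard interior oscillation argument for quotients. The one-sided interior Harnack inequality will be used together with the reverse comparison of Lemma~\ref{lem:interior-reverse-comparison}, which replaces the missing backward Harnack direction.

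Fix $P$ and $s$ as in the statement. Put $D=\Omega_\psi\cap Q_{M,s}(P)$, $m_s=\inf_D v/u$, $M_s=\sup_D v/u$ and $\omega_s=M_s-m_s$. If $\omega_s=0$ there is nothing to prove, so assume $\omega_s>0$. As in Proposition~\ref{prop:one-step-contraction}, introduce
\[
W=\frac{v-m_su}{\omega_s},\qquad \widetilde W=\frac{M_su-v}{\omega_s}.
\]
Both are non-negative solutions in $D$ and satisfy $W+\widetilde W=u$. Since $s\le c_*\delta(P)$, after decreasing $c_*$ the box $Q_{M,s}(P)$ and a fixed enlargement stay at distance comparable to $\delta(P)$ from the graph. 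Hence $D$ is an interior intrinsic box, and the interior Harnack inequality applies there with constants depending only on $m,M$.

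Evaluate at the past point $P^-_{s'}$, where $s'=\tau_0 s$ with a small geometric $\tau_0$. At that point one of $W/u$ and $\widetilde W/u$ is at least $1/2$; say it is $W/u$, the other case being symmetric. The forward interior Harnack inequality, together with an admissible curve from every point $Q\in Q_{M,\tau s}(P)$ backward to $P^-_{s'}$, gives
\[
W(Q)\ge c\,W(P^-_{s'})\ge \tfrac c2\,u(P^-_{s'}).
\]
This requires $\tau$ small compared with $\tau_0$, so that $Q$ is strictly later than $P^-_{s'}$ and the curve stays in a scale-$s$ cylinder. Such curves come from the explicit interpolation in the proof of Lemma~\ref{lem:controlled-local-chains}.

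The denominator must then be compared the other way. By Harnack in the correct direction, $u(Q)\le C\,u(P^+_{s'})$ for every $Q\in Q_{M,\tau s}(P)$, since $P^+_{s'}$ is later than all such $Q$. Lemma~\ref{lem:interior-reverse-comparison} gives $u(P^+_{s'})\le C_*u(P^-_{s'})$. Combining these,
\[
\frac{W}{u}\ge \frac{c}{2CC_*}=:1-\theta_{\mathrm i}\quad\text{on }Q_{M,\tau s}(P).
\]
Consequently $\operatorname{osc}v/u\le\theta_{\mathrm i}\omega_s$ there, with $\theta_{\mathrm i}$ depending on $m,M,H$ through $C_*$. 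The iterated bound~\eqref{eq:iterated-interior-oscillation} then follows by applying the one-step estimate at the scales $\tau^k s$, which remain admissible because the restriction $s\le c_*\delta(P)$ is inherited by smaller $s$. Monotonicity of the oscillation handles the intermediate radii $h$.

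The main obstacle is the geometric bookkeeping. One needs $s'=\tau_0 s$ to satisfy the hypothesis $s'\le c_*\delta(P)$ of Lemma~\ref{lem:interior-reverse-comparison}, which holds automatically. The real issue is choosing $\gamma$, $\tau_0$ and $\tau$ so that $P^{\pm}_{s'}$ lie in $Q_{M,s}(P)$ and the whole box $Q_{M,\tau s}(P)$ is sandwiched in time strictly between them. The admissible curves must also be Harnack chains of bounded length inside the enlarged box. Because of the group law, the $y$-displacements of size $s'^2|x|$ must be absorbed; this works since the box is centred at $P$ via left translation, so no $|x|$-dependent drift appears.
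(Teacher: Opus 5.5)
Your proposal is correct and follows the paper's proof essentially step by step. Both arguments use the same normalized pair $W,\widetilde W$ and the same dichotomy at the past point. Forward Harnack carries the lower bound on $W$ from the past point to the small box, and Harnack toward the future point bounds $u$. Lemma~\ref{lem:interior-reverse-comparison} then reverses the time direction for the denominator, and the standard iteration with monotonicity of the oscillation finishes the proof. The only difference is cosmetic: you place the past and future points at the scale $s'=\tau_0 s$ instead of at $s$ itself, which just adds a harmless geometric parameter to the bookkeeping.
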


\begin{proof}
Write
\[
 q=\frac vu,
 \qquad
 m=\inf_{\Omega_\psi\cap Q_{M,s}(P)}q,
 \qquad
 M'=\sup_{\Omega_\psi\cap Q_{M,s}(P)}q,
 \qquad
 \omega=M'-m.
\]
The denominator \(u\) is positive in the region under consideration by
Proposition~\ref{prop:uniform-reference-comparison} and the local
Harnack-chain geometry. If \(\omega=0\), then
\eqref{eq:interior-one-step-contraction} is immediate. Suppose that
\(\omega>0\), and define
\[
 W=\frac{v-mu}{\omega},
 \qquad
 \widetilde W=u-W=\frac{M'u-v}{\omega}.
\]
Then \(W\) and \(\widetilde W\) are non-negative and
\(\mathcal K\)-harmonic in \(\Omega_\psi\cap Q_{M,s}(P)\), and
\[
 0\leq\frac Wu\leq1,
 \qquad
 0\leq\frac{\widetilde W}{u}\leq1,
 \qquad
 \frac Wu+\frac{\widetilde W}{u}=1.
\]

At the past point \(P_s^-\), at least one of the two quotients in the last
display is at least \(1/2\). Suppose first that
\[
 \frac{W(P_s^-)}{u(P_s^-)}
 \geq
 \frac12.
\]
Choose \(\tau=\tau(m,M)>0\) sufficiently small. The standard interior
Harnack-chain geometry then places every
\(Z\in Q_{M,\tau s}(P)\) causally between \(P_s^-\) and \(P_s^+\), with both
chains contained in \(Q_{M,s}(P)\) and consisting of a uniformly bounded
number of intrinsic cylinders. The forward Harnack inequality applied to
\(W\) gives
\[
 W(P_s^-)
 \leq
 C W(Z),
\]
while the Harnack inequality applied to \(u\), followed by
Lemma~\ref{lem:interior-reverse-comparison}, gives
\[
 u(Z)
 \leq
 C u(P_s^+)
 \leq
 CC_*u(P_s^-).
\]
Consequently,
\[
 \frac{W(Z)}{u(Z)}
 \geq
 \frac{1}{2C^2C_*}
 =:c_{\mathrm i}>0
 \qquad\text{for }Z\in Q_{M,\tau s}(P).
\]
Since
\[
 \frac Wu=\frac{q-m}{\omega},
\]
we obtain
\[
 q(Z)\geq m+c_{\mathrm i}\omega
 \qquad\text{in }Q_{M,\tau s}(P).
\]
The upper bound \(q\leq M'\) remains valid on the smaller box, and hence
\[
 \osc_{\Omega_\psi\cap Q_{M,\tau s}(P)}q
 \leq
 (1-c_{\mathrm i})\omega.
\]

If instead
\[
 \frac{\widetilde W(P_s^-)}{u(P_s^-)}
 \geq
 \frac12,
\]
the same argument applied to \(\widetilde W\) gives
\[
 \frac{\widetilde W(Z)}{u(Z)}
 =
 \frac{M'-q(Z)}{\omega}
 \geq
 c_{\mathrm i}
 \qquad\text{in }Q_{M,\tau s}(P).
\]
Thus
\[
 q(Z)\leq M'-c_{\mathrm i}\omega
\]
on the smaller box, and the same oscillation contraction follows. Taking $\theta_{\mathrm i}=1-c_{\mathrm i}\in(0,1)$
proves \eqref{eq:interior-one-step-contraction}. Iteration at the scales
\(s,\tau s,\tau^2s,\ldots\), followed by monotonicity of oscillation and
interpolation between consecutive powers of \(\tau\), gives
\eqref{eq:iterated-interior-oscillation}.
\end{proof}

We can now give the nearest-boundary argument.

\begin{proposition}
\label{prop:nearest-boundary-holder}
Under the hypotheses of Theorem~\ref{thm:perturbative-BHI}, there exist
there exist
\[
 r_0=r_0(m,M)\in(0,1),
 \qquad
 \kappa_0=\kappa_0(m,M)\in(0,1),
\]
and
\[
 \sigma
 =
 \min\{\sigma_0,\sigma_{\mathrm i}\}
 =
 \sigma(m,M,H)\in(0,1)
\]
such that
\begin{equation}
\label{eq:local-holder-from-oscillation}
 \left|
 \frac{v(P)}{u(P)}
 -
 \frac{v(\widetilde P)}{u(\widetilde P)}
 \right|
 \leq
 C
 \left(
 \frac{d_{\mathcal K}^{\mathrm s}(P,\widetilde P)}{r_0}
 \right)^\sigma
 \osc_{D_{r_0}(P_0)}\frac vu
\end{equation}
whenever $P,\widetilde P\in\mathscr D_{\kappa_0}(P_0)$. Here \(C=C(m,M,H)\).
\end{proposition}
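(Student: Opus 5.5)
The plan is the standard nearest-boundary patching of the boundary estimate \eqref{eq:iterated-boundary-oscillation} with the interior estimate \eqref{eq:iterated-interior-oscillation}. First fix \(r_0\) and \(\kappa_0\), depending only on \(m,M\), so that two things hold. For every \(P\in\mathscr D_{\kappa_0}(P_0)\), the vertical projection \(P^\partial\) lies in \(\Delta_\psi\cap Q_{M,\kappa/2}\). Moreover, every box \(D_{r}(P^\partial)\) with \(r\le r_0\) is admissible for Proposition~\ref{prop:one-step-contraction}, and its enlargement by \(A\) sits inside \(Q_{M,2}\) and inside \(D_{r_0}(P_0)\) after enlarging by a fixed factor. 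This is pure box geometry using the group law, \eqref{eq:vertical-distance-comparison} and \(\psi(P_0)\)-based Lipschitz bounds. Write \(\rho=d^{\mathrm s}_{\mathcal K}(P,\widetilde P)\), \(\delta=\delta(P)\), and order the points so that \(\delta(P)\ge\delta(\widetilde P)\), or just treat both orders. Quasi-triangle constants are absorbed throughout. If \(\rho\ge c r_0\), the estimate is trivial, since the left side is at most \(\osc_{D_{r_0}(P_0)}(v/u)\).

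\emph{Case 1 (interior): \(\rho\le c_*\tau\delta/C\).} Set \(s=c_*\delta\). Then \(\widetilde P\in Q_{M,h}(P)\) with \(h\approx C\rho\le s\). By \eqref{eq:iterated-interior-oscillation},
\[
 |q(P)-q(\widetilde P)|\le C(\rho/\delta)^{\sigma_{\mathrm i}}\osc_{Q_{M,s}(P)}q .
\]
Next observe that \(Q_{M,s}(P)\subset D_{C\delta}(P^\partial)\). Apply \eqref{eq:iterated-boundary-oscillation} at \(P^\partial\) between the scales \(C\delta\) and \(r_0'\), where \(r_0'\) is a fixed fraction of \(r_0\) chosen so that \(D_{r_0'}(P^\partial)\subset D_{r_0}(P_0)\). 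This gives
\[
 \osc_{Q_{M,s}(P)}q\le C(\delta/r_0)^{\sigma_0}\osc_{D_{r_0}(P_0)}q .
\]
Multiplying the two bounds and using \(\sigma\le\min\{\sigma_0,\sigma_{\mathrm i}\}\) together with \(\rho\le\delta\le r_0\) yields \(C(\rho/r_0)^{\sigma}\osc_{D_{r_0}(P_0)}q\).

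\emph{Case 2 (boundary): \(\rho> c\delta\).} Both \(P\) and \(\widetilde P\) then lie in \(D_{C\rho}(P^\partial)\). Indeed, \(d(P,P^\partial)\lesssim\delta\lesssim\rho\), and \(\widetilde P\) is within \(\rho\) of \(P\); one must check membership in the box \(P^\partial\circ Q_{M,C\rho}\) rather than in a metric ball, which again follows from the comparability of boxes and balls. Then \eqref{eq:iterated-boundary-oscillation} from scale \(C\rho\) to \(r_0'\) gives \(|q(P)-q(\widetilde P)|\le C(\rho/r_0)^{\sigma_0}\osc_{D_{r_0}(P_0)}q\).

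The main obstacle is bookkeeping rather than analysis. One must ensure that the interior box \(Q_{M,s}(P)\), with its fixed enlargement, lies inside \(\Omega_\psi\cap Q_{M,2}\) and inside a boundary box of comparable size centered at \(P^\partial\in\Delta_\psi\cap Q_{M,\kappa/2}\). One must also ensure that the box inclusions are valid despite the non-symmetric, non-commutative group structure, where \(y\)-displacements pick up terms like \(t\,x\). I would handle both by working in coordinates dilated by \(\delta^{-1}\) around \(P^\partial\), where every relevant point sits in a fixed compact non-tangential region, and then choosing \(\kappa_0\ll r_0\ll\kappa\) accordingly. Since \(\sigma\) is the minimum of the two exponents, each estimate can be weakened to the common exponent, and this yields \eqref{eq:local-holder-from-oscillation}.
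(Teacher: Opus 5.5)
Your proposal is correct and follows essentially the same route as the paper. The paper splits the separation \(h=d_{\mathcal K}^{\mathrm s}(P,\widetilde P)\) into three regimes. For \(h\geq c_2r_0\) the bound is trivial. For \(c_3\delta(P)\leq h<c_2r_0\) both points lie in \(D_{Ch}(P^\partial)\), and the boundary iteration is run up to a scale comparable to \(r_0\). For \(h<c_3\delta(P)\) the interior iteration is applied from scale \(c_4\delta(P)\), followed by the boundary iteration at \(P^\partial\), and the two factors are combined through \(\sigma=\min\{\sigma_0,\sigma_{\mathrm i}\}\). This is exactly your trivial, boundary and interior cases, with the same box-geometry bookkeeping.
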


\begin{proof}
Choose \(r_0=r_0(m,M)\) sufficiently small relative to the geometric
localization constant in Proposition~\ref{prop:uniform-reference-comparison},
and then choose \(\kappa_0=\kappa_0(m,M)\ll r_0\) so that every fixed
enlargement of a box used below is contained in \(Q_{M,2}(P_0)\). All
subsequent reductions of these localization constants are by geometric
factors depending only on \(m\) and \(M\). Put
\[
 q=\frac vu,
 \qquad
 \Omega_0=\osc_{D_{r_0}(P_0)}q,
 \qquad
 h=d_{\mathcal K}^{\mathrm s}(P,\widetilde P),
 \qquad
 d=\delta(P).
\]
After decreasing \(\kappa_0\), both \(P\) and \(\widetilde P\) belong to
\(D_{r_0}(P_0)\). If \(h=0\), there is nothing to prove. If
\(h\geq c_2r_0\), then
\eqref{eq:local-holder-from-oscillation} follows directly from
\[
 |q(P)-q(\widetilde P)|\leq\Omega_0
\]
after increasing \(C\). We may therefore assume that
\[
 0<h<c_2r_0,
\]
where \(c_2=c_2(m,M)>0\) is sufficiently small. Set \(P^*=P^\partial\). By
\eqref{eq:vertical-distance-comparison}, $d_{\mathcal K}^{\mathrm s}(P,P^*)
 \leq
 Cd$. We distinguish two cases.

Suppose first that $h\geq c_3d$, where \(c_3=c_3(m,M)\in(0,1)\). The quasi-triangle inequality and the
comparability between intrinsic balls and boxes give
\[
 P,\widetilde P\in D_{Ch}(P^*).
\]
After decreasing \(c_2\) and \(\kappa_0\), choose a fixed
\(\bar r\asymp r_0\) such that $D_{\bar r}(P^*)\subset D_{r_0}(P_0)$and all the boundary contractions between the scales \(Ch\) and \(\bar r\)
are admissible. Iterating
\eqref{eq:iterated-boundary-oscillation} gives
\[
 \begin{aligned}
 |q(P)-q(\widetilde P)|
 &\leq
 \osc_{D_{Ch}(P^*)}q\leq
 C\left(\frac h{r_0}\right)^{\sigma_0}\Omega_0.
 \end{aligned}
\]

Suppose now instead that  $h<c_3d$. By decreasing \(c_3\), the points \(P\) and \(\widetilde P\) lie in a common
intrinsic box centered at \(P\), and a fixed enlargement of this box remains
inside \(\Omega_\psi\). Choose \(c_4=c_4(m,M)>0\) sufficiently small that
\[
 s=c_4d\leq c_*\delta(P)
\]
and the hypotheses of Lemma~\ref{lem:interior-quotient-contraction} hold at
the scale \(s\). After decreasing \(c_3\) further, we also have
\[
 \widetilde P\in Q_{M,Ch}(P),
 \qquad
 Ch\leq s.
\]
Applying \eqref{eq:iterated-interior-oscillation} from the scale \(s=c_4d\)
down to the scale \(Ch\) gives
\begin{equation}
\label{eq:nearest-boundary-interior-step}
 |q(P)-q(\widetilde P)|
 \leq
 C\left(\frac hd\right)^{\sigma_{\mathrm i}}
 \osc_{\Omega_\psi\cap Q_{M,c_4d}(P)}q.
\end{equation}
The quasi-triangle inequality and the box geometry give
\[
 Q_{M,c_4d}(P)
 \subset
 Q_{M,Cd}(P^*).
\]
After decreasing \(\kappa_0\), the boundary oscillation estimate
\eqref{eq:iterated-boundary-oscillation}, applied at \(P^*\), yields
\[
 \osc_{\Omega_\psi\cap Q_{M,c_4d}(P)}q
 \leq
 C\left(\frac d{r_0}\right)^{\sigma_0}\Omega_0.
\]
Since \(h<d<r_0\) and
\(\sigma=\min\{\sigma_0,\sigma_{\mathrm i}\}\),
\[
 \left(\frac hd\right)^{\sigma_{\mathrm i}}
 \left(\frac d{r_0}\right)^{\sigma_0}
 \leq
 \left(\frac h{r_0}\right)^\sigma.
\]
Combining this estimate with
\eqref{eq:nearest-boundary-interior-step} completes the proof.
\end{proof}

\begin{proposition}
\label{prop:reduction-rigidity}
Theorem~\ref{thm:perturbative-BHI} holds.
\end{proposition}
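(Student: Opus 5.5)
The proof assembles the ingredients already established. Under the hypotheses of Theorem~\ref{thm:perturbative-BHI}, Proposition~\ref{prop:uniform-reference-comparison} (that is, Lemma~\ref{lem:reference-balance}) applies to both \(u\) and \(v\), with \(\eta_0=\eta_0(m,M,H)\). It gives positivity of \(u,v\) in \(\mathscr D_{\kappa r}(P_0)\) for all admissible \(P_0\) and \(r\), and the uniform balance \(H_u(P_0,r),H_v(P_0,r)\leq H_1\). We then fix \(\kappa\) in the theorem to be at most the minimum of the localization constants appearing in Propositions~\ref{prop:uniform-reference-comparison}, \ref{prop:one-step-contraction}, and~\ref{prop:nearest-boundary-holder} (in particular \(\kappa\leq\kappa_0\)), reduced by a geometric factor depending only on \(m,M\). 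This keeps \(\kappa\) a function of \(m,M\) alone, and \(u,v>0\) in \(\mathscr D_\kappa(P_0)\) then follows.

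For the comparability \eqref{eq:quotient-comparability-scale-one}, apply Lemma~\ref{lem:localized-weak-comparison} with \((U,V)=(v,u)\) at the boundary point \(P_0\) and at a fixed scale \(s\asymp 1\). The scale is chosen so that \(Q_{M,as}(P_0)\subset Q_{M,2}(P_0)\) and \(Q_{M,bs}(P_0)\supset Q_{M,\kappa}(P_0)\). This bounds \(v/u\) above by \(K\,v(A^+_{s,\Lambda})/u(A^-_{s,\Lambda})\) and below by \(K^{-1}v(A^-_{s,\Lambda})/u(A^+_{s,\Lambda})\). By the balance bound \(H_1\), each of \(v(A^\pm_{s,\Lambda})\) and \(u(A^\pm_{s,\Lambda})\) is comparable to the corresponding value at the other time orientation.

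It remains to compare these scale-\(s\) values with the values at \(A_{1,\Lambda}(P_0)\). The controlled chains of Lemma~\ref{lem:controlled-local-chains}(i), used in their admissible directions, together with \eqref{eq:scale-Harnack-plus}--\eqref{eq:scale-Harnack-minus}, give
\[
 w(A^-_{1,\Lambda})\lesssim w(A_{1,\Lambda})\lesssim w(A^+_{1,\Lambda}),
 \qquad
 w(A^-_{1,\Lambda})\lesssim w(A^-_{s,\Lambda}),
 \qquad
 w(A^+_{s,\Lambda})\lesssim w(A^+_{1,\Lambda}).
\]
Since \(\mathfrak h_{1,\Lambda}\leq H\) by \eqref{eq:reference-control-scale-one}, all three scale-one values are comparable with constant \(C(m,M,H)\). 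The Carleson estimate \eqref{eq:one-sided-Carleson} and balance at scale \(s\) then close the chain in the remaining direction. Hence \(w(A^\pm_{s,\Lambda})\asymp w(A_{1,\Lambda})\) for \(w=u,v\), and \eqref{eq:quotient-comparability-scale-one} follows.

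For the H\"older estimate \eqref{eq:quotient-holder-scale-one}, apply Proposition~\ref{prop:nearest-boundary-holder}. It bounds \(|q(P)-q(\widetilde P)|\) by \(C\,(d^{\mathrm s}_{\mathcal K}(P,\widetilde P)/r_0)^\sigma\osc_{D_{r_0}(P_0)}q\), where \(q=v/u\). Bound the oscillation by \(2\sup_{D_{r_0}(P_0)}q\). If \(r_0\) is small relative to the region where the comparability argument above was carried out, this supremum is at most \(C\,v(A_{1,\Lambda})/u(A_{1,\Lambda})\). Finally, replace \(d^{\mathrm s}_{\mathcal K}\) by \(d_{\mathcal K}\), which costs only a constant, and absorb \(r_0^{-\sigma}\) into \(C\).

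The main obstacle is bookkeeping rather than a new estimate. The localization constants \(\kappa\), \(\kappa_0\), \(r_0\) and the admissibility ranges in Propositions~\ref{prop:one-step-contraction} and~\ref{prop:nearest-boundary-holder} must be nested consistently. Every boundary point \(P^*\) arising in the nearest-boundary argument must lie in \(\Delta_\psi\cap Q_{M,\kappa/2}\), so that reference balance is available there. I would handle this by choosing \(r_0\), then \(\kappa_0\), then the theorem's \(\kappa=\min\{\kappa_0,\dots\}\) in that order, each depending only on \(m,M\). The theorem's \(\sigma=\min\{\sigma_0,\sigma_{\mathrm i}\}\) is then the exponent supplied by Proposition~\ref{prop:nearest-boundary-holder}.
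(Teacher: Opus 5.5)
Your proposal is correct and matches the paper's proof step for step. The paper applies Proposition~\ref{prop:uniform-reference-comparison} to $u$ and $v$, then Lemma~\ref{lem:localized-weak-comparison} at a fixed scale comparable to one. It uses Harnack chains between the scale-$s$ and scale-one reference points, together with \eqref{eq:reference-control-scale-one}, to normalize by $v(A_{1,\Lambda})/u(A_{1,\Lambda})$. Finally it applies Proposition~\ref{prop:nearest-boundary-holder}, bounding the oscillation on $D_{r_0}(P_0)$ by that same comparability estimate. Your requirement that $r_0$ be small relative to the inner comparison box handles a point of bookkeeping the paper leaves implicit.
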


\begin{proof}
By left translation, it is enough to prove the theorem with its distinguished
boundary point at the origin. All the estimates below are translation
invariant, and we retain the notation \(P_0\) in the formulas so that the
conclusion can be translated back directly. Apply
Proposition~\ref{prop:uniform-reference-comparison} to both \(u\) and
\(v\). Choose a fixed scale \(r_0=r_0(m,M)>0\) and decrease \(\kappa\), by a
factor depending only on \(m\) and \(M\), so that every box required by
Lemma~\ref{lem:localized-weak-comparison} is contained in
\(Q_{M,2}(P_0)\). That lemma gives
\[
 K^{-1}
 \frac{v(A^-_{r_0,\Lambda}(P_0))}
      {u(A^+_{r_0,\Lambda}(P_0))}
 \leq
 \frac{v(P)}{u(P)}
 \leq
 K
 \frac{v(A^+_{r_0,\Lambda}(P_0))}
      {u(A^-_{r_0,\Lambda}(P_0))}
\]
in \(\mathscr D_\kappa(P_0)\), after another decrease of \(\kappa\) by a
geometric factor depending only on \(m\) and \(M\). Uniform reference
balance at the scale \(r_0\), the controlled
Harnack chains joining the three reference points at that scale, and the
chains joining reference points at the fixed comparable scales \(r_0\) and
\(1\), together with \eqref{eq:reference-control-scale-one}, give
\[
 \frac{v(A^\pm_{r_0,\Lambda}(P_0))}
      {u(A^\mp_{r_0,\Lambda}(P_0))}
 \asymp
 \frac{v(A_{1,\Lambda}(P_0))}
      {u(A_{1,\Lambda}(P_0))},
\]
with constants depending only on \(m,M,H\). Consequently,
\[
 C^{-1}
 \frac{v(A_{1,\Lambda}(P_0))}
      {u(A_{1,\Lambda}(P_0))}
 \leq
 \frac{v(P)}{u(P)}
 \leq
 C
 \frac{v(A_{1,\Lambda}(P_0))}
      {u(A_{1,\Lambda}(P_0))}
\]
for \(P\in\mathscr D_\kappa(P_0)\). This is
\eqref{eq:quotient-comparability-scale-one}. The same estimate also gives
\[
 \osc_{D_{r_0}(P_0)}\frac vu
 \leq
 C
 \frac{v(A_{1,\Lambda}(P_0))}
      {u(A_{1,\Lambda}(P_0))}.
\]
Proposition~\ref{prop:nearest-boundary-holder} therefore yields
\[
 \left|
 \frac{v(P)}{u(P)}
 -
 \frac{v(\widetilde P)}{u(\widetilde P)}
 \right|
 \leq
 C
 \bigl(d_{\mathcal K}^{\mathrm s}(P,\widetilde P)\bigr)^\sigma
 \frac{v(A_{1,\Lambda}(P_0))}
      {u(A_{1,\Lambda}(P_0))}
\]
for \(P,\widetilde P\in\mathscr D_\kappa(P_0)\), after decreasing \(\kappa\)
once more by a factor depending only on \(m\) and \(M\). Since
\(d_{\mathcal K}^{\mathrm s}\) and \(d_{\mathcal K}\) are
equivalent, this is \eqref{eq:quotient-holder-scale-one}. The proof is
complete.
\end{proof}

The proof of Theorem~\ref{thm:perturbative-BHI} uses substantially more than ordinary local compactness. The
strong minimum principle alone is too weak for the required conclusion:
because propagation is causal, it forces the limiting function to vanish
only on a one-sided subset of each \(y_m\)-fibre, in fact on a half-ray, and
provides no information on the complementary part of the fibre. Polynomial
growth makes the translation orbit finite-dimensional, while exact
cylindricity converts this finite-dimensionality into real-analytic
dependence on \(y_m\). It is this additional rigidity, rather than the
strong minimum principle by itself, that extends the zero set across the
missing part of every invariant fibre.

\begin{remark}
\label{rem:not-ratio-compactness}
The preceding reduction avoids passing directly to the quotients
\(v_j/u_j\) near the moving boundary. Uniform convergence of \(u_j\) and
\(v_j\) does not, by itself, imply convergence of their quotient where both
functions vanish. The reference-balance estimate supplies precisely the
quantitative lower bound needed before the weak comparison principle and
oscillation reduction can be used.
\end{remark}

\section{Concluding remarks}
\label{sec:conclusion}

This paper shows that exact cylindricality in the distinguished higher-order
variable is not necessary for local boundary comparison. What is required is
that the critical \(C^{0,1/3}_{y_m}\) defect be sufficiently small at the
scale under consideration and remain small under subsequent blow-ups. For
graphs with \(C^{0,\alpha}_{y_m}\) regularity, \(\alpha>1/3\), this occurs
below the threshold in \eqref{eq:threshold-scale}; more generally, it follows
from the vanishing-modulus hypothesis in
Theorem~\ref{thm:general-modulus}. Boundary H\"older decay, the one-sided
Carleson estimate, and localized weak comparison require no cylindricality.
The asymptotic cylindricality assumption enters only in establishing uniform
comparison of the forward and backward reference values.

The global rigidity theorem isolates the mechanism behind this comparison.
A collapsing sequence of bad scales produces a non-negative solution of
polynomial growth in an unbounded cylindrical limit domain. The strong
minimum principle is causal and initially propagates the zero set through
only a half-ray in each invariant \(y_m\)-fibre. Polynomial growth makes the
translation orbit finite-dimensional, while exact cylindricality ensures
that all \(y_m\)-translates remain solutions in the same domain. The resulting
finite-dimensional translation representation gives real-analytic dependence
on \(y_m\), which extends the one-sided zero set to the entire fibre. Thus
exact symmetry is used only in the limiting rigidity problem and is not
imposed on the original domain.

The exponent \(1/3\) is dictated by the homogeneous degree three of \(y_m\).
At the critical exponent, smallness of the scale-invariant defect is
sufficient by Theorem~\ref{thm:perturbative-BHI}, but boundedness without
smallness does not force the blow-up limits to be cylindrical. Whether
boundary comparison holds for arbitrary critical intrinsic Lipschitz
dependence remains open. Resolving this problem would require either a
different rigidity principle or a structural description of the possible
non-cylindrical tangent domains.

The argument also suggests two extensions. First, for divergence-form
Kolmogorov operators with coefficients invariant in \(y_m\), the required
local boundary estimates are already available in rough coefficient form
\cite{LitsgardNystrom2022}; the remaining issue is to establish an appropriate
global rigidity theorem. Second, in higher-step Kolmogorov geometries, one
expects the critical regularity in an invariant variable to be the reciprocal
of its homogeneous degree. In both directions, the expanding-box growth
estimate and the finite-dimensional translation argument provide a framework
for separating local potential theory from the global rigidity of tangent
domains.

The results of this paper also provide the boundary-comparison input in the
regularity program for the Kolmogorov obstacle problem initiated in
\cite{FrentzNystromPascucciPolidoro2010} and continued in
\cite{Bowman2025}. The former work established optimal regularity of the
solution in the intrinsic H\"older scale. Under a quantitative thickness
condition, Bowman subsequently proved the decisive interior estimate $\nabla_yu\in\mathrm L^\infty_{\mathrm{loc}}$ and showed that the regular free boundary is a non-characteristic graph, is
differentiable in the Kolmogorov geometry, and has a unique half-space
blow-up. In the notation of the present paper, its defining function is
\(1/2\)-H\"older continuous in the distinguished variable \(y_m\). Its
cylindrical defect at scale \(r\) is therefore
\[
 O\bigl(r^{3/2-1}\bigr)=O(r^{1/2}),
\]
and the boundary comparison theorem proved here applies at all sufficiently
small scales.

The present theorem supplies the homogeneous boundary Harnack component of
the argument for the free boundary. An additional step is required because the
derivatives determining the normal are not \(\mathcal K\)-harmonic. Indeed,
in the positivity set, where \(\mathcal Ku=1\),
\[
 \mathcal K(\partial_{x_i}u)
 =
 \partial_{x_i}(\mathcal Ku)-\partial_{y_i}u
 =
 -\partial_{y_i}u.
\]
The full treatment of this inhomogeneous problem is carried out in
\cite{NystromHigherRegularity2026}; we describe the mechanism only briefly
here. Under the obstacle-problem blow-up at radius \(r\), the commutator
source in the derivative equation is bounded by \(Cr\). A
\(\mathcal K\)-harmonic replacement, together with the identity
\(\mathcal Ku=1\), transfers the homogeneous boundary oscillation contraction
established here to quotients of the relevant directional derivatives, with
an additive \(O(r)\) error. Iteration yields H\"older continuity of the
non-degenerate normal. A second decay argument for the transport derivative,
whose equation has a divergence-form source of the same scale-decaying order,
gives a power modulus in the intrinsic time direction. Combined with the
enhanced regularity in the higher-order variables obtained in
\cite{Bowman2025}, this yields a quantitative improvement of flatness and
shows that the regular free boundary is locally
\(C_{\mathcal K}^{1,\beta}\) for some \(\beta\in(0,1)\). Thus the present
boundary comparison theorem, the results of \cite{Bowman2025}, and the
inhomogeneous comparison argument in
\cite{NystromHigherRegularity2026} together complete the higher-regularity
program at regular free-boundary points.

\medskip
\noindent
\textbf{Declaration on the use of generative AI.}
The author used ChatGPT, developed by OpenAI, as an interactive tool
during the preparation and revision of this manuscript.  The tool was
used for language editing, organization, LaTeX formatting, and checks of
clarity and internal consistency.  The mathematical arguments, results,
and conclusions were developed and verified by the author, who also
checked the references and takes full responsibility for the content of
the manuscript.

\end{document}